\newcommand{\red}[1]{\textcolor{red}{#1}}
\DeclareMathOperator{\im}{{\mathrm{Im}}}
\newcommand{\Pol}{{\mathbf{M}}}
\renewcommand{\Dot}{\mathbf {Dot}}
\def\P{\mathbb{P}}
\def\E{\mathbb{E}}
\newcommand{\gE}[1]{\left\langle #1 \right \rangle}
\def\Z{\mathbb{Z}}
\def\R{\mathbb{R}}
\def\N{\mathbb{N}}
\def\bx{\mathbf{x}}
\def\11{{\mathbf{1}}}
\newcommand{\abs}{{\rm abs}}
\newcommand{\ord}{{\mathrm{ord}}}
\newcommand{\size}{{\rm{size}}}
\newcommand{\gh}{{\rm{gh}}}
\newcommand{\fr}{{\rm{fr}}}
\newcommand{\C}{\mathbb C}
\newcommand{\Iso}{{\mathcal I}}
\newcommand{\wt}{\widetilde}
\newcommand{\wh}{\widehat}
\newcommand{\al}{\alpha}
\newcommand{\nonuni}{{\text{complete $T$-expansion}}}
\newcommand{\Nonuni}{{\text{Complete $T$-expansion}}}
\newcommand{\dashed}{\text{diffusive}}
\newcommand{\Dashed}{\text{Diffusive}}
\newcommand{\self}{\text{self-energy}}
\newcommand{\selfs}{\text{self-energies}}
\newcommand{\Selfs}{\text{Self-energies}}
\newcommand{\incomp}{{\text{$T$-equation}}}
\newcommand{\fa}{{\mathfrak a}}
\newcommand{\fb}{{\mathfrak b}}
\newcommand{\QGn}{\mathcal Q^{(n)}}
\newcommand{\Err}{{\mathcal Err}}
\newcommand{\PT}{\mathcal R}
\newcommand{\PTn}{\mathcal R^{(n)}}
\newcommand{\QT}{\mathcal Q}
\newcommand{\QTn}{\mathcal Q^{(n)}}
\newcommand{\AT}{\mathcal A}
\newcommand{\ATn}{\mathcal A^{(>n)}}
\DeclareMathOperator{\OO}{O}
\DeclareMathOperator{\oo}{o}
\newcommand{\Wn}{\mathcal W^{(n)}}
\newcommand{\wtSdelta}{\Sigma}
\newcommand{\Sele}{{\mathcal E}}
\newcommand{\Selek}{{\mathcal E}}
\newcommand{\conc}{\mathfrak g}
\newcommand{\soe}{d_\eta}
\newcommand{\etas}{W^{-5+\delta_0}L^{5-d}}
\newcommand{\tr}[1]{\mathrm{tr}\left(#1\right)}
\newcommand{\be}{\begin{equation}}
	\newcommand{\ee}{\end{equation}}
\newcommand{\ii}{\mathrm{i}}
\newcommand{\dd}{\mathrm{d}}
\newcommand{\e}{{\varepsilon}}
\renewcommand{\cal}{\mathcal}
\newcommand{\cob}{\color{blue}}
\newcommand\dif{\mathop{}\!\mathrm{d}}
\newtheorem{thm}{Theorem}[section]
\newtheorem{assumption}[thm]{Assumption}
\newtheorem{proposition}[thm]{Proposition}
\newtheorem{lemma}[thm]{Lemma}
\newtheorem{corollary}[thm]{Corollary}
\newtheorem{defn}[thm]{Definition}
\newtheorem{claim}[thm]{Claim}
\newtheorem{strategy}[thm]{Strategy}
\theoremstyle{definition}
\newtheorem{remark}[thm]{Remark}
\numberwithin{equation}{section}
\begin{document}

\begin{frontmatter}
\title{Bulk universality and quantum unique ergodicity for random band matrices in high dimensions}
\runtitle{Bulk universality and QUE for random band matrices}

\begin{aug}
\author[A]{\fnms{Changji}~\snm{Xu}\ead[label=e1]{cxu@cmsa.fas.harvard.edu}},
\author[B]{\fnms{Fan}~\snm{Yang}\ead[label=e2]{fyangmath@mail.tsinghua.edu.cn}}, 
\author[C]{\fnms{Horng-Tzer}~\snm{Yau}\ead[label=e3]{htyau@math.harvard.edu}}
\and
\author[D]{\fnms{Jun}~\snm{Yin}\ead[label=e4]{jyin@math.ucla.edu}}
\address[A]{Center of Mathematical Sciences and Applications, Harvard University\printead[presep={,\ }]{e1}}

\address[B]{Yau Mathematical Sciences Center, Tsinghua University\printead[presep={,\ }]{e2}}

\address[C]{Department of Mathematics, Harvard University\printead[presep={,\ }]{e3}}

\address[D]{Department of Mathematics, University of California, Los Angeles\printead[presep={,\ }]{e4}}
\end{aug}

\begin{abstract}
We consider Hermitian random band matrices {\scriptsize$H=(h_{xy})$} on the {\it d}-dimensional lattice {\scriptsize$(\Z/L\Z)^d$}, where the entries {\scriptsize$h_{xy}=\overline h_{yx}$} are independent centered complex Gaussian random variables with variances {\scriptsize$s_{xy}=\mathbb E|h_{xy}|^2$}. The variance matrix {\scriptsize$S=(s_{xy})$} has a banded profile so that {\scriptsize$s_{xy}$} is negligible if {\scriptsize$|x-y|$} exceeds the band width {\scriptsize$W$}. For dimensions {\scriptsize$d\ge 7$}, we prove the bulk eigenvalue universality of {\scriptsize$H$} under the condition {\scriptsize$W \gg L^{95/(d+95)}$}. Assuming that {\scriptsize$W\geq L^\e $} for a small constant {\scriptsize$\e >0$}, we also prove the quantum unique ergodicity for the bulk eigenvectors of {\scriptsize$H$} and a sharp local law for the Green's function {\scriptsize$G(z)=(H-z)^{-1}$} up to {\scriptsize$\im \, z \gg W^{-5}L^{5-d}$}. The local law implies that the bulk eigenvector entries of $H$ are of order {\scriptsize$\OO(W^{-5/2}L^{-d/2+5/2})$} with high probability. 
\end{abstract}

\begin{keyword}[class=MSC]
\kwd[Primary ]{60B20}
\kwd[; secondary ]{82B44}
\kwd{15B52}
\end{keyword}

\begin{keyword}
\kwd{Random band matrices}
\kwd{Bulk universality}
\kwd{Quantum unique ergodicity}
\kwd{Delocalization}
\end{keyword}

\end{frontmatter}
\tableofcontents

\section{Introduction}

A $d$-dimensional random band matrix describes a Hamiltonian on a $d$-dimensional lattice with random hoppings in a band of width $W$. In this paper, we consider a large finite lattice $\Z_L^d:=\{1,2, \cdots, L\}^d$ with $N:= L^d$ many lattice sites and define a random band matrix ensemble $H=(h_{xy}:x,y\in \Z_L^d)$ on it. (Here, for the matrix indices, we fix an arbitrary ordering $1,\ldots, N$ of the vertices in $\Z_L^d$.) The entries of $H$ are independent (up to the Hermitian condition) centered random variables with a translation invariant variance profile $s_{xy}:=\mathbb E|h_{xy}|^2 = f(|x-y|/W)$ for a rapidly decaying function $f$. In particular, the variance is negligible when $|x-y|\gg W$. Without loss of generality, we can normalize $f$ so that 
\be\label{fxy}
\sum_{x}s_{xy}=\sum_{y}s_{xy}=1.
\ee
It is well-known that under \eqref{fxy}, the global eigenvalue distribution of $H$ converges weakly to the Wigner's semicircle law supported in $[-2,2]$ \cite{Wigner}. The random band matrix ensemble naturally interpolates between the mean-field Wigner ensemble \cite{Wigner} and the famous Anderson model \cite{Anderson} as $W$ varies. 
In particular, a sharp Anderson metal-insulator transition is  conjectured to occur when $W$ crosses a critical band width $W_c$:
if $W \ll W_c$, $H$ has localized bulk eigenvectors and Poisson statistics for bulk eigenvalues; if $W \gg W_c$, $H$ has delocalized bulk eigenvectors and GOE/GUE  statistics for bulk eigenvalues. Based on simulations and some non-rigorous supersymmetry arguments, the critical band width is conjectured to be $W_c = \sqrt L$ when $d=1$, $W_c = \sqrt{\log L}$ when $d=2$, and $W_c\sim 1$ when $d\ge 3$ \cite{PB_review, ConJ-Ref2, ConJ-Ref1, ConJ-Ref4, fy, Spencer3, Spencer2, Spencer1,ConJ-Ref6}.

Despite the importance of the Anderson metal-insulator transition in physics, establishing its rigorous theory for a concrete model (including the Anderson model and random band matrices) remains one of the major open problems in mathematical physics. There have been many partial results concerning the localization or delocalization of one-dimensional (1D) random band matrices  \cite{BaoErd2015,BGP_Band,BouErdYauYin2017,PartII,PartI,CS1_4,CPSS1_4,ErdKno2013,ErdKno2011,delocal,Semicircle,HeMa2018,PelSchShaSod,Sch2009,SchMT,Sch3,1Dchara,Sch1,Sch2014,Sch2,Sod2010,Band1D_III} and random band matrices in dimensions $d\ge 2$ \cite{DL_2D,DisPinSpe2002,ErdKno2013,ErdKno2011,delocal, HeMa2018,BandI,BandII,Band1D_III}. We refer the reader to \cite{PB_review,PartI,CPSS1_4,BandI} for a brief review of some of these results. 
So far, the best delocalization result for high-dimensional band matrices was obtained in \cite{BandI,BandII}, which proved that if $d\ge 8$ and $W\ge L^\e$ for an arbitrarily small constant $\e>0$, the bulk eigenvectors of random band matrices are (weakly) delocalized in the following senses:  with high probability, most bulk eigenvectors have localization length of order $L$ and the $\ell^\infty$-norm of every bulk eigenvector of $H$ is at most $W/L$. On the other hand, the eigenvalue statistics of random band matrices are much harder to study. To the best of our knowledge, the GOE/GUE statistics of band matrices with general variance profiles have only been proved in 1D under $W\ge cL$ for a constant $c>0$ in \cite{BouErdYauYin2017} and under $W\gg L^{3/4}$ in  \cite{PartI}. 

One main goal of this paper is to answer this important question for high-dimensional band matrix ensembles. We show that the bulk eigenvalue universality and quantum unique ergodicity (QUE) hold for random band matrices in dimensions $d\ge 7$ as long as the band width $W$ is reasonably large. More precisely, we obtain the following results in this paper:
\begin{itemize}
	\item[(i)] If $W\gg L^{95/(d+95)}$, the bulk eigenvalue statistics of $H$ match those of GOE/GUE asymptotically.
	
	\item[(ii)] If $W\ge L^\delta$ for an arbitrarily small constant $\delta>0$, the QUE of bulk eigenvectors holds.
	
	\item[(iii)] If $W\ge L^\delta$ for an arbitrarily small constant $\delta>0$, an optimal local law holds for the Green's function (or resolvent) of $H$,
	\be\nonumber
	G(z)=(H-z)^{-1},\quad z\in \C_+:=\{z\in \C: \im z>0\},
	\ee
	down to the scale $\im z \gg \eta_*:=W^{-5}L^{5-d}$.
\end{itemize}
The bulk universality established here gives a rare example of random matrix statistics arising from a ``local" model. Roughly speaking, the band width is viewed as an equivalent of the range of interaction, so for a band matrix ensemble to be considered a local model, $W$ needs to be finite. In very high dimensions $d$, our assumption  $W\gg L^{95/(d+95)}$ is only slightly worse than the conjectured optimal assumption that $W$ is large but finite. Notice that QUE is a stronger notion than delocalization by saying that every bulk eigenvector of $H$ is asymptotically uniformly distributed on subsets of microscopic scales $\oo(N)$. 
Equivalently, it states that given an eigenvector $u_\al$ and a real diagonal matrix $\Pi$ with $\tr\Pi=0$, $N|\langle u_\al, \Pi u_\al\rangle|$ is much smaller than its typical size $\sum_x |\Pi_{xx}|$, where the inner product $\langle \cdot, \cdot\rangle$ is defined as $\langle  u,  v\rangle:= u^* v$. 
The local law of $G(z)$ says that the resolvent entries $G_{xy}$ are well-approximated by $m(z)\delta_{xy}$ for $z=E+\ii \eta$ with $E$ in  the bulk and $\eta\gg \eta_*$, where $m(z)$ is the Stieltjes transform of Wigner's semicircle law,
\be\label{msc}
m(z):=\frac{-z+\sqrt{z^2-4}}{2} = \frac{1}{2\pi}\int_{-2}^2 \frac{\sqrt{4-\xi^2}}{\xi-z}\dd\xi.
\ee
An immediate consequence of the local law is the following delocalization estimate: in dimensions $d\ge 7$, the $\ell^\infty$-norm of every bulk eigenvector of $H$ is at most $W^{-5/2}L^{-d/2+5/2}$ with high probability provided that $W\ge L^\delta$. Compared with \cite{BandI,BandII}, this result covers a new dimension $d=7$ and improves the $\ell^\infty$ bound $W/L$ to $W^{-5/2}L^{-d/2+5/2}$, which is already very close to the optimal bound $L^{-d/2+\e}$---the extra factor is only $(L/W)^{5/2}$.

One main strategy for the proof of the bulk universality of $H$ roughly follows the three-step strategy initiated in \cite{ErdPecRamSchYau2010,ErdSchYau2011}. The reader can refer to \cite{erdHos2017dynamical} for an overview of this strategy. Our local law in Theorem \ref{thm: locallaw} below completes the first step for $\eta\gg \eta_*$. In the second step, we consider the matrix Brownian motion $H_t:=\sqrt{1-t} H+B_t$, where $B_t$ is a Hermitian matrix whose entries are independent Brownian motions with variances $t/N$ (i.e., for any fixed time $t$, $B_t$ has the law of $\sqrt{t}\text{GOE}$ or $\sqrt{t}\text{GUE}$). With the local law as the input, it was proved in \cite{LSY2019,LY17} that the local bulk statistics of $H_t$ converge to those of GOE/GUE at any time $t\gg \eta_*$, which provides the optimal second step given the local law. In the third step, we need to show that the local bulk statistics of the original matrix $H$ are well-approximated by those of $H_t$, which is achieved by comparing the moments of $\im g(z) $ with those of $\im g_t(z) $, where $ g(z):= \frac{1}{N}\tr{G(z)}$ and $g_t(z):= \frac{1}{N}\tr{G_t(z)}$ with $G_t(z):=(H_t-z)^{-1}$. This presents a main obstacle to the three-step strategy because previous Green's function comparison arguments for this step require matching the variances of the entries of $H$ with those of $H_t$. However, the variances of the entries of a random band matrix (almost) vanish outside the band, so the variance matching will never hold.

To overcome the obstacle in the third step mentioned above, a mean-field reduction method was introduced in \cite{BouErdYauYin2017,PartI}. Roughly speaking, it reduces the study of some spectral properties of  $H$ to that of a $W\times W$ mean-field random matrix $Q$ expressed as a rational function of certain blocks of $H$. A key observation in \cite{BouErdYauYin2017,PartI} is that the QUE, bulk universality, and local law of $Q$ imply the QUE and bulk universality of the original band matrix $H$. However, the local law of $Q$ was established via the \emph{generalized resolvent} of $H$ \cite{PartII,Band1D_III}. Besides the fact that generalized resolvents seem to be extremely difficult to estimate in high dimensions, it looks like the mean-field reduction approach is applicable only when the band width is large, while we aim to deal with narrow band matrices in this work.

In this paper, we prove the bulk universality of $H$ using the three-step strategy but without using the mean-field reduction argument. Roughly speaking, a key observation is that in controlling $\E (\im g_t(z) )^k - \E (\im g(z))^k$, the leading quantities governing its evolution with respect to $t$ will contain factors of the form $N|\langle u_\al, \Pi u_\al\rangle|^2$, where $u_\al$ denotes a bulk eigenvector and $\Pi$ is a real diagonal matrix with $\tr\Pi=0$ and $\sum_x |\Pi_{xx}|=1$. Applying the QUE for $H$, these factors are much smaller than 1, which implies that, from $t=0$ to $t=L^\e\eta_*$, $g_t(z)$ changes by a power of $W^{-(d+95)}L^{95+C\e}$ for some absolute constant $C>0$. Hence, as long as $W^{-(d+95)}L^{95+C\e}\ll 1$, the local spectral statistics essentially do not change along the DBM from $t=0$ to $t=L^\e\eta_*$. Together with the bulk universality for $H_t$, it gives the bulk universality of the original band matrix $H$.

Regarding the proof of QUE, previous arguments are either based on an eigenvector moment flow method \cite{BouYau2017,PartI} 
or some multi-resolvent local laws for $G(z)$ \cite{CES_QUE1,CES_QUE3,CES_QUE2}. Our current proof of QUE is instead based on an extension of the ideas in \cite{BandI,BandII}. More precisely, proving the QUE amounts to bounding high moments of $\cal P:=\tr{(\im G) \Pi (\im G) \Pi}$ for $\im G:=\frac{1}{2\ii}(G-G^*)$. A key tool developed in \cite{BandI,BandII} is a $T$-expansion of the  $T$-variable, defined by $T_{xy}:=|m|^2\sum_\al s_{x\al}|G_{\al y}|^2$ \cite{delocal}. In this paper, we extend it to expansions of high moments of $\cal P$, that is, we can expand any $\cal P^k$ into a sum of deterministic terms with arbitrarily high order error. Then, we can explore some important cancellations for these deterministic terms, which lead to the proof of the QUE.

Some key ideas used in the proof will be briefly discussed in Section \ref{sec_idea} below. We expect that the argument in this paper can be extended to random band matrices in lower dimensions and with smaller band width, and we will pursue this direction in future works. 


Finally, we mention several advantages of the current proof of bulk universality and QUE. (1) The fact that the QUE of $H$ implies the bulk universality of  $H$ is very  transparent in our current argument. In particular, we do not need to introduce an extra mean-field matrix $Q$ to establish such a connection. As a consequence, the proofs of the QUE and bulk universality are simplified greatly. 
(2) Our current proof only uses the conventional resolvent $G(z)$, whose local law is much easier to prove than the generalized resolvent in the mean-field reduction approach. 
(3)  If we can prove the local law of $G(z)$ up to the optimal scale $\im z\gg \eta_*= L^{-d}$ (instead of the current $\eta_*=W^{-5}L^{5-d}$), then our current argument yields the bulk universality of random band matrices with band width $W\ge L^\delta$. (This is because the change of $g_t(z)$ from $t=0$ to $t=L^\e\eta_*$ would become  $W^{-d}L^{C\e}$ instead of $W^{-(d+95)}L^{95+C\e}$.)  In addition, optimal QUE and complete delocalization of bulk eigenvectors would also follow. Hence, proving the local law of $G(z)$ down to $\im z\gg L^{-d}$ is now the key open problem in the study of band matrices.

\subsection{Main results}

In this subsection, we state the main results of this paper, including the bulk universality, QUE, local law, and delocalization of bulk eigenvectors. We consider $d$-dimensional random band matrices indexed by a cube of linear size \(L\) in \(\mathbb{Z}^{d}\), i.e., 
\be\label{ZLd}
\Z_L^d:=\left( \Z\cap ( -L/2 , L/2]\right) ^d. 
\ee
We will view $\Z_L^d$ as a torus and denote by $[x-y]_L$ the representative of $x-y$ in $\Z_L^d$, i.e.,  
\be\label{representativeL}[x-y]_L:= [(x-y)+L\Z^d]\cap \Z_L^d.\ee
Clearly, $\|x-y\|_L:=\| [x-y]_L \|$ is a {periodic} distance on $\Z_L^d$ for any norm $\|\cdot\|$ on $\Z^d$. For definiteness, we use the $\ell^\infty$-norm in this paper, i.e. $\|x-y\|_L:=\|[x-y]_L\|_\infty$. In this paper, we consider the following class of $d$-dimensional random band matrices. 

\begin{assumption}[Random band matrix] \label{assmH}
	Fix any $d\in \N$. For $L\gg W\gg 1$ and $N:=L^d$, we assume that $ H\equiv H_{d,f,W,L}$ is an $N\times N$ complex Hermitian random matrix whose entries $(\mathrm{Re}\, h_{xy}, \mathrm{Im }\,  h_{xy}: x,y \in \Z_L^d)$ are independent Gaussian random variables (up to symmetry $H=H^\dagger$) such that  
	\be\label{bandcw0}
	\mathbb E h_{xy} = 0, \quad \E (\mathrm{Re}\, h_{xy})^2 =  \E (\im \, h_{xy})^2 = s_{xy}/2, \quad x , y \in \Z_L^d,
	\ee
	where the variances $s_{xy}$ satisfy that
	\be\label{sxyf}s_{xy}= f_{W,L}\left( [x-y]_L \right)\ee
	for a positive symmetric function $f_{W,L}$ satisfying Assumption \ref{var profile} below. We say that $H$ is a $d$-dimensional random band matrix with linear size $L$, band width $W$, and variance profile $f_{W,L}$. Denote the variance matrix by $S : = (s_{xy})_{x,y\in \Z_L^d}$, which is an $N \times N$ doubly stochastic symmetric matrix. 
\end{assumption}   

\begin{assumption}[Variance profile]\label{var profile}
	We assume that $f_{W,L}:\Z_L^d\to \mathbb R_+$ is a positive symmetric function on $\Z_L^d$ that can be expressed by the Fourier transform 
	\be\label{choicef}
	f_{W,L}(x):= \frac{1}{(2\pi)^d Z_{W,L}}\int \psi(Wp)e^{\ii p\cdot x} \dif p.  \ee
	Here, $\Z_{W,L}$ is the  normalization constant so that $\sum_{x\in \Z_L^d} f_{W,L}(x)=1$, and $\psi\in C^\infty(\R^d)$ is a symmetric smooth function independent of $W$ and $L$ and satisfies the following properties:
	\begin{itemize}
		\item[(i)] $\psi(0)=1$ and $\|\psi\|_\infty \le 1$;  
		\item[(ii)] $\psi(p)\le \max\{1 - c_\psi |p|^2 , 1-c_\psi  \}$ for a constant $c_\psi>0$;
		\item[(iii)]  $\psi$ is in the Schwartz space, i.e.,
		\be\label{schwarzpsi} \lim_{|p|\to \infty}(1+|p|)^{k}|\psi^{(l)}(p)| =0, \quad \text{for any }k,l\in \N.\ee
	\end{itemize}
\end{assumption}

Clearly, $f_{W,L}$ is of order $\OO(W^{-d})$ and decays faster than any polynomial, that is, for any fixed $k\in \N$, there exists a constant $C_k>0$ so that
\be\label{subpoly}
|f_{W,L}(x)|\le C_k W^{-d}\left( {\|x\|_L}/{W}\right)^{-k}.
\ee
In other words, $f_{W,L}$ is a Schwartz function of $x/W$. Hence, the variance profile $S$ defined in \eqref{sxyf}  has a banded structure, namely,  for any constants $\tau,D>0$,
\be\label{app compact f}
\mathbf 1_{|x-y|\ge W^{1+\tau}}|s_{xy}|\le W^{-D}.
\ee
Combining \eqref{schwarzpsi} and \eqref{subpoly} with the Poisson summation formula, we obtain that  
\be\label{bandcw1} 
Z_{W,L} =   \psi(0) + \OO(W^{-D})=1+ \OO(W^{-D}),
\ee
for any large constant $D>0$ as long as $L\ge W^{1+\e}$ for a constant $\e>0$.

Denote the eigenvalues and normalized eigenvectors of $H$ by $\{\lambda_\al\}$ and $\{u_\alpha\}$. Our first main result gives the bulk universality of random band matrices in dimensions $d\ge 7$. Define the $k$-point correlation function of $H$ by $$ \rho_H^{(k)}(\alpha_1,\alpha_2,...,\alpha_k):= \int_{\R^{N-k}}\rho_H^{(N)}(\alpha_1,\alpha_2,...,\alpha_N) \dif \alpha_{k+1}\cdots \dif \alpha_N,$$
where $\rho_H^{(N)}(\alpha_1,\alpha_2,...,\alpha_N)$ is the joint density of all unordered eigenvalues of $H$.
\begin{thm}[Bulk universality]
	\label{main thm-uni}
	Under Assumptions \ref{assmH} and \ref{var profile}, fix any $d\ge 7$ and an arbitrarily small constant $c_0>0$. If $W^{d+95} \geq L^{95 + c_0}$, then for any fixed $k>0$, the $k$-point correlation function of $H$ converges to that of \textrm{GUE} in the following sense. 
 For any $|E| \leq 2 - \kappa$ and smooth test function $\cal O$ with compact support, we have
 $$\lim_{N \to \infty} \int_{\R^k}\dif \boldsymbol{\alpha} ~ \cal O( \boldsymbol{\alpha}) \left\{\left(\rho_H^{(k)} - \rho_{\text{GUE}}^{(k)}\right)\left(E + \frac{\boldsymbol{\alpha}}{N}\right) \right\} =0\,.$$
\end{thm}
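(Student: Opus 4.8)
The plan is to establish Theorem~\ref{main thm-uni} via the three-step dynamical approach of \cite{ErdPecRamSchYau2010,ErdSchYau2011}, but bypassing the mean-field reduction of \cite{BouErdYauYin2017,PartI} and replacing the Green's function comparison in the third step by an argument driven directly by the quantum unique ergodicity of $H$. \emph{Step~1} is the input local law: under the hypothesis $W^{d+95}\ge L^{95+c_0}$ with $d\ge 7$ one has in particular $W\ge L^{\de}$ for some $\de>0$, so Theorem~\ref{thm: locallaw} supplies the optimal local law for $G(z)$ on the whole range $\im z\gg\eta_*=W^{-5}L^{5-d}$, and hence eigenvalue rigidity in the bulk on all scales $\gg\eta_*$. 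This is the input required by the following two steps.

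\emph{Step~2 (relaxation by the matrix flow).} I would introduce $H_t=\sqrt t\,\text{-perturbed}$ ensemble $H_t=\sqrt{1-t}\,H+B_t$, where $B_t$ is a Hermitian matrix with independent Brownian-motion entries of variance $t/N$, so that for fixed $t$, $B_t$ has the law of $\sqrt t\,\mathrm{GUE}$. Feeding the local law and rigidity of Step~1 into the Dyson Brownian motion analysis of \cite{LSY2019,LY17}, the averaged bulk $k$-point correlation functions of $H_t$ agree asymptotically with those of GUE (in the sense of the theorem) for every time $t\ge L^{c_2}\eta_*$, where $c_2>0$ may be taken arbitrarily small. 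It then remains to transport this conclusion back to $H_0=H$.

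\emph{Step~3 (comparison $H_t\leftrightarrow H$ via QUE).} By the standard reduction of the convergence of window-averaged correlation functions to a matching of resolvent moments, it suffices to compare $\E\prod_{j=1}^{k}\im g_t(E_j+\ii\eta)$ along the flow for energies in the bulk and spectral scales $\eta$ down to $L^{c_2}\eta_*$ (which, after the average over the mesoscopic window $b=N^{c_1}/N$, controls the smeared $k$-point functions); equivalently, to estimate $\frac{\dd}{\dd t}\,\E\big(\im g_t(z)\big)^{k}$ and integrate it from $t=0$ to $t=L^{c_2}\eta_*$. Using It\^o's formula and the flow equation for $G_t$, this $t$-derivative expands into a sum of resolvent polynomials; organizing the expansion with the ($T$-variable) $T$-expansion machinery of \cite{delocal,BandI,BandII}, suitably extended, one finds that every surviving contribution is either already negligible at scale $\eta\gg\eta_*$ or carries a factor of the form $N\,|\langle u_\al,\Pi u_\al\rangle|^{2}$ with $u_\al$ a bulk eigenvector and $\Pi$ a real diagonal matrix satisfying $\tr\Pi=0$ and $\sum_x|\Pi_{xx}|=1$. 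Invoking the QUE of $H$ (which holds a fortiori under our hypothesis on $W$), each such factor is $\ll1$, and tracking powers of $W$ and $L$ shows that along $t\in[0,L^{c_2}\eta_*]$ the relevant moments of $\im g_t$ vary by at most a power of $W^{-(d+95)}L^{95+C(c_1+c_2)}$ for an absolute constant $C>0$. Since $W^{d+95}\ge L^{95+c_0}$, this bound is $\le L^{-c_0+C(c_1+c_2)}=\oo(1)$ once $c_1,c_2$ are chosen small enough relative to $c_0$.

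\emph{Conclusion and main obstacle.} Combining Step~3 (the relevant moments of $\im g_t$ are essentially unchanged from $t=0$ to $t=L^{c_2}\eta_*$) with Step~2 (GUE statistics for $H_t$ at $t=L^{c_2}\eta_*$) yields the asserted convergence of the window-averaged $k$-point correlation functions of $H$ to those of GUE. The main obstacle is Step~3: proving that the dangerous part of $\frac{\dd}{\dd t}\E(\im g_t)^{k}$ can genuinely be expressed through the QUE-controllable quantities $N|\langle u_\al,\Pi u_\al\rangle|^2$ with errors that remain affordable all the way down to $\eta_*$. This requires the full strength of the extended $T$-expansion and self-energy renormalization analysis, together with careful bookkeeping of which resolvent chains persist after the cancellations; the precise constant $95$ in the hypothesis on $W$ reflects the order of expansion one must reach before these error terms become summable at the scale $\eta_*$.
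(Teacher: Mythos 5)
Your proposal is correct and follows essentially the same route as the paper: the local law of Theorem \ref{thm: locallaw} as input, bulk universality of $H_t$ at $t\sim\eta_*$ via \cite{LY17}, and a comparison step in which the $t$-derivative of moments of $\im m_t$ is shown to be governed by QUE-controllable factors $N|\langle u_\al,\Pi u_\al\rangle|^2$ with $\tr{\Pi}=0$ (in the paper $\Pi=S^\circ_a$, arising from $s_{ab}-N^{-1}$), concluded by the standard correlation-function comparison theorem, exactly as in Proposition \ref{hmbdelta}. The only minor organizational difference is that the paper bounds the derivative terms by spectral decomposition together with the QUE moment bound of Proposition \ref{trABAB}, with the $T$-expansion machinery entering only through the proofs of QUE and the local law rather than being applied directly to $\partial_t\,\E\prod_i\im m_t(z_i)$.
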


As we will explain at the end of this subsection, this result can be readily extended to non-Gaussian random band matrices after some straightforward technical modifications. Hence, this theorem indeed proves the important bulk universality conjecture for random band matrices, i.e., the local bulk eigenvalue statistics for large random matrices with independent entries are universal and do not depend on the particular distribution of matrix elements and the variance profile. The bulk universality was first proved for Wigner matrices \cite{BouErdYauYin2015,ErdPecRamSchYau2010,ErdSchYau2011,EYYbernoulli,ErdYauYin2012Univ,ErdYauYin2012Rig,TaoVu2011} and later extended to 1D random band matrices \cite{BouErdYauYin2017,PartI} and many other mean-field random matrix and random graph ensembles. We refer the reader to \cite{erdHos2017dynamical} for a more detailed review of the universality conjecture and related results in the literature.

As mentioned before, the bulk universality is a consequence of the QUE of bulk eigenvectors and the local law of the Green's function. We state the QUE as our second main result. 
Roughly speaking, it shows that \emph{all} bulk eigenvectors in dimensions $d \geq 12$ and \emph{most} bulk eigenvectors in dimensions $7\le d \le 11$ are asymptotically uniformly distributed on microscopic scales. In particular, it implies that the bulk eigenvectors are not localized in any small subset of volume $\oo(N)$, so their localization length must be $L$. 

\begin{thm}[Quantum unique ergodicity]\label{thm:QUE}
	Let $\kappa,\delta>0$ be arbitrary small constants. Suppose $W\ge L^\delta$ and Assumptions \ref{assmH} and \ref{var profile} hold. 
	\begin{itemize}
		\item[(i)] For $d \geq {12}$ and any $I_N \subset \Z_L^d$ with $|I_N| \geq \left(L/W\right)^{{10d}/(d-2)}$, the following event occurs with probability tending to one:
		\begin{equation}
			\label{eq:que}
			\frac{1}{|I_N|}\sum_{x \in I_N} (N|u_\alpha(x)|^2 -1)\to 0 \quad \text{uniformly for all $\alpha$ such that $ |\lambda_\alpha| \leq 2 - \kappa$.}
		\end{equation}
		
		\item[(ii)] For $d \geq {7}$, the following event occurs with probability tending to one for any small constant $\epsilon>0$ and $\ell\ge W^{-3/d} L^{\frac{5+d}{2d-2}}$:
		\begin{equation}
			\label{eq:weakque}
			\frac{1}{N}\,\left|\left\{ \alpha: |\lambda_\alpha|<2 - \kappa,\Big|\frac{1}{|I|}\sum_{x \in I} (N|u_\alpha(x)|^2 -1)\Big| \geq \epsilon \text{ for some $I \in \mathcal I$}\right\}\right| \to 0\,,
		\end{equation}
		where $\mathcal I: = \left\{I_{\mathbf k,\ell}: \mathbf k\in \Z^d, -L/\ell \le k_i \le L/\ell \right\}$ is a collection of boxes that covers $\Z^d_L$, with $I_{\mathbf k,\ell}:=\{[y]_L: y_i \in [(k_i-1)\ell/2  ,(k_i+1)\ell/2) \cap \Z^d\}$ (recall the notation \eqref{representativeL}).
	\end{itemize}
\end{thm}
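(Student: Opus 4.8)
\emph{Step 1: reduction to a resolvent observable.} For $I_N\subset\Z_L^d$ let $\Pi$ be the real diagonal matrix with entries $\Pi_{xx}=\mathbf 1_{x\in I_N}-|I_N|/N$, so that $\tr\Pi=0$, $\sum_x|\Pi_{xx}|\le 2|I_N|$, $\sum_x\Pi_{xx}^2\le 2|I_N|$, and
\[
\frac{1}{|I_N|}\sum_{x\in I_N}\bigl(N|u_\alpha(x)|^2-1\bigr)=\frac{N}{|I_N|}\,\langle u_\alpha,\Pi u_\alpha\rangle .
\]
Substituting the spectral decomposition $\im G(E+\ii\eta)=\sum_\beta\frac{\eta}{(\lambda_\beta-E)^2+\eta^2}u_\beta u_\beta^*$ into $\cal P(z):=\tr{(\im G(z))\,\Pi\,(\im G(z))\,\Pi}\ge 0$ and keeping only the term $\beta=\gamma=\alpha$ at $E=\lambda_\alpha$ gives the pointwise bound $|\langle u_\alpha,\Pi u_\alpha\rangle|^2\le\eta^2\,\cal P(\lambda_\alpha+\ii\eta)$. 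Hence, with $\eta=\eta_1:=W^{\epsilon}\eta_*$ just above the threshold of the local law of Theorem~\ref{thm: locallaw}, part (i) of Theorem~\ref{thm:QUE} follows once, with probability $1-o(1)$, $\sup_{|E|\le 2-\kappa}\cal P(E+\ii\eta_1)\ll(|I_N|/(N\eta_1))^{2}$. Since $E\mapsto\cal P(E+\ii\eta_1)$ changes by at most a bounded factor over intervals of length $c\,\eta_1$, this reduces to a bound on $\E[\cal P(E+\ii\eta_1)^k]$ for fixed bulk $E$ and a large constant $k$, followed by Markov's inequality and a union bound over an $O(\eta_1^{-1})$-net of energies. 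Part (ii) is the \emph{averaged} version: from $\int_\R\frac{\eta^2\,\dd E}{((\lambda-E)^2+\eta^2)^2}=\frac{\pi}{2\eta}$ one gets, for each box $I$, $\sum_{\alpha:\,|\lambda_\alpha|\le 2-\kappa}|\langle u_\alpha,\Pi^{(I)} u_\alpha\rangle|^2\lesssim\eta\int_{\text{bulk}}\cal P(E+\ii\eta;\Pi^{(I)})\,\dd E$, so Chebyshev's inequality shows the number of exceptional $\alpha$ is governed by the \emph{first} moment $\E\cal P$ only; the extra factor $\eta$ is precisely the gain that permits the smaller boxes $\ell\ge W^{-3/d}L^{(5+d)/(2d-2)}$ and the lower dimensions $7\le d\le 11$, and the union over the $\lesssim(L/\ell)^d$ boxes $I\in\mathcal I$ is handled by linearity of expectation.

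\emph{Step 2: expansion of $\cal P^k$.} The core is a sharp estimate of $\E[\cal P^k]$. Writing $\cal P^k=\sum_{\mathbf a,\mathbf b}\prod_{j=1}^k\Pi_{a_ja_j}\Pi_{b_jb_j}(\im G)_{a_jb_j}(\im G)_{b_ja_j}$, I would develop---extending the $T$-expansion of \cite{BandI,BandII}, designed there for the single variable $T_{xy}=|m|^2\sum_\alpha s_{x\alpha}|G_{\alpha y}|^2$---a recursive scheme expanding $\E[\cal P^k]$, and more generally the expectation of any polynomial in $\cal P$, in $T$-variables, and in entries of $G$ and $\overline G$, into a sum of \emph{deterministic graphical terms} plus an error smaller than $W^{-D}$ for every fixed $D$. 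In each such term the internal lattice vertices are summed freely; the edges carry either entries of the renormalized diffusion propagator $\Theta$ (with $\sum_y|\Theta_{xy}|\lesssim(\im z)^{-1}$ and $|\Theta_{xy}|$ obeying the usual power-law decay out to the scale $\ell\sim W/\sqrt{\im z}$ and faster-than-polynomial decay beyond), or ``light'' $S$- and $G$-edges; and the $2k$ rows/columns attached to the $\Pi$'s become distinguished vertices weighted by $\Pi_{xx}$. As in \cite{BandI,BandII} a self-energy renormalization must be carried out so that it is $\Theta$, not the more singular $(1-|m|^2S)^{-1}$, that appears in the leading terms; the new ingredient is to perform this renormalization in the presence of the $2k$ pinned $\Pi$-vertices, uniformly in $k$.

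\emph{Step 3: bounding the deterministic terms and conclusion.} Each graphical term is estimated using the decay/summation bounds on $\Theta$; the constraints $\sum_x|\Pi_{xx}|\le 2|I_N|$, $\sum_x\Pi_{xx}^2\lesssim|I_N|$, and, crucially, $\tr\Pi=0$; and the fact that a $\Theta$-edge summed against a traceless $\Pi$-weight produces cancellation. The leading graphs yield $\cal P(\cdot;\Pi/(2|I_N|))\prec|I_N|^{-1}+W^{-2}|I_N|^{-(d-2)/d}$, the two terms reflecting the diagonal $a=b$ contribution and the clustering of the diffusion profile over $I_N$ (worst case $I_N$ a ball), which alone would only require $|I_N|\gg(L/W)^{10}$. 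The delicate point, exactly as stressed in the introduction, is that a \emph{naive} power count of the subleading graphs and of the fluctuations produces contributions of size $\sim(N\eta)^{-1}$---and worse, with extra powers of $L/W$ from internal summations---which would be fatal, so one must exhibit the precise cancellations, originating in the self-energy renormalization and in $\tr\Pi=0$, that remove these, and then balance what remains against the truncation error. Optimizing this balance is what produces the explicit thresholds $|I_N|\ge(L/W)^{10d/(d-2)}$ and $d\ge 12$ in part (i), and $\ell\ge W^{-3/d}L^{(5+d)/(2d-2)}$ and $d\ge 7$ in part (ii). Finally, Markov's inequality applied to $\E[\cal P^k]$ with $k$ a large constant (for (i), together with the union bound over the net), respectively to the first moment of the count of exceptional indices (for (ii)), upgrades these moment bounds to the high-probability statements.

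\emph{Main obstacle.} By a wide margin the hardest part is the construction and bookkeeping of the high-moment expansion of $\cal P^k$ together with the cancellation analysis. Controlling a single $T$-variable is already intricate in \cite{BandI,BandII}; here $\cal P^k=(\tr{(\im G)\Pi(\im G)\Pi})^k$ couples $2k$ resolvent factors through the fixed matrices $\Pi$, the graphical complexity grows with $k$, and the self-energy renormalization and the $\Pi$-weights must be tracked simultaneously so that every loop touching the $\Pi$'s is charged the small factor $|I_N|^{-1}$ (resp.\ $\ell^{-d}$) in place of $\eta^{-1}$. Making this uniform in $k$, and isolating the exact cancellations that defeat the $(N\eta)^{-1}$-sized contributions, is the technical heart of the argument and is where the bulk of the work lies.
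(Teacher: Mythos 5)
Your Step 1 and your concluding Markov/union-bound step do match the paper's route (this is Lemma \ref{uab-tr} together with the computation in the proof of Theorem \ref{thm:QUE}), and your energy-integrated, first-moment treatment of part (ii) is a harmless variant of the paper's use of the averaged bound \eqref{eq:avque}. The genuine gap is in Steps 2--3: the entire substance of the theorem is the moment estimate for $\cal P=\tr{\mathcal A\Pi\mathcal A\Pi}$, namely Proposition \ref{trABAB}, $\E|\cal P|^{2p}\prec\big(\sum_y|\Pi_y|\big)^{2p}\big(\max_x\sum_y B_{xy}|\Pi_y|\big)^{2p}$, and you do not prove it --- you say you ``would develop'' an extension of the $T$-expansion to $\E\,\cal P^k$ and yourself identify its construction and cancellation bookkeeping as ``where the bulk of the work lies.'' In the paper this is exactly the content of the complete-expansion machinery (Lemma \ref{def nonuni-T}, Lemma \ref{mGep} with its structural properties (b),(c), proved via Lemma \ref{mG-fe}); without that input there is no quantitative bound on $\cal P$ at $\eta\approx\eta_*$, and a plan to build the machinery is not a proof.

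Moreover, the cancellation mechanism you describe is not the one that actually works, which makes it doubtful the sketch would assemble as stated. It is not true that ``a $\Theta$-edge summed against a traceless $\Pi$-weight produces cancellation'': $\sum_y\Theta^{\circ}_{xy}\Pi_{yy}$ does not vanish, it is merely bounded by $\max_x\sum_y B_{xy}|\Pi_y|$. The cancellation used in the paper is that, after the complete expansion, any deterministic graph in which some external $\Pi$-vertex forms its own connected component (equivalently, is attached to the rest only through the index-free zero-mode/free-edge factors $(N\eta)^{-1}$) is independent of that vertex by translation invariance, so summing it against $\Pi$ with $\tr{\Pi}=0$ gives exactly zero; the remaining $(N\eta)^{-1}$ contributions are \emph{not} cancelled but dominated, using the counting property that at least half of the non-isolated external atoms carry a non-silent edge (Lemma \ref{mGep}(b)). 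Relatedly, the thresholds $|I_N|\ge(L/W)^{10d/(d-2)}$, $d\ge12$, and $\ell\ge W^{-3/d}L^{(5+d)/(2d-2)}$, $d\ge7$, do not arise from ``balancing subleading graphs against the truncation error'': they come directly from evaluating the leading bounds $(N\eta)^2W^{-2}|I_N|^{2/d-1}$ (sup version) and $N\eta\,W^{-2}|I_N|^{2/d-1}$ times the number of boxes (averaged version) at the local-law scale $\eta_*=W^{-5+\delta_0}L^{5-d}$.
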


Notice that the left-hand side of \eqref{eq:que} is a special case of $|\langle u_\al, \Pi u_\al\rangle|^2$ with the diagonal matrix $\Pi$ defined by $\Pi_{xx}=({N}/{|I_N|})\11_{x \in I_N} -1$. In part (ii), $\cal I$ can be any cover of $\Z_L^d$ consisting of subsets of size $\ell^d$ and with cardinality $|\cal I|=\OO(N/\ell^d)$.
The above probabilistic QUE was first proved for Wigner matrices \cite{BouYau2017} and later extended to 1D random band matrices \cite{BouErdYauYin2017,PartI} and many other types of mean-field random matrices and random graphs \cite{ALM_Levy,BHY2019,Benigni2020,Benigni2021,LP2021,principal,BouHuaYau2017,Marcinek_thesis}, to name a few. Recently, a stronger notion of QUE called the eigenstate thermalization hypothesis was also established for Wigner matrices \cite{CES_QUE1,CES_QUE3,CES_QUE2}. 

Both the proofs of bulk universality and QUE are crucially based on the following (essentially sharp) local law up to the scale $\im z\gg W^{-5}L^{5-d}$. 

\begin{thm}[Local law]\label{thm: locallaw}
	Let $\kappa,\delta,\delta_0\in (0,1)$ be arbitrary small constants. Under Assumptions \ref{assmH} and \ref{var profile}, fix any $d \geq 7$ and suppose $W\ge L^\delta$. For any constants $\tau,D>0$, we have the following estimate on $G(z)$ for $z=E+\ii \eta$ and all $x,y \in \Z_{L}^d$: 
	\begin{equation}
		\label{locallaw}
		\P\bigg(\sup_{|E|\le 2- \kappa}\sup_{\eta_*\le \eta\le 1} |G_{xy} (z) -m(z)\delta_{xy}|^2 \le  W^\tau \left(B_{xy}+ \frac{1}{N\eta}\right)\bigg) \ge 1- L^{-D} 
	\end{equation}
	for large enough $L$, where $\eta_*:=\etas$ and we denote 
	\be\label{defnBxy}B_{xy}:=W^{-2}\left(\|x-y\|_{ L}+W\right)^{-d+2}.\ee 
\end{thm}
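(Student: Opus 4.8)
The plan is to establish the local law through the $T$-expansion machinery developed in \cite{BandI,BandII}, pushed down to the optimal-in-$W$ scale $\eta_*=W^{-5}L^{5-d}$. First I would record the standard \emph{a priori} bounds: the semicircle self-consistent equation gives $G_{xy}(z)\approx m(z)\delta_{xy}$ on the global scale $\eta\sim 1$, and a continuity/bootstrap argument in $\eta$ (decreasing $\eta$ along a fine grid and using Lipschitz continuity of $G$ in $z$) reduces matters to propagating the estimate \eqref{locallaw} from scale $\eta$ to scale $\eta/2$. The key quantity to control is the $T$-variable $T_{xy}=|m|^2\sum_\alpha s_{x\alpha}|G_{\alpha y}|^2$, which measures the $\ell^2$-mass of a row of $G$ smeared by $S$; the local law for $G_{xy}$ follows once one shows $T_{xy}\lesssim W^\tau(\Theta_{xy}+ (N\eta)^{-1})$ for the appropriate deterministic profile $\Theta_{xy}$ comparable to $B_{xy}$, since off-diagonal entries and the diagonal fluctuation are both controlled by $T$ via Ward-type identities $\sum_y |G_{xy}|^2 = (\im G_{xx})/\eta$ and the fluctuation-averaging mechanism.

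The core step is the \emph{$T$-expansion}: one writes a self-improving equation for $T$ of the schematic form $T = \Theta + \Theta\cdot \mathcal{E}(T) + (\text{higher-order error})$, where $\Theta=(1-|m|^2 S)^{-1}|m|^2 S$ is the diffusive propagator (whose kernel decays like $B_{xy}$ by the finite-range random-walk estimate on $\Z_L^d$, using $d\ge 7$ to make the Green's function of the walk summable), and $\mathcal{E}$ collects self-energy corrections and genuinely higher-order graphs. Expanding to arbitrarily high order in the small parameter—morally $W^{-d/2}$ times powers of $\ell/W$ and $(N\eta)^{-1/2}$—and using the \emph{self-energy renormalization} to absorb the marginal ($\sim$ order-one after summation) self-energy insertions back into a renormalized propagator, one obtains a closed inequality for $\max_{xy} T_{xy}/\Theta_{xy}$ that can be iterated. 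The diffusive decay of $\Theta$ is what produces the factor $W^{-2}(\|x-y\|_L+W)^{-d+2}$ in $B_{xy}$, and the $(N\eta)^{-1}$ term is the usual fluctuation floor; the dimension restriction $d\ge 7$ and the scale restriction $\eta\ge\eta_*=W^{-5}L^{5-d}$ enter precisely to guarantee that all error graphs summed over the lattice remain $\ll 1$ (the exponent $5$ being the number of ``loops'' one can afford before the expansion stops improving).

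I expect the main obstacle to be the \textbf{bookkeeping and termination of the $T$-expansion at the scale $\eta_*$}: one must show that the hierarchy of graphical error terms, after resumming self-energies, is genuinely small down to $\eta\gg W^{-5}L^{5-d}$ rather than only down to some larger scale. This requires (a) a careful \emph{power-counting lemma} assigning to each expansion graph a gain measured in $W$, $L$, $\eta$, and the number of ``$\Theta$-legs,'' (b) the self-energy analysis showing the renormalized kernel still satisfies the diffusive bound $B_{xy}$ with the same constants, and (c) a stability/fixed-point argument turning the approximate identity for $T$ into the stated high-probability bound, uniformly in $E$ in the bulk and $\eta$ in the range, via a union bound over a polynomial grid and the large-deviation estimates for the quadratic-in-$H$ fluctuations. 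Steps (a)–(c) are the technical heart; everything else—the reduction to $T$, the continuity argument, and deducing the eigenvector delocalization corollary—is comparatively routine given the machinery of \cite{BandI,BandII}.
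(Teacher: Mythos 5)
Your overall architecture---reduce the local law to the $T$-variable, expand $T$ to arbitrarily high order with self-energy renormalization, and bootstrap in $\eta$ from $\eta=1$ downward with a grid argument, finishing with $\mathbf 1_{x\ne y}|G_{xy}|^2\prec T_{xy}$ and fluctuation averaging---does match the paper's skeleton. But there is a genuine gap at the heart of your plan: you write the self-improving equation as $T=\Theta+\Theta\cdot\mathcal E(T)+\cdots$ with the \emph{unprojected} propagator $\Theta=(1-|m|^2S)^{-1}|m|^2S$, which is exactly the expansion of \cite{BandI,BandII}. That expansion cannot reach $\eta_*=W^{-5}L^{5-d}$: since $\Theta_{xy}\le W^\tau B_{xy}+\frac{\im m}{N\eta}$ carries the zero mode $\frac{\im m}{N\eta}$ (see \eqref{thetaxy2}), already the second-order error obeys only $\sum_x\Theta_{\fa x}\mathcal A^{(>2)}_{x,\fa\fa}\prec W^{-d}+\frac{1}{N\eta}\frac{L^2}{W^2}+\frac{1}{N\eta^2}$, and the last term diverges once $\eta\ll L^{-d/2}$. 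The paper's key new idea, absent from your proposal, is to remove the zero mode \emph{before} expanding: one works with $\Theta^\circ=P^\perp\Theta P^\perp$ as in \eqref{Theta-S-circ}, which satisfies $|\Theta^\circ_{xy}|\prec B_{xy}$ with no $(N\eta)^{-1}$ tail, and extracts the zero mode explicitly as the term $\frac{|m|^2}{2\ii N\eta}(G_{\fb_2\fb_1}-\overline G_{\fb_1\fb_2})$ in the $T$-expansion (Lemma \ref{2ndExp}, Definition \ref{defn genuni}). Without this separation the power counting you describe stops improving near $\eta\sim W^2/L^2$, i.e.\ the previously known range, not $\eta\ge\eta_*$; this cancellation of the zero mode (later exploited through $\tr\Pi=0$) is also what drives the QUE and universality parts of the paper.

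A second, quieter gap is the continuity step. A plain ``Lipschitz in $z$ on a fine grid'' transfer is not what produces the thresholds: the paper's Proposition \ref{lem: ini bound} introduces the weak norm $\|\cdot\|_w$ of Definition \ref{def_swnorms}, built from box averages, and controls it through high moments of $\mathrm{Tr}(\mathcal A_{\mathcal I}^p)$ for $\im G$ restricted to boxes (Lemma \ref{gvalue_continuity}), which in turn requires the \emph{complete} $T$-expansion of Lemma \ref{mGep}; the lost factor $\wt\eta/\eta$ is then recovered by the entrywise $T$-bound of Proposition \ref{lemma ptree}. It is this mechanism---through $\conc(K,W,\eta)$ in \eqref{eq:def-conc} and the summation estimate of Lemma \ref{w_s}, whose exponents contain $W^{-7}K^{7-d}$ and $K^5/(W^5N\eta)$---that actually forces $d\ge 7$ and $\eta\gtrsim W^{-5}L^{5-d}$; your heuristics that $d\ge7$ makes the walk's Green's function summable (true already for $d\ge3$) and that ``$5$ is the number of loops one can afford'' do not capture it. Your items (a)--(c) (graph power counting, self-energy bounds including the sum-zero property, and the fixed-point/union-bound closure) are indeed the remaining ingredients and follow the paper's route once these two missing ideas are supplied.
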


An immediate corollary of this local law is the following delocalization estimate on bulk eigenvectors.

\begin{corollary}[Delocalization]\label{thm:supu}
	Under the assumptions of Theorem \ref{thm: locallaw}, for any constants $\tau, D>0$, we have that 
	\begin{equation}
		\P\Big(\sup_{\alpha: |\lambda_\alpha | \leq 2 - \kappa} \|u_\alpha\|_\infty^2 \leq W^{-5+\tau}L^{5-d}\Big) \ge 1- L^{-D} 
	\end{equation}
	for large enough $L$.
\end{corollary}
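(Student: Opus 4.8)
The plan is to derive Corollary \ref{thm:supu} from the local law of Theorem \ref{thm: locallaw} by the standard resolvent-to-eigenvector argument, using the spectral decomposition of $\im G$. First I would recall that for an eigenvector $u_\alpha$ with eigenvalue $\lambda_\alpha$, and for $z = \lambda_\alpha + \ii\eta$, one has the pointwise lower bound
\be\nonumber
\im G_{xx}(z) = \sum_\beta \frac{\eta\, |u_\beta(x)|^2}{(\lambda_\beta - \lambda_\alpha)^2 + \eta^2} \ge \frac{|u_\alpha(x)|^2}{\eta}\,,
\ee
so that $|u_\alpha(x)|^2 \le \eta\, \im G_{xx}(\lambda_\alpha + \ii\eta)$ for every $x$ and every $\eta>0$. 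Choosing $\eta = \eta_* = W^{-5+\delta_0} L^{5-d}$ (which is admissible in the range $\eta_* \le \eta \le 1$ of the local law) and applying \eqref{locallaw} with the deterministic approximation $m(z)$, we get on the high-probability event that $|G_{xx}(z) - m(z)| \le W^{\tau'}(B_{xx} + (N\eta)^{-1})^{1/2}$. Since $B_{xx} = W^{-2}\cdot W^{-d+2} = W^{-d}$ and $\im m$ is bounded (indeed $\im m(E+\ii\eta) \asymp \sqrt{4 - E^2}$ is $\OO(1)$ for $|E| \le 2 - \kappa$), we obtain $\im G_{xx}(z) = \OO(1)$ with probability at least $1 - L^{-D}$, uniformly in $x$ and uniformly in the relevant spectral parameter.

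Next I would handle the uniformity over all bulk eigenvectors $\alpha$ and all sites $x$. The local law \eqref{locallaw} already includes a supremum over $|E| \le 2-\kappa$ and over $\eta_* \le \eta \le 1$ inside the probability, so in particular it controls $\im G_{xx}(\lambda_\alpha + \ii\eta_*)$ simultaneously for all $\alpha$ with $|\lambda_\alpha| \le 2 - \kappa$ (note $\lambda_\alpha$ is itself a valid choice of $E$ on this event, possibly after observing that if $|\lambda_\alpha|$ is slightly above $2-\kappa$ we may work with a marginally larger $\kappa$). A union bound over the $N = L^d$ choices of $x$ costs only a polynomial factor $L^d$, which is absorbed by replacing $D$ with $D + d$ and is negligible against $W^\tau$ after adjusting $\tau$. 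Combining, on an event of probability at least $1 - L^{-D}$ we have, for every bulk eigenvector and every $x$,
\be\nonumber
|u_\alpha(x)|^2 \le \eta_* \cdot \im G_{xx}(\lambda_\alpha + \ii \eta_*) \le \eta_* \cdot W^{\tau/2} \le W^{-5+\tau} L^{5-d}\,,
\ee
where I have renamed constants so that $\delta_0$ in $\eta_*$ together with the $W^{\tau/2}$ slack is absorbed into the final $W^\tau$; more precisely one fixes $\delta_0 < \tau/2$ at the outset. Taking the supremum over $x$ gives the claimed bound on $\|u_\alpha\|_\infty^2$.

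This argument is entirely routine once Theorem \ref{thm: locallaw} is in hand — there is no real obstacle, only bookkeeping. The only points requiring minor care are: (a) making sure $\lambda_\alpha$ lies in the interval over which the local law's internal supremum ranges, which is why one states the corollary with the same $\kappa$ or shrinks it infinitesimally; (b) checking that the diagonal bound $B_{xx} = W^{-d}$ is indeed dominated by (or at worst comparable to) $(N\eta_*)^{-1} = W^{5-\delta_0} L^{-5} \cdot L^{-d}\cdot$, i.e. that both error terms evaluated at $\eta = \eta_*$ multiplied by $\eta_*$ are much smaller than $W^{-5+\tau}L^{5-d}$ — and indeed $\eta_* B_{xx} = W^{-5+\delta_0}L^{5-d}\cdot W^{-d} \ll W^{-5+\tau}L^{5-d}$ and $\eta_*/(N\eta_*) = 1/N = L^{-d} \ll W^{-5+\tau}L^{5-d}$ for $d \ge 7$ and $W \ge L^\delta$; and (c) absorbing the $W^{\tau'}$ and $\delta_0$ into a single $W^\tau$ by an appropriate choice of the free small constants. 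The delocalization statement for $\ell^\infty$-norms then follows immediately, and in fact one sees the bound is within a factor $(L/W)^{5}$ of the optimal $L^{-d}$, consistent with the discussion after Theorem \ref{thm: locallaw}.
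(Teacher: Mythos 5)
Your argument is exactly the paper's: take $E=\lambda_\alpha$, $\eta=\eta_*$ with $\delta_0<\tau$ in the spectral decomposition to get $|u_\alpha(x)|^2\le \eta\,\im G_{xx}(\lambda_\alpha+\ii\eta)$, then bound $\im G_{xx}$ by $\OO(1)$ via the local law \eqref{locallaw} and $\im m\le 1$. Your extra bookkeeping (uniformity in $\alpha$ and $x$, comparison of $B_{xx}$ with $(N\eta_*)^{-1}$, absorbing $\delta_0$ into $\tau$) is correct but just spells out what the paper leaves implicit.
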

\begin{proof}
	Taking $E=\lambda_\al$ and $\eta=\etas$ with $\delta_0<\tau$ in the spectral decomposition 
	$$ \im G_{xx}(E+\ii \eta) = \sum_\alpha \frac{\eta}{|\lambda_\alpha - E|^2 + \eta^2} |u_{\alpha}(x)|^2$$
	gives that $|u_{\alpha}(x)|^2 \le \eta \im G_{xx}(E+\ii \eta).$ Then, using the local law \eqref{locallaw} and $\im m(z) \le 1$, we conclude the proof.
\end{proof}

Theorem \ref{thm: locallaw} improves the local law, Theorem 1.4, in \cite{BandI} from $d\ge 8$ to a lower dimension $d\ge 7$ and from $\eta\gg W^2/L^2$ to a  smaller scale $\eta\gg W^{-5}L^{5-d}$ which has the correct leading dependence in $L^{-d}$. As a consequence, it yields a better delocalization estimate than \cite{BandI}, which proved that $\|u_\al\|_\infty^2\le W^{2+\e}/L^2$ when $d\ge 8$. We believe that the local law \eqref{locallaw} should hold for all $\eta\gg L^{-d}$, which will lead to the following complete delocalization of bulk eigenvectors:
$$\mathbb P \Big(\sup_{\alpha: |\lambda_\alpha | \leq 2 - \kappa} \|u_\al\|_\infty^2 \le L^{-d +\tau}\Big)\ge  1-L^{-D} .
$$
Such complete delocalization was first proved for Wigner matrices in \cite{ESY_local,ESY1,ErdYauYin2012Univ,ErdYauYin2012Rig} and later extended to many other classes of mean-field random matrices and random graphs (see e.g., \cite{ALY_Levy,ADK2021,ADK2022,BHY2019,BKH2017,LP2020,isotropic,PartI,EKYY_ER1,HKM2019,HY_Regulard,LT2020,RV2016}).

As has been explained in \cite{BandI,BandII}, our proof can be readily adapted to non-Gaussian random band matrices after some technical modifications. More precisely, we have used Gaussian integration by parts in expanding resolvent entries (see Section \ref{sec_operations}), and this can be replaced by certain cumulant expansion formulas (see e.g., \cite[Proposition 3.1]{Cumulant1} and \cite[Section II]{Cumulant2}) for general distributions. For simplicity of presentation, we choose to stick to the complex Gaussian cases to avoid technical complexities associated with non-Gaussian distributions. The proof for the real Gaussian case is also similar to the complex case except that the number of terms will double whenever applying the Gaussian integration by parts formula to resolvents (which is due to the fact that $\E h_{xy}^2 = 0$ in the complex case but not in the real case).

\subsection{Some key ideas}\label{sec_idea}

As discussed before, one main challenge in proving the bulk universality for $H$ is how to compare $g(z)$ with $g_t(z)$ for some $t\gg W^{-5}L^{5-d}$. When $H$ is a mean-field Wigner matrix, we can find another Wigner matrix $H'$ so that the first four moments of $H$ and $H'+B_t$ are almost identical entrywise. Then, in the Green's function comparison, the matching moments imply that if we replace the entries of $H$ with those of $H'+B_t$ one by one, the Green's function will not change by much.
The moment matching condition, however, is no longer true for the setting of band matrices. Our proof is instead based on the key observation that the stability of the Green's function still holds, provided with QUE. As a simple example, we compare $\E \im g_t$ with $\E \im g$. With {It{\^o}'s} formula, we find that $\partial_t \E(\im g_t -\im g )$ is dominated by terms containing QUE factors $|\langle u_\al, \Pi u_\al\rangle|^2$ for a real diagonal $\Pi$ with zero trace. For example, one of these terms is
\begin{equation}
	-\frac{1}{2N}\E \sum_{a,b} (G_t)_{aa}(G^2_t)_{bb}\left(s_{ab} - \frac{1}{N}\right)  .
\end{equation}
On the right hand, each summand does not vanish unless $s_{ab} = N^{-1}$, but they are expected to be small after averaging due to  QUE:
\begin{equation}
	\sum_a (G_t)_{aa} \left(s_{ab} - \frac{1}{N}\right)  =   \sum_{\alpha} \frac{1}{\lambda_\alpha(t) - z} \sum_a |u_\alpha(a)|^2 \left(s_{ab} - \frac{1}{N}\right) \, ,
\end{equation}
where, fixing $b$, the diagonal matrix with $(a,a)$-entry given by $s_{ab} - N^{-1}$ has zero trace.


With the spectral decomposition of $\im G$, it is easy to see that proving the QUE amounts to bounding high moments of $\cal P=\tr{(\im G) \Pi (\im G) \Pi}$. A key ingredient for the proof of QUE is a \emph{complete expansion} of the $k$-moment of $\cal P$ for any fixed $k\in \N$. Roughly speaking, with a carefully designed expansion strategy,  we can express $\E \cal P^k$ 
as a sum of deterministic expressions, which satisfy some ``nice graphical properties". We can then control these deterministic expressions by exploring some cancellations in them.

Similar to \cite{BandI,BandII}, our general complete expansions are based on a simpler and more basic $T$-expansion of the $T$-variable. Roughly speaking, we will show that
\be\label{Teq_intro3}T_{xy} = |m|^2 \Theta_{xy}^{\circ} + |m|^2 \frac{\im  m}{ N\eta} + (\text{fluctuations and higher order errors})\, ,
\ee
where, roughly speaking, $\Theta_{xy}^{\circ}$ (cf. \eqref{Theta-S-circ})  is the Green's function of a random walk on $\Z^d_L$ with transition probability matrix $S$ and with zero mode removed, and the second term on the right-hand side gives the zero mode. The propagator \smash{$\Theta^\circ_{xy}$} describes the diffusive behavior of the random walk and is of order $B_{xy}$ (cf. \eqref{thetaxy}). Hence, when $\im z \gg W^2/L^{2}$, the zero mode is negligible and can be absorbed into the diffusive term in \cite{BandI,BandII}. On the other hand, when $\im z$ gets below $W^2/L^{2}$, the zero mode will play a crucial role and make some expressions in \cite{BandI,BandII} diverge. An important step in the $T$-expansion of this paper is to separate out the zero mode so that all expressions are under control as long as $\eta\gg W^{-5}L^{5-d}$. This key idea will be explained in Section \ref{sec_second_T}. 
If we pick out the zero mode $\frac{\im m}{N\eta}$ from $T_{xy}$, then in the expansions of $\E \cal P^k$, the ``leading terms" will contain the vanishing factor $\sum_{x}\Pi_{xx} \frac{\im m}{N\eta}=0.$
This type of cancellation is crucial in the proof of QUE and is proved rigorously by a careful analysis of the deterministic expressions from complete expansions.


\vspace{5pt}

The rest of this paper is organized as follows. 
In Section \ref{sec_T_exp}, we introduce some graphical notations representing expressions of resolvent entries and the concept of $\selfs$. We then use them to define the $T$-expansion. In Section \ref{sec_complete_T}, we define the concept of complete $T$-expansions and complete expansions of general graphs. The main results, Theorems \ref{main thm-uni}, \ref{thm:QUE} and \ref{thm: locallaw}, will be proved in Section \ref{sec_pf_main} based on the tools developed in Sections \ref{sec_T_exp} and \ref{sec_complete_T}. The proof of the local law requires a ``sum zero property" for $\selfs$, whose proof is presented in Section \ref{sec_pf_sumzero}. Two key lemmas in Section \ref{sec_complete_T} regarding bounding complete expansions of general graphs will be proved in Section \ref{sec_pf_complete}. Finally, constructions of the $T$-expansion, the $\nonuni$, and complete expansions of general graphs are included in Appendix \ref{sec notation}--\ref{sec_pf_mG}.

\section{$T$-expansion}\label{sec_T_exp}

Similar to \cite{BandI,BandII}, our proofs in this paper are crucially based on an important tool---the $T$-expansion up to arbitrarily high order. However, in order to decrease $\eta$ from $\eta\gg W^2/L^2$ in \cite{BandI,BandII} to $\eta\ge \etas$, we need to derive a different $T$-expansion with zero mode removed. We now use the second order $T$-expansion to explain this key idea.

\subsection{Second order $T$-expansion}\label{sec_second_T}

Our $T$-expansion will be formulated in terms of the following $T$-variables with three subscripts: 
\be\label{general_T}
T_{x,yy'}:=|m|^2\sum_\al s_{x\al}G_{\al y}\overline G_{\al y'},\quad \text{and}\quad T_{yy',x}:=|m|^2\sum_\al G_{ y\al }\overline G_{y' \al}s_{\al x}. 
\ee
When $y=y'$, we have the conventional $T$-variable $T_{xy}\equiv T_{x,yy}$. 
In this subsection, we derive the second order $T$-expansion of $T_{x,yy'}$ using the expansion
\be\label{GmH}
G = - \frac 1 { z+m} + \frac 1 { z+m} (H+m) G \quad \Rightarrow \quad G-   m = - m  (H+m) G .
\ee
The formula \eqref{GmH} is derived easily from the definition of $G$ and the self-consistent equation $(z+ m) m = -1$.   
Define $\E_x$ as the partial expectation with respect to the $x$-th row and column of $H$, i.e., $\E_x(\cdot) := \E(\cdot|H^{(x)}),$	where $H^{(x)}$ denotes the $(N-1)\times(N-1)$ minor of $H$ obtained by removing the $x$-th row and column. 
For simplicity, in this paper, we will use the notations 
$$P_x :=\E_x , \quad Q_x := 1-\E_x.$$
Taking the partial expectation of $HG$ and applying Gaussian integration by parts to the $H$ entries, we can obtain the following lemma. The corresponding $T$-expansion of $T_{yy',x}$ can be obtained by considering the transposition of $T_{x,yy'}$. 

\begin{lemma}[Second order $T$-expansion]
	\label{2ndExp}
	Under Assumption \ref{assmH}, we have 
	\begin{equation}
		\label{eq:2nd}
		\begin{split}
			T_{\fa,\fb_1 \fb_2}&= m  \Theta^{\circ}_{\fa \fb_1}\overline G_{\fb_1\fb_2} + \frac{|m|^2}{2\ii N\eta} (G_{\fb_2 \fb_1} - \overline G_{\fb_1 \fb_2}) + \sum_x \Theta^{\circ}_{\fa x}\left[ \mathcal A^{(>2)}_{x,\fb_1\fb_2}  + \mathcal Q^{(2)}_{x,\fb_1\fb_2}  \right]\,,
		\end{split}
	\end{equation}
	for $z=E+\ii \eta$ and $\fa,\fb_1,\fb_2\in \Z_L^d$, where 
	\be\label{Theta-S-circ}\Theta^\circ := \frac{|m|^2S^\circ}{1 - |m|^2 S^\circ},\quad S^{\circ}:= P^\perp SP^\perp,
	\ee
	with $P^\perp:= I - \mathbf e\mathbf e^\top$ and $\mathbf e:= {N}^{-1/2}(1,\ldots, 1)^\top$, and 
	\begin{align*}
		\mathcal A^{(>2)}_{x,\fb_1\fb_2}&:= m \sum_{y} s_{xy}   (G_{yy}-m) G_{x \fb_1} \overline G_{x \fb_2} \nonumber+m \sum_{y} s_{xy} ( \overline G_{xx} -\overline m)G_{y\fb_1}\overline G_{y\fb_2}\,,\\
		\mathcal Q^{(2)}_{x,\fb_1\fb_2}&:= Q_x \left(G_{x\fb_1} \overline G_{x\fb_2} \right)- m  \delta_{x\fb_1}  Q_{\fb_1}\left(  \overline G_{\fb_1\fb_2} \right) - m\sum_{y}  s_{xy}  Q_x \left[   (G_{yy}-m) G_{x\fb_1} \overline G_{x\fb_2} \right]  \\
		& - m\sum_{y}  s_{xy}  Q_x\left(    \overline G_{xx} G_{y\fb_1}\overline G_{y\fb_2}  \right)\,.
	\end{align*}
\end{lemma}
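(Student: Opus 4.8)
The plan is to derive \eqref{eq:2nd} by expanding one factor of $G$ in the definition \eqref{general_T} of $T_{\fa,\fb_1\fb_2}$ using the identity \eqref{GmH}, and then identifying the diffusive propagator $\Theta^\circ$ together with the zero mode. Concretely, write
\[
T_{\fa,\fb_1\fb_2}=|m|^2\sum_\al s_{\fa\al}G_{\al\fb_1}\overline G_{\al\fb_2}
=|m|^2\sum_\al s_{\fa\al}\big[\,m\,\overline G_{\al\fb_2}\delta_{\al\fb_1}-m\sum_\beta(H+m)_{\al\beta}G_{\beta\fb_1}\overline G_{\al\fb_2}\,\big],
\]
where I used $G_{\al\fb_1}=m\delta_{\al\fb_1}-m\sum_\beta(H+m)_{\al\beta}G_{\beta\fb_1}$ from the second form of \eqref{GmH}. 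The first piece gives $|m|^2 m\sum_\al s_{\fa\al}\delta_{\al\fb_1}\overline G_{\al\fb_2}=|m|^2 m\, s_{\fa\fb_1}\overline G_{\fb_1\fb_2}$, which after the resummation below becomes the $m\Theta^\circ_{\fa\fb_1}\overline G_{\fb_1\fb_2}$ term (the $s$ factor being the first term in the Neumann-like series for $\Theta^\circ$). The second piece splits as $mH$ term plus $m^2$ term; the $m^2$ term $-|m|^2 m^2\sum_{\al}s_{\fa\al}G_{\al\fb_1}\overline G_{\al\fb_2}=-|m|^2 m\,T_{\fa,\fb_1\fb_2}\cdot m$ feeds back a multiple of $T$ itself (it is really $-|m|^2 m^2$ times $|m|^{-2}T_{\fa,\fb_1\fb_2}$ up to the $s$-sum, i.e. it produces the operator $|m|^2 S$ acting on the $\fa$-index), which is what eventually yields the geometric series $\Theta=|m|^2S/(1-|m|^2S)$; the $\fa$-index projection $P^\perp$ and the zero mode are extracted by writing $S=S^\circ+|m|^{-2}\cdot(\text{rank-one part})$ — more precisely by splitting off the contribution of $\mathbf e\mathbf e^\top$, which is exactly where the $\frac{|m|^2}{2\ii N\eta}(G_{\fb_2\fb_1}-\overline G_{\fb_1\fb_2})$ zero-mode term comes from, using $\sum_\al \im G_{\al\fb_1}=\ldots$ and the Ward identity $\sum_\al |G_{\al\fb_1}|^2=\eta^{-1}\im G_{\fb_1\fb_1}$ type relations together with $(G-G^*)=2\ii\,\im G$.

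The genuinely probabilistic input is the $mH$ term, $-|m|^2 m\sum_\al s_{\fa\al}\sum_\beta H_{\al\beta}G_{\beta\fb_1}\overline G_{\al\fb_2}$. Here I would take partial expectation by inserting $1=P_\al+Q_\al$ (this is the source of the $Q_x$ notation in $\mathcal Q^{(2)}$) and apply complex Gaussian integration by parts to $P_\al$ in the entry $H_{\al\beta}$. Differentiating $G_{\beta\fb_1}$ and $\overline G_{\al\fb_2}$ with respect to $H_{\al\beta}$ (recall $\partial G_{ab}/\partial H_{cd}=-G_{ac}G_{db}$ and that in the complex case only the $h_{\al\beta}$–$\overline h_{\al\beta}$ pairing survives) produces, after using $\sum_\beta s_{\al\beta}$-type sums, the three structures: (i) a term $-m\,\delta_{x\fb_1}\overline G_{\fb_1\fb_2}$-type contribution that recombines with earlier pieces, (ii) the $\mathcal A^{(>2)}$ terms where a factor $(G_{yy}-m)$ or $(\overline G_{xx}-\overline m)$ appears — these are the ``higher order'' terms because $G_{yy}-m$ and $\overline G_{xx}-\overline m$ are small by the local law, and (iii) the leftover fluctuation terms $Q_x(\cdots)$ that, together with the leading $|m|^2 S$ self-consistent term, get organized into $\mathcal Q^{(2)}$. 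The bookkeeping also generates the $-m\,\delta_{x\fb_1}Q_{\fb_1}(\overline G_{\fb_1\fb_2})$ term, coming from the diagonal $\al=\beta$ or $\al=\fb_1$ coincidence in the differentiation. One then collects all the ``self-consistent'' contributions proportional to $|m|^2\sum_{y}s_{xy}(\cdots)$ acting on the free index, moves them to the left-hand side, and inverts $1-|m|^2 S^\circ$ — equivalently, applies $\sum_x\Theta^\circ_{\fa x}$ to the remainder — to arrive at \eqref{eq:2nd}.

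The main obstacle — really the only nontrivial point — is the careful separation of the zero mode: one must show that replacing $S$ by $S^\circ=P^\perp S P^\perp$ inside the geometric series is legitimate, and that the discarded rank-one part $\mathbf e\mathbf e^\top$ contributes exactly the explicit term $\frac{|m|^2}{2\ii N\eta}(G_{\fb_2\fb_1}-\overline G_{\fb_1\fb_2})$. This requires using $S\mathbf e=\mathbf e$ (double stochasticity, \eqref{fxy}), the identity $\frac{|m|^2}{1-|m|^2}=\frac{|m|^2}{2\eta\,\im m}$ (which follows from $(z+m)m=-1$, giving $1-|m|^2=2\eta\,\im m/|m|^0\ldots$ — more precisely $\im(1/m)=\eta$ so $1-|m|^2=2\eta\,\im m$ after a short computation), and the Ward-type identity $|m|^2\sum_\al G_{\al\fb_1}\overline G_{\al\fb_2}=\frac{m}{2\ii\eta}(G_{\fb_2\fb_1}-\overline G_{\fb_1\fb_2})\cdot(\text{something})$, i.e. $\sum_\al(GG^*)_{\fb_1\fb_2}=\frac{(G-G^*)_{\fb_1\fb_2}}{2\ii\eta}$ from the resolvent identity $G-G^*=2\ii\eta GG^*$. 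Threading these algebraic identities through the $P^\perp$ decomposition so that the zero-mode coefficient comes out precisely as stated is the delicate part; the rest is systematic Gaussian integration by parts and regrouping of $Q_x$ terms, entirely parallel to the constructions in \cite{BandI,BandII}.
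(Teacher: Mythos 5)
Your skeleton — expand one resolvent factor via \eqref{GmH}, insert $P_x+Q_x$, apply Gaussian integration by parts, and resum the self-consistent term — is the same family of argument as the paper's (and as Lemma 2.4 of \cite{BandI}), but the one step that is genuinely new in this lemma, the separation of the zero mode, is exactly the step you leave as a gesture, and the algebra you propose for it is wrong. The paper performs this separation at the very start, where it is essentially trivial: since $s^\circ_{xy}=s_{xy}-N^{-1}$, one writes $s_{\fa x}=s^\circ_{\fa x}+N^{-1}$, and Ward's identity \eqref{eq_Ward0} applied to the $N^{-1}$ part produces the term $\frac{|m|^2}{2\ii N\eta}(G_{\fb_2\fb_1}-\overline G_{\fb_1\fb_2})$ exactly (see \eqref{eq:TTC}); the remaining $T^\circ$ is then rewritten with the exact identity $|m|^2S^\circ=\Theta^\circ-|m|^2\Theta^\circ S$ (valid because $\Theta^\circ\mathbf e=0$ and $S\mathbf e=\mathbf e$), so that the $-|m|^2\Theta^\circ S$ piece is cancelled by the self-consistent output of the integration by parts and no inversion is ever performed. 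Your plan instead expands first and extracts the rank-one part ``inside the geometric series''. This can be made to work: the raw expansion yields the operator $1-|m|^2S$, whose inversion gives $\Theta$, and converting $\Theta_{xy}=\Theta^\circ_{xy}+\frac{\im m}{N\eta}$ into the stated form requires verifying that the zero-mode average of \emph{all} generated terms (the source, $\mathcal A^{(>2)}$ and $\mathcal Q^{(2)}$) resums to the Ward expression, which follows by pairing the pre-inversion identity with $\mathbf e$ and using $S^\top\mathbf e=\mathbf e$. You neither perform this check nor set it up correctly, and the identities you invoke for it are false: $\im(1/m)=\eta$ does not hold, and $1-|m|^2=2\eta\,\im m$ contradicts \eqref{insert m^2}, which gives $1-|m|^2=\eta|m|^2/\im m$.

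There is also a mechanical error in your bookkeeping: you attribute the self-consistent operator $|m|^2S$ acting on the free index to the $-m^2G_{\al\fb_1}$ piece of \eqref{GmH}. That piece contributes $-m^2T_{\fa,\fb_1\fb_2}$, and it is cancelled (up to a $Q$-term that lands in $\mathcal Q^{(2)}$) by the leading $G_{yy}\to m$ part of the Gaussian integration by parts; the self-consistent term $|m|^2\sum_y s_{xy}G_{y\fb_1}\overline G_{y\fb_2}$ actually arises from the $\overline G_{xx}\to\overline m$ leading part of the integration by parts, while the subleading parts $(G_{yy}-m)$ and $(\overline G_{xx}-\overline m)$ produce $\mathcal A^{(>2)}$. (Similarly, the $-m\,\delta_{x\fb_1}Q_{\fb_1}(\overline G_{\fb_1\fb_2})$ term comes from writing the $m\delta_{x\fb_1}$ source as $P_{\fb_1}+Q_{\fb_1}$, not from a coincidence in the differentiation.) Getting this attribution right is what decides which terms are absorbed into the propagator versus which end up in $\mathcal A^{(>2)}$ and $\mathcal Q^{(2)}$, so as written your derivation does not yet yield the displayed formula; the fix is to follow the paper's order of operations (zero mode first via Ward, then the identity $|m|^2S^\circ=\Theta^\circ(1-|m|^2S)$, then integration by parts as in \cite{BandI}).
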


\begin{proof}
	Note that $\mathbf e$ is the Perron-Frobenius eigenvector of $S$ with eigenvalue 1. Hence, we have $s_{xy}^{\circ} = s_{x y}- N^{-1}$ and $\Theta^\circ\mathbf e=0$. Furthermore, we recall the following classical Ward's identities, which are derived from a simple algebraic calculation:
	\be\label{eq_Ward0}
	\sum_x \overline G_{xy'} G_{xy} = \frac{G_{y'y} -\overline{G_{yy'} }}{2\ii \eta},\quad \sum_x \overline G_{y' x} G_{yx} = \frac{G_{yy'} -\overline{G_{y'y} }}{2\ii \eta}.
	\ee
	As a special case, if $y=y'$, we have
	\be\label{eq_Ward}
	\sum_x |G_{xy}( z)|^2 =\sum_x |G_{yx}( z)|^2 = \frac{\im G_{yy}(z) }{ \eta}.
	\ee

	Now, using \eqref{eq_Ward0}, we get that
	\begin{align}
		T_{\fa,\fb_1\fb_2}& =  |m|^2\sum_{x} s_{\fa x}G_{x\fb_1} \overline G_{x\fb_2} =  |m|^2\sum_{x} s_{\fa x }^{\circ}G_{x \fb_1} \overline G_{x \fb_2} + |m|^2\frac{1}{N}\sum_{x} G_{x \fb_1} \overline G_{x \fb_2}\nonumber\\
		&= |m|^2\sum_{x} s_{\fa x }^{\circ}G_{x \fb_1} \overline G_{x \fb_2} + \frac{|m|^2}{2\ii N\eta} (G_{\fb_2\fb_1} - \overline G_{\fb_1\fb_2}) \nonumber\\
		&= T_{\fa,\fb_1\fb_2}^\circ+  \frac{|m|^2}{2\ii N\eta} (G_{\fb_2\fb_1} - \overline G_{\fb_1\fb_2})\,, \label{eq:TTC}	 
	\end{align}
	where 
	\be\label{Tcirc}
	T^{\circ}_{\fa,\fb_1\fb_2}:=\sum_{x}P^{\perp}_{\fa x}T_{x,\fb_1\fb_2}  = |m|^2\sum_{x} s_{\fa x }^{\circ}G_{x \fb_1} \overline G_{x \fb_2}. 
	\ee
	Using \eqref{Theta-S-circ} and \eqref{GmH}, we can write that 
	\begin{align*}
		 T^{\circ}_{\fa,\fb_1\fb_2} & = \sum_{x}[\Theta^\circ_{\fa x} -|m|^2(\Theta^\circ S)_{\fa x}] G_{x \fb_1}\overline G_{x \fb_2} \\
		&= \sum_{x}\Theta^\circ_{\fa x}  P_x\left[(m\delta_{x\fb_1}-m(HG)_{x \fb_1}-m^2 G_{x\fb_1})\overline G_{x \fb_2}\right]   \\
		&+ \sum_{x}\Theta^\circ_{\fa x}  Q_x\left(G_{x \fb_1}\overline G_{x \fb_2}\right) -  |m|^2\sum_{x}(\Theta^\circ S)_{\fa x} G_{x \fb_1}\overline G_{x \fb_2}.
	\end{align*}
	Then, applying Gaussian integration by parts to $P_x[(HG)_{x \fb_1} \overline G_{x \fb_2}]$, we can get that 
	\begin{align*}
		\sum_{x}[\Theta^\circ_{\fa x} -|m|^2(\Theta^\circ S)_{\fa x} ]G_{x \fb_1}\overline G_{x \fb_2} = m  \Theta^\circ_{x\fb_1}   \overline G_{\fb_1\fb_2}   + \mathcal A^{(>2)}_{x,\fb_1\fb_2}+\mathcal Q^{(2)}_{x,\fb_1\fb_2}.
	\end{align*} 
	Since the calculation is exactly the same as the one for Lemma 2.4 of \cite{BandI}, we omit the details. This completes the proof of Lemma \ref{2ndExp}.  
\end{proof}

Compared with the second order $T$-expansion in Lemma 2.5 of \cite{BandI}, there are two differences: the second term on the RHS of \eqref{eq:2nd} is new and the other two terms use $\Theta^{\circ}_{\fa x}$ instead of $\Theta_{\fa x}$ as in \cite{BandI}, where $\Theta := {|m|^2S}/({1 - |m|^2 S}).$ Note that $\Theta^\circ$ differs from $\Theta$ by a zero mode, i.e., 
\be\label{Theta-Thetacirc}
\Theta^\circ_{xy}=(P^\perp \Theta P^\perp)_{xy}= \Theta_{xy}-\frac{|m|^2}{N(1-|m|^2)}= \Theta_{xy}-\frac{\im m}{N\eta},
\ee
where we used the identity 
\be\label{insert m^2}\frac{|m(z)|^2}{1-|m(z)|^2}  = \frac{ \im m(z)}{\eta} ,\ee
which can be derived by taking the imaginary part of the equation $z=-m(z)-m^{-1}(z)$. 
It is well-known that for $z=E+\ii \eta$ with $E\in (-2+\kappa,2-\kappa)$ and $\eta >0 $,
\be\label{thetaxy}
|\Theta_{xy}^{\circ}(z)| \le   \frac{W^\tau \mathbf 1_{|x-y|\le   \eta^{-1/2}W^{1+\tau}}}{W^2\langle x-y\rangle^{d-2}}   + \frac{1}{ \langle x-y\rangle^{D}}  \le W^{\tau} B_{xy},
\ee
for any constants $\tau, D>0$, where we recall that $B_{xy}$ was defined in \eqref{defnBxy}. For simplicity, here and throughout the rest of this paper, we abbreviate
\be\label{Japanesebracket} |x-y|\equiv \|x-y\|_L,\quad \langle x-y \rangle \equiv \|x-y\|_L + W.\ee
The reader can refer to e.g., \cite{delocal,Band1D_III} for a proof of \eqref{thetaxy}. (For example, the proof in \cite{Band1D_III} was written for $\Theta(z)$ with $\eta\ge W^2/L^2$, in which case we can use $\|S\|\le 1$ and $1-|m(z)|^2\gtrsim W^2/L^2$. For $\Theta^{\circ}(z)$ with smaller $\eta$, the same proof still works except that we will use $|m(z)|\le 1$ and the spectral gap of $S$ at $1$, i.e., $1-\|S^{\circ}\|\gtrsim W^2/L^2$.) With \eqref{thetaxy} and \eqref{Theta-Thetacirc}, we get that 
\be\label{thetaxy2}
\Theta_{xy}(z)\le W^{\tau} B_{xy} + \frac{\im m}{N\eta}.
\ee

To explain heuristically why the new second order $T$-expansion can be applied to smaller $\eta$ and what trouble the $T$-expansion in \cite{BandI} may have, we take $\fb_1=\fb_2=\fa$ and look at the \smash{$\sum_x \Theta^{\circ}_{\fa x} \mathcal A^{(>2)}_{x,\fa\fa}$} term in \eqref{eq:2nd}. Suppose we already know that the local law \eqref{locallaw} holds. Then, using \eqref{thetaxy}, we can bound that 
\be\nonumber \sum_x \Theta^{\circ}_{\fa x} \mathcal A^{(>2)}_{x,\fa\fa} \prec \sum_x B_{\fa x}\left(B_{x\fa} + \frac{1}{N\eta} \right) \lesssim W^{-d} + \frac{1}{N\eta}\frac{L^2}{W^2}\ll 1,\ee
as long as \smash{$\eta\gg W^{-2}L^{2-d}$}. Here, ``$\prec$" denotes stochastic domination, which will be defined in Definition \ref{stoch_domination} below. On the other hand, for the $T$-expansion in \cite{BandI}, we need to control $\sum_x \Theta_{\fa x} \mathcal A^{(>2)}_{x,\fa\fa}$. Using \eqref{thetaxy2}, we can bound that 
$$ \sum_x \Theta_{\fa x} \mathcal A^{(>2)}_{x,\fa\fa} \prec \sum_x  \left(B_{x\fa} + \frac{1}{N\eta} \right)^2 \lesssim W^{-d} + \frac{1}{N\eta}\frac{L^2}{W^2} + \frac{1}{N\eta^2}, $$
where the RHS diverges when $\eta\gg L^{-d/2}$. Hence, the $T$-expansion in \cite{BandI} does not work well for $\eta\ll L^{-d/2}$. Of course,  in the proof, we do not have the local law \eqref{locallaw} a priori, and we can only use a worse continuity estimate for $G$ entries. Hence, in the current work, we cannot reach the level $W^{-2} L^{2-d}$ yet. 

In this paper, we use the following notion of stochastic domination introduced in \cite{EKY_Average}. 

\begin{defn}[Stochastic domination and high probability event]\label{stoch_domination}
	{\rm{(i)}} Let
	\[\xi=\left(\xi^{(W)}(u):W\in\mathbb N, u\in U^{(W)}\right),\hskip 10pt \zeta=\left(\zeta^{(W)}(u):W\in\mathbb N, u\in U^{(W)}\right),\]
	be two families of non-negative random variables, where $U^{(W)}$ is a possibly $W$-dependent parameter set. We say $\xi$ is stochastically dominated by $\zeta$, uniformly in $u$, if for any fixed (small) $\tau>0$ and (large) $D>0$, 
	\[\mathbb P\bigg[\bigcup_{u\in U^{(W)}}\left\{\xi^{(W)}(u)>W^\tau\zeta^{(W)}(u)\right\}\bigg]\le W^{-D}\]
	for large enough $W\ge W_0(\tau, D)$, and we will use the notation $\xi\prec\zeta$. 
	If for some complex family $\xi$ we have $|\xi|\prec\zeta$, then we will also write $\xi \prec \zeta$ or $\xi=\OO_\prec(\zeta)$. 
	
	\vspace{5pt}
	\noindent {\rm{(ii)}} As a convention, for two deterministic non-negative quantities $\xi$ and $\zeta$, we will write $\xi\prec\zeta$ if and only if $\xi\le W^\tau \zeta$ for any constant $\tau>0$. 
	
	
	\vspace{5pt}
	\noindent {\rm{(iii)}} We say that an event $\Xi$ holds with high probability (w.h.p.) if for any constant $D>0$, $\mathbb P(\Xi)\ge 1- W^{-D}$ for large enough $W$. More generally, we say that an event $\Omega$ holds $w.h.p.$ in $\Xi$ if for any constant $D>0$,
	$\P( \Xi\setminus \Omega)\le W^{-D}$ for large enough $W$.
\end{defn}

\subsection{Graphs, scaling order and doubly connected property}

Similar to some previous works (e.g., \cite{EKY_Average,BandI,BandII,Band1D_III}) on random matrices, we will organize our proofs using graphs. In this subsection, we introduce the basic concepts of atomic graphs,  molecular graphs, and the doubly connected property. Many graphical notations used in this paper have been defined in \cite{BandI,BandII}, but we repeat them for the convenience of the reader.

Our graphs will consist of matrix indices as vertices and matrix entries as edges. In particular, the entries of $S$, $\Theta^{\circ}$ and $G$ will be represented by different types of edges. In addition, we also have edges representing the following two deterministic matrices 
\be\label{Thetapm2}
S^+(z):=\frac{m^2(z) S}{1-m^2(z) S}, \quad S^-(z):=\overline S^+(z),  
\ee
which satisfy the following estimate \eqref{S+xy}. It is a folklore result and we omit its proof (a formal proof for the $d=1$ case is given in equation (4.21) of \cite{PartII}). 
\begin{lemma} \label{lem deter}
	Suppose Assumptions \ref{assmH} and \ref{var profile} hold and $z=E+\ii \eta$ with $E\in (-2+\kappa,2-\kappa)$ for a constant $\kappa>0$. Then, for any constants $\tau, D>0$, we have that
	\be\label{S+xy}|S^\pm_{xy}(z)| \lesssim    W^{-d}\mathbf 1_{|x-y|\le W^{1+\tau}} +  \langle x-y\rangle^{-D} . \ee 
\end{lemma}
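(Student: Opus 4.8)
\emph{Proof sketch.} The plan is to diagonalize $S$ by the discrete Fourier transform on the torus $\Z_L^d$ and thereby reduce the bound on $S^\pm_{xy}(z)$ to a Schwartz-decay estimate for the continuous symbol $p\mapsto m^2(z)\psi(p)/\big(1-m^2(z)\psi(p)\big)$.

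Since $s_{xy}=f_{W,L}([x-y]_L)$ is a convolution kernel on $\Z_L^d$, so are the entries of $S^\pm(z)=\sum_{n\ge1}\big(m^{\pm2}(z)S\big)^n$, the series converging entrywise because $|m(z)|<1$ for $\eta>0$ and $\|S\|=1$. Writing $\{\lambda_k\}$ for the eigenvalues of $S$, indexed by $k$ in the dual torus, we have
\[
S^+_{xy}(z)=\frac1N\sum_{k}\frac{m^2\lambda_k}{1-m^2\lambda_k}\,e^{\ii k\cdot[x-y]_L}.
\]
By Poisson summation together with \eqref{choicef}, \eqref{schwarzpsi}, \eqref{subpoly} and \eqref{bandcw1} (using $L\gg W$), the symbol satisfies $\lambda_k=\psi(Wk)+\OO(W^{-D})$ for every $D>0$; in particular $\lambda_k\in\R$, $\lambda_0=1$, $|\lambda_k|\le1$, $|\lambda_k|\le1-c$ for $\|k\|\gtrsim W^{-1}$ (the spectral gap of $S$ at $\pm1$, a consequence of Assumption \ref{var profile} since $\psi(0)=1=\|\psi\|_\infty$ and $\psi$ is Schwartz with $\inf\psi>-1$), and $\lambda_k$ is smooth in $k$ with $|\partial_k^\alpha\lambda_k|\lesssim_\alpha W^{|\alpha|}$. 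Since $S^-=\overline{S^+}$ and $\lambda_k\in\R$, it suffices to treat $S^+$.

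The crucial input is the uniform lower bound $|1-m^2(z)\lambda_k|\gtrsim_\kappa1$, valid for all $|E|\le2-\kappa$, $0<\eta\le1$, and all $k$. For $\|k\|\gtrsim W^{-1}$ we have $|1-m^2\lambda_k|\ge1-|m|^2|\lambda_k|\ge1-|\lambda_k|\gtrsim1$, using $|m(z)|\le1$ and $|\lambda_k|\le1-c$ there; for $\|k\|\lesssim W^{-1}$ we have $\lambda_k=1+\OO(W^2\|k\|^2)$ (Taylor expansion of $\psi$ at $0$), hence $|1-m^2\lambda_k|\ge|1-m^2|-|m|^2\cdot\OO(W^2\|k\|^2)\gtrsim_\kappa1$ by the standard bulk bound $|1-m^2(z)|\gtrsim_\kappa1$ and a small enough choice of the threshold. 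Consequently $h_z(p):=m^2(z)\psi(p)/\big(1-m^2(z)\psi(p)\big)$ is smooth, with all derivatives inheriting the rapid decay of $\psi$: $|\partial_p^\alpha h_z(p)|\le C_{\alpha,D,\kappa}\,(1+|p|)^{-D}$ for all $\alpha$ and $D$, uniformly in $z$ in the bulk.

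Finally, I would replace $N^{-1}\sum_k$ by $(2\pi)^{-d}\int_{[-\pi,\pi]^d}\dd k$ at the cost of an error $\OO\big(W^{-D}\langle x-y\rangle^{-D}\big)$ (by \eqref{subpoly} and $L\gg W$), then replace $\lambda_k$ by $\psi(Wk)$ (another $\OO(W^{-D})$ error) and substitute $q=Wk$ to get
\[
S^+_{xy}(z)=W^{-d}\,\widehat{h_z}\!\big([x-y]_L/W\big)+\OO\big(W^{-D}\langle x-y\rangle^{-D}\big),
\]
where truncating the $q$-integral to $\|q\|\le\pi W$ costs only $\OO(W^{-D})$ by the decay of $h_z$. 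Since $\widehat{h_z}$ is Schwartz with seminorms bounded uniformly in $z$ in the bulk, $|S^+_{xy}(z)|\lesssim_{D,\kappa}W^{-d}(1+\|x-y\|_L/W)^{-D}$; taking $D$ large this gives $\lesssim W^{-d}$ when $\|x-y\|_L\le W^{1+\tau}$ and $\lesssim\langle x-y\rangle^{-D}$ when $\|x-y\|_L\ge W^{1+\tau}$, which is \eqref{S+xy}, and the estimate for $S^-$ follows by complex conjugation. I expect the main technical point to be the uniform lower bound $|1-m^2(z)\lambda_k|\gtrsim_\kappa1$, i.e.\ preventing $m^2(z)\lambda_k$ from approaching $1$: away from $k=0$ this rests on the spectral gap of $S$ at its Perron eigenvalue (together with $\inf_k\lambda_k>-1$), and near $k=0$ on the bulk lower bound for $1-m^2(z)$.
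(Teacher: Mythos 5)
The paper does not actually prove this lemma: it is quoted as a folklore fact, with a pointer to \cite{PartII} (equation (4.21) there) for a formal proof in the $d=1$ case. Your Fourier-multiplier argument is precisely the standard route that this folklore refers to, and apart from one point it is correct: diagonalizing $S$ on the dual torus, identifying $\lambda_k=\psi(Wk)+\OO(W^{-D})$ via Poisson summation, bounding the symbol $m^2\psi/(1-m^2\psi)$ together with all its derivatives uniformly in the bulk, and converting Schwartz decay of its Fourier transform into the bound $W^{-d}(1+\|x-y\|_L/W)^{-D'}$, which gives \eqref{S+xy}. (For the far regime, to really get the error in the form $\OO(W^{-D}\langle x-y\rangle^{-D})$ rather than just $\OO(W^{-D})$ uniformly, you should run the Riemann-sum and $\lambda_k\mapsto\psi(Wk)$ replacements with integration by parts in $k$, or work with the infinite-lattice kernel and Poisson-sum over $\Z^d$; this is routine but worth a line, especially since the lemma itself is stated without any polynomial relation between $L$ and $W$.)

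The one genuine gap is your parenthetical ``$\psi$ is Schwartz with $\inf\psi>-1$'': Assumption \ref{var profile} gives only $\|\psi\|_\infty\le 1$ and an \emph{upper} bound $\psi(p)\le\max\{1-c_\psi|p|^2,1-c_\psi\}$, so a lower bound $\lambda_k\ge-1+c$ is not among the hypotheses, and it is exactly where your argument could fail. Indeed, at $\eta=0$ one has $\min_{\lambda\in[-1,1]}|1-m^2(E)\lambda|=\tfrac12|E|\sqrt{4-E^2}$, attained at $\lambda\approx-1$ when $E\approx0$, so if the spectrum of $S$ reached down to $-1$ (a bipartite-like profile), the uniform bound $|1-m^2(z)\lambda_k|\gtrsim_\kappa1$ — which you correctly identified as the crux — would be false near $E=0$ and $S^\pm$ would not satisfy \eqref{S+xy}. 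The missing input does follow from the assumptions, but from Assumption \ref{assmH}/\ref{var profile} via the \emph{positivity} of $f_{W,L}$ rather than from the listed properties of $\psi$: positivity of $f_{W,L}(x)=W^{-d}\check\psi(x/W)/Z_{W,L}$ at all lattice points (for all large $W$) forces $\check\psi\ge0$, and then $\psi(q)=\int\check\psi(u)\cos(q\cdot u)\,\dd u>-1$ for every $q$ (equality would require $\cos(q\cdot u)=-1$ on a set of positive $\check\psi$-measure), which together with $\psi(q)\to0$ as $|q|\to\infty$ and continuity gives $\inf\psi\ge-1+c$ for a constant $c>0$; equivalently one can argue directly that $1+\lambda_k=\sum_x f_{W,L}(x)(1+\cos(k\cdot x))\gtrsim1$ for $\|k\|\gtrsim W^{-1}$. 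Add this short derivation and your proof is complete.
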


We first introduce the most basic atomic graphs. 

\begin{defn}[Atomic graphs] \label{def_graph1} 
	
	Given a standard oriented graph with vertices and edges, we assign the following structures and call the resulting graph an atomic graph.  
	
	\begin{itemize}
		%
		%
		
		\item {\bf Atoms:} We will call the vertices atoms. Every graph has some external atoms and internal atoms. 
		The external atoms represent external indices whose values are fixed, while internal atoms represent summation indices that will be summed over. 
		
		
		\item \noindent{\bf Weights}: A regular weight on an atom $x $ represents a $G_{xx}$ factor, drawn as a blue solid $\Delta$, or a $\overline G_{xx}$ factor, drawn as a red solid $\Delta$. 
		A light weight on atom $x$ represents a $G_{xx}-m$ factor, drawn as a blue hollow $\Delta$, or a $\overline G_{xx}-\overline m$ factor, drawn as a red hollow $\Delta$. 
	Note that every $\Delta$ actually represents a self-loop of $G$ or $(G-m)$ edge (see the definition of solid edges below).
		
		\item {\bf Edges:} The edges are divided into the following types. 
		
		\begin{enumerate}

			\item{\bf  Solid edges:} 
			\begin{itemize}
				\item a blue oriented edge from atom $x$ to atom $y$ represents a $G_{xy}$ factor; 
				\item a red oriented edge from atom $x$ to atom $y$ represents a $\overline G_{xy}$ factor.
			\end{itemize}

			\item {\bf Waved edges:}
			\begin{itemize}
				\item a neutral waved edge between atoms $x$ and $y$ represents an $s_{xy}$ factor; 
				
				\item a positive blue waved edge 
				between atoms $x$ and $y$ represents an $S^+_{xy}$ factor;
				
				\item a negative red waved edge 
				between atoms $x$ and $y$ represents an $S^-_{xy}$ factor.
			\end{itemize}

			\item {\bf $\Dashed$ edges:} A $\dashed$ edge between atoms $x$ and $y$ represents a ${\Theta}_{xy}^{\circ}$ factor. We draw it as a black double-line edge between atoms $x$ and $y$. 
			
			\item {\bf Free edges:} A purple solid edge between atoms $x$ and $y$ represents a $(N \eta)^{-1}$ factor, and we call it a free edge. 
			
			\item {\bf Dotted edges:} A dotted edge connecting atoms $\al$ and $\beta$  represents a factor $\mathbf 1_{\al=\beta}\equiv \delta_{\al\beta}$, and a $\times$-dotted edge represents a factor $\mathbf 1_{\al\ne \beta} \equiv  1-\delta_{\al\beta} $. There is at most one dotted or $\times$-dotted edge between each pair of atoms. 
			
			
		\end{enumerate}
		The orientations of non-solid edges do not matter. Edges between internal atoms are called {internal edges}, while edges with at least one end at an external atom are called {external edges}.

		\item{\bf $P$ and $Q$ labels:} A solid edges or a weight may have a label $P_x$ or $Q_x$ for some atom $x$ in the graph. Moreover, every edge or weight can have at most one $P$ or $Q$ label.

		\item{\bf Coefficients:} There is a coefficient 
		associated with each graph. Unless otherwise specified, the coefficient is of order $\OO(1)$. 
		
		
	\end{itemize}
	
\end{defn}

\begin{defn}[Subgraphs]\label{def_sub}
	A graph $\cal G_1$ is said to be a subgraph of $\cal G_2$, denoted by $\cal G_1\subset \cal G_2$, if every graphical component (except the coefficient) of $\cal G_1$ is also in $\cal G_2$. Moreover, $\cal G_1 $ is a proper subgraph of $\cal G_2$ if  $\cal G_1\subset \cal G_2$ and $\cal G_1\ne \cal G_2$. Given a subset $\cal S$ of atoms in a graph $\cal G$, the subgraph $\cal G|_{\cal S}$ induced on $\cal S$ refers to the subgraph of $\cal G$ with atoms in $\cal S$ as vertices, the edges between these atoms, and the weights on these atoms. Given a subgraph $\cal G$, its closure $\overline{\cal G}$ refers to $\cal G$ plus its external edges. 
\end{defn}

Along the proof, we will introduce some other types of weights and edges. To each graph, we assign a \emph{value}  as follows.

\begin{defn}[Values of graphs]\label{ValG} 
	Given an atomic graph $\mathcal G$, we define its value as an expression obtained as follows. We first take the product of all the edges, all the weights, and the coefficient of the graph $\cal G$. Then, for the edges and weights with the same $P_x$ or $Q_x$ label, we apply $P_x$ or $Q_x$ to their product. Finally, we sum over all the internal indices represented by the internal atoms. The values of the external indices are fixed by their given values. The value of a linear combination of graphs  $\sum_i c_i \cal G_i$ is naturally defined as the linear combination of the graph values of $\cal G_i$.
\end{defn}

For simplicity, throughout this paper, we will always abuse the notation by identifying a graph (a geometric object) with its value (an analytic expression). In this sense, noticing that free edges represent $(N\eta)^{-1}$ factors without indices, two graphs are equivalent if they have the same number of free edges and all other graph components are the same. In other words, we can move a free edge freely to another place without changing the graph. 

\begin{defn}[Normal graphs]  \label{defnlvl0}  
	We say an atomic graph $\cal G$ is \emph{normal} if it satisfies the following properties:
	\begin{itemize}
		\item[(i)] it contains at most $\OO(1)$ many atoms and edges;
		\item[(ii)] all internal atoms are connected together or to external atoms through paths of waved, $\dashed$ or dotted edges;
		\item[(iii)] there are no dotted edges between internal atoms;
		\item[(iv)] every pair of atoms $\al$ and $\beta$ in the graph are connected by a $\times$-dotted edge \emph{if and only if} they are connected by a $G$ edge.
	\end{itemize}
\end{defn}

All graphs appearing in our proof are normal (after some simple graph operations related to dotted edges, see Definition \ref{dot_operation}). By this definition, every $G$ edge in a normal graph is off-diagonal, while all diagonal $G$ factors will be represented by weights. Given a normal graph, we define its scaling order as follows. 

\begin{defn} [Scaling order] \label{def scaling}
	Given a normal graph $\cal G$, we define its scaling order as 
	\begin{align}
		\mathrm{ord}(\cal G): = &\ \#\{\text{off-diagonal }  G  \text{ edges}\}  + 2\#\{ \text{waved, free or diffusive edges}\} \nonumber\\
		& + \#\{\text{light weights}\}   - 2\left[ \#\{\text{internal atoms}\}- \#\{\text{dotted edges}\} \right]. \label{eq_deforderrandom2}
	\end{align}
	Here, every dotted edge in a normal graph means that an internal atom is equal to an external atom, so we lose one free summation index. The concept of scaling order can also be defined for subgraphs. 
\end{defn}  

In this paper, whenever we say the order of a graph, we are referring to its scaling order. 
We arrange the graphs in $T$-expansions according to their scaling orders, and an $n$-th order $T$-expansion indicates that the ``error term" contains graphs of scaling order $>n$. 
 
The motivation behind \Cref{def scaling} is as follows: consider the special case with $\eta\ge W^2/L^2$ and $W\ge cL$, i.e., $H$ is a generalized Wigner matrix. First, each summation over an internal atom leads to a $\OO(W^d)$ factor. Second, by \eqref{thetaxy} and \eqref{S+xy}, every waved or diffusive edge is of order $\OO_\prec(W^{-d})$. Third, every free edge provides a $(N\eta)^{-1}=\OO(W^{-d})$ factor. Finally, if we know that $|G_{xy}-m\delta_{xy}|\prec W^{-d/2}$ (recall the local law \eqref{locallaw}), then every off-diagonal $G$ edge or light weight is bounded by $\OO_\prec(W^{-d/2})$. From the above arguments, we see that 
$$|\cal G|\prec W^{-\text{ord}(\cal G)\cdot d/2}.$$ 
However, if we have $L\gg W$, the scaling order does not imply the real size of the graph value directly. In order to establish such a connection for a graph, it needs to be \emph{doubly connected}. To define the doubly connected property, we first define molecular graphs.

	Our graphs have a two-level structure, that is, a local structure varying on scales of order $W$, and a global structure varying on scales up to $L$. To explain this, we introduce the following concept of molecules. 
	
	
	\begin{defn}[Molecules]\label{def_poly}
		We partition the set of all atoms into a disjoint union of subsets $\{\text{all atoms}\}=\cup_j \cal M_j$, where every $\cal M_j$ is called a molecule. Two atoms belong to the same molecule if and only if they are connected by a path of neutral/plus/minus \emph{waved edges} and \emph{dotted edges}. Every molecule containing at least one external atom is called an external molecule.  
	\end{defn}
	
	By \eqref{subpoly} and \eqref{S+xy}, if two atoms $x$ and $y$ are in the same molecule, then we essentially have $|x-y|\le W^{1+\tau}$ up to a negligible error $\OO(W^{-D})$. Given an atomic graph, we will call the subgraph inside a molecule the {\it local structure} of this molecule. On the other hand, the {\it global structure} of an atomic graph refers to its \emph{molecular graph}, which is the quotient graph with each molecule regarded as a vertex. 
	
	\begin{defn}[Molecular graphs] \label{def moleg}  
		Molecular graphs are graphs consisting of 
		\begin{itemize}
			\item external and internal molecules;
			\item solid,  $\dashed$ and free edges between molecules.
		\end{itemize}
		Given an atomic graph $\cal G$, we define its molecular graph, denoted by $ \cal G_{\cal M}$, as follows:
		\begin{itemize}
			\item merge all atoms in the same molecule and represent them by a vertex;
			
			\item keep solid, $\dashed$ and free edges between molecules;

			\item discard all the other components in $\cal G$ (including weights, dotted or $\times$-dotted edges, and edges inside molecules).
		\end{itemize}
	\end{defn}
	
	In this paper, molecular graphs are used solely to help with the analysis of graph structures, while all graph expansions are applied to atomic graphs only. 
	The following \emph{doubly connected property} is a crucial property defined using molecular graphs. It will allow us to establish a direct connection between the scaling order of a graph and a bound on its value. All graphs in the $T$-expansion will satisfy this property. 
	
	\begin{defn} [Doubly connected property]\label{def 2net}  
		An atomic graph $\cal G$ without external molecules is said to be doubly connected if its molecular graph $\cal G_{\cal M}$ satisfies the following property. There exists a collection, say $\cal B_{black}$, of $\dashed$ edges, and another collection, say $\cal B_{blue}$, of blue solid, $\dashed$ and free edges such that 
		\begin{itemize}
			\item[(a)] $\cal B_{black}\cap \cal B_{blue}=\emptyset$,
			\item[(b)] each of $\cal B_{black}$ and $\cal B_{blue}$ contains a spanning tree that connects all molecules. 
		\end{itemize}
		(Note that red solid edges are not used in either collection.) For simplicity of notations, we will call the edges in $\cal B_{black}$ as black edges and the edges in $\cal B_{blue}$ as blue edges. Correspondingly, $\cal B_{black}$ and $\cal B_{blue}$ are referred to as black net and blue net, respectively.
		
		An atomic graph $\cal G$ with external molecules is called doubly connected if its subgraph with all external molecules removed is doubly connected.
	\end{defn}

	
	\subsection{$\Selfs$ and renormalized $\dashed$ edges}\label{sec_selfc}

	In higher order $T$-expansions, new types of $\dashed$ edges will also appear. They have the same behavior as $\Theta^{\circ}$, but ``renormalized by $\selfs$" in the following sense: 
	\be\label{theta_renormal1}
	\Theta^{(n)}:= \frac{1}{1-\Theta^{\circ} \wtSdelta^{(n)} }\Theta^{\circ},\quad \text{where}\quad  \wtSdelta^{(n)}(z):=\sum_{2l=4}^n \Sele_{2l}(z),
	\ee
 where each $\Sele_{2l}(z)$ is a deterministic matrix called \emph{$\self$ with scaling order $2l$} that will be defined in \Cref{collection elements}. Then, we perform a truncated Taylor expansion of $\Theta^{(n)}$ and pick out the $k$-th order part $\Theta^{(n)}_k$. More precisely, we define $\Theta^{(n)}_2:=\Theta^{\circ}$ and 
	\be\label{chain S2k}
	\Theta^{(n)}_k:=\Theta^{\circ}\Sele_{k}\Theta^{\circ} + \sum_{ l =2}^k \sum\limits_{ \mathbf k=(k_1,\cdots, k_l) \in \Omega_{n,k}^{(l)} } \Theta^{\circ}\Sele_{k_1}\Theta^{\circ}  \Sele_{k_2}\Theta^{\circ} \cdots \Theta^{\circ}  \Sele_{k_l}\Theta^{\circ},\quad k\ge 4, 
	\ee
	where $\Omega_{n,k}^{(l)} \subset \N^l$ is the subset of vectors $\mathbf k$ satisfying that 
	\be\label{omegakl}
	4\le k_i \le n,\quad \text{ and }\quad  \sum_{i=1}^l  k_i  -2(l-1)=k. 
	\ee
We will discuss the meaning of condition \eqref{omegakl} below \Cref{collection elements}.

	\begin{defn}[$\Selfs$]\label{collection elements}
		
		Fix any $l\in \N$. $\Sele_{2l}(z)$ is a deterministic matrix 
		satisfying the following properties.
		
		\begin{itemize}
			\item[(i)] For any $x,y \in \Z_L^d$, $  (\Selek_{2l})_{xy}$ is a sum of $\OO(1)$ many deterministic graphs of scaling order $2l$, with external atoms $x$ and $y$. 
			These graphs consist of waved edges, dotted edges (there may be a dotted edge between $x$ and $y$), diffusive edges, and \smash{$\Theta^{(n)}_k$} edges with $2\le k \le n \le 2l-1$, but do not contain free or $\Theta^{(n)}$ edges with $n \geq 3$. 
			
			
			\item[(ii)] $ \Selek_{2l} (z)$ satisfies the following properties 
			for $z= E+ \ii \eta$ with $|E|\le  2- \kappa$ and $\eta \in [ W^{-5}L^{5-d}, 1]$:
			\be\label{two_properties0}
			\Selek_{2l} (x, x+a) =  \Selek_{2l} (0,a), \quad   \Selek_{2l} (0, a) = \Selek_{2l}(0,-a), \quad \forall \ x,a\in \Z_L^d,
			\ee 
			\be\label{4th_property0}
			\left|  (\Selek_{2l})_{0x}(z) \right| \prec  W^{-(l-2)d} B_{0x}^2 , \quad \forall \ x\in \Z_L^d,
			\ee
			\be\label{3rd_property0}
			\Big|\sum_{x\in \Z_L^d} (\Selek_{2l})_{0x}(z)\Big|   \prec   \left[ \left(\eta + \frac {W^{2d-6}}  {L^{2d-6}}  \right) W^{-d}+   \frac{1}{N} \frac{L^2}{W^2}  \right] W^{-(l-2)d} . 
			\ee
			
		\end{itemize} 
	\end{defn}

	By Definition \ref{def scaling}, the scaling order of a deterministic graph can only be even. Moreover, every nontrivial $\self$ $\Selek_{l}$ in this paper has scaling order $\ge 4$. Hence, as a convention, we set 
	\be\label{trivial conv}
	\Sele_1=\Sele_2=\Sele_3=0, \quad \text{and}\quad \Sele_{2l+1}:=0,\quad l\in \N.
	\ee
	 We will call $\Theta^{(n)} $ in \eqref{theta_renormal1} the \emph{$n$-th order renormalized $\dashed$ matrix}. Note the second condition in \eqref{omegakl} means that the subgraph $(\Theta^{\circ}\Sele_{k_1}\Theta \cdots \Theta  \Sele_{k_l}\Theta^{\circ})_{xy}$ has scaling order $k$, so \smash{$\Theta^{(n)}_k$} is the sum of all graphs in the truncated Taylor expansion of $\Theta^{(n)}$ that have scaling order $k$. We also remark that Definition \ref{collection elements} (i) is a recursive condition, that is, after defining lower order $\selfs$, we then use them to define \smash{$\Theta^{(n)}_k$}, $2\le k \le n \le 2l-1$, which are further used to define $\cal E_{2l}$.
	
	We will regard $\Theta^{(n)}$ and $\Theta^{(n)}_k$ as diffusive edges with labels. 
	
	\begin{defn}[Labeled $\dashed$ edges] \label{def_graph comp} 
		We represent $\Theta^{(n)}_{xy}$ and $(\Theta^{(n)}_k)_{xy}$ by $\dashed$ edges between atoms $x$ and $y$ with labels $(n)$ and $(n;k)$, respectively. In graphs, every labeled $\dashed$ edge is drawn as a double-line edge with a label but without any internal structure. Moreover, when calculating scaling orders, a $\Theta^{(n)}$ edge is counted as an edge of scaling order 2, and a $\Theta^{(n)}_k$ edge is counted as an edge of scaling order $k$.
	\end{defn}
	
	Using the properties \eqref{two_properties0}--\eqref{3rd_property0}, we can show that the labeled $\dashed$ edges defined above satisfy a similar bound as $\Theta^\circ$ (recall \eqref{thetaxy}). 
	
	
	
	\begin{lemma}\label{lem redundantagain}
		Fix $d\ge 6$ and $n\in \N$. Let $\Selek_{2l}$, $4\le 2l\le n$, be a sequence of $\selfs$ satisfying Definition \ref{collection elements}. We have that for any $x,y \in \Z_L^d$ and fixed $k\in \N$,
		\be\label{BRB} 
		|\Theta^{(n)}_{xy}(z)|\prec  B_{xy},\quad  \left|(\Theta^{(n)}_k)_{xy}(z)\right|\prec W^{-(k-2)d/2}B_{xy}  , \ee
		for all $z=E+\ii \eta$ with $|E|\le  2- \kappa$ and $\eta \in [ W^{-5}L^{5-d}, 1]$.
	\end{lemma}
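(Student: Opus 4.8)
\textbf{Proof proposal for Lemma \ref{lem redundantagain}.}

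The plan is to prove the two bounds in \eqref{BRB} simultaneously by a strong induction on $n$, using the recursive structure of the $\selfs$ and the labelled $\dashed$ edges. The base case $\Theta^{(n)}_2 = \Theta^\circ$ is exactly \eqref{thetaxy}, which gives $|\Theta^\circ_{xy}| \prec B_{xy}$. For the inductive step, I would first establish the bound on $(\Theta^{(n)}_k)_{xy}$ and then use it, together with a geometric-series argument, to control the full $\Theta^{(n)}_{xy}$.

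First I would analyze $\Theta^{(n)}_k$ via its defining formula \eqref{chain S2k} as a sum of chains $\Theta^\circ \Sele_{k_1}\Theta^\circ \Sele_{k_2}\cdots \Sele_{k_l}\Theta^\circ$ with $\sum_i k_i - 2(l-1) = k$ and each $k_i \ge 4$. The key analytic input is a convolution-type estimate: if $|A_{xy}| \prec W^{a} B_{xy}$ and $|C_{xy}| \prec W^{c} B_{xy}$ with the additional ``summability" control coming from property \eqref{3rd_property0} on the $\selfs$, then the product $(A \Sele C)_{xy}$ obeys a bound of the form $W^{a+c}\cdot(\text{loss from }\Sele)\cdot B_{xy}$. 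Concretely, one uses that $\sum_z B_{xz} B_{zy} \lesssim W^{-d}\cdot(\text{something}) + \ldots$ and, crucially, that $\sum_x |(\Selek_{2l})_{0x}| $ is small by \eqref{3rd_property0} while the pointwise bound $|(\Selek_{2l})_{0x}| \prec W^{-(l-2)d}B_{0x}^2$ from \eqref{4th_property0} supplies the spatial decay. Combining the pointwise decay of $B_{xy}$ with the $\ell^1$-smallness of each self-energy insertion, a chain with $l$ insertions of total "weight" $\sum(k_i-2) = k-2$ contributes at most $W^{-(k-2)d/2}B_{xy}$ after summing over all the intermediate atoms; here $d\ge 6$ ensures the relevant convolutions of $B$-propagators do not lose powers of $L/W$ that would overwhelm the gain $W^{-(k_i-2)d/2}$ from each insertion. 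Since there are only $\OO(1)$ chains for each fixed $k$ (as $n$ is fixed), summing them preserves the bound, and one should also check the restriction $|x-y| \le \eta^{-1/2}W^{1+\tau}$ in the definition of $B_{xy}$ propagates correctly through the convolutions down to $\eta \ge W^{-5}L^{5-d}$.

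Next, for $\Theta^{(n)} = (1 - \Theta^\circ \wtSdelta^{(n)})^{-1}\Theta^\circ$, I would expand in the Neumann series $\Theta^{(n)} = \sum_{j\ge 0}(\Theta^\circ \wtSdelta^{(n)})^j \Theta^\circ$ and reorganize by scaling order, identifying $\Theta^{(n)} = \sum_{k\ge 2} \Theta^{(n)}_k$ up to terms of order exceeding what is controlled; alternatively, and more cleanly, I would bound the series directly. Each term $(\Theta^\circ \wtSdelta^{(n)})^j\Theta^\circ$ with $j$ self-energy insertions carries a factor at least $W^{-jd}$ (since each $\Sele_{2l}$ with $2l\ge 4$ contributes a gain bounded by $W^{-d}$ in the relevant operator-norm-type sense, using \eqref{3rd_property0} and $d\ge 6$), while each $\Theta^\circ$ factor contributes $B$-propagator decay; summing the geometric series in $j$ then yields $|\Theta^{(n)}_{xy}| \prec B_{xy}$ with no loss. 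The main obstacle I anticipate is the bookkeeping in the convolution estimates for chains of $B$-propagators against self-energy kernels: one must carefully track that the $\ell^1$ bound \eqref{3rd_property0} (which contains a potentially large $L^2/W^2$ term in its last summand) combines with the pointwise decay \eqref{4th_property0} so that the net effect of each insertion is genuinely a gain of a full power of $W^{-d/2}$ (or $W^{-d}$ in operator norm), rather than merely breaking even; this is where the dimension restriction $d\ge 6$ is used and where the argument is most delicate. I expect the proof of these convolution lemmas — essentially variants of the estimates in \cite{BandI,BandII} — to constitute the bulk of the work, with the induction itself being a routine wrapper.
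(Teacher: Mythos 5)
There is a genuine gap in the way you propose to handle the self-energy insertions. You read \eqref{3rd_property0} as saying that $\sum_x |(\Sele_{2l})_{0x}|$ is small, but \eqref{3rd_property0} only controls the \emph{signed} sum $\bigl|\sum_x (\Sele_{2l})_{0x}\bigr|$; the sum of absolute values is of order $W^{-(l-2)d}\cdot W^{-d}$ by \eqref{4th_property0}, which is not small in the sense your argument needs. Consequently, the combination ``pointwise decay of $B$ plus $\ell^1$-smallness of the insertion'' does not give the claimed bound: using only \eqref{4th_property0}, a single insertion yields $\sum_{z,w} B_{xz}\,B_{zw}^2\,B_{wy}\sim W^{-d}\,W^{-4}\langle x-y\rangle^{4-d}=W^{-d}B_{xy}\,(\langle x-y\rangle/W)^2$, i.e.\ you lose a factor up to $(L/W)^2$ per insertion \emph{regardless of the dimension}, so your claim that $d\ge 6$ prevents this loss is not the correct mechanism. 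What actually restores the $B_{xy}$ profile is a cancellation (renormalization) argument: one writes $\sum_z\Theta^\circ_{xz}(\Sele)_{zw}=\sum_z\bigl(\Theta^\circ_{xz}-\Theta^\circ_{xw}\bigr)(\Sele)_{zw}+\Theta^\circ_{xw}\sum_z(\Sele)_{zw}$, bounds the second piece by the sum-zero property \eqref{3rd_property0}, and gains two powers in the first piece from a discrete second-order Taylor expansion of $\Theta^\circ$, where the first-order term vanishes by the symmetry \eqref{two_properties0}. Without this step, neither your chain estimate for $\Theta^{(n)}_k$ nor your Neumann-series bound for $\Theta^{(n)}$ closes: an operator-norm gain of $W^{-d}$ per insertion does not by itself propagate the entrywise bound $B_{xy}$ through the geometric series.

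For comparison, the paper does not reprove the lemma at all: it invokes Lemma 6.2 of \cite{BandI} (where the statement is proved for $\Theta$ and $\eta\ge W^2/L^2$ by exactly the renormalization mechanism above) and only remarks that the argument carries over to $\Theta^\circ$ and $\eta\ge W^{-5}L^{5-d}$ because one can use $|m(z)|\le 1$ together with the spectral gap $1-\|S^\circ\|\gtrsim W^2/L^2$, which keeps the bound \eqref{thetaxy} and the needed regularity of $\Theta^\circ$ valid at these much smaller $\eta$. Your proposal gestures at checking the small-$\eta$ regime, but the decisive input there is precisely this spectral gap of $S^\circ$ at $1$ (replacing the lower bound on $1-|m|^2$ used for larger $\eta$), which you would need to state and use explicitly.
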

	\begin{proof}
		The estimate \eqref{BRB} was proved in Lemma 6.2 of \cite{BandI} with $\eta\ge W^2/L^2$ and $\Theta^{\circ}$ replaced by $\Theta$. But, as we remarked below \eqref{Japanesebracket}, the same proof also works for our setting by using $|m(z)|\le 1$ and the spectral gap of $S^\circ$ at $1$. So we omit the details. 
	\end{proof}

	\subsection{Definition of the $T$-expansion}
	
	With the above preparations, in this subsection, we define the $n$-th order $T$-expansion for any fixed $n$.  We first introduce the concepts of recollision graphs and $Q$-graphs. 
	
	\begin{defn}[Recollision graphs and $Q$-graphs]\label{Def_recoll}
		{(i)} We say a graph is a recollision graph, if there is at least one dotted edge connecting an internal atom to an external atom. By this definition, a recollision graph 
  represents an expression where we set at least one summation (or free) index 
  to be equal to a fixed index value. 
		
		\vspace{5pt}
		\noindent{(ii)} We say a graph is a $Q$-graph if all  $G$ edges and weights in the graph have the same $Q_x$ label for a specific atom $x$. 
		In other words, the value of a $Q$-graph can be written as $Q_x(\Gamma)$ for an external atom $x$ or $\sum_x Q_x(\Gamma)$ for an internal atom $x$, where $\Gamma$ is a graph without $P/Q$ labels. 
	\end{defn}

 To give some simple examples, we consider the second order $T$-expansion in \eqref{eq:2nd}. If we set $x=\fb_1$ or $\fb_2$ in the graph $\sum_{x,y} m\Theta^{\circ}_{\fa x} s_{xy}   (G_{yy}-m) G_{x \fb_1} \overline G_{x \fb_2}$, we get a recollision graph. For example,
 \be\label{eq:simplerecoll} 
 \sum_{x,y} m\Theta^{\circ}_{\fa x} s_{xy}   (G_{yy}-m) \delta_{x\fb_1} (1-\delta_{x\fb_2})G_{x \fb_1} \overline G_{x \fb_2} 
 \ee
 is a $\fb_1$-recollision graph. Also notice that the graphs in $\sum_x \Theta^{\circ}_{\fa x} \mathcal Q^{(2)}_{x,\fb_1\fb_2} $ are all $Q$-graphs.

	\begin{defn} [$n$-th order $T$-expansion]\label{defn genuni}
		Fix any $n\in \N$ and a large constant $D>n$. For $\fa, \fb_1,\fb_2 \in \Z_L^d$, an $n$-th order $T$-expansion of $T_{\fa,\fb_1\fb_2}$ with $D$-th order error is an expression of the following form: 
		\begin{equation}
			\label{mlevelTgdef}
			\begin{split}
				T_{\fa,\fb_1 \fb_2}&= 
				m \Theta^{(n)}_{\fa \fb_1}\overline G_{\fb_1\fb_2} + \frac{|m|^2}{2\ii N\eta} (G_{\fb_2 \fb_1} - \overline G_{\fb_1 \fb_2}) \\
				&+ \sum_x \Theta^{(n)}_{\fa x}\left[\PTn_{x,\fb_1 \fb_2} +  \ATn_{x,\fb_1\fb_2}  + \Wn_{x,\fb_1\fb_2}  + \QTn_{x,\fb_1\fb_2}  +  (\Err_{n,D})_{x,\fb_1\fb_2}\right]\, ,
			\end{split}
		\end{equation}
		where the right side is a sum of $\OO(1)$ many graphs satisfying the following properties.

		\begin{enumerate}

			\item Every graph in $\PTn_{x,\fb_1\fb_2}$, $\ATn_{x,\fb_1\fb_2}$, $\Wn_{x,\fb_1\fb_2}$, $\QTn_{x,\fb_1\fb_2} $ and $(\Err_{n,D})_{x,\fb_1\fb_2}$ is a normal graph (recall Definition \ref{defnlvl0}) with external atoms $x, \fb_1,\fb_2$. Furthermore, in every graph,
			\begin{itemize}
				\item 		there is an edge, blue solid or $\dashed$ or dotted, connected to  $\fb_1$;
				
				\item there is an edge, red solid or $\dashed$ or dotted, connected to $\fb_2$.
			\end{itemize} 
			
			\item The sequence of $\selfs$ $\Sele_{k}$, $4\le k \le n$, satisfy Definition \ref{collection elements}.

			\item $\PTn_{x,\fb_1\fb_2}$ is a sum of recollision graphs (cf. Definition \ref{Def_recoll}) of scaling order $\ge 3$ and without any $P/Q$ label or free edge. 

			\item  $\ATn_{x,\fb_1\fb_2}$ is a sum of higher order graphs of scaling order $> n$ and without any  $P/Q$ label or free edge.  
			
			\item  $\Wn_{x,\fb_1\fb_2}$ is a sum of graphs of scaling order $\ge 3$, without any  $P/Q$ label and with exactly one free edge. 
			
			\item $\QTn_{x,\fb_1\fb_2} $ is a sum of $Q$-graphs (cf. Definition \ref{Def_recoll}) without any free edge.  
			
			
			
			\item $(\Err_{n,D})_{x,\fb_1\fb_2}$ is a sum of error graphs of scaling order $> D$ (these graphs may contain $P/Q$ labels and hence are not included into $\ATn_{x,\fb_1\fb_2}$).

			\item The graphs in $\PTn_{x,\fb_1\fb_2}$, $\ATn_{x,\fb_1\fb_2}$, $\Wn_{x,\fb_1\fb_2}$, $\QTn_{x,\fb_1\fb_2}$ and $(\Err_{n,D})_{x,\fb_1\fb_2}$ are doubly connected in the sense of Definition \ref{def 2net}. Moreover, the free edge in every graph of $\Wn_{x,\fb_1\fb_2}$ is redundant, that is, removing the free edge does not break the doubly connected property.
			
			
			%
			
		\end{enumerate}
		The graphs in $\PTn$, $\ATn$, $\Wn$, and $\QTn$ actually satisfy some additional graphical properties, which will be given later in Definition \ref{def genuni2}.
	\end{defn}
	
We now explain heuristically why we expect to have these terms $\PTn$, $\ATn$, $\Wn$, $\QTn$ and $\Err_{n,D}$ in the $T$-expansion. All these graphs come from further expanding the graphs in \eqref{eq:2nd} (with local and global expansions that will be given in Appendices \ref{sec_operations} and \ref{sec_goperations}). First, we will get a recollision graph in $\PTn$ if we add a dotted edge between a pair of internal and external atoms that are connected by a $G$ edge (see e.g., the graph in \eqref{eq:simplerecoll}). Second, the $Q$-graphs will appear when we apply Gaussian integration by parts, that is, given a graph, say $\cal G=P_x(\cal G)+Q_x(\cal G)$, we will apply Gaussian integration by parts to $P_x(\cal G)$, while $Q_x(\cal G)$ will be included in $\QTn$. Third, when constructing \eqref{mlevelTgdef}, we sometimes need to apply a lower order $T$-expansion (e.g., the second order $T$-expansion \eqref{eq:2nd}) to a $T$-variable. Then, the graph obtained by replacing the $T$-variable with the second term in \eqref{eq:2nd} is a graph with one free edge and will be included in $\Wn$. Finally, the remaining graphs that have no ``special structures" are of sufficiently high scaling order and will be included in $\ATn$. The error graphs in $\Err_{n,D}$ appear when we truncate an expansion at a very large order $D$. For example, in the $Q$-expansion that will be discussed in \Cref{sec_goperations}, we need to perform truncated Taylor expansions of weights $(G_{xx})^{-1}$. The graphs with truncated errors will be included in $\Err_{n,D}$ and they may contain $P/Q$ labels.

	To prove the local law, Theorem \ref{thm: locallaw}, we need to construct the $T$-expansion up to arbitrarily high order.  
	
	\begin{thm}[Construction of $T$-expansions]   \label{completeTexp} 
		Given any $M\in \N$, 
		we can construct a sequence of $n$-th order $T$-expansions satisfying Definition \ref{defn genuni} for all $2\le n \le M$.
	\end{thm}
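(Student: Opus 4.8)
The plan is to prove Theorem~\ref{completeTexp} by induction on $n$, starting from the second order $T$-expansion in Lemma~\ref{2ndExp} as the base case $n=2$. The inductive step takes an $n$-th order $T$-expansion satisfying Definition~\ref{defn genuni} and produces an $(n+1)$-th order one by processing the ``unfinished'' parts of the expansion---namely, the higher order graphs in $\ATn_{x,\fb_1\fb_2}$ of scaling order exactly $n+1$ (those of order $>n+1$ are simply left in $\mathcal A^{(>n+1)}$) and the $Q$-graphs in $\QTn_{x,\fb_1\fb_2}$. For each such graph of scaling order $n+1$, I would apply a further round of expansions: the key operations are (a) applying $Q_x = 1 - P_x$ to split off $Q$-graphs, (b) using Gaussian integration by parts (as in Section~\ref{sec_operations}) to expand resolvent entries $G_{x\cdot}$ where $x$ is a summation index touched by a waved edge, thereby generating new waved edges, light weights, and higher-order graphs, and (c) the identity \eqref{GmH} to rewrite $G - m$ factors. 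Each operation either raises the scaling order (pushing the resulting graph into $\ATn[>n+1]$ or $\Err_{n+1,D}$), or produces a graph that is again a recollision graph, a $Q$-graph, or a graph with a redundant free edge, each of which lands in the appropriate component of the $(n+1)$-th order expansion.

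The second, and genuinely substantial, ingredient is the construction and extraction of the order-$(n+1)$ self-energy $\Sele_{n+1}$. Among the deterministic graphs of scaling order $n+1$ produced in the expansion, there will be a family of two-point ``self-energy-type'' subgraphs sitting between two $\dashed$ edges; these must be collected and shown to satisfy all the properties of Definition~\ref{collection elements}---in particular the size bound \eqref{4th_property0} and, crucially, the sum-zero estimate \eqref{3rd_property0}. The sum-zero property is what the excerpt explicitly flags as being proved separately in Section~\ref{sec_pf_sumzero}, so here I would only need to cite it. Once $\Sele_{n+1}$ is identified, I would absorb it into the renormalized propagator by passing from $\Theta^{(n)}$ to $\Theta^{(n+1)} = (1 - \Theta^\circ \wtSdelta^{(n+1)})^{-1}\Theta^\circ$ via the resolvent-type identity $\Theta^{(n+1)} = \Theta^{(n)} + \Theta^{(n)}\Sele_{n+1}\Theta^{(n+1)}$, which is exactly how the definition \eqref{theta_renormal1}--\eqref{chain S2k} is engineered to telescope. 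The leading term $m\Theta^{(n)}_{\fa\fb_1}\overline G_{\fb_1\fb_2}$ in \eqref{mlevelTgdef} then upgrades to $m\Theta^{(n+1)}_{\fa\fb_1}\overline G_{\fb_1\fb_2}$, with the correction reinserted as new $\dashed$-edge factors carrying labels $(n+1)$ and $(n+1;k)$.

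Throughout, I would carry the doubly connected property (Definition~\ref{def 2net}) as part of the induction hypothesis and verify that it is preserved under each graph operation: Gaussian integration by parts on a solid edge touching a waved edge creates new edges in a way that can be shown to keep both a black spanning net (of $\dashed$ edges) and a blue spanning net; the new free edges introduced are redundant by construction, as required by item (8) of Definition~\ref{defn genuni}. The new self-energy $\dashed$ edges $\Theta^{(n+1)}_k$, being bounded by $B_{xy}$ up to $W^{-(k-2)d/2}$ factors (Lemma~\ref{lem redundantagain}), behave exactly like $\Theta^\circ$ edges for net-connectivity purposes. I would also need to check that the new $\self$ graphs only involve the allowed edge types listed in Definition~\ref{collection elements}(i)---no free edges, no $\Theta^{(n)}$ edges with $n\ge 3$, only $\Theta^{(m)}_k$ with $2\le k\le m\le 2l-1$---which follows because those higher $\dashed$ edges are introduced only when stitching $\selfs$ into the propagator, not inside a single $\self$ graph.

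The main obstacle I expect is twofold: first, the bookkeeping of showing that the deterministic order-$(n+1)$ two-point graphs genuinely organize into a self-energy satisfying the sharp sum-zero bound \eqref{3rd_property0}---this is the one place where a naive power count fails and a genuine cancellation (the ``sum zero property'') must be invoked, and its proof (deferred to Section~\ref{sec_pf_sumzero}) is where the real analytic content lies. Second, and specific to this paper as opposed to \cite{BandI,BandII}, one must track the new zero-mode term $\frac{|m|^2}{2\ii N\eta}(G_{\fb_2\fb_1} - \overline G_{\fb_1\fb_2})$ through every expansion step and confirm it is never regenerated or corrupted---i.e., that working with $\Theta^\circ$ rather than $\Theta$ consistently keeps the zero mode isolated in that single explicit term, so that all remaining graphs are controlled for $\eta \ge \etas$ rather than only $\eta \gg W^2/L^2$. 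Since the excerpt states that the relevant construction details are relegated to Appendices~\ref{sec notation}--\ref{sec_pf_mG}, my proof sketch at this point in the paper would be to set up the induction, state precisely what must be checked at each step, and defer the lengthy graph-operation verifications and the sum-zero input to those later sections.
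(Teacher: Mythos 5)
Your proposal is correct in substance and follows the same overall route as the paper: induction on the order with Lemma \ref{2ndExp} as the base case, local and global graph expansions with stopping rules, extraction of deterministic two-point self-energy graphs, the sum-zero property invoked as a separate input (Proposition \ref{cancellation property}, proved in Section \ref{sec_pf_sumzero}), and renormalization of the propagator. The organization of the induction differs, though, in a way worth noting. The paper does not upgrade the $n$-th order expansion by re-expanding its order-$(n+1)$ remainder; instead (Proposition \ref{Teq}, Strategy \ref{strat_global}) it restarts from the second order expansion \eqref{eq:2nd} and expands with stopping rules at order $n+1$, using the $(n-1)$-th order $T$-expansion only as the substitution rule \eqref{replaceT} for $t$-variables in global steps. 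The output is first organized into the $n$-th order \emph{$T$-equation} \eqref{mlevelT incomplete}, whose propagators are the bare $\Theta^{\circ}$ and whose self-energy term is $\sum_x(\Theta^{\circ}\Sigma^{(n)})_{\fa x}T_{x,\fb_1\fb_2}$; the $T$-expansion is then obtained by solving this linear equation for $T^{\circ}$, which produces $\Theta^{(n)}=(1-\Theta^{\circ}\Sigma^{(n)})^{-1}\Theta^{\circ}$ in one shot (your telescoping identity $\Theta^{(n+1)}=\Theta^{(n)}+\Theta^{(n)}\Sele_{n+1}\Theta^{(n+1)}$ is algebraically equivalent but is not how the paper proceeds). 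This organization is precisely what resolves the zero-mode issue you flag as an obstacle: by the translation invariance \eqref{two_properties0} of $\Sigma^{(n)}$ and $\Theta^{\circ}\mathbf e=0$ one has $\sum_y(\Theta^{\circ}\Sigma^{(n)})_{xy}=0$ (equation \eqref{sum_TE0}), so the self-energy term only acts on $T^{\circ}$ and the explicit zero-mode term in \eqref{eq:TTC} passes through the inversion untouched; in your incremental variant you would need the analogous fact $\Theta^{(n)}\mathbf e=0$ (which does hold, but should be stated). Two smaller corrections: the $Q$-graphs in $\QTn$ are terminal by the stopping rules and require no further processing, and the step you gesture at for keeping $\Sele_{n+1}$ within the edge types of Definition \ref{collection elements}(i) is carried out explicitly in the paper by expanding the $\Theta^{(n-1)}$ edges inside the collected deterministic graphs via \eqref{chain S2k} and moving the remainder into $\mathcal A^{(>n)}$; also note that the consistency property (1) of Definition \ref{def genuni2} and the identity $\Sigma^{(n)}=\mathcal E_n+\Sigma^{(n-1)}$ must be verified, which the paper does by reference to the argument in \cite{BandII}.
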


	As a special case, when $\fb_1=\fb_2=\fb$, \eqref{mlevelTgdef} gives an expansion of the $T$-variable $T_{\fa\fb}$. Later in Section \ref{sec nondeter}, using the doubly connected property of the graphs in \eqref{mlevelTgdef}, we will show that very roughly speaking, the second term on the right side of \eqref{mlevelTgdef} provides a factor $(N\eta)^{-1}$ and the other terms can be bounded by $B_{\fa\fb}$.
	This leads to the local law, Theorem \ref{thm: locallaw}, since we know that  $T_{\fa\fb}$ controls the size of $|G_{\fa\fb}|^2$ (see Lemma \ref{lem G<T} below).

	\section{Complete expansions}\label{sec_complete_T}
	
	Letting $\fb_1=\fb_2=\fb$, we notice that \eqref{mlevelTgdef} is actually an incomplete expansion of $\fb_1=\fb_2=\fb$
	due to the random graphs in $\PTn$, $\ATn$, and $\Wn$. In fact, since the diagonal $G$ entries are close to $m$ by the local law \eqref{locallaw}, the first two terms on the right side of \eqref{mlevelTgdef} give the two leading deterministic terms:
	$ |m|^2  \Theta^{(n)}_{\fa \fb}  + |m|^2 \frac{ \im m}{ N\eta} .$ 
	The $Q$-graphs in $\QTn$ are fluctuation terms with zero expectation, and the error graphs in $\Err_{n,D}$ are always negligible as long as $D$ is taken sufficiently large. We need to further expand the graphs in $\PTn$, $\ATn$, and $\Wn$ into sums of \emph{deterministic graphs}, \emph{fluctuation graphs}, and \emph{error graphs}. We call the expansion thus obtained a \emph{complete $T$-expansion}. In other words, the complete $T$-expansion gives the exact expectation of $T_{\fa\fb}$ in terms of a sum of deterministic graphs plus a negligible error. 
	
	\subsection{$\Nonuni$}
	
	Before defining the complete $T$-expansion, we introduce another type of edge. 
	
	\begin{defn}[Ghost edges]
		We use a dashed edge between atoms $x$ and $y$ to represent a $W^2/L^2$ factor and call it a \emph{ghost edge}. We do not count ghost edges when calculating the scaling order of a graph, i.e., the scaling order of a ghost edge is $0$. Moreover, the doubly connected property in Definition \ref{def 2net} is extended to graphs with ghost edges by including these edges in the blue net. Finally, every ghost edge is associated with a $L^2/W^2$ factor in the coefficient of a graph.
	\end{defn}
	
	Like free edges, ghost edges have no indices and can be moved freely to other places without changing the graph. Both ghost and free edges are introduced to maintain the doubly connected property and some other graphical properties (e.g., the SPD property in Definition \ref{def seqPDG} and the generalized doubly connected property in Definition \ref{def 2netex}). 
	
	For any graph $\mathcal G$, let $k_\gh(\mathcal G) $ denote the number of ghost edges in $\cal G$. Define
	\begin{equation}
		\label{eq:def-size}
		\size(\mathcal G): = \Big( \frac{L^2}{W^2}\Big)^{k_\gh(\mathcal G)} W^{-\ord(\mathcal G) \soe}\,,
	\end{equation}
	where (recall that $\delta_0$ is the constant introduced in \Cref{thm: locallaw})
	\begin{equation}
		\label{eq:def-soe}
		\soe:= \begin{cases}
			\delta_0/2,& \etas\le \eta <(W/L)^2\,,\\
			d/2, & \eta \geq (W/L)^2\,.
		\end{cases}
	\end{equation}
	Roughly speaking, $\soe$ is chosen such that $B_{xy} + (N\eta)^{-1} =\OO(W^{-2\soe}) $ for $\eta \ge \etas$. 
	The following lemma shows that as long as there is a $T$-expansion of sufficiently high order, we can construct a $\nonuni$.
	
	\begin{lemma}[$\Nonuni$]\label{def nonuni-T}
		Under the assumptions of Theorem \ref{thm: locallaw}, suppose the local law 
		\begin{equation}\label{locallaw0}
			|G_{xy} (z) -m(z)\delta_{xy}|^2 \prec B_{xy} + \frac{1}{N\eta}
		\end{equation}
		holds for a fixed $z= E+\ii\eta$ with $|E|\le  2-\kappa$ and $\eta \in [\etas,1]$. Fix any $n \in \N$ sufficiently large such that 
		\begin{equation}\label{Lcondition10}  
			{L^2}/{W^2}  \le W^{(n-1) \soe -c_0} 
		\end{equation}
		for some constant $c_0>0$. Suppose we have an $n$-th order $T$-expansion satisfying Definition \ref{defn genuni}. Then, for any large constant $D>0$,  $T_{\fa,\fb_1 \fb_2}$ can be expanded into a sum of $\OO(1)$ many normal graphs (which may contain ghost and free edges): 
		\begin{equation}
			\label{mlevelTgdef weak}
			\begin{split}
				T_{\fa,\fb_1 \fb_2} & =   m  \wt \Theta_{\fa \fb_1}\overline G_{\fb_1\fb_2}  + \frac{|m|^2}{2\ii N\eta} (G_{\fb_2 \fb_1} - \overline G_{\fb_1 \fb_2})+ \Err_{\fa,\fb_1 \fb_2}\\
				&+\sum_{\mu} \sum_x \wt \Theta_{\fa x}\mathcal D^{(\mu)}_{x \fb_1}\overline G_{\fb_1\fb_2} f_\mu (G_{\fb_1\fb_1})+   \sum_\nu \sum_{x} \wt \Theta_{\fa x}\mathcal D^{(\nu)}_{x \fb_2} G_{\fb_2\fb_1} \wt f_\nu(G_{\fb_2\fb_2})  \\
				&+  \sum_{\gamma} \sum_{x} \wt \Theta_{\fa x}\mathcal D^{(\gamma)}_{x , \fb_1 \fb_2}g_\gamma(G_{\fb_1\fb_1},G_{\fb_2\fb_2},\overline G_{\fb_1\fb_2},  G_{\fb_2\fb_1})+ \sum_{\omega}\sum_{x}\wt \Theta_{\fa x} \mathcal Q^{(\omega)}_{x,\fb_1\fb_2}   .
			\end{split}
		\end{equation} 
		The graphs on the right-hand side satisfy the following properties. 
		\begin{enumerate}
			\item $\Err_{\fa,\fb_1 \fb_2}$ is an error term satisfying $\Err_{\fa,\fb_1 \fb_2}\prec W^{-D}.$ 
			
			\item $|\wt \Theta_{xy}| \prec B_{xy}$ and $\mathcal D^{(\mu)}_{x \fb_1}$, $\mathcal D^{(\nu)}_{x \fb_2}$, $\mathcal D^{(\gamma)}_{x , \fb_1 \fb_2}$ are deterministic doubly connected graphs with size $\leq W^{-c_0}$. 
			\item $f_\mu(\cdot)$, $\wt f_\nu(\cdot)$, and $g_\mu(\cdot)$ are monomials.
			\item $\mathcal Q^{(\omega)}_{\fa,\fb_1\fb_2} $ are doubly connected $Q$-graphs. (They actually satisfy some additional graphical properties, which will be given in Lemma \ref{def nonuni-T extra}.)
		\end{enumerate}
		
	\end{lemma}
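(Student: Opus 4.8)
\textbf{Proof proposal for Lemma \ref{def nonuni-T} (Complete $T$-expansion).}

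The plan is to start from the $n$-th order $T$-expansion \eqref{mlevelTgdef} and iteratively expand the three "problematic" pieces $\PTn_{x,\fb_1\fb_2}$, $\ATn_{x,\fb_1\fb_2}$ and $\Wn_{x,\fb_1\fb_2}$ until every remaining random graph is either a $Q$-graph, an error graph of scaling order $>D$, or a deterministic graph times a monomial in the diagonal entries $G_{\fb_1\fb_1}, G_{\fb_2\fb_2}, \overline G_{\fb_1\fb_2}, G_{\fb_2\fb_1}$. The key engine is the one-step expansion machinery already used for the $T$-expansion itself: given a random graph, isolate an off-diagonal $G$ edge incident to a "live" internal atom, write $G = -m(H+m)G + m$ via \eqref{GmH}, apply Gaussian integration by parts (Section \ref{sec_operations}), and collect the resulting terms according to whether they (a) increase the scaling order, (b) produce a $Q$-label, or (c) create a recollision (a dotted edge to an external atom). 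Each such step either strictly raises the scaling order or reduces the number of unexpanded random edges, so after finitely many steps every branch terminates; the condition \eqref{Lcondition10} is exactly what guarantees that the accumulated $(L^2/W^2)^{k_\gh}$ ghost-edge factors are dominated by the gain $W^{-\ord\cdot\soe}$ from the scaling order, so that graphs of order $>n$ have size $\le W^{-c_0}$ and can be absorbed into the error once $\ord > D$.

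First I would formalize the termination: define a complexity measure on graphs (e.g.\ $D - \ord(\cal G)$ together with the number of $G$ edges not yet carrying a $P$ or $Q$ label and not yet pinned by a dotted edge) and show each elementary expansion operation strictly decreases it, invoking the local law hypothesis \eqref{locallaw0} only at the very end to bound the terminal graphs. Second, I would check that the doubly connected property is preserved at every step — this is where ghost and free edges enter: when integration by parts produces a term where removing an edge would disconnect one of the two spanning trees (the black net or the blue net), we insert a ghost edge ($W^2/L^2$ factor) or a free edge ($(N\eta)^{-1}$ factor) to repair connectivity, exactly as in the construction of the $T$-expansion. One must verify that these insertions are legitimate, i.e.\ that the inserted factor is indeed an upper bound for the corresponding sum over the removed index (this uses Ward's identity \eqref{eq_Ward0} and the bound \eqref{thetaxy} on $\Theta^\circ$), and that ghost edges only ever get added to the blue net. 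Third, I would identify the deterministic graphs: whenever a branch reaches a graph all of whose remaining $G$ edges can be resolved into $m$-values plus negligible fluctuations, the diagonal weights at $\fb_1$ and $\fb_2$ get collected into the monomials $f_\mu, \wt f_\nu, g_\gamma$ and the rest is a deterministic doubly connected subgraph $\mathcal D^{(\cdot)}$ whose size is controlled by its scaling order via the argument sketched after Theorem \ref{completeTexp}; the leading such graph is the renormalized propagator, which I would rename $\wt\Theta$ absorbing $\Theta^{(n)}$ together with all deterministic self-energy-free corrections, and verify $|\wt\Theta_{xy}|\prec B_{xy}$ using Lemma \ref{lem redundantagain}.

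The main obstacle I expect is bookkeeping the interaction between the scaling-order gain and the ghost-edge cost in a way that is uniform over all branches of the expansion tree. Each integration-by-parts step can spawn $\OO(1)$ children but over many steps the tree has bounded-but-growing size, and one must ensure that (i) no branch ever needs more than $\OO(1)$ ghost edges before its order exceeds $D$, and (ii) the $\OO(1)$ in "$\OO(1)$ many graphs" does not blow up — both follow from \eqref{Lcondition10} plus the fact that $n$ (and hence $D$) is fixed, but making this precise requires carefully tracking that every ghost-edge insertion is accompanied by a scaling-order increase of at least $2$, so that $k_\gh \le \ord/2$ and $\size(\cal G) \le W^{-c_0}$ persists down to order $D$. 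A secondary technical point is the separation of the zero mode: the $\frac{|m|^2}{2\ii N\eta}(G_{\fb_2\fb_1}-\overline G_{\fb_1\fb_2})$ term must be carried through the expansion untouched (it is already "complete"), and one must check that the expansion of the remaining terms never reintroduces an uncontrolled $\frac{\im m}{N\eta}$ factor — this is ensured by working throughout with $\Theta^\circ$ (and its renormalizations $\Theta^{(n)}$) rather than $\Theta$, exactly as set up in Section \ref{sec_second_T}. The additional graphical properties of the $Q$-graphs $\mathcal Q^{(\omega)}$ alluded to in item (4) I would defer to Lemma \ref{def nonuni-T extra} and only note here which invariants (e.g.\ the SPD-type property) are maintained by each expansion step.
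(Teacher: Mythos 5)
There is a genuine gap in how you terminate the expansion and how you define and bound $\wt\Theta$. In the paper's argument (Strategy \ref{strat_global_weak}), the expansion of $\sum_x \Theta^{(n)}_{\fa x}(\PTn+\ATn+\Wn)$ is \emph{not} pushed all the way to ``deterministic graph times a monomial in the diagonal entries'': the stopping rule (T1) stops at graphs whose internal part is deterministic and locally standard, and such graphs still end in a standard neutral atom attached to $\fb_1,\fb_2$, i.e.\ they carry a full $T$-variable $T_{y,\fb_1\fb_2}$. Consequently the output is a self-consistent linear equation \eqref{mlevelTgdef weak2} containing the term $\sum_\mu\sum_{x,y}\Theta^{\circ}_{\fa x}\wt{\cal D}^{(\mu)}_{xy}T_{y,\fb_1\fb_2}$, and the complete $T$-expansion \eqref{mlevelTgdef weak} is obtained only after \emph{solving} this equation, which is what defines $\wt\Theta=\big(1-\sum_\mu\Theta^\circ\wt{\cal D}^{(\mu)}\big)^{-1}\Theta^\circ$. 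Your plan of iterating until every random piece becomes deterministic would keep regenerating these $T$-factors and never reach the claimed form; the resummation step is the missing idea. Relatedly, your proposed verification of $|\wt\Theta_{xy}|\prec B_{xy}$ via Lemma \ref{lem redundantagain} does not work: that lemma relies on the sum-zero property \eqref{3rd_property0} of genuine $\selfs$, which the new deterministic insertions $\wt{\cal D}^{(\mu)}$ do \emph{not} satisfy. The paper instead uses their smallness, $\size(\wt{\cal D}^{(\mu)}_{xy})\le \frac{W^2}{L^2}W^{-2\soe-c_0}$ as in \eqref{size_wtD} (this is exactly where \eqref{Lcondition10} and the ghost-edge bookkeeping enter), combined with the deterministic estimate of Lemma \ref{dG-bd}, to show $\sum_\al\Theta^\circ_{x\al}\wt{\cal D}^{(\mu)}_{\al y}\prec W^{-c_0}\langle x-y\rangle^{-d}$ and then sums the Neumann series.

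Two smaller points. First, your justification of the ghost/free-edge insertions is off: adding a ghost edge is an exact identity (one multiplies the coefficient by $L^2/W^2$ and inserts a $W^2/L^2$ factor), and moving a free edge is legitimate because free edges carry no indices; neither step is an ``upper bound for the sum over the removed index'' via Ward's identity. Their sole purpose is to make the pivotal blue edge of a deterministic, strongly isolated, locally standard MIS redundant so that the global expansion (applying the $n$-th order $T$-expansion to the corresponding $t$-variable) can proceed without destroying the (generalized) doubly connected/SPD structure, while the $L^2/W^2$ cost is later beaten by the order gain thanks to \eqref{Lcondition10}. Second, recollision graphs are not terminal objects here (unlike in the construction of the $T$-expansion); only (T1)–(T3)-type graphs stop, with the size-$\le W^{-D}$ graphs absorbed into $\Err_{\fa,\fb_1\fb_2}$ via Lemma \ref{bdd_gene_double}, so your branch classification needs to be adjusted accordingly.
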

	
	Roughly speaking, taking $\fb_1=\fb_2=\fb$, the above lemma shows that we can write $T_{\fa\fb}$ as a sum of two leading terms (i.e., the first two terms on the right side of \eqref{mlevelTgdef weak}), a fluctuation term of mean zero, a negligible error term, and some deterministic terms (except for some external weights $G_{\fb\fb}$ and $\overline G_{\fb\fb}$).

	
	\subsection{Complete expansions of graphs with multiple external atoms}
	
	We can also extend the complete $T$-expansion to complete expansions of more general graphs with multiple external atoms. In other words, given a graph consisting of external atoms and edges between them, we want to expand it into a sum of deterministic graphs, $Q$-graphs, and error graphs. Moreover, we will show that the deterministic graphs are properly bounded. 
	
	To state the main result, Lemma \ref{mGep}, of this subsection, we need to introduce a new concept of simple auxiliary graphs. 
	
	\begin{defn}
		A graph is a simple auxiliary graph if it only contains external atoms and the following few types of edges: 
		\begin{itemize}
			\item pseudo-waved edges: a waved edge between $x$ and $y$ represents a $W^{-d}\mathbf 1_{|x-y|\le W^{1+\tau}}$ factor for an arbitrary small constant $\tau>0$; 
			\item pseudo-diffusive edges: a double-line edge between $x$ and $y$ represents a $B_{xy}$ factor; 
			\item silent diffusive edges: a green double-line edge between $x$ and $y$ represents a factor
			$\wt B_{xy}:= W^{-4}\langle x-y\rangle^{-(d-4)};$
			\item free edges;
			\item silent free edges: a green solid edge between $x$ and $y$ represents a factor 
			$\frac{1}{N\eta} \frac{L^2}{W^2}.$
		\end{itemize}
		All these edges are counted as edges of scaling order $2$. 
	\end{defn} 
	Pseudo-waved edges give bounds for waved edges by \eqref{app compact f} and \eqref{S+xy}, and pseudo-diffusive edges give bounds for diffusive and labeled diffusive edges by \eqref{thetaxy} and \eqref{BRB}. Silent pseudo-diffusive and free edges come from summations over internal atoms in the deterministic graphs obtained from complete expansions. Essentially, they come from the following estimates:
	$$\sum_{w}B_{xw}B_{wy}\lesssim \wt B_{xy},\quad \sum_{w}B_{xw}\frac{1}{N\eta}\lesssim \frac{1}{N\eta} \frac{L^2}{W^2}.$$

	\begin{lemma}
		\label{mGep}
		Under the setting of Lemma \ref{def nonuni-T}, suppose we have a $\nonuni$ \eqref{mlevelTgdef weak} for a fixed $z= E+\ii\eta$ with $|E|\le  2-\kappa$ and $\eta \in [\etas,1]$. Let $\mathcal G_{\mathbf x}(z)$ be a graph consisting of external atoms $\bx=(x_1,\cdots, x_p)$, all taking different values, and (non-ghost and non-silent) edges between them. 
		For any constant $D>0$, we have that
		\begin{equation}\label{EGx}
			\E[\mathcal G_{\mathbf x}] = \sum_\mu\mathcal G_{\mathbf x}^{(\mu)} + \OO(W^{-D})\,,
		\end{equation}
		where the right-hand side is a sum of $\OO(1)$ many deterministic normal graphs $\mathcal G_{\mathbf x}^{(\mu)}$ with internal atoms and without silent edges such that if $x_i$ and $x_j$ are connected in $\mathcal G_{\mathbf x}$, then they are also connected in $\mathcal G_{\mathbf x}^{(\mu)}$ through non-ghost edges.
		Furthermore, every $\mathcal G_{\mathbf x}^{(\mu)}$ is bounded by a sum of $\OO(1)$ many simple auxiliary graphs: 
		\begin{equation}\label{EGx2}
			|\mathcal G_{\mathbf x}^{(\mu)}|\prec \sum_{\gamma}c_\gamma(W,L) \mathcal G_{\mathbf x}^{(\mu,\gamma)}+ W^{-D}\, ,
		\end{equation}
		where $c_\gamma(W,L)$ are positive $(W,L)$-dependent coefficients. These simple auxiliary graphs satisfy the following properties.
		\begin{itemize}
			\item[(a)] If $x_i$ and $x_j$ are connected in $\mathcal G_{\mathbf x}^{(\mu)}$, then they are also connected in $\mathcal G_{\mathbf x}^{(\mu,\gamma)}$. Furthermore, if $x_i$ and $x_j$ are connected in {$\mathcal G_{\mathbf x}^{(\mu)}$} without using free or ghost edges, then they are also connected in \smash{$\mathcal G_{\mathbf x}^{(\mu,\gamma)}$} without using free or ghost edges.
			\item[(b)] 
			If $\cal G_{\mathbf x}^{(\mu,\gamma)}$ contains $k \geq 2$ non-isolated atoms (i.e., atoms which have at least one neighbor), then there are at least $\lceil k/2\rceil$ special atoms such that each of them is connected with a different non-silent edge.
			
			\item[(c)] Every $\mathcal G_{\mathbf x}^{(\mu,\gamma)}$ satisfies $c_\gamma(W,L)\size(\mathcal G_{\mathbf x}^{(\mu,\gamma)})\le W^{-2\soe(p-t)}$ under the definition \eqref{eq:def-size}, where $t$ denotes the number of connected components in $\mathcal G_{\mathbf x}^{(\mu,\gamma)}$.
		\end{itemize}
	\end{lemma}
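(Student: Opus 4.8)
\textbf{Proof strategy for Lemma \ref{mGep}.}

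The plan is to iteratively apply the $\nonuni$ (Lemma \ref{def nonuni-T}) and its extension to $T$-variables appearing inside $\mathcal G_{\mathbf x}$, much as in the proof of the analogous statement in \cite{BandII}, but keeping careful track of the zero mode and the size bookkeeping. First I would reduce to the case where $\mathcal G_{\mathbf x}$ is a product of $G$-edges and $\overline G$-edges among the external atoms $\bx$, since weights $G_{x_ix_i}$ can be replaced by $m + (G_{x_ix_i}-m)$ and the light weight absorbed into later expansions, and $S^\pm$, $\Theta^\circ$, dotted edges contribute only deterministic factors that do not obstruct anything. The core mechanism is: pick a $G$-edge $G_{x_iy}$ (or a chain of them) incident to $x_i$, write $G = m + (G-m) = m - m(H+m)G$ via \eqref{GmH}, take partial expectation $P_{x_i}$, and apply Gaussian integration by parts. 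This produces either (a) deterministic factors $m\delta$, (b) new $T$-variables $T_{x_i,\ast\ast}$, (c) $Q$-graphs, or (d) higher-order graphs. The new $T$-variables are then replaced by their $\nonuni$ \eqref{mlevelTgdef weak}, which reintroduces $\dashed$ edges $\wt\Theta$, the zero-mode term $\frac{|m|^2}{2\ii N\eta}(G-\overline G)$, deterministic subgraphs $\mathcal D^{(\cdot)}$ of size $\le W^{-c_0}$, and further $Q$-graphs. Each pass increases the minimal scaling order of the ``not-yet-deterministic'' part by a definite amount, so after $\OO_D(1)$ passes every remaining random graph has scaling order $> D$, hence is $\OO_\prec(W^{-D})$ by the size/doubly-connected bound. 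Collecting the deterministic graphs produced along the way gives \eqref{EGx}; the $Q$-graphs have zero expectation and are discarded; the error graphs give $\OO(W^{-D})$. Connectivity of $x_i,x_j$ is preserved at each step because replacing $G_{x_iy}$ by its expansion always leaves $x_i$ attached (to a $\wt\Theta$ edge or a weight or a dotted edge toward the rest of the graph), and $\nonuni$ graphs are doubly connected.

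For the quantitative bound \eqref{EGx2}, I would bound each deterministic graph $\mathcal G_{\mathbf x}^{(\mu)}$ edge-by-edge: replace each $G$/$\overline G$ edge and each $\wt\Theta$ / labelled $\dashed$ edge by a pseudo-diffusive edge via \eqref{thetaxy}, \eqref{BRB} and the local-law input $|G_{xy}-m\delta_{xy}|^2\prec B_{xy}+(N\eta)^{-1}$, replace waved edges by pseudo-waved edges via \eqref{app compact f} and \eqref{S+xy}, and replace free edges by free edges and ghost edges by their $L^2/W^2$ coefficients; the zero-mode free edges $(N\eta)^{-1}$ stay as free edges. Then I would perform the internal summations. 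The key combinatorial point is that summing an internal atom that sits between two pseudo-diffusive edges produces a \emph{silent} diffusive edge $\wt B_{xy}$ and summing one between a pseudo-diffusive and a free edge produces a \emph{silent} free edge, exactly the two estimates displayed before the statement; iterating this collapses all internal atoms, so that the resulting simple auxiliary graphs have external atoms only. Property (a) is automatic from the connectivity-preservation in the expansion together with the fact that these bounding substitutions never disconnect anything (a non-ghost, non-free path stays a non-ghost, non-free path after the substitution, and internal-atom elimination preserves adjacency). Property (b)—that a component with $k$ non-isolated atoms has $\ge\lceil k/2\rceil$ atoms each carrying a distinct non-silent edge—follows from the doubly connected property of the graphs entering the $\nonuni$: each component of the molecular graph carries a blue spanning tree and a black ($\dashed$) spanning tree, so every atom has degree $\ge 1$ in non-silent edges, and a tree on $k$ vertices has $k-1\ge\lceil k/2\rceil$ edges for $k\ge 2$; one then assigns to each such edge one of its endpoints as the ``special atom'', with distinctness forced by choosing endpoints along the tree. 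Property (c), the size bound $c_\gamma\,\size(\mathcal G_{\mathbf x}^{(\mu,\gamma)})\le W^{-2\soe(p-t)}$, is where the scaling-order accounting in Definition \ref{def scaling} pays off: each deterministic graph has scaling order $\ge 2(\text{number of internal summations not yet performed})$ plus the order carried by the $\mathcal D^{(\cdot)}$ blocks (each $\le W^{-c_0}$ of negative size), and after eliminating internal atoms the exposed structure connecting the $p$ external atoms in $t$ components needs at least $p-t$ ``linking'' edges each of size $\OO(W^{-2\soe})$; tracking the ghost-edge coefficients $(L^2/W^2)^{k_\gh}$ against the $W^{-\ord\cdot\soe}$ factors, using $L^2/W^2\le W^{(n-1)\soe-c_0}$ from \eqref{Lcondition10}, shows ghost edges never overwhelm the gain.

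The main obstacle I anticipate is the size bookkeeping in part (c) in the regime $\etas\le\eta<(W/L)^2$, where $\soe=\delta_0/2$ is tiny, so the per-order gain $W^{-\soe}$ is extremely weak and one cannot afford to be sloppy: every ghost edge must be paid for by genuine scaling order, and the zero-mode term $\frac{|m|^2}{2\ii N\eta}(G-\overline G)$ from each $\nonuni$ application must be handled so that the resulting $(N\eta)^{-1}$ factors combine with diffusive edges into silent free edges rather than accumulating dangerously; this is precisely the delicate point that the paper's new $T$-expansion (with the zero mode separated, as explained in Section \ref{sec_second_T}) is designed to make work down to $\eta\gg\etas$. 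A secondary technical nuisance is verifying that the doubly connected property (extended to include ghost edges in the blue net) is genuinely preserved through every expansion step and through the bounding substitutions, including the step where a $G$-edge is replaced by $m\delta$ (a dotted edge), which can merge molecules—one must check the merged molecular graph still has the two required spanning structures, which follows since a dotted edge is allowed in the blue net and contracting an edge of a connected graph keeps it connected. I would organize the induction on the number of $G$-edges in $\mathcal G_{\mathbf x}$, or equivalently on a suitable lexicographic complexity (number of random edges, then scaling-order deficit), with the $\nonuni$ and the $Q$-graph expansion lemma (the extension of Lemma \ref{def nonuni-T extra}) as the two black boxes driving each step.
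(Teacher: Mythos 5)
Your overall architecture (iterate the complete $T$-expansion until everything random is either a $Q$-graph or of scaling order $>D$, then bound the surviving deterministic graphs by pseudo/silent edges and sum out internal atoms leaf-to-root) matches the paper's, and your treatment of \eqref{EGx}, property (a), and the size accounting for (c) is in the right spirit. The genuine gap is in property (b). You derive it from the doubly connected property of the graphs \emph{before} the internal summations: every molecule lies on a blue and a black spanning tree, a tree on $k$ vertices has $k-1\ge\lceil k/2\rceil$ edges, and you assign endpoints along the tree. But property (b) is a statement about the \emph{final} simple auxiliary graphs $\mathcal G_{\mathbf x}^{(\mu,\gamma)}$, which contain only external atoms; the spanning-tree structure you invoke no longer exists there, and the danger is precisely that the summations degrade edges into silent ones. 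For instance, if an external atom $x_i$ is attached to the internal part by a single pseudo-diffusive edge, the summation $\sum_w B_{x_i w}B_{wy}\lesssim \wt B_{x_i y}$ leaves $x_i$ with only a \emph{silent} edge, so $x_i$ cannot serve as a special atom; nothing in your argument prevents this from happening to more than half of the non-isolated atoms.

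What the paper does instead is prove a structural statement at the level of the deterministic graphs $\mathcal G_{\mathbf x}^{(\mu)}$ themselves (Lemma \ref{mG-fe}(b)): among the external molecules adjacent to internal molecules, at least half carry \emph{special redundant} diffusive or free edges, i.e.\ edges whose simultaneous removal preserves the generalized doubly connected property. This is not a consequence of doubly connectedness alone; it is propagated through every local and global expansion step by the long case analysis of Section \ref{sec_pf_mG} (tracking the property (b') through operations (L1)--(L3), (G1)--(G7), ghost-edge insertions, and $Q$-expansions). Then, in the proof of Lemma \ref{mGep}, one shows by a further case analysis on \eqref{keyobs4.3}--\eqref{keyobs4.3_add} that each special redundant edge is preserved under every internal summation — it either remains special redundant, or turns into a non-silent edge between external atoms, or forces a redundant black/blue replacement (possibly via a liberated $W^2/L^2$ factor upgrading a silent edge) — so that after all summations each special external atom indeed carries a distinct non-silent edge. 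Without this two-layer argument (the redundancy bookkeeping through the expansion, plus its preservation under summation), the count $\lceil k/2\rceil$ in (b) — which is exactly what powers the factor $(\max_x\sum_y B_{xy}|\Pi_y|)^{2p}$ in Proposition \ref{trABAB} and the exponent $p-1$ in Lemma \ref{gvalue_continuity} — is not justified, so your proposal as written does not close the key step.
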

	\begin{remark}
		The bound $c_\gamma(W,L)\size(\mathcal G_{\mathbf x}^{(\mu,\gamma)})\le W^{-2\soe(p-t)}$ is actually a quite conservative estimate for general inputs. For example, if the initial graph $\cal G_{\bx}$ already contains a lot of edges in it, then the trivial bound $c_\gamma(W,L)\size(\mathcal G_{\mathbf x}^{(\mu,\gamma)})\le \size(\cal G_{\bx})$ may be better. However, the property (c) is sufficient for our purpose in this paper.  
	\end{remark}

	With Lemma \ref{mGep}, we can derive the following estimate. It will be used in proving the continuity estimate, Proposition \ref{lem: ini bound}, which is needed for the proof of the local law. 
	\begin{lemma}
		\label{gvalue_continuity}
		Suppose the assumptions of Lemma \ref{def nonuni-T} hold. Fix any $p\in \N$, consider a $p$-gon graph 
		\be\label{eq_pgons}
		\cal G_{\bx}(z) = \prod_{i=1}^{p}G^{s_i}_{x_i x_{i+1}}(z),
		\ee
		where $\bx:=(x_1,\cdots, x_{p})$, $x_{p+1}\equiv x_1$, and $s_i \in \{\pm\}$ with the conventions $ G_{xy}^{+}:= G_{xy}$ and $G_{xy}^{-}:= G^*_{xy}$. Let $\cal I\subset \Z_L^d$ be a subset with $|\cal I|\gtrsim W^d$ and denote $K:=|\cal I|^{1/d}$. Then, we have 
		\begin{equation}\label{eq_bound_Gx}
			\begin{split}
				&\frac{1}{|\cal I|^p}\sum_{x_i \in \cal I, i \in [p]}\mathcal G_{\mathbf x}(z) \prec \conc(K,W,\eta)^{{p-1}} \,,
			\end{split}
		\end{equation} 
		where
		\begin{equation}
			\label{eq:def-conc}
			\conc(K,W,\eta):=\sqrt{\left(\frac{1}{W^2 K^{d-2}}+ \frac{1}{N\eta}+ \sqrt{\frac{1}{N\eta}\frac{L^2}{W^2 K^d}}\right)\left(\frac{1}{W^4 K^{d-4}} + \frac{1}{N\eta}\frac{L^2}{W^2}\right)}.
		\end{equation}
	\end{lemma}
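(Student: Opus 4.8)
The goal is to bound the averaged $p$-gon $\frac{1}{|\cal I|^p}\sum_{x_i \in \cal I}\cal G_{\bx}(z)$ by $\conc(K,W,\eta)^{p-1}$. The natural first step is to take the expectation and apply Lemma \ref{mGep} with the external atom set $\bx=(x_1,\dots,x_p)$: since the $p$-gon is connected, \eqref{EGx} gives $\E[\cal G_{\bx}] = \sum_\mu \cal G_{\bx}^{(\mu)} + \OO(W^{-D})$ with each $\cal G_{\bx}^{(\mu)}$ a connected deterministic normal graph, and then \eqref{EGx2} bounds each $\cal G_{\bx}^{(\mu)}$ by a sum of connected simple auxiliary graphs $c_\gamma(W,L)\cal G_{\bx}^{(\mu,\gamma)}$. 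However, since \eqref{eq_bound_Gx} is a high-probability (``$\prec$'') statement, not just an in-expectation one, the cleanest route is to instead apply Lemma \ref{mGep} to the higher moment $\E\big|\frac{1}{|\cal I|^p}\sum \cal G_{\bx}\big|^{2q}$ for large $q$, i.e. to the graph with $2pq$ external atoms obtained from $q$ copies of $\cal G_{\bx}$ and $q$ copies of $\overline{\cal G_{\bx}}$. A Markov/Chebyshev argument then upgrades the expectation bound to a $\prec$ bound; alternatively, one can first establish a crude a priori bound on $\cal G_{\bx}$ and run a standard self-improving argument. I will organize the main estimate as a bound on $\E[\cal G_{\bx}]$ (and its moments) and reduce to estimating a single connected simple auxiliary graph.

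\textbf{Core estimate for one auxiliary graph.} The heart of the matter is: for a connected simple auxiliary graph $\cal G_{\bx}^{(\mu,\gamma)}$ on external atoms $\bx$ (with internal atoms to be summed), show that
\[
\frac{c_\gamma(W,L)}{|\cal I|^p}\sum_{x_i\in\cal I}\cal G_{\bx}^{(\mu,\gamma)} \prec \conc(K,W,\eta)^{p-1}.
\]
Each auxiliary edge is a pseudo-waved ($W^{-d}\mathbf 1_{|x-y|\le W^{1+\tau}}$), pseudo-diffusive ($B_{xy}$), silent diffusive ($\wt B_{xy}=W^{-4}\langle x-y\rangle^{-(d-4)}$), free ($(N\eta)^{-1}$), or silent free ($\frac{1}{N\eta}\frac{L^2}{W^2}$) edge. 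The strategy is to sum out internal atoms one by one using the convolution bounds $\sum_w B_{xw}B_{wy}\lesssim \wt B_{xy}$, $\sum_w B_{xw}(N\eta)^{-1}\lesssim \frac{1}{N\eta}\frac{L^2}{W^2}$, and their analogues (together with $\sum_w W^{-d}\mathbf 1_{|x-w|\le W^{1+\tau}}\lesssim W^{C\tau}$), reducing $\cal G_{\bx}^{(\mu,\gamma)}$ to a graph on the external atoms only. Then each external atom $x_i$ is averaged over $\cal I$ of cardinality $|\cal I|=K^d$; the gain per averaged vertex comes from $\frac{1}{|\cal I|}\sum_{x\in\cal I}(\text{edge at }x)$, e.g. $\frac{1}{K^d}\sum_{x\in\cal I}B_{xy}\lesssim \frac{1}{W^2 K^{d-2}}$ and $\frac{1}{K^d}\sum_{x\in\cal I}\wt B_{xy}\lesssim \frac{1}{W^4 K^{d-4}}$, while free/silent-free edges survive the averaging unchanged. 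Property (b) of Lemma \ref{mGep} (the existence of $\lceil k/2\rceil$ special atoms each meeting a distinct non-silent edge) is exactly what guarantees enough independent averaging gains, and property (c), $c_\gamma\,\size(\cal G_{\bx}^{(\mu,\gamma)})\le W^{-2\soe(p-t)}$ with $t=1$ by connectedness, supplies the correct overall power of $W$; matching these against the definition \eqref{eq:def-conc} of $\conc$—which is precisely $\big[(B\text{-average}+\text{free}+\text{cross term})(\wt B\text{-average}+\text{silent free})\big]^{1/2}$ raised to the right power—is the bookkeeping that produces $\conc(K,W,\eta)^{p-1}$ (the ``$-1$'' reflects that the cyclic $p$-gon, after choosing a spanning structure, has $p-1$ ``independent'' edges, consistent with the doubly-connected origin of the auxiliary graphs).

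\textbf{Main obstacle.} I expect the bookkeeping in the previous paragraph to be the real work: one must track, along the sequential elimination of internal atoms and the averaging of external atoms, that the accumulated factor never exceeds the target, and in particular that the mixed structure in $\conc$—where one factor behaves like a ``$B$-type'' propagator and the other like a ``$\wt B$-type'' (silent) propagator, reflecting the two nets of the doubly connected property—is respected. The cross term $\frac{1}{K^{d/2}}\sqrt{\frac{1}{N\eta}\frac{L^2}{W^2}}$ in \eqref{eq:def-conc} is the subtle one: it arises when an averaged external atom is incident to a free (or silent-free) edge on one side and a diffusive edge on the other, and one has to verify it is always dominated appropriately. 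Because $\cal G_{\bx}$ is an arbitrary $p$-gon with arbitrary signs $s_i$, one cannot rely on any special symmetry, so the argument must be a clean, case-free counting of edge types against vertex averages. Promoting the expectation bound to the high-probability bound \eqref{eq_bound_Gx} via moments is comparatively routine given Lemma \ref{mGep}, but requires applying that lemma to the $2q$-fold product graph and checking that its connected components (the components can only merge or stay the $2pq$-cycle structure) still satisfy property (c) with the right value of $t$, so that the moment grows no faster than $\conc^{2q(p-1)}$ up to $W^{C\tau}$ losses; a union bound over $z$ in a fine net then closes the argument.
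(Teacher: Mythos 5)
Your plan follows essentially the same route as the paper: apply Lemma \ref{mGep}, bound the resulting deterministic graphs by simple auxiliary graphs via \eqref{EGx2}, and then average the external atoms over $\cal I$, with property (b) supplying the per-atom gains and property (c) the overall $W$-powers. However, one concrete step is missing: Lemma \ref{mGep} is stated only for external atoms taking \emph{distinct} values, whereas the average over $\bx\in\cal I^p$ contains coinciding tuples. The paper's first step is precisely to decompose according to the coincidence pattern, merge coinciding atoms into $q\le p$ external atoms, and only then apply Lemma \ref{mGep}; the exponent $p-1$ is recovered at the end by absorbing the leftover factors $|\cal I|^{-(p-q)}$ in the double sum over $q$ and the number $n\ge\lceil q/2\rceil$ of special atoms (cf. \eqref{eq_reduce_to_aux2}), using that $K^{-d}\le\conc$. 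Without this decomposition the application of Lemma \ref{mGep} is not literally justified, and the special-atom count in property (b) refers to $q$, not $p$.

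Two smaller inaccuracies: the simple auxiliary graphs $\cal G^{(\mu,\gamma)}_{\bx}$ in \eqref{EGx2} contain \emph{no} internal atoms (the internal summations producing the silent edges are already performed inside the proof of Lemma \ref{mGep}), so your ``sum out internal atoms first'' step is redundant; and the cross term $K^{-d/2}\sqrt{(N\eta)^{-1}L^2/W^2}$ in \eqref{eq:def-conc} does not arise from an atom meeting a free edge on one side and a diffusive edge on the other, but from the ``paired'' case \eqref{keyobs4}, where a single average consumes two special atoms at once and produces $W^{-d}K^{-d}$, which is then redistributed as the square of the cross term against a factor \eqref{bad factor}. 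Your moment-method upgrade of the expectation bound to stochastic domination (applying Lemma \ref{mGep} to the $2q$-fold product graph) is legitimate and in fact more explicit than the paper, which leaves that step implicit. With the coincidence decomposition added and the averaging bookkeeping carried out as in \eqref{keyobs4}--\eqref{keyobs5_add}, your plan coincides with the paper's proof.
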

	
	\section{Proof of the main results}\label{sec_pf_main}
	
	\subsection{Proof of the local law}

Theorem \ref{thm: locallaw} follows immediately from Proposition \ref{locallaw-fix} and Theorem \ref{completeTexp}.
\begin{proposition}\label{locallaw-fix}
	Under the setting of Theorem \ref{thm: locallaw}, suppose we have an $n$-th order $T$-expansion satisfying Definition \ref{defn genuni}.
	
	\begin{itemize}
		\item[(i)] 
		Suppose for some constant $c_0>0$,
		\begin{equation}
			\label{Lcondition1}  
			{L^2}/{W^2}  \le W^{(n-1)d/2-c_0} .
		\end{equation}
		Then, the following local law holds uniformly in all $z= E+\ii\eta$ with $|E|\le  2-\kappa $ and $\eta \in [W^{2}/L^2,1]$:
		\begin{equation}\label{locallaw1}
			|G_{xy} (z) -m(z)\delta_{xy}|^2 \prec B_{xy} + \frac{1}{N\eta},\quad \forall \ x,y \in \Z_L^d. 
		\end{equation}
		
		\item[(ii)] Suppose for some constant $c_0>0$,
		\begin{equation}
			\label{Lcondition2}  
			{L^2}/{W^2}  \le W^{(n-1)\delta_0/2-c_0} .
		\end{equation}
		Then, the local law \eqref{locallaw1} holds uniformly in all $z= E+\ii\eta$ with $|E|\le  2-\kappa $ and $\eta \in [\etas,1]$. 
	\end{itemize}
\end{proposition}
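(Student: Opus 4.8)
The plan is to run a standard continuity (bootstrap) argument in $\eta$, using the $n$-th order $T$-expansion as the engine for the self-improving estimate. The starting point is the relation $|G_{xy}|^2 \lesssim T_{xy} + (\text{small})$ from Lemma \ref{lem G<T} (referenced in the excerpt), so that controlling $|G_{xy}-m\delta_{xy}|^2$ reduces to controlling $T_{xy}\equiv T_{x,yy}$. One feeds the $T$-expansion \eqref{mlevelTgdef} with $\fb_1=\fb_2=\fb$ into this: the first term $m\Theta^{(n)}_{\fa\fb}\overline G_{\fb\fb}$ contributes $\OO_\prec(B_{\fa\fb})$ by Lemma \ref{lem redundantagain} and the local law for diagonal entries; the second term $\frac{|m|^2}{2\ii N\eta}(G_{\fb\fb}-\overline G_{\fb\fb})$ contributes the $(N\eta)^{-1}$ piece; and the remaining terms $\PTn$, $\ATn$, $\Wn$, $\QTn$, $\Err_{n,D}$ must be shown to be of order $\OO_\prec(B_{\fa\fb}+(N\eta)^{-1})$ \emph{on the bootstrap hypothesis} that a weaker a priori bound holds for $G$ entries on the same scale.

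The mechanism is the one sketched at the end of Section \ref{sec_second_T}: each deterministic/fluctuation graph in these error terms is doubly connected (Definition \ref{def 2net}), and the doubly connected property converts scaling order into an actual size bound — roughly, each extra unit of scaling order buys a factor $W^{-\soe}$ where $\soe = d/2$ for $\eta\ge (W/L)^2$ (part (i)) and $\soe=\delta_0/2$ for the smaller range $\eta\ge\etas$ (part (ii)). Since $\ATn$ collects graphs of scaling order $>n$, its total contribution is bounded by $(L^2/W^2)\cdot W^{-n\soe}$ times the target size, and this is $\ll$ the target precisely under the hypotheses \eqref{Lcondition1} (resp. \eqref{Lcondition2}), which say $L^2/W^2 \le W^{(n-1)\soe-c_0}$. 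The recollision graphs in $\PTn$ are of scaling order $\ge 3$ and carry a dotted edge forcing an internal index onto $\fb$, which gains an extra diffusive/free factor; $\Wn$ has a redundant free edge contributing $(N\eta)^{-1}$; and $\QTn$ is handled by a high-moment estimate exploiting the $Q_x$ (mean-zero) structure — take the $2p$-th moment, expand via Lemma \ref{mGep} or the graphical machinery, and observe the $Q$ label forces pairings that produce extra smallness, then apply Markov. One then closes the bootstrap: start from the trivial bound at $\eta=1$, propagate down in $\eta$ along a fine grid with a perturbation/union-bound step, and conclude \eqref{locallaw1} on the whole range.

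The main obstacle is the bookkeeping that makes the scaling-order-to-size conversion rigorous in the regime $\eta<(W/L)^2$ of part (ii), where $\soe$ degrades to $\delta_0/2$ and the zero mode $\frac{\im m}{N\eta}$ can no longer be absorbed into $B_{xy}$. This is exactly where the new $\Theta^\circ$-based expansion matters: one must verify that separating the zero mode (via \eqref{Theta-Thetacirc}) keeps every graph in the expansion finite — in particular that the bound \eqref{thetaxy} for $\Theta^\circ$ (not $\Theta$) is available down to $\eta\ge\etas$, which is where the spectral gap $1-\|S^\circ\|\gtrsim W^2/L^2$ replaces $1-|m|^2\gtrsim W^2/L^2$. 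Checking that all error graphs, and especially the $Q$-graph moment bound, survive with $\soe=\delta_0/2$ and still beat $L^2/W^2$ under \eqref{Lcondition2} is the delicate point; everything else is a fairly routine adaptation of the argument in \cite{BandI}.
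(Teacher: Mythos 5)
Your outline of the self-improvement step is essentially the paper's: reduce to $T_{xy}$ via Lemma \ref{lem G<T}, insert the $n$-th order $T$-expansion with $\fb_1=\fb_2=\fb$, bound $\PTn$, $\ATn$, $\Wn$, $\Err_{n,D}$ through the doubly connected structure with a gain of $W^{-\soe}$ per unit of scaling order, treat $\QTn$ by a high-moment argument exploiting the mean-zero $Q$-structure, and close under \eqref{Lcondition1}/\eqref{Lcondition2}; this is exactly Proposition \ref{lemma ptree} and Lemma \ref{lem highp1}, followed by the induction in $\eta$. The genuine gap is in what you call the ``bootstrap hypothesis that a weaker a priori bound holds for $G$ entries'' and how that bound is transported to the smaller scale. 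A fine grid in $\eta$ together with a ``perturbation/union-bound step'' only yields max-norm-type continuity (entries grow by at most $\wt\eta/\eta$), and that is not enough to run the scaling-order-to-size conversion you invoke: the key summation estimate, Lemma \ref{w_s}, which produces the $W^{-\soe}$ factor each time an internal atom is summed out, requires \emph{averaged} control of $\sum_{y:|y-x_0|\le K}|G_{xy}|$ over boxes of every scale $K\in[W,L/2]$, weighted by $\conc(K,W,\eta)^{1/2}$ --- the second part of the weak norm $\|\cdot\|_w$ in Definition \ref{def_swnorms}. With only an entrywise bound, a sum such as $\sum_x |G_{x\beta}|\,B_{xy}$ costs a factor of order $L^2/W^2$, the gain per scaling order is lost, and the bounds on $\PTn$ and $\ATn$ in \eqref{estimates_ptree} do not close; this is fatal in particular for part (ii), where $\soe=\delta_0/2$.

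In the paper this is precisely the role of the continuity estimate, Proposition \ref{lem: ini bound}: the local law at scale $\wt\eta$ implies $\|G-m\|_w\prec \wt\eta/\eta$ at the smaller scale, and its proof is not perturbative --- it controls $\|\cal A_{\cal I}\|_{\ell^2\to\ell^2}$ on boxes through $\E\,\mathrm{Tr}(\cal A_{\cal I}^{p})$ and the $p$-gon bound of Lemma \ref{gvalue_continuity}, which in turn rests on the complete $T$-expansion and Lemma \ref{mGep} (hence on the condition \eqref{Lcondition10}, i.e., \eqref{Lcondition1} or \eqref{Lcondition2}). So the complete-expansion machinery, which you invoke only for the $Q$-graphs, is in fact indispensable at the continuity step; the $Q$-graphs themselves are handled more directly, as in Lemma \ref{lem highp1}, by extracting powers of $T$ from $\E\, T^{p-1}\QTn$. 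Apart from this missing ingredient (and the minor point that the gain from $\ATn$ is $W^{-(n-1)\soe}$, since its graphs have order $>n$ and the target has order $2$, not $W^{-n\soe}$), your route coincides with the paper's proof.
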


The two conditions \eqref{Lcondition1} and \eqref{Lcondition2} come from \eqref{Lcondition10} due to the two values of $\soe$. Moreover, the two parts of Proposition \ref{locallaw-fix} are used at different stages of the proof. Part (i) will be used in the proof of Theorem \ref{completeTexp} (see the proof of Proposition \ref{sum0:L} in Section \ref{sec_pf_sumzero}). Once we have a sequence of $T$-expansions up to any order by Theorem \ref{completeTexp}, taking $n$ sufficiently large so that \eqref{Lcondition2} holds, we conclude Theorem \ref{thm: locallaw} by part (ii).

Similar to many previous proofs of local laws in the literature, we prove Proposition \ref{locallaw-fix} through a multi-scale argument in $\eta$, that is, we gradually transfer the local law at a larger scale of $\eta$ to a multiplicative smaller scale of $\eta$. The structure of the proof of \Cref{locallaw-fix} is depicted in \Cref{Fig pfchart2}, which will be implemented at the end of this subsection.

\begin{figure}[htb]
\color{black}
\tikzstyle{startstop} = [rectangle,rounded corners, minimum width=1cm,minimum height=0.5cm,text centered, draw=black]
\tikzstyle{startstop3} = [rectangle,rounded corners, minimum width=3cm, text width=6em, minimum height=0.5cm,text badly centered, draw=black]
\tikzstyle{block} = [rectangle, draw=black, 
    text width=18em, text centered, rounded corners, minimum height=1.5em]
\tikzstyle{line} = [draw=black, -latex']
\tikzstyle{line0} = [draw=black]
\tikzstyle{decision} = [diamond, draw=black, 
    text width=3em, text badly centered, node distance=2cm, 
    inner sep=0pt]

\tikzstyle{cloud} = [draw=black, ellipse, 
node distance=2.5cm,
    minimum height=2em]
\tikzstyle{null} = [draw=none,fill=none,right]

\begin{center}  
\begin{tikzpicture}[node distance = 1.5cm, auto]

    \node [startstop] (start) {Step 1,  \Cref{eta1case0}: Local law when $\eta=1$};
    \node [block, below of= start, node distance=1.3cm] (op1) {Induction hypothesis: Local law \eqref{locallaw1} when $\eta=\eta_{k}$};
    \node [block, below of=op1, node distance=1.5cm] (op2) {Step 2,  \Cref{lem: ini bound}: Obtaining the weak continuity estimate \eqref{eq:weak_local_aver} for $\eta=\eta_{k+1}$};
 
    \node [block, below of=op2, node distance=1.5cm] (op3) {Step 3,  \Cref{lemma ptree}: Obtaining the stronger estimate \eqref{pth T} for $\eta=\eta_{k+1}$};
    \node [block, below of=op3, node distance=1.3cm] (op4) {Concluding \eqref{locallaw1} for $\eta=\eta_{\ell}$};
     \node [block, below of=op4, node distance=1.2cm] (op5) {Step 4: Extending \eqref{locallaw1} to all $z$ uniformly};
     \node [cloud, left of=op2, node distance=6cm] (update) {Induction: $k+1 \to k$};

    \path [line] (start) -- (op1);
    \path [line] (op1) -- (op2);
    \path [line] (op2) --node [midway] {} (op3);
    \path [line] (op3) --node [midway] {} (op4);
     \path [line] (op4) --node [midway] {} (op5);
    \node [null, left of=op2, node distance=4.12cm] (null2) {};
    \node [null, right of=op2, node distance=4.12cm] (null22) {};
    \node [null, right of=op3, node distance=4.12cm] (null33) {};
    \node [null, left of=op4, node distance=4.12cm] (null4) {};
    \node [null, right of=op4, node distance=4.12cm] (null44) {};
     
    \path [line] (update) |- (op1);
    \path [line] (op3) -| node [near start] {} (update);

\end{tikzpicture}
\end{center}
\caption{We take $\eta_k=W^{-k \e_0}\vee \eta_*$, $k=0,1,\ldots, \ell$, for a small constant $\e_0>0$, where $\eta_*=W^2/L^2$ or $\etas$ and $\ell$ is the smallest integer such that $W^{-\ell \e_0}\le \eta_*$ (so there is $\eta_\ell=\eta_*$).}\label{Fig pfchart2}
\end{figure}

We first have an initial estimate at $\eta=1$, which holds in all dimensions $d\ge 1$.

\begin{lemma}[Initial estimate, Lemma 7.2 of \cite{BandII}] \label{eta1case0} 
Let $\kappa,\delta\in (0,1)$ be arbitrarily small constants. Under Assumptions \ref{assmH} and \ref{var profile}, fix any $d \geq 1$ and suppose $W\ge L^\delta$. 
 Then, for any $z=E+\ii\eta$ with $|E|\le  2-\kappa$ and $\eta=1$, we have that 
	\be\label{locallaw eta1}
	|G_{xy} (z) -m(z)\delta_{xy}|^2\prec  B_{xy}  ,\quad \forall \ x,y \in \Z_L^d.  
	\ee
\end{lemma}

Next, starting with a large $\eta$, Proposition \ref{lem: ini bound} gives a key continuity estimate, which says that if the local law holds at $\eta$, then a weaker local law will hold at a multiplicative smaller scale than $\eta$.  To state it, we need the following definition.

\begin{defn}\label{def_swnorms}
	An edge $\mathscr E_{xy}$ is said to be $\lambda$-bounded if $\|\mathscr  E \|_{w} \prec W^\lambda$, where
	\begin{align*}
		\|\mathscr  E \|_{w} & := W^{\soe}\max_{x,y \in \Z_L^d} |\mathscr E_{xy}|+  \sup_{k\in [W,L/2]}\max_{x,x_0 \in  \Z_L^d}\frac{1}{K^d\sqrt{\conc(K,W,\eta)}}\sum_{y:|y-x_0| \leq K} (|\mathscr E_{xy}|+|\mathscr E_{yx}|),
	\end{align*}
	where $\conc$ is defined in \eqref{eq:def-conc}.
	An edge $\mathcal A_{xy}$ is said to be $(\Phi,\lambda)$-bounded if it is $\lambda$-bounded and $\| \mathcal A \|_{s;\Phi} \prec 1$, where
	\begin{align*}
		\|\mathscr  E\|_{s;\Phi} := &\max_{x,y \in \Z_L^d} 
		|\mathscr E_{xy}| /\left(W^{-1} \gE{x-y}^{1-d/2} + \Phi  \right)\,.
	\end{align*}
\end{defn}
Here, the second stronger norm corresponds to the local law when $\Phi=(N\eta)^{-1/2}$, while the first weaker norm is due to the following continuity estimate. 

\begin{proposition}[Continuity estimate]\label{lem: ini bound}
	Under the setting of Proposition \ref{locallaw-fix}, suppose 
	\be\label{eq_cont_ini}
	\|G (\widetilde z) -m(\widetilde z)\|_{s;(N\eta)^{-1/2}} \prec 1,
	\ee
	with $\widetilde z =E+ \ii \widetilde\eta$ for $|E|\le  2-\kappa$ and $\widetilde\eta\in [\etas,1]$. Then, we have that  
 \be\label{eq:weak_local_aver}
 \|G (z) -m(z)\|_w \prec  \wt \eta/\eta ,
 \ee
	uniformly in $z=E+\ii \eta$ with $\max(\etas,W^{-\soe/20}\wt\eta) \le \eta \le \wt \eta$.
\end{proposition}

\begin{proof}
	Let $\mathcal I = \{y:|y-x_0| \leq K\}$. In equations (5.24) and (5.25) of \cite{BandI}, it has been proved that
 $$\sum_{y\in \mathcal I} \left(|G_{xy}(z)|^2 + |G_{yx}(z)|^2\right) \lesssim \sum_{y\in \mathcal I} \left(|G_{xy}(\wt z)|^2+|G_{yx}(\wt z)|^2\right) +  \left( \frac{\wt\eta}{\eta}\right)^2 \|\cal A\|_{\ell^2\to \ell^2}.$$
 Together with \eqref{eq_cont_ini} for $G(\wt z)$, it implies that
	\be\label{cont_lem1}
	\sum_{y \in \mathcal I} \left(|G_{yx}(z)|^2 + |G_{xy}(z)|^2\right) \prec \frac{K^2}{W^2} + \frac{K^d}{N\eta}+ \left(\frac{\wt \eta}{\eta} \right)^2 \| \mathcal A_{\mathcal I} \|_{\ell^2 \to \ell^2},
	\ee
	where $\mathcal A_{\mathcal I}$ is the submatrix of $\cal A=\frac{1}{2i}(G- G^*)$ with row and column indices in ${\mathcal I}$. 
	Using Lemma \ref{gvalue_continuity}, we get that for any $p\in 2\N$, 
	$$ \E \| \mathcal A_{\mathcal I} \|_{\ell^2 \to \ell^2} ^{p} \leq \E \mathrm{Tr}(\mathcal A_{\mathcal I}^{p})  \prec (K^d)^p \conc(K,W,\eta)^{p-1}\,.$$
	Together with Markov's inequality, it yields that $\| \mathcal A_{\mathcal I}\|_{\ell^2 \to \ell^2} \prec K^d\conc(K,W,\eta)$ since $p$ is arbitrary. Plugging this estimate into \eqref{cont_lem1} gives that
	\begin{equation}
		\max_{x,x_0}\frac{1}{K^d}\sum_{y:|y-x_0| \leq K} \left(|G_{yx}(z)|^2 + |G_{xy}(z)|^2\right) \prec \left(\frac{\wt \eta}{\eta} \right)^2 \conc(K,W,\eta)\,.
	\end{equation}
	Using the Cauchy-Schwarz inequality, we get from the above bound that  
	$$
	\max_{x,x_0}\frac{1}{K^d}\sum_{y:|y-x_0| \leq K} \left(|G_{yx}(z)| + |G_{xy}(z)|\right) \prec \frac{\wt \eta}{\eta} \sqrt{\conc(K,W,\eta)}\,.
	$$
	It remains to prove that
	\begin{equation}
		\|G(z)-m(z)\|_{\max}\prec  \frac{\wt \eta}{\eta}   W^{-\soe}\,.
	\end{equation} This can be proved using a standard $\e$-net and perturbation argument, see e.g., the proof of equation (5.8) in \cite{BandI}. We omit the details.
\end{proof}

Finally, Proposition \ref{lemma ptree} and Lemma \ref{lem G<T} will improve the weak continuity estimate in Proposition \ref{lem: ini bound} to the stronger local law.

\begin{proposition}[Entrywise bounds on $T$-variables]\label{lemma ptree} 
	Under the setting of Proposition \ref{locallaw-fix}, fix any $z=E+ \ii\eta$ with $|E|\le 2-\kappa$ and $\eta\in [\etas,1]$. Suppose 
	\begin{equation}
		\label{eq:cond-ewb}
		\|G(z) - m(z) \|_{w} \prec W^{\lambda}
	\end{equation} 
	for some constant $\lambda$ sufficiently small depending on $d$, $\delta_0$, $n$ and $c_0$ in \eqref{Lcondition1} or \eqref{Lcondition2}. Then, we have that 
	\begin{equation}\label{pth T}
		T_{xy}(z) \prec B_{xy} + \frac{1}{N\eta},\quad \forall \ x,y\in \Z_L^d.
	\end{equation}
\end{proposition}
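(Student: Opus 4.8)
The plan is to substitute the $n$-th order $T$-expansion of Definition~\ref{defn genuni} into $T_{xy}=T_{x,yy}$, that is, to apply \eqref{mlevelTgdef} with $\fa=x$ and $\fb_1=\fb_2=y$, and then bound each resulting term by $B_{xy}+(N\eta)^{-1}$. Two of the terms are explicit. For the first, Lemma~\ref{lem redundantagain} gives $|\Theta^{(n)}_{xy}|\prec B_{xy}$, while the hypothesis \eqref{eq:cond-ewb}, read through the first (weaker) norm in Definition~\ref{def_swnorms}, yields the entrywise bound $|G_{yy}-m|\prec W^{\lambda-\soe}$, hence $m\Theta^{(n)}_{xy}\overline G_{yy}\prec B_{xy}$. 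The second explicit term equals $\frac{|m|^2}{2\ii N\eta}(G_{yy}-\overline G_{yy})=\frac{|m|^2\,\im G_{yy}}{N\eta}$, which is $\prec (N\eta)^{-1}$ since $|G_{yy}|\prec 1$; this is the source of the $(N\eta)^{-1}$ in \eqref{pth T}. Finally, the error graphs in $(\Err_{n,D})_{y,yy}$ have scaling order $>D$, so choosing $D$ large relative to $n,d,\delta_0$ and using only the crude consequences of \eqref{eq:cond-ewb}, their contribution to $\sum_{x'}\Theta^{(n)}_{xx'}(\cdots)$ is $\prec W^{-D'}$ for arbitrarily large $D'$ and hence negligible.

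It then remains to estimate $\sum_{x'}\Theta^{(n)}_{xx'}\bigl[\PTn_{x',yy}+\ATn_{x',yy}+\Wn_{x',yy}+\QTn_{x',yy}\bigr]$. For the three non-$Q$ pieces $\PTn,\ATn,\Wn$ I would bound each summand deterministically via its doubly connected structure (Definition~\ref{def 2net}), exactly as in \cite{BandI,BandII}: after attaching the $\Theta^{(n)}_{xx'}$ edge the graph has external atoms $x,y$; one routes the black net of $\dashed$ edges from $x$ to $y$, collapsing chains via $\sum_w B_{aw}B_{wb}\lesssim \wt B_{ab}\le (L^2/W^2)B_{ab}$, performs all remaining internal summations along the blue net at cost $\lesssim L^2/W^2$ per summation (or $\lesssim 1$ for waved and dotted edges), and bounds each leftover off-diagonal $G$-edge, light weight, or free edge by $W^{\lambda-\soe}$, $W^{\lambda-\soe}$, and $(N\eta)^{-1}$ respectively using \eqref{eq:cond-ewb}. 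Counting powers of $W$ by the scaling order (Definition~\ref{def scaling}) then shows that a doubly connected graph of scaling order $k$ contributes at most $W^{C\lambda}\,W^{-(k-2)\soe}\bigl(B_{xy}+(N\eta)^{-1}\bigr)$ for an $n$-dependent constant $C$, the accumulated powers of $L^2/W^2$ being what is absorbed here. For $\PTn$ (recollision, hence $k\ge 3$ and a dotted edge to $y$ killing a free index), for $\Wn$ (one redundant free edge supplying $(N\eta)^{-1}$, $k\ge 3$), and for $\ATn$ ($k>n$) this bound is $\le W^{-c_0/2}\bigl(B_{xy}+(N\eta)^{-1}\bigr)$ once $\lambda$ is taken small depending on $d,\delta_0,n,c_0$ and the conditions \eqref{Lcondition1}/\eqref{Lcondition2} (corresponding to the two values $\soe=d/2$ and $\soe=\delta_0/2$) are invoked; this is precisely where those hypotheses and the smallness of $\lambda$ enter.

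The main obstacle is the fluctuation part $\mathcal F:=\sum_{x'}\Theta^{(n)}_{xx'}\QTn_{x',yy}$, where the deterministic bound above is too lossy and the $Q$-structure must be exploited. The plan is to estimate $\E|\mathcal F|^{2p}$ for arbitrary fixed $p$: expand the $2p$-th power into $2p$ replicas of the doubly connected $Q$-graph and process the $Q_{x'}$-labels through $Q_{x'}=1-\E_{x'}$. Any replica whose distinguished internal index $x'$ is not shared with another replica is annihilated by the partial expectation, so the surviving terms come equipped with extra dotted edges identifying these indices across replicas; each such identification removes one free summation and produces an extra gain $W^{-2\soe}$ after a redistribution of ghost edges that preserves double connectivity. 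A careful bookkeeping of the resulting graphs — essentially the $Q$-graph estimates already developed in \cite{BandI,BandII}, which I would quote with the new $\soe$ from \eqref{eq:def-soe} — then gives $\E|\mathcal F|^{2p}\prec\bigl(B_{xy}+(N\eta)^{-1}\bigr)^{2p}$, and since $p$ is arbitrary, Markov's inequality yields $\mathcal F\prec B_{xy}+(N\eta)^{-1}$. Collecting the estimates for the two leading terms, the error term, the three non-$Q$ parts, and $\mathcal F$ proves \eqref{pth T}. The hard part throughout is the $Q$-graph moment estimate and the matching of the scaling-order gains against the powers of $L^2/W^2$ that \eqref{Lcondition1}/\eqref{Lcondition2} are designed to control.
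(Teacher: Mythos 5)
There is a genuine gap: your argument is one--shot, while the statement cannot be obtained in one shot from \eqref{eq:cond-ewb}; it requires the bootstrap that the paper runs through Lemma \ref{lem highp1}. Concretely, your claim that the non-$Q$ parts obey $\sum_{x'}\Theta^{(n)}_{xx'}\bigl[\PTn+\ATn+\Wn\bigr]_{x',yy}\prec W^{-c_0/2}\bigl(B_{xy}+(N\eta)^{-1}\bigr)$ using only entrywise/averaged consequences of \eqref{eq:cond-ewb} is too strong, for two reasons. First, your accounting pays a factor $L^2/W^2$ for \emph{each} internal summation, but the hypotheses \eqref{Lcondition1}/\eqref{Lcondition2} are calibrated to absorb exactly \emph{one} such factor against the full scaling-order gain $W^{-(n-1)\soe}$; a graph of order $n+1$ may contain many internal atoms, and the accumulated $(L^2/W^2)^{\#\{\text{internal atoms}\}}$ is not compensated. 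The paper avoids this by summing along the blue spanning tree with the \emph{averaged} component of the $\|\cdot\|_w$ norm (Lemmas \ref{GtoAG} and \ref{w_s}), so that each blue-tree summation yields a gain $W^{\lambda-\soe}$ rather than a loss, and only a single $L^2/W^2$ survives. Second, even with that machinery the resulting bounds are \emph{not} localized as $B_{xy}+(N\eta)^{-1}$: the two solid edges ending at the external atom $y$ must be kept paired as a $T$-variable and bounded by the a priori input $T\prec B+\wt\Phi^2$, which is why the paper's estimates \eqref{estimates_ptree} carry the factor $(B_{\fa\fb}+\wt\Phi^2)$ with $\wt\Phi_0^2=W^{2\lambda-2\soe}$. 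Since $W^{2\lambda-2\soe}$ vastly exceeds $B_{xy}+(N\eta)^{-1}$ (e.g.\ $B_{xy}\sim W^{-2}L^{2-d}$ for distant $x,y$), no single application of these bounds gives \eqref{pth T}; the paper instead proves the high-moment inequality $\E T_{xy}^p\prec(B_{xy}+W^{-c}\wt\Phi^2+(N\eta)^{-1})^p$ and iterates it $\lceil D/c\rceil$ times, shrinking $\wt\Phi$ by $W^{-c/2}$ per step. This iteration is absent from your proposal, and without it the argument does not close.

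A related problem affects your treatment of the fluctuation term: bounding $\E|\mathcal F|^{2p}$ for $\mathcal F=\sum_{x'}\Theta^{(n)}_{xx'}\QTn_{x',yy}$ directly by $(B_{xy}+(N\eta)^{-1})^{2p}$ again presupposes the very localization and smallness that is only available after the bootstrap; the paper's actual mechanism is the mixed-moment recursion $\E T^{p-1}_{\fa\fb}\QTn_{\fa,\fb\fb}\prec\sum_{k=2}^p\bigl[W^{-\soe/2+\lambda/2}(B_{\fa\fb}+\wt\Phi^2)\bigr]^k\,\E T^{p-k}_{\fa\fb}$, closed by H\"older and Young within the moment bound for $T$ itself. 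Your handling of the two explicit leading terms and of $\Err_{n,D}$ is fine, but to repair the proof you would need to (i) replace the per-summation $L^2/W^2$ accounting by the $w$-norm summation estimate \eqref{keyobs3}, (ii) retain the $T$-variable structure at $y$ and introduce the parameter $\wt\Phi$, and (iii) add the self-improving iteration that converts $B_{xy}+\wt\Phi_0^2$ into $B_{xy}+(N\eta)^{-1}+W^{-D}$.
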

We postpone the proof of Proposition \ref{lemma ptree} to Section \ref{sec nondeter}.  Combining this proposition with the following lemma, we can obtain the local law \eqref{locallaw1}. The bound \eqref{offG largedev} was proved in equation (3.20) of \cite{Band1D_III}, while \eqref{diagG largedev} was proved in Lemma 5.3 of \cite{delocal}.
\begin{lemma}\label{lem G<T}
	Suppose for a constant $\e>0$, a deterministic parameter $W^{-d/2}\le \Phi\le W^{-\e}$ and a subset $\mathbf D\subset \mathbb C_+$, we have that
	\begin{equation}
		\label{initialGT} 
		\|G(z)-m(z)\|_{\max}\prec W^{-\e},\quad \|T\|_{\max} \prec \Phi^2 ,\quad \text{uniformly in $z\in \mathbf D$.}
	\end{equation}
	Then, the following estimates hold:
	\begin{align}
		\mathbf 1_{x\ne y} |G_{xy}(z)|^2  \prec T_{xy}(z) \quad &\text{uniformly in $x\ne y \in \Z_L^d$ and $z\in \mathbf D$,}	\label{offG largedev}
		\\
		\label{diagG largedev} |G_{xx}(z)-m(z)| \prec \Phi\quad &\text{uniformly in $x\in \Z_L^d$ and $z\in \mathbf D$.}
	\end{align}
\end{lemma}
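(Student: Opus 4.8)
The plan is to derive both estimates from the self-improving structure of the $T$-variable together with the a priori bounds in \eqref{initialGT}. First I would recall the standard identity relating $G_{xy}$ for $x\ne y$ to a sum involving $G^{(x)}$ (the minor with the $x$-th row/column removed) and the variance profile: using the Schur complement formula and Ward's identity \eqref{eq_Ward}, one writes $|G_{xy}|^2$ in terms of $\sum_\al s_{x\al}|G^{(x)}_{\al y}|^2$ plus fluctuation terms controlled by $Q_x$-expectations. The key algebraic input is that $\sum_\al s_{x\al}|G_{\al y}|^2 = |m|^{-2} T_{xy}$ by definition \eqref{general_T}, so that the diffusive profile on the right of \eqref{offG largedev} is exactly $T_{xy}$ up to the harmless $|m|^{-2}=\OO(1)$ factor. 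The minor-vs-full-matrix comparison $|G^{(x)}_{\al y} - G_{\al y}|$ is bounded using the resolvent identity $G^{(x)}_{\al y} = G_{\al y} - G_{\al x}G_{xy}/G_{xx}$, and here the hypothesis $\|G-m\|_{\max}\prec W^{-\e}$ guarantees $|G_{xx}|\sim 1$ so no small denominators appear; the correction is of order $|G_{\al x}||G_{xy}|$, which after summing against $s_{x\al}$ contributes a term of size $\prec T_{xx}|G_{xy}|^2$ (or similar), absorbable into the left side since $T_{xx}\prec \Phi^2 \prec W^{-2\e}$ is a small multiplicative factor. This is precisely the argument in equation (3.20) of \cite{Band1D_III}, so I would follow that computation.

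For \eqref{diagG largedev}, the plan is to start from the self-consistent equation for a single diagonal entry. Writing $G_{xx}^{-1} = -z - \sum_{\al,\beta} h_{x\al}G^{(x)}_{\al\beta}h_{\beta x}$ and splitting the quadratic form into its partial expectation plus a $Q_x$-fluctuation, the expectation piece is $\sum_\al s_{x\al}G^{(x)}_{\al\al}$, which by the doubly stochastic property \eqref{fxy} and the minor expansion equals $\frac1N\tr G + \OO(\text{fluctuations}) = m + \OO(\cdots)$ once we already know $G_{\al\al}\approx m$ weakly. Thus $G_{xx}^{-1} = -z - m + \text{(error)}$, i.e.\ $G_{xx} - m = m^2\cdot\text{(error)} + \OO(\text{error}^2)$ using $(z+m)m=-1$. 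The error term is a centered quadratic form in the $h_{x\al}$, and by the large deviation bound for such forms (see e.g.\ \cite{EKY_Average}) its size is $\prec (\sum_\al s_{x\al}^2 |G^{(x)}_{\al y}|^2)^{1/2}$-type quantities, which are controlled by $T$-variables: concretely one gets a bound of order $(\frac1N\sum_\al s_{x\al}|G_{\al x}|^2)^{1/2} \asymp T_{xx}^{1/2} \prec \Phi$, plus a term $\prec \Phi^2$ from the off-diagonal contributions and a recollision term $\prec \Phi^2$. This is the content of Lemma 5.3 of \cite{delocal}.

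The main obstacle I anticipate is bookkeeping the fluctuation (the $Q_x$) terms at the right level of precision: one must check that every such term is bounded either by $T_{xy}$ itself (so it can be reabsorbed on the left-hand side of \eqref{offG largedev} after noting the prefactor is $\prec W^{-2\e}\ll 1$) or by $\Phi\cdot(\text{something}\prec 1)$ for \eqref{diagG largedev}. Since the statement explicitly cites \cite[eq.\ (3.20)]{Band1D_III} and \cite[Lemma 5.3]{delocal}, I would not reprove these from scratch but instead note that the inputs \eqref{initialGT} are exactly the hypotheses required there: the uniform smallness $\|G-m\|_{\max}\prec W^{-\e}$ rules out small denominators in all Schur-complement manipulations, and $\|T\|_{\max}\prec\Phi^2$ provides the self-consistent control that closes the estimates. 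Hence the proof reduces to verifying hypothesis-matching and invoking those two references.
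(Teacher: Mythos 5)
Your proposal matches the paper's treatment: the paper gives no independent proof of this lemma, stating only that \eqref{offG largedev} is equation (3.20) of \cite{Band1D_III} and \eqref{diagG largedev} is Lemma 5.3 of \cite{delocal}, so reducing the matter to hypothesis-matching and citing those two results is exactly what is done. One small caution: your sketch of the diagonal bound, taken on its own, would only return $|G_{xx}-m|\prec W^{-\e}$ if you plug the weak bound into $\sum_\al s_{x\al}(G_{\al\al}-m)$ directly---the cited proof keeps these deviations as a vector and inverts $1-m^2S$ (whose boundedness is the content of the $S^\pm$ estimate \eqref{S+xy}) to upgrade to $\prec\Phi$---but since you defer to \cite[Lemma 5.3]{delocal} this does not affect the argument.
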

\begin{proof}[\bf Proof of Proposition \ref{locallaw-fix}]
	We define a sequence of $z_k=E+\ii \eta_k$ with decreasing imaginary parts $\eta_k:= \max\left(W^{-k \lambda/3}, \etas \right)$ for a sufficiently small constant $\lambda>0$. First, Lemma \ref{eta1case0} shows that the local law \eqref{locallaw1} holds for $z_0 = E +\ii $. Now, suppose \eqref{locallaw1} holds for $z_k = E + \ii\eta_k$, then Proposition \ref{lem: ini bound} yields that $\|G_{xy}(z) - m(z) \delta_{xy} \|_w \prec W^{\lambda}$ uniformly in $z=E+\ii \eta$ with $\eta_{k+1}\le \eta\le \eta_k$. Therefore, the condition \eqref{eq:cond-ewb} holds and we get \eqref{pth T} by Proposition \ref{lemma ptree}, which, together with Lemma \ref{lem G<T}, implies that the local law \eqref{locallaw1} holds at $z_{k+1}$. By induction in $k$, the above arguments show that the local law \eqref{locallaw1} holds for any fixed $z=E+\ii \eta$ with $\eta \in [\etas,1]$ (or $\eta \in [W^2/L^2,1]$ if we only have \eqref{Lcondition1}). The uniformity in $z$ follows from a standard $\e$-net and perturbation argument, see e.g., the proof of Theorem 2.16 in \cite{BandI}. We omit the details.
\end{proof}

\subsection{Proof of Proposition \ref{lemma ptree}} \label{sec nondeter}

Proposition \ref{lemma ptree} is a simple consequence of the following lemma.  
\begin{lemma}\label{lem highp1}
	Suppose the assumptions of Proposition \ref{lemma ptree} hold. Assume that
	\begin{equation}
		\label{initial_p}
		T_{xy} \prec B_{xy}+\widetilde\Phi^2,\quad \forall \ x,y \in \Z_L^d,
	\end{equation}
	for a deterministic parameter $\widetilde\Phi $ satisfying $0\le \widetilde\Phi \le W^{-\e}$ for some constant $\e>0$. Then, for any fixed $p\in \N$, we have that
	\begin{equation}
		\label{locallawptree}
		\E T_{xy} (z) ^p \prec  \Big(B_{xy} +W^{-c} \widetilde\Phi^2+\frac{1}{N\eta}\Big)^p 
	\end{equation}
	for some constant $c>0$ depending only on  $d$, $\delta_0$, $n$ and $c_0$ in \eqref{Lcondition1} or \eqref{Lcondition2}. 
\end{lemma}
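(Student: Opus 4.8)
\textbf{Proof strategy for Lemma \ref{lem highp1}.} The plan is to bootstrap the a priori bound \eqref{initial_p} on $T_{xy}$ into the improved moment bound \eqref{locallawptree} by feeding $T_{xy}^p$ into the complete $T$-expansion machinery. First I would write $T_{xy}^p = T_{x,yy}^p$ and apply the $n$-th order $T$-expansion \eqref{mlevelTgdef} to one factor of $T_{x,yy}$, keeping the other $p-1$ factors as edges attached to external atoms. This produces a leading diffusive term $m\Theta^{(n)}_{xy}\overline G_{yy}$ (which, using the local law input \eqref{eq:cond-ewb} and $\overline G_{yy}=\overline m + \OO_\prec(W^\lambda(B_{yy}+(N\eta)^{-1})^{1/2})$, contributes $\OO_\prec(B_{xy})$), a zero-mode term $\frac{|m|^2}{2\ii N\eta}(G_{yy}-\overline G_{yy})$ contributing $\OO_\prec((N\eta)^{-1})$, a recollision part $\PTn$, a higher-order part $\ATn$, a term with one free edge $\Wn$, a $Q$-part $\QTn$, and a negligible error $\Err_{n,D}$. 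The $Q$-graphs have mean zero after taking $\E$, so together with the product of $p-1$ external $T$-factors one uses the $Q$-expansion/martingale structure (as in \cite{BandI,BandII}) to show they contribute only higher-order corrections; the key quantitative input is that each $Q$ operation followed by expansion produces an extra small factor, which after $p$-th moment bookkeeping gives a gain of $W^{-c}$ relative to $\widetilde\Phi^2$.

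\textbf{Key steps in order.} (1) Apply Lemma \ref{mGep} (complete expansion of graphs with multiple external atoms) to the deterministic and $Q$-graph pieces arising from the single-step $T$-expansion, bounding each resulting simple auxiliary graph via property (c): $c_\gamma(W,L)\,\size(\mathcal G^{(\mu,\gamma)})\le W^{-2\soe(\#\text{external}-\#\text{components})}$. (2) Track the doubly connected structure: the crucial point is that every graph in $\PTn$, $\ATn$, $\Wn$, $\QTn$ is doubly connected, and the external atoms $x,y$ (plus the copies from the other $p-1$ factors) remain connected via a black net of diffusive edges and a blue net of solid/diffusive/free edges; this is exactly what converts the scaling-order count into a genuine size bound of the form $(B_{xy}+(N\eta)^{-1}+W^{-c}\widetilde\Phi^2)^p$. (3) Use the a priori bound \eqref{initial_p} to control any residual $T$-edges or $G$-edges (via \eqref{offG largedev}) appearing inside the expansion after at most $D$ steps, so that one never needs $T_{xy}$ pointwise beyond $B_{xy}+\widetilde\Phi^2$. (4) Choose $D$ large enough (depending on $p$, $d$, $\delta_0$, $n$, $c_0$) so that the $\Err_{n,D}$ term and all graphs of scaling order $>D$ are bounded by $W^{-D'}$ for $D'$ arbitrarily large, hence negligible. (5) Combine: the leading terms give $(B_{xy}+(N\eta)^{-1})^p$, while every genuinely nontrivial expansion graph carries at least one extra factor $W^{-2\soe}$ or, when it still contains $\widetilde\Phi^2$-type edges from \eqref{initial_p}, a strict gain $W^{-c}\widetilde\Phi^2$ over the input; the condition \eqref{Lcondition1} or \eqref{Lcondition2} is precisely what guarantees the accumulated $(L^2/W^2)^{k_\gh}$ factors from ghost edges are dominated by the $W^{-\ord\cdot\soe}$ savings, so the iteration closes.

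\textbf{Main obstacle.} The hard part will be organizing the bookkeeping in step (2)–(5): one must show that after substituting the $n$-th order $T$-expansion into one factor of $T_{xy}^p$ and then fully expanding all resulting random graphs via Lemma \ref{mGep}, the \emph{worst-case} graph still obeys the claimed bound $(B_{xy}+W^{-c}\widetilde\Phi^2+(N\eta)^{-1})^p$. The delicate point is that the $\widetilde\Phi^2$ appearing in the a priori bound \eqref{initial_p} must come back with a \emph{strictly smaller} coefficient $W^{-c}\widetilde\Phi^2$, not merely $\widetilde\Phi^2$ again — otherwise the self-improvement loop used to prove the local law in Proposition \ref{locallaw-fix} would not converge. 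Securing this gain requires that every graph in which a $\widetilde\Phi^2$-edge (inherited from \eqref{initial_p}) survives must also contain at least one additional small component (an extra waved edge, free edge, light weight, or diffusive edge beyond the minimal doubly connected skeleton), which is a consequence of the recollision/higher-order/one-free-edge classification in Definition \ref{defn genuni} (3)–(6) together with the scaling-order counting; making this airtight for all $p$ simultaneously, uniformly in the graph topology, is the technical core and is deferred to Section \ref{sec nondeter}.
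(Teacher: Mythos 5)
Your opening move coincides with the paper's: write $\E T_{\fa\fb}^p=\E\,T_{\fa\fb}^{p-1}\,T_{\fa\fb}$ and substitute the $n$-th order $T$-expansion \eqref{mlevelTgdef} into one factor. But the way you propose to estimate the resulting terms has a genuine problem: you invoke Lemma \ref{mGep} (and implicitly the complete $T$-expansion of Lemma \ref{def nonuni-T}) as the main bounding tool. Those results are stated under the hypothesis that the local law \eqref{locallaw0} already holds at the very $z$ under consideration, and Lemma \ref{mGep} is moreover formulated for graphs consisting only of external atoms and edges between them. At the stage of the bootstrap where Lemma \ref{lem highp1} is used (inside Proposition \ref{locallaw-fix}, after the continuity step), the only available inputs are the weak bound \eqref{eq:cond-ewb}, $\|G-m\|_w\prec W^{\lambda}$, and the a priori bound \eqref{initial_p}; the local law at $z$ is precisely what one is trying to prove, so appealing to Lemma \ref{mGep} is circular, and the graphs produced by the substitution contain internal (summed) atoms anyway. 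The paper instead bounds the non-$Q$ terms with tools that need only $\lambda$-boundedness: the molecular reduction Lemma \ref{GtoAG} plus the summation Lemma \ref{w_s} (this is exactly why the $\|\cdot\|_w$ norm is designed the way it is), yielding the estimates \eqref{estimates_ptree}, with the $\Wn$ term handled by removing its redundant free edge and treating the remaining doubly connected graph like $\ATn$.

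The second gap is your treatment of the $Q$-part. The graphs in $\QTn_{\fa,\fb\fb}$ are \emph{not} mean zero once multiplied by $T_{\fa\fb}^{p-1}$, since the remaining $T$-factors depend on the row/column singled out by the $Q$-label; "mean zero plus martingale structure" does not by itself give higher-order corrections. The actual mechanism (last line of \eqref{estimates_ptree}) is that expanding the other $T$-factors against the $Q$-label forces at least two extra small factors, giving
$\E T^{p-1}_{\fa\fb}\QTn_{\fa,\fb\fb}\prec\sum_{k=2}^{p}\bigl[W^{-\soe/2+\lambda/2}(B_{\fa\fb}+\wt\Phi^2)\bigr]^{k}\,\E T^{p-k}_{\fa\fb}$,
and the conclusion \eqref{locallawptree}, with $c=\min(c_0-(n-1)\lambda,\soe/2-\lambda/2)$, then follows from H\"older's and Young's inequalities; the $k\ge 2$ gain is where the self-improvement $\wt\Phi^2\mapsto W^{-c}\wt\Phi^2$ really comes from, together with the factor $W^{(n-1)(-\soe+\lambda)}L^2/W^2\le W^{-c_0+(n-1)\lambda}$ in the $\ATn$ bound under \eqref{Lcondition1} or \eqref{Lcondition2}. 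Finally, deferring the "technical core" to Section \ref{sec nondeter} is not available to you as an external input: that section is exactly where this lemma is proved, so the deferral leaves the essential estimates unestablished.
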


\begin{proof}[\bf Proof of Proposition \ref{lemma ptree}]
	Starting with $T_{xy}\prec B_{xy} +\wt\Phi_0^2$, where $\wt\Phi_0:= W^{-d_\eta+\lambda}$ due to \eqref{eq:cond-ewb}, we combine \eqref{locallawptree} with Markov's inequality to obtain that
	$$T_{xy} (z) \prec  B_{xy} +W^{-c} \wt\Phi_0^2 +\frac1{N\eta} \ .$$
	Hence, \eqref{initial_p} holds with a smaller parameter $\wt\Phi=\wt\Phi_1:=W^{-c/2}\wt\Phi_0 + (N\eta)^{-1/2}$. Repeating this argument for $\left\lceil D/c\right\rceil$ many times, we obtain that 
	$$T_{xy} (z) \prec  B_{xy} +\frac1{N\eta}+W^{-D} .$$
	This concludes \eqref{pth T} as long as $D$ is large enough.
\end{proof}

It remains to prove Lemma \ref{lem highp1}. We first simplify the atomic graphs using their molecular structures. We will see (in Lemma \ref{GtoAG}) that it suffices to consider the following class of graphs, called {\it auxiliary} graphs, which are actually obtained as quotient graphs with molecules reduced to vertices.

\begin{defn}
An auxiliary graph consists of vertices and double-line or solid edges between them, where
\begin{itemize}
	\item every vertex represents a molecule,
	\item every double-line edge between vertices $x$ and $y$ represents a $B_{xy}$ factor and has scaling order 2,
	\item every solid edge represents a non-negative $\lambda$-bounded edge and has scaling order 1.
\end{itemize}
\end{defn} 

\begin{lemma}
\label{GtoAG}
Let $\mathcal G$ be a normal graph. Suppose $(G-m)$ is $\lambda$-bounded for a constant $\lambda\in [0,\soe)$. Then, for any constants $\tau,D>0$, there exists an auxiliary graph \smash{$\wt{\mathcal G}\equiv \wt{\mathcal G}(\tau)$}, whose vertices are representative atoms of the molecules in $\cal G$, such that if two molecules in $\cal G$ are connected by a $\dashed$ (resp. blue solid) edge, then their representative atoms are also connected by a double-line (resp. solid) edge in $\wt{\cal G}$. Furthermore, we have
\be\label{G_by_auxG} \mathcal G_{\abs} \prec W^{-[\ord(\mathcal G) - \ord(\wt{\mathcal G})](\soe-\lambda)+ \tau} \wt{\mathcal G} + W^{-D}\, ,\ee
where $ {\cal G}_{{\rm abs}}$ is obtained by replacing each component (including edges, weights, and the coefficient) in $\cal G$ with its absolute value and ignoring all the $P$ or $Q$ labels (if any). Finally, if $(G - m)$ is $\lambda$-bounded (resp. $(\Phi,\lambda)$-bounded), then so are the solid edges in $\wt{\mathcal G}$.
\end{lemma}
\begin{proof}
We first choose a representative atom for each molecule in $\cal G$. Then, the key to the proof is to show that all diffusive and $\lambda$-bounded (or $(\Phi,\lambda)$-bounded) edges between two molecules can be bounded by double-line and $\lambda$-bounded (or $(\Phi,\lambda)$-bounded) solid edges between their representative atoms. Our proof basically follows that of \cite[Lemma 6.10]{BandI}, and, for the convenience of the reader, we give more details below.

 In the following proof, we fix a small constant $\tau>0$ and a large constant $D>0$. Suppose there are $\ell$ internal molecules $\cal M_i$, $1\le i \le \ell$, and we choose one atom $x_i$ in each $\cal M_i$ as a representative. For simplicity of notations, we will use ``$\al\sim_{\cal M} \beta$" to denote that ``atoms $\al$ and $\beta$ belong to the same molecule". 
For any $y_i\sim_{\cal M}x_i$, it suffices to assume that  
\be\label{yixi}
|y_i-x_i|\le W^{1+\tau/2},
\ee
because otherwise the graph is smaller than $W^{-D}$.

We first define a matrix $\Psi$ as 
\be\label{eq defPsi}
\Psi^2_{xy}:=W^{-D}+ \max\limits_{ \substack{|x_1-x| \le   W^{1+\tau}  \\ |y_1-y|\le  W^{1+\tau}}}s_{x_1y_1} + W^{-(2+2\tau)d}\sum_{ |x_1-x| \le  W^{1+\tau}}\sum_{ |y_1-y|\le  W^{1+\tau}} |G_{x_1y_1}|^2  .\ee
It is easy to check that $\|\Psi\|_{w} \prec \|G-m\|_{w} + 1$ and  $\|\Psi\|_{s;\Phi} \prec \|G-m\|_{s;\Phi} + 1$ as long as $D$ is large enough. In equation (6.16) of \cite{BandI}, it has been proved that given $x_i,y_i\in \Z_L^d$ satisfying \eqref{yixi} for $i\in \{1,2\}$, 
\begin{align}
|G_{y_1y_2}|&\prec W^{d\tau} \Psi_{x_1x_2}.\label{Gpsi}
\end{align}
Now, under \eqref{yixi}, for $y_i\sim_{\cal M} x_i$ and $y_j\sim_{\cal M} x_j$, by \eqref{Gpsi}, \eqref{thetaxy} and \eqref{BRB}, we have that
\begin{align}
    |G_{y_i y_j}|\prec W^{d\tau} \Psi_{x_i x_j},\quad &\Theta^{\circ}_{y_i y_j}\prec B_{y_i y_j} \lesssim W^{(d-2)\tau/2} B_{x_i x_j},\label{intermole1}\\
\Theta^{(n)}_{y_i y_j}\prec W^{(d-2)\tau/2} B_{x_i x_j},\quad  &(\Theta^{(n)}_k)_{y_i y_j} \prec W^{-(k-2)d /2 + (d-2)\tau/2} B_{x_i x_j}.    \label{intermole2} 
\end{align}
These estimates show that we can bound the edges between different molecules with $\Psi$ or $B$ entries that only contain the representative atoms $x_i$ in their indices. In this way, we can obtain that 
\be\label{reduce Gaux0}
\cal G_{\abs}\prec W^{-n_1 d/2+ n_2 \tau}\sum_{x_1, \ldots, x_\ell} \wt{\mathcal G}(x_1, \cdots, x_\ell) \prod_{i=1}^\ell |\cal G_{x_i}^{(i)}| ,
\ee
where $W^{-n_1 d/2+ n_2\tau}$ is a factor coming from applying the estimates \eqref{intermole1} and \eqref{intermole2}, $\wt{\mathcal G}$ is a product of solid edges representing $\Psi$ entries and double-line edges representing $B$ entries, and every \smash{$\cal G_{x_i}^{(i)}$} is the subgraph inside the molecule $\cal M_i$ which has $x_i$ as an external atom. We next bound the local structure $\cal G_{x_i}^{(i)}$ inside $\cal M_i$ as follows: 
\begin{itemize}
\item by \eqref{subpoly}, \eqref{S+xy} and \eqref{thetaxy}, each waved or $\dashed$ edge is bounded by $\OO_\prec(W^{-d })$;  
\item by \eqref{BRB}, each labeled $\dashed$ edge is bounded by $\OO_\prec (W^{-kd/2 })$, where $k$ is its scaling order; 
\item each off-diagonal $G$ edge and light weight is bounded by $\OO_\prec(W^{-(\soe-\lambda)})$; 
\item each summation over an internal atom in $\cal M_i \setminus \{x_i\}$ provides a factor $\OO( W^{(1+\tau/2)d})$ due to \eqref{yixi}. 
\end{itemize}
Thus, with the definition of the scaling order in \eqref{eq_deforderrandom2}, we get that
\begin{align}\label{internal struc1}
|\cal G_{x_i}^{(i)}|\prec W^{-\ord(\cal G^{(i)}_{x_i}) \cdot (\soe-\lambda) + k_{i}\cdot \tau d/2 }, 
\end{align} 
where $k_i$ is the number of internal atoms in $\cal G_{x_i}^{(i)}$. 
Plugging \eqref{internal struc1} into \eqref{reduce Gaux0} concludes \eqref{G_by_auxG} since $\tau$ is arbitrary.
\end{proof}

A key ingredient for the proof of Lemma \ref{lem highp1} is the following lemma. 
\begin{lemma}\label{w_s}
Suppose $d \geq 7$ and $\eta \geq \etas$. Given any two matrices $ \mathcal A^{(1)}$ and $ \mathcal A^{(2)}$ with non-negative entries, we have that
\begin{equation}
	\label{keyobs3}
	\sum_{x_i}\mathcal A^{(1)}_{x_i \al}\cdot \prod_{j=1}^k B_{x_i y_j } \prec W^{-\soe} \Gamma(y_1,\cdots, y_k)\|\mathcal A^{(1)}\|_{w}\,,
\end{equation}
where $\Gamma(y_1,\cdots, y_k)$ is a sum of $k$ different products of $(k-1)$ double-line edges:  
\begin{equation}
	\label{defn_Gamma}\Gamma(y_1,\cdots, y_k):= \sum_{i=1}^k \prod_{j\ne i}B_{y_{i}y_j}.
\end{equation}
In addition, if $ \mathcal A^{(1)}$ and $ \mathcal A^{(2)}$ are $\lambda$-bounded (or $(\Phi,\lambda)$-bounded), then 
\begin{equation}
	\label{keyobs2}
	{\mathcal A}_{\alpha \beta}: = \frac{W^{ \soe - \lambda }}{\Gamma(y_1,\cdots, y_k)}\sum_{x_i}\mathcal A^{(1)}_{x_i \alpha }\mathcal A^{(2)} _{x_i \beta}\cdot \prod_{j=1}^k B_{x_i y_j } \ \text{is also $\lambda$ (or $(\Phi,\lambda)$)-bounded.}
\end{equation}

\end{lemma}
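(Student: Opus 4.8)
The plan is to prove \eqref{keyobs3} first and then derive \eqref{keyobs2} from it by inspecting how the two norms $\|\cdot\|_w$ and $\|\cdot\|_{s;\Phi}$ interact with a summation against a product of diffusive edges. The first bound \eqref{keyobs3} is the main workhorse, and it decouples into two pieces according to the two terms defining $\|\mathcal A^{(1)}\|_w$: the $L^\infty$ part $W^{\soe}\max|\mathcal A^{(1)}_{xy}|$ and the averaged part $\sup_K\frac1{K^d\sqrt{\conc}}\sum_{|y-x_0|\le K}|\mathcal A^{(1)}_{xy}|$. The point of the $\sqrt{\conc(K,W,\eta)}$ weight in the norm is precisely that it is the "natural" $\ell^1$-average size of a diffusive edge over a box of side $K$, so the averaged norm is designed to absorb exactly one diffusive edge $B_{x_i y_j}$ worth of decay.

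\textbf{Proof of \eqref{keyobs3}.} First I would handle the generic summand. Fix the index $i$ that is being summed (write $x\equiv x_i$) and split the sum $\sum_x \mathcal A^{(1)}_{x\al}\prod_{j=1}^k B_{xy_j}$ dyadically according to the distance from $x$ to the cluster $\{y_1,\dots,y_k\}$. Pick the $y_j$ nearest to $x$, say $y_{i_0}$; by the triangle inequality $B_{xy_j}\lesssim W^\tau B_{y_{i_0}y_j}$ for all $j\ne i_0$ up to the usual polynomially small error, so those $(k-1)$ diffusive edges come out of the sum and assemble into one of the products $\prod_{j\ne i_0}B_{y_{i_0}y_j}$ appearing in $\Gamma(y_1,\dots,y_k)$. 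What remains is $\sum_x \mathcal A^{(1)}_{x\al} B_{xy_{i_0}}$, which I bound by a further dyadic decomposition in $K=\|x-y_{i_0}\|_L+W$: on the shell $\|x-y_{i_0}\|\sim K$ one has $B_{xy_{i_0}}\lesssim W^{-2}K^{2-d}$, and the number of $x$ there is $\OO(K^d)$, so the contribution is $\lesssim W^{-2}K^{2-d}\cdot K^d\cdot\big(\tfrac1{K^d}\sum_{\|x-y_{i_0}\|\le K}\mathcal A^{(1)}_{x\al}\big)\lesssim W^{-2}K^{2}\sqrt{\conc(K,W,\eta)}\,\|\mathcal A^{(1)}\|_w$. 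One then checks, using $d\ge 7$ and the explicit formula \eqref{eq:def-conc} for $\conc$, that $\sum_{K\, \text{dyadic}} W^{-2}K^2\sqrt{\conc(K,W,\eta)}\prec W^{-\soe}$ uniformly in $\eta\ge\etas$; this is a one-line summable geometric/power estimate after plugging in the two cases $\soe=\delta_0/2$ and $\soe=d/2$ (the worst case being $\eta$ at the lower end $\etas$, where $\conc$ is largest). Summing over the $k$ choices of $i_0$ produces $\Gamma(y_1,\dots,y_k)$ on the right, which gives \eqref{keyobs3}. The main obstacle here is really just the bookkeeping of the dyadic decomposition together with verifying that the exponent arithmetic in \eqref{eq:def-conc} closes with room to spare when $d\ge 7$ — in particular that the borderline contribution $\eta=\etas$ still yields a genuine gain $W^{-\soe}$ rather than merely $W^{0}$.

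\textbf{Proof of \eqref{keyobs2}.} Now apply \eqref{keyobs3} with $\mathcal A^{(1)}$ and also with its transpose, and combine with the analogous estimate for $\mathcal A^{(2)}$ to control the averaged-norm part of $\|\mathcal A\|_w$, i.e. $\sup_K \frac1{K^d\sqrt\conc}\sum_{|\beta-x_0|\le K}|\mathcal A_{\al\beta}|$: summing the definition of $\mathcal A_{\al\beta}$ over $\beta$ in a box and using \eqref{keyobs3} for the $x_i$-sum against $\mathcal A^{(2)}$ (keeping $\mathcal A^{(1)}_{x_i\al}$ as a fixed weight bounded by $W^{\lambda-\soe}$ times the averaged norm via another application of \eqref{keyobs3}), the two factors $W^{\soe}$ cancel the prefactor $W^{\soe-\lambda}$ and the two factors of $\Gamma$ cancel against $\Gamma$ in the denominator, leaving $\prec W^{\lambda}$. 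The $L^\infty$ part of $\|\mathcal A\|_w$ is even easier: bound $\mathcal A^{(1)}_{x_i\al}$ and $\mathcal A^{(2)}_{x_i\beta}$ by their $(\Phi,\lambda)$- or $\lambda$-bounded pointwise sizes and sum one of them against $\prod_j B_{x_i y_j}$ via \eqref{keyobs3} again. For the $(\Phi,\lambda)$-bounded case one additionally has to track the norm $\|\cdot\|_{s;\Phi}$ from Definition \ref{def_swnorms}: here one uses that against a product of diffusive edges the pointwise bound $|\mathcal A^{(1)}_{x_i\al}|\lesssim W^{-1}\gE{x_i-\al}^{1-d/2}+\Phi$ is preserved after the $x_i$-summation because both $B$ and $W^{-1}\gE{\cdot}^{1-d/2}$ are (up to constants and the $\soe$ bookkeeping) stable under the convolution $\sum_{x}(\cdots)B_{x\cdot}$, which is exactly the content of the pseudo-diffusive composition estimates $\sum_w B_{xw}B_{wy}\lesssim\wt B_{xy}$ already recorded before Lemma \ref{mGep}. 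I expect this last verification — that the strong norm $\|\cdot\|_{s;\Phi}$ is inherited, not just the weak norm $\|\cdot\|_w$ — to require the most care, but it is structurally identical to the reduction already carried out in \cite{BandI} for bounding graphs, so I would cite that and only spell out the new input, namely the role of the $\Gamma$ and $\conc$ factors.
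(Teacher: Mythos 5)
Your proof of \eqref{keyobs3} is essentially the paper's argument: you partition the sum according to the nearest $y_l$ (the paper's sets $\mathcal I_l$), pull the remaining $(k-1)$ diffusive edges onto $y_l$ to produce $\Gamma(y_1,\dots,y_k)$, and then bound $\sum_{x}\mathcal A^{(1)}_{x\al}B_{xy_l}$ by a dyadic decomposition in $\gE{x-y_l}$ using the averaged part of $\|\cdot\|_w$ together with the explicit formula for $\conc$, which is exactly the paper's displayed bracket and its case check $\soe=\delta_0/2$ versus $\soe=d/2$ for $d\ge 7$, $\eta\ge\etas$. For \eqref{keyobs2}, which the paper dismisses as a simple consequence, your sketch is the intended route, but two points in it should be tightened: a correct execution applies \eqref{keyobs3} only once per $x_i$-sum (bounding the other $\mathcal A$-factor pointwise by $W^{\lambda-\soe}$ or via its box-averaged norm), so only one factor of $\Gamma$ arises and cancels the denominator, not two; and the preservation of $\|\cdot\|_{s;\Phi}$ is not literal ``convolution stability'' of $B$ (indeed $\sum_w B_{xw}B_{wy}\lesssim \wt B_{xy}$, which is larger than $B_{xy}$), but rather follows because the prefactor $W^{\soe-\lambda}/\Gamma$ and the smallness of $\Phi$ for $\eta\ge\etas$ absorb the loss from such compositions.
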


In a doubly connected auxiliary graph, we choose a blue spanning tree of the blue net and sum over all internal atoms from leaves to a chosen root (which is usually chosen as an external atom). Let $x_i$ be a leaf of the blue tree, \smash{$\mathcal A^{(1)}_{x_i \al}$} be the blue edge in the blue tree, and $B_{x_i y_j }$ be edges in the black net. Then, the estimate \eqref{keyobs3} shows that the summation over an internal atom $x_i$ can be bounded by a sum of doubly connected graphs obtained by removing the atom $x$ and the edges attached to it, and then adding edges between its neighboring atoms. Furthermore, if $x_i$ connects to an external atom, say $\beta$, then the estimate \eqref{keyobs2} shows that the summation over an internal atom $x_i$ can be bounded by a sum of doubly connected graphs, where the external atom $\beta$ still connects to an internal atom $\al$. 
After summing over all internal atoms, we can bound an auxiliary graph by a sum of graphs consisting of external atoms only. 
Hence, together with Lemma \ref{GtoAG}, Lemma \ref{w_s} enables us to bound the graphs in the $T$-expansion, which leads to the proof Lemma \ref{lem highp1}.


\begin{proof}[\bf Proof of Lemma \ref{lem highp1}] 
This proof is similar to that for Lemma 8.1 in \cite{BandII}. Using the $n$-th order $T$-expansion \eqref{mlevelTgdef} with $\fb_1=\fb_2=\fb$, we write that  
\begin{equation}
	\label{eq:ETp-Texp}
	\begin{split}
		\E T_{\fa \fb}^p =\E T_{\fa \fb}^{p-1} \Big\{ & m \Theta^{(n)}_{\fa \fb} \overline{G}_{\fb\fb} + \frac{|m|^2}{N\eta} \im G_{\fb\fb}\\
		&+\sum_x \Theta^{(n)}_{\fa x}\left[\PTn_{x,\fb \fb} +  \ATn_{x,\fb \fb}  + \Wn_{x,\fb \fb}  + \QTn_{x,\fb \fb}  +  (\Err_{n,D})_{x,\fb \fb}\right] \Big\}.
	\end{split}
\end{equation}
With Lemma \ref{GtoAG} and Lemma \ref{w_s} as inputs, we can show that 
\begin{equation}\label{estimates_ptree}
	\begin{split}
		& \sum_x \Theta^{(n)}_{\fa x}\PTn_{x,\fb \fb}  \prec W^{-\soe + \lambda} B_{\fa \fb}\,,\\
		& \sum_x \Theta^{(n)}_{\fa x}\ATn_{x,\fb \fb}  \prec W^{(n-1)(-\soe + \lambda)}\frac{L^2}{W^2} (B_{\fa \fb} + \wt \Phi^2)\,,  \\
		& \sum_x \Theta^{(n)}_{\fa x}\Wn_{x,\fb \fb}  \prec \frac{1}{N\eta}\frac{L^2}{W^2} (B_{\fa \fb} + \wt \Phi^2)\,,\\
		& \sum_x \Theta^{(n)}_{\fa x}\Err_{x,\fb \fb}  \prec W^{D(-\soe + \lambda)}\frac{L^2}{W^2} (B_{\fa \fb} + \wt \Phi^2)\,, \\
		&\E T^{p-1}_{\fa\fb}\QTn_{\fa,\fb\fb} \prec \sum_{k=2}^p \Big[W^{-\soe/2+\lambda/2}(B_{\fa\fb} + \wt \Phi^2)\Big]^k \E T^{p-k}_{\fa\fb}\,.
	\end{split}
\end{equation}
More precisely, the estimates in Lemma \ref{w_s} will replace the role of estimates (8.10)--(8.12) in \cite{BandII}. With these estimates, using exactly the same arguments as in \cite[Section 8.2]{BandII}, we can prove all the bounds in \eqref{estimates_ptree} except the third one involving $\Wn$.
For any graph in {$\Wn$}, 
removing the unique free edge in it still gives a doubly connected graph (see property 4 of Definition \ref{def genuni2} below), so it can be written as $(N\eta)^{-1} \cdot \mathcal G_{x\fb}$ for a doubly connected graph $\mathcal G$. We can bound $\mathcal G_{x\fb}$ using the same argument as the one for $\ATn$ in Claim 8.6 of \cite{BandII} and get  that
$$\sum_x \Theta^{(n)}_{\fa x}\mathcal G_{x\fb} \prec  \frac{L^2}{W^2} (B_{\fa \fb} + \wt \Phi^2).$$
This yields the third bound in \eqref{estimates_ptree}. Substituting \eqref{estimates_ptree} into \eqref{eq:ETp-Texp}, the desired result \eqref{locallawptree} with $c = \min(c_0 - (n-1)\lambda,\soe/2-\lambda/2)$ follows from an application of H\"older's inequality and Young's inequality.  
\end{proof}

Finally, the proof of Lemma \ref{w_s} involves a basic calculation using Definition \ref{def_swnorms}.
\begin{proof}[\bf Proof of Lemma \ref{w_s}]
Define the subsets
\begin{equation}
	\mathcal I_l = \Big\{ x \in \Z^d_L: \gE{x - y_l} \leq \min_{j\ne l}\gE{x- y_j}  \Big\}\,.
\end{equation}
So $B_{x y_l}=\max_{j}B_{xy_j}$ for $x \in \mathcal I_l$. Moreover, for $j\ne l$, we have $\gE{y_l-y_j} \le \gE{x- y_j}+\gE{x-y_l} \le 2\gE{x- y_j}$, implying that $B_{x_i y_j }\lesssim B_{y_l y_j }$. Thus, we obtain that
\begin{equation}
	\label{eq:1140223}
	\sum_{x_i \in I_l}\mathcal A^{(1)}_{x_i \beta}\cdot \prod_{j=1}^k B_{x_i y_j } \lesssim \sum_{x_i \in I_l}\mathcal A^{(1)}_{x_i \beta} B_{x_i y_l } \cdot \prod_{j \not = l} B_{y_l y_j } \le  \sum_{x_i \in \Z_L^d}\mathcal A^{(1)}_{x_i \beta} B_{x_i y_l } \cdot \prod_{j \not = l} B_{y_l y_j }\,.
\end{equation}
Without loss of generality, assume that $\mathcal A^{(1)}$ is $0$-bounded, i.e., $\| \mathcal A^{(1)} \|_{w} \prec 1$. With $K_n := 2^n W$, we have that
\begin{align*}
	& \sum_{x_i \in \Z_L^d}\mathcal |\mathcal A^{(1)}_{x_i \beta}| B_{x_i y_l }  \leq \sum_{ 1 \leq n \leq \log_2 \frac L W}\sum_{K_{n-1} \leq \gE{x_i - y_l} \leq K_n}\mathcal |\mathcal A^{(1)}_{x_i \beta}| B_{x_i y_l } \\
	&\leq  \sum_{ 1 \leq n \leq \log_2 \frac L W}\max_{K_{n-1} \leq \gE{x_i - y_l} \leq K_n} B_{x_i y_l}\cdot  \sum_{K_{n-1} \leq \gE{x_i - y_l} \leq K_n} |\mathcal A^{(1)}_{x_i \beta}|  \\
	& \prec \max_{ 1 \leq n \leq \log_2 \frac L W}W^{-2}K_n^{2-d}\cdot K_n^d\conc(K_n,W,\eta)^{\frac{1}{2}}\\
	&\leq \max_{ 1 \leq n \leq \log_2 \frac L W}  \left[ \left(\frac{1}{W^7 K_n^{d-7}}+ \frac{K_n^5}{W^5 N\eta} + \sqrt{\frac{K_n^{10-d}}{W^{10}N\eta}\frac{L^2}{W^2}}\right)\left(\frac{1}{W^7 K_n^{d-7}} + \frac{K_n^3}{W^3N\eta}\frac{L^2}{W^2}\right)\right]^{\frac{1}{4}}\,,
\end{align*}
which implies that 
\begin{equation}
	\sum_{x_i \in \Z_L^d}\mathcal |\mathcal A^{(1)}_{x_i \beta}| B_{x_i y_l }  \prec \begin{cases}
		W^{-\delta_0/2}, &\text{if}\ d \geq 7, \eta \geq \etas\\
		W^{- {d}/{2}}, &\text{if}\ d \geq 7, \eta \geq W^2/L^2
	\end{cases}\,.
\end{equation}
Combined with \eqref{eq:1140223}, it yields \eqref{keyobs3} (and also explains why we choose the $\soe$ in \eqref{eq:def-soe}). The result \eqref{keyobs2} is a simple consequence of \eqref{keyobs3}, and we omit the details of the proof.
\end{proof}

\subsection{Construction of the $T$-expansion}

In this section, we construct a sequence of $T$-expansions satisfying Definition \ref{defn genuni} order by order. First, it is not hard to see that the $n$-th order $T$-expansion can be obtained by solving the $n$-th order $T$-equation defined as follows. 

\begin{defn}[$n$-th order $\incomp$]\label{def incompgenuni}
Fix any $n\in \N$ and a large constant $D>n$. For $\fa, \fb_1,\fb_2 \in \Z_L^d$, an $n$-th order $\incomp$ of $T_{\fa,\fb_1\fb_2}$ with $D$-th order error is an expression of the following form:
\begin{equation}\label{mlevelT incomplete}
	\begin{split}
		T_{\fa,\fb_1 \fb_2}&= m  \Theta^{\circ}_{\fa \fb_1}\overline G_{\fb_1\fb_2} + \frac{|m|^2}{2\ii N\eta} (G_{\fb_2 \fb_1} - \overline G_{\fb_1 \fb_2})  +\sum_x (\Theta^{\circ} \Sigma^{(n)})_{\fa x} T_{x,\fb_1\fb_2} \\
		&+ \sum_x \Theta^{\circ}_{\fa x}\left[\PTn_{x,\fb_1 \fb_2} +  \ATn_{x,\fb_1\fb_2}  + \Wn_{x,\fb_1\fb_2}  + \QTn_{x,\fb_1\fb_2}  +  (\Err_{n,D})_{x,\fb_1\fb_2}\right]\,,
	\end{split}
\end{equation}
where $\PTn,\ATn,\Wn,\QTn,\Err_{n,D}$ are the graphs in Definition \ref{defn genuni}. 
\end{defn}

Second, given the $(n-1)$-th order $T$-expansion, following the expansion strategy described in \cite{BandII}, we can construct an $n$-th order $\incomp$ as in \Cref{Teq} below. 
Before stating it, we introduce the following notion of redundant edges.
\begin{defn}[Redundant edges]\label{def-redundant} 
In a doubly connected graph, an edge is said to be redundant if after removing it, the resulting graph is still doubly connected. Otherwise, the edge is said to be pivotal. 
\end{defn}

\begin{proposition}[Construction of the $\incomp$]  \label{Teq}
Given any $n\in \N$, suppose we have constructed an $(n-1)$-th order $T$-expansion satisfying Definition \ref{defn genuni}. 
Then, we can construct an $n$-th order $\incomp$ satisfying Definition \ref{def incompgenuni}, where $\Sigma^{(n)}$ is a deterministic matrix such that $\Sigma^{(n)}=\cal E_{n} + \Sigma^{(n-1)}$ with $\cal E_{n}$ being a sum of doubly connected deterministic graphs satisfying Definition \ref{collection elements} (i).  Moreover, every $\dashed$ edge in each graph in $\cal E_{n}$ is redundant.
\end{proposition} 

Third, we can prove that the deterministic matrix $\cal E_{n}$ constructed in Proposition \ref{Teq} indeed is a $\self$, that is, it satisfies properties \eqref{two_properties0}--\eqref{3rd_property0}.
\begin{proposition}\label{cancellation property}
The deterministic matrix $\Sele_n$ constructed in the $n$-th order $\incomp$ in Proposition \ref{Teq} satisfies the properties \eqref{two_properties0}--\eqref{3rd_property0} with $l=n/2$ (recall the convention \eqref{trivial conv}).
\end{proposition}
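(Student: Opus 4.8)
\textbf{Proof proposal for Proposition \ref{cancellation property}.}

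The plan is to verify the three defining properties \eqref{two_properties0}, \eqref{4th_property0}, and \eqref{3rd_property0} for the deterministic matrix $\Sele_n$ produced by the construction in Proposition \ref{Teq}, using induction on $n$. The translation invariance and symmetry in \eqref{two_properties0} are the easiest part: they follow from the translation invariance of $S$ (hence of $\Theta^\circ$ and $S^\pm$), from the fact that every $\self$ graph is built only from waved edges, dotted edges, diffusive edges, and labelled diffusive edges $\Theta^{(k')}_{k''}$ with $k'' \le k' \le n-1$ (all of which are translation invariant by the inductive hypothesis and the discussion around \eqref{two_properties0}--\eqref{3rd_property0}), and from the observation that the expansion strategy of \cite{BandII} produces graphs whose value depends only on the differences of the external indices $0$ and $x$; the reflection symmetry $\Sele_n(0,a) = \Sele_n(0,-a)$ comes from the symmetry of $S$ together with the conjugation-transposition symmetry of the expansion.

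The size bound \eqref{4th_property0}, namely $|(\Sele_n)_{0x}| \prec W^{-(l-2)d} B_{0x}^2$, is proved by a graphical power-counting argument: each $\Sele_n$ graph is deterministic of scaling order $n = 2l$, so it has no $G$ edges or free edges, and by Definition \ref{collection elements}(i) it consists of waved, dotted, diffusive, and labelled-diffusive edges; one bounds each waved edge by a pseudo-waved edge (via \eqref{app compact f}, \eqref{S+xy}), each diffusive or labelled-diffusive edge by a pseudo-diffusive edge $B$ (via \eqref{thetaxy}, \eqref{BRB}), and then sums over internal atoms using the doubly connected property — since an order-$2l$ doubly connected deterministic graph has two edge-disjoint spanning structures, after summing over internal vertices one is left with two independent $B_{0x}$-type factors, and the surplus edges relative to the minimal doubly connected skeleton each contribute a factor $W^{-d}$ (because $\soe = d/2$ for deterministic graphs, or equivalently because a closed $B$-loop over an internal atom gives $\sum_w B_{\cdot w}B_{w \cdot}\lesssim \wt B \lesssim W^{-d}\cdot B$-scale). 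Counting the surplus for an order-$2l$ graph gives exactly $l-2$ extra $W^{-d}$ factors beyond the two $B_{0x}$'s, yielding \eqref{4th_property0}; this is essentially the computation already carried out for \cite[Lemma 6.2]{BandI} and the analogous estimates in \cite{BandII}, adapted to $\Theta^\circ$ via $|m|\le 1$ and the spectral gap of $S^\circ$.

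The hard part is the sum-zero property \eqref{3rd_property0}, which encodes the crucial cancellation: a naive bound on $\sum_x (\Sele_n)_{0x}$ would only give $\prec W^{-(l-2)d}\cdot(\text{size of }\sum_x B_{0x}^2)$, which is far too big; one must exhibit an exact cancellation showing that the ``constant mode'' of $\Sele_n$ nearly vanishes. This is exactly the content of the ``sum zero property for $\selfs$'' whose proof is deferred to Section \ref{sec_pf_sumzero} in the paper, and the natural plan is: (i) use the $(n-1)$-th order $T$-equation together with Ward identities \eqref{eq_Ward0}--\eqref{eq_Ward} to derive a self-consistent identity for $\sum_x T_{x,\fb_1\fb_2}$ that forces $\Sele_n$ to satisfy an approximate orthogonality to the constant vector $\mathbf e$, exploiting that $\Theta^\circ \mathbf e = 0$ and $S^\circ \mathbf e = 0$; (ii) invoke part (i) of Proposition \ref{locallaw-fix} (the local law valid down to $\eta \ge W^2/L^2$ under the milder condition \eqref{Lcondition1}), which is available because the $(n-1)$-th order expansion is already constructed, to control the error terms; and (iii) track how the $\eta$-dependent and $L$-dependent error sources — the $(N\eta)^{-1}$ from the free/zero-mode term and the $W^{2d-6}/L^{2d-6}$ diffusive tail — propagate through the self-consistent equation, producing precisely the bracket $[(\eta + W^{2d-6}L^{-(2d-6)})W^{-d} + N^{-1}L^2 W^{-2}]W^{-(l-2)d}$ in \eqref{3rd_property0}. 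I expect step (iii) of this sum-zero argument — bookkeeping the cancellation quantitatively and showing no divergent term survives once the zero mode $\tfrac{\im m}{N\eta}$ has been separated off as in Section \ref{sec_second_T} — to be the main obstacle, and it is the step that genuinely uses the new decomposition $\Theta^\circ = \Theta - \tfrac{\im m}{N\eta}$ rather than simply copying \cite{BandI,BandII}.
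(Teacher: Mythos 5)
Your handling of the first two properties matches the paper: \eqref{two_properties0} follows from the symmetry and translation invariance of $S$ (the paper cites Lemma A.1 of \cite{BandI}), and \eqref{4th_property0} is exactly the doubly connected power counting, which the paper packages as Lemma \ref{dG-bd} applied with $k_\gh=k_\fr=0$ to the order-$n$ deterministic graphs of $\Sele_n$. The problem is the sum-zero property \eqref{3rd_property0}, which is the entire content of Proposition \ref{sum0:L}, and there your proposed route has a genuine gap. You want to derive the cancellation directly at the given finite $(L,\eta)$ from a self-consistent identity for $\sum_x T_{x,\fb_1\fb_2}$, using Ward identities, $\Theta^\circ\mathbf e=0$, and part (i) of Proposition \ref{locallaw-fix} to control errors. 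But \eqref{3rd_property0} must hold uniformly for all $\eta\in[W^{-5}L^{5-d},1]$, while at this stage of the induction the only local law available is part (i) of Proposition \ref{locallaw-fix}, valid only for $\eta\ge W^2/L^2$; a consistency argument run at $\eta\ge W^2/L^2$ says nothing directly about the deterministic quantity $\sum_x(\Sele_n)_{0x}(z)$ at much smaller $\eta$, and no probabilistic input exists at those $\eta$ yet. Moreover, the algebraic facts $\Theta^\circ\mathbf e=0$, $S^\circ\mathbf e=0$ do not by themselves force any orthogonality of $\Sele_n$ to the constant vector: the cancellation is a nontrivial property of the specific graphs produced by Proposition \ref{Teq}, and your step (iii), which asserts that the bookkeeping ``produces precisely the bracket'' in \eqref{3rd_property0}, is exactly the statement to be proved rather than an argument.

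The paper proves \eqref{3rd_property0} by a different mechanism: it introduces the infinite-space, zero-$\eta$ limit $\Sele_n^\infty$ (Definition \ref{def infspace}), which is independent of $L$ and $\eta$, and invokes the exact sum-zero property $\sum_x(\Sele_n^\infty)_{0x}=\OO(W^{-D})$ from \cite{BandI} (Proposition \ref{sum0-inf}); part (i) of Proposition \ref{locallaw-fix} enters only to verify the hypothesis \eqref{cond_d=7_only} needed to extend that statement to $d=7$, not to control errors in a finite-volume self-consistent equation. The $\eta$- and $L$-dependent error terms in \eqref{3rd_property0} then come from two purely deterministic comparison lemmas: Lemma \ref{LtoInf_x}, which replaces $(m(z),S,S^\pm(z),\Theta^{[n-1]}(z))$ by their limits and produces $\eta W^{-d}+W^{-2}L^{2-d}$ (this is where the zero-mode issue $\Theta^\circ$ versus $\Theta_\infty$ is handled, via Lemmas \ref{Green-gr} and \ref{theta-itself}), and Lemma \ref{outsideL}, which compares $\sum_{x\in\Z_L^d}$ with $\sum_{x\in\Z^d}$ and produces $(W/L)^{2d-6}W^{-d}$, using the globally standard property (Lemma \ref{-edc}) to remove a diffusive edge. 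This decoupling — prove the exact cancellation only for the $(L,\eta)$-independent limit at a favorable scale, then estimate the finite-volume and finite-$\eta$ corrections deterministically — is the missing idea in your proposal, and without it your approach cannot yield the claimed bound uniformly down to $\eta= W^{-5}L^{5-d}$.
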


 The proof of Proposition \ref{Teq} is similar to that for Theorem 3.7 in \cite{BandII}, and we will describe an outline of it in \Cref{sec notation} without giving all the details. The proof of Proposition \ref{cancellation property} will be presented in Section \ref{sec_pf_sumzero}. 

Combining the above results, we can prove Theorem \ref{completeTexp} by induction.  

\begin{proof}[\bf Proof of Theorem \ref{completeTexp}]
Suppose we have constructed the $n$-th order $\incomp$.
Using property \eqref{two_properties0} for $\Sigma^{(n)}=\sum_{2l=4}^n\mathcal E_{2l}$, we get that
\begin{equation}\label{sum_TE0}
	\sum_{y} (\Theta^{\circ} \Sigma^{(n)})_{xy} = \sum_{\alpha,\beta} \Theta^{\circ}_{x\al} \Sigma^{(n)}_{\al\beta} =  \sum_{\alpha} \Theta^{\circ}_{x\al}\cdot \sum_{\beta}  \Sigma^{(n)}_{0\beta}  = 0.  
\end{equation}
Using \eqref{sum_TE0} and \eqref{eq:TTC}, we can rewrite \eqref{mlevelT incomplete} as
\begin{equation}
	\label{eq:solve-Teq-0}
	\begin{split}
		&\sum_x \left(I-\Theta^{\circ} \Sigma^{(n)}\right)_{\fa x} T^\circ_{x,\fb_1\fb_2}= m  \Theta^{\circ}_{\fa \fb_1}\overline G_{\fb_1\fb_2}   \\
		&+ \sum_x \Theta^{\circ}_{\fa x}\left[\PTn_{x,\fb_1 \fb_2} +  \ATn_{x,\fb_1\fb_2}  + \Wn_{x,\fb_1\fb_2}  + \QTn_{x,\fb_1\fb_2}  +  (\Err_{n,D})_{x,\fb_1\fb_2}\right]\,.
	\end{split}
\end{equation}
Solving \eqref{eq:solve-Teq-0} and recalling the definition \eqref{theta_renormal1}, we obtain that
$$ T^\circ_{\fa,\fb_1\fb_2} = m  \Theta^{(n)}_{\fa \fb_1}\overline G_{\fb_1\fb_2} + \sum_x \Theta^{(n)}_{\fa x}\left[\PTn_{x,\fb_1 \fb_2} +  \ATn_{x,\fb_1\fb_2}  + \Wn_{x,\fb_1\fb_2}  + \QTn_{x,\fb_1\fb_2}  +  (\Err_{n,D})_{x,\fb_1\fb_2}\right]. $$
Substituting it back to \eqref{eq:TTC} 
gives the $n$-th order $T$-expansion \eqref{mlevelTgdef}. The above argument together with Propositions \ref{Teq} and \ref{cancellation property} shows that given the $(n-1)$-th order $T$-expansion, we can construct the $n$-th order $T$-expansion. By mathematical induction, we conclude Theorem \ref{completeTexp}.   
\end{proof}
	
	\subsection{Proof of the quantum unique ergodicity}

	To prove Theorem \ref{thm:QUE}, we first observe that the quantity of interest $  {|I_N|}^{-1}\sum_{x \in I_N} (N|u_\alpha(x)|^2 -1)$ can be controlled by $\tr{\mathcal A \Pi \mathcal A \Pi}$, where $\mathcal A : =\im G$ and $\Pi$ is a diagonal matrix of zero  trace. Then, some bounds on high moments of $\tr{\mathcal A \Pi \mathcal A \Pi}$ obtained using Lemma \ref{mGep} will conclude Theorem \ref{thm:QUE}. The key ingredient for the proof is identifying a cancellation in $\tr{\mathcal A \Pi \mathcal A \Pi}$ from the graphical properties of its complete expansions.
	
	\begin{lemma}
		\label{uab-tr}
		Let $z = E + \ii\eta$ and $\Pi = \mathrm{diag}((\Pi_x)_{x \in \Z^d_L})$ be a real diagonal matrix. Then, for any $l\ge \eta$,
		\begin{align}
			& {\sum_{\alpha, \beta: |\lambda_\alpha - E| \leq l,|\lambda_\beta - E| \leq l}}|\gE{u_\alpha,\Pi u_\beta}|^2  \leq \frac{4l^4}{\eta^2} \tr{\mathcal A(z) \Pi \mathcal A(z) \Pi}  ,\label{eq:uab-tr-1}
			\\	
			\label{eq:uab-tr-2} &\sum_{\alpha, \beta: |\lambda_\alpha - E| \leq l,|\lambda_\beta - E| \leq l}|\gE{u_\alpha,\Pi \bar u_\beta}|^2  \leq \frac{4l^4}{\eta^2} \tr{\mathcal A(z) \Pi \overline {\mathcal A}(z) \Pi}\,.
		\end{align}
	\end{lemma}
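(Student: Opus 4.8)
The plan is to use the spectral decomposition of $\mathcal A(z) = \im G(z)$ together with the fact that, since $\Pi$ is real and hence Hermitian, every term in the resulting double sum for $\tr{\mathcal A(z)\Pi\mathcal A(z)\Pi}$ is manifestly non-negative; the inequalities then follow by discarding all but the relevant eigenvalue window and estimating the spectral weights from below.

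Concretely, I would first write $\mathcal A(z) = \tfrac{1}{2\ii}(G-G^*) = \sum_\gamma w_\gamma\, u_\gamma u_\gamma^*$ with $w_\gamma := \eta/(|\lambda_\gamma-E|^2+\eta^2)>0$, which is immediate from the spectral theorem. Substituting this into $\tr{\mathcal A(z)\Pi\mathcal A(z)\Pi}$ and using cyclicity of the trace gives, for each pair $(\gamma,\delta)$, the scalar $w_\gamma w_\delta\,(u_\gamma^*\Pi u_\delta)(u_\delta^*\Pi u_\gamma)$; since $\Pi^*=\Pi$, we have $u_\delta^*\Pi u_\gamma=\overline{u_\gamma^*\Pi u_\delta}$, so this equals $w_\gamma w_\delta\,|\gE{u_\gamma,\Pi u_\delta}|^2\ge 0$. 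Hence
\[
\tr{\mathcal A(z)\Pi\mathcal A(z)\Pi}=\sum_{\gamma,\delta} w_\gamma w_\delta\,|\gE{u_\gamma,\Pi u_\delta}|^2 \ge \sum_{\substack{\alpha,\beta:\ |\lambda_\alpha-E|\le l \\ |\lambda_\beta-E|\le l}} w_\alpha w_\beta\,|\gE{u_\alpha,\Pi u_\beta}|^2 .
\]
For indices in the window one has $|\lambda_\alpha-E|^2+\eta^2\le l^2+\eta^2\le 2l^2$ because $l\ge\eta$, so $w_\alpha\ge \eta/(2l^2)$ and $w_\alpha w_\beta\ge \eta^2/(4l^4)$; rearranging yields \eqref{eq:uab-tr-1}.

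For \eqref{eq:uab-tr-2} I would run the same computation with $\overline{\mathcal A}(z)=\sum_\delta w_\delta\,\bar u_\delta\bar u_\delta^*$, the entrywise complex conjugate of the spectral decomposition (so $\bar u_\delta$ denotes the entrywise conjugate of $u_\delta$). Then $\tr{\mathcal A(z)\Pi\overline{\mathcal A}(z)\Pi}=\sum_{\gamma,\delta} w_\gamma w_\delta\,(u_\gamma^*\Pi\bar u_\delta)(\bar u_\delta^*\Pi u_\gamma)$, and using that $\Pi$ is real one checks $\bar u_\delta^*\Pi u_\gamma=\overline{u_\gamma^*\Pi\bar u_\delta}$, so each summand is $w_\gamma w_\delta\,|\gE{u_\gamma,\Pi\bar u_\delta}|^2\ge 0$; the same restriction to the window and the same weight bound conclude.

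There is essentially no obstacle: the statement is an elementary consequence of the spectral theorem and the positivity that the reality of $\Pi$ forces on the terms of $\tr{\mathcal A\Pi\mathcal A\Pi}$ and $\tr{\mathcal A\Pi\overline{\mathcal A}\Pi}$, via the identity $(u_\gamma^*\Pi u_\delta)(u_\delta^*\Pi u_\gamma)=|\gE{u_\gamma,\Pi u_\delta}|^2$. The only mild care needed is distinguishing the entrywise conjugation in $\overline{\mathcal A}$ from the Hermitian adjoint used in part (i), and verifying the trivial lower bound $w_\alpha\ge\eta/(2l^2)$ on the spectral window where $l\ge\eta$.
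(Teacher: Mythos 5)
Your proof is correct and follows essentially the same route as the paper: spectral decomposition of $\mathcal A$ (note $|\lambda_\alpha-E|^2+\eta^2=|\lambda_\alpha-z|^2$), positivity of each term $|\gE{u_\alpha,\Pi u_\beta}|^2$ from $\Pi$ being real, restriction to the spectral window, and the lower bound $\eta^2/(4l^4)$ on the weights using $l\ge\eta$. The paper leaves the last step implicit ("this yields the bound immediately"), so your write-up simply spells out the same argument in more detail.
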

	\begin{proof}
		Using the spectral decomposition of $\cal A$,
		$$	\mathcal A = \frac{1}{2i} ( G - G^*) = \sum_{\alpha}\frac{\eta}{|\lambda_\alpha -z|^2} u_\alpha u_\alpha^*\, ,
		$$
		we obtain that
		\be\label{APiAPi}
		\tr{\mathcal A \Pi \mathcal A \Pi} = \sum_{\alpha,\beta}\frac{\eta^2}{|\lambda_\alpha -z|^2|\lambda_\beta- z|^2} |\gE{u_\alpha,\Pi u_\beta}|^2 \,.
		\ee
		This yields \eqref{eq:uab-tr-1} immediately. The proof of \eqref{eq:uab-tr-2} is similar.
	\end{proof}
	
	With Lemma \ref{uab-tr}, the proof of Theorem \ref{thm:QUE} follows easily from the following proposition. We postpone the proof of \Cref{trABAB} until we complete the proof of Theorem \ref{thm:QUE}.
	\begin{proposition}
		\label{trABAB}
		Under the setting of Lemma \ref{mGep}, if $\Pi = \mathrm{diag}(\Pi_x:{x \in \Z^d_L})$ is a real diagonal matrix with $\tr{\Pi} = 0$, then for any fixed $p \in \N$, we have
		\begin{equation}
			\label{eq:trABAB}
			\E\left[|\tr{\mathcal A(z) \Pi \mathcal A(z) \Pi}|^{2p}\right] \prec \Big(\sum_y |\Pi_y|\Big)^{2p}  \Big(\max_x \sum_{y}B_{xy}|\Pi_y|\Big)^{2p} \,.
		\end{equation}
		The same bound holds for $\E\left[|\tr{\mathcal A(z) \Pi \overline {\mathcal A}(z) \Pi}|^{2p}\right]$.
	\end{proposition}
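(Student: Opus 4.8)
The plan is to run a complete graphical expansion of $\E\big[|\tr{\mathcal A(z)\Pi\mathcal A(z)\Pi}|^{2p}\big]$ through Lemma~\ref{mGep} and then extract the bound from the graphical properties, the hypothesis $\tr\Pi=0$ entering through a cancellation that annihilates all the ``dangerous'' deterministic graphs; this cancellation is the graphical incarnation of the zero mode $\im m/(N\eta)$ being orthogonal to a traceless $\Pi$. First I would rewrite $\tr{\mathcal A\Pi\mathcal A\Pi}=\sum_{a,b}|\mathcal A_{ab}|^2\Pi_a\Pi_b$, which is real and nonnegative since $\mathcal A=\im G$ is self-adjoint, so $|\tr{\mathcal A\Pi\mathcal A\Pi}|^{2p}=(\tr{\mathcal A\Pi\mathcal A\Pi})^{2p}$. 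Writing $\mathcal A_{ab}=\tfrac1{2\ii}(G_{ab}-\overline{G_{ba}})$ and expanding, $|\mathcal A_{ab}|^2$ becomes a fixed linear combination of products of two solid edges between $a$ and $b$, so $(\tr{\mathcal A\Pi\mathcal A\Pi})^{2p}$ is an $\OO(1)$-linear combination of $\Pi$-weighted sums $\sum_{x_1,\dots,x_{4p}}\big(\prod_{i}\Pi_{x_i}\big)\mathcal G_{\bx}$, where $\mathcal G_{\bx}$ is a disjoint union of $2p$ digons (two parallel solid edges) on external atoms $\bx=(x_1,\dots,x_{4p})$. To apply Lemma~\ref{mGep}, which requires distinct external atoms, I would split the summation by the coincidence pattern of $x_1,\dots,x_{4p}$; this yields $\OO(1)$ terms, each a $\Pi$-weighted sum over $q\le 4p$ distinct atoms of a graph obtained by merging some atoms of $\mathcal G_{\bx}$ (which only merges digons and keeps the two endpoints of each surviving digon connected). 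The fully diagonal pieces, where a digon collapses onto a single atom, can be treated separately by an elementary argument using the local law \eqref{locallaw0} and Ward's identity \eqref{eq_Ward}; I do not expect these to be the main difficulty.

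Next I would take the expectation and apply Lemma~\ref{mGep}: $\E[\mathcal G_{\bx}]=\sum_\mu \mathcal G_{\bx}^{(\mu)}+\OO(W^{-D})$ with $\mathcal G_{\bx}^{(\mu)}$ deterministic. The key structural input is that the two endpoints of each digon are connected in $\mathcal G_{\bx}$, hence remain connected through non-ghost edges in every $\mathcal G_{\bx}^{(\mu)}$; in particular no external atom is isolated, and the number of connected components $t$ satisfies $t\le 2p$. Then \eqref{EGx2} and properties (a)--(c) of Lemma~\ref{mGep} bound each $\mathcal G_{\bx}^{(\mu)}$ by $\sum_\gamma c_\gamma(W,L)\,\mathcal G_{\bx}^{(\mu,\gamma)}$, where $\mathcal G_{\bx}^{(\mu,\gamma)}$ is a simple auxiliary graph on the $q$ external atoms with $k=q$ non-isolated atoms, hence at least $\lceil q/2\rceil$ special atoms each carrying a distinct non-silent edge, and with $c_\gamma(W,L)\,\size(\mathcal G_{\bx}^{(\mu,\gamma)})\le W^{-2\soe(q-t)}$.

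The heart of the proof is the cancellation. I would argue that among the deterministic graphs $\mathcal G_{\bx}^{(\mu)}$, any graph in which some $\Pi$-weighted external atom $x_i$ is attached to the rest of the graph only through index-free edges (free or ghost edges) has a value independent of the index $x_i$; summing such a graph against $\Pi_{x_i}$ produces the factor $\sum_{x_i}\Pi_{x_i}=\tr\Pi=0$, so it drops out. Heuristically, each digon $\sum_{a,b}|\mathcal A_{ab}|^2\Pi_a\Pi_b$ has leading deterministic content of the schematic form $\sum_{a,b}\Pi_a\Pi_b[(\text{diffusive})_{ab}+c]$, where the constant $c$ is exactly the zero mode $\im m/(N\eta)$ that was split off from the $T$-variable in \eqref{mlevelTgdef}, and this constant term is killed by the traceless $\Pi$. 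After discarding these graphs, in every surviving $\mathcal G_{\bx}^{(\mu)}$ — and, via property (a), in the associated auxiliary graphs — each external atom carries at least one genuine index-carrying edge, so the special atoms provided by property (b) can be used efficiently.

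It then remains to perform the $\Pi$-summation in each surviving auxiliary graph $\mathcal G_{\bx}^{(\mu,\gamma)}$ and do the power counting. Working within each connected component — a union of whole digons, hence of even size $2m$ once the diagonal pieces are removed — I would sum the $\lceil 2m/2\rceil=m$ special atoms via their dedicated non-silent edges (pseudo-diffusive edges $B_{\cdot\cdot}$, or pseudo-waved edges, bounded by $\OO(W^\tau)B_{\cdot\cdot}$) to produce $m$ factors of $\max_x\sum_y B_{xy}|\Pi_y|$, sum the remaining $m$ atoms against $|\Pi|$ using only $\sum_y|\Pi_y|$, and absorb the leftover silent, free and ghost edges together with $c_\gamma(W,L)$ into the size bound $c_\gamma\size\le W^{-2\soe(q-t)}$ of property (c) (each such edge being $\OO(W^{-2\soe})$ up to the $(L/W)^2$ factors already tracked by $\size$). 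Summing over components gives, in the generic case $q=4p$, exactly $2p$ factors of $\max_x\sum_y B_{xy}|\Pi_y|$ and $2p$ factors of $\sum_y|\Pi_y|$, which is \eqref{eq:trABAB}; the coincidence cases $q<4p$ are dominated by this after $|\Pi_x|^k\le(\sum_y|\Pi_y|)^{k-1}|\Pi_x|$. The argument for $\E[|\tr{\mathcal A\Pi\overline{\mathcal A}\Pi}|^{2p}]$ is identical, the only change being that one solid edge in each digon is a $\overline G$-edge, which does not affect the graphical analysis. The main obstacles are Step three — pinning down, through the complete expansion, exactly which deterministic graphs vanish by $\tr\Pi=0$ — and the uniformity of the power counting down to $\eta\ge\etas$; the clean treatment of the diagonal digon pieces is a secondary technical point.
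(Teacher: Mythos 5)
Your overall route is the same as the paper's: expand $|\tr{\mathcal A \Pi \mathcal A \Pi}|^{2p}$ into $\Pi$-weighted sums of products of $4p$ resolvent entries, apply Lemma \ref{mGep}, kill a class of deterministic graphs using $\tr{\Pi}=0$, and then power count with properties (a)--(c). The genuine gap is in the cancellation step. The paper discards every deterministic graph in which the $4p$ external atoms fall into more than $2p$ connected components once free (and ghost) edges are excluded from the edge set: in such a graph some component contains exactly \emph{one} external atom, and because all internal atoms of that component are summed over the whole torus and every edge factor is translation invariant, the value is independent of that external atom even when it is attached to internal atoms by genuine diffusive or waved edges; only then does $\sum_x \Pi_x = 0$ make it vanish. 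Your criterion --- discard only graphs in which an external atom is attached to the rest solely through free or ghost edges --- is strictly weaker and misses exactly these graphs (e.g.\ an internal molecule joined to $x_i$ by a black diffusive edge and to everything else by a free/ghost edge, which is the graphical form the zero mode takes after one internal summation). For such a graph the auxiliary bound \eqref{EGx2} leaves $x_i$ attached only through silent edges (schematically $\sum_a B_{x_i a}\cdot\frac{1}{N\eta}\lesssim \frac{L^2}{W^2}\frac{1}{N\eta}$), so its $\Pi$-average produces an extra root factor $\sum_y|\Pi_y|$ in place of $\max_x\sum_y B_{xy}|\Pi_y|$; since $\max_x\sum_y B_{xy}|\Pi_y|$ can be as small as $\frac{L^2}{W^2}\frac{1}{N}\sum_y|\Pi_y|$, the excess is of order $\eta^{-1}$ and no leftover size factor compensates at $\eta\asymp \eta_*\ll W^2/L^2$. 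The true value is zero, but only via the translation-invariance cancellation you did not invoke; without it you also cannot justify the bound $\mathrm{CC}\le 2p$ on the number of $\sum_y|\Pi_y|$ factors, which is what your per-component bookkeeping implicitly assumes.

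A second, related soft spot: property (b) of Lemma \ref{mGep} only guarantees that each special atom carries a \emph{non-silent} edge, and non-silent edges include free edges, not just pseudo-diffusive/pseudo-waved ones as you assert. Summing a special atom through a free edge yields $\frac{1}{N\eta}\sum_y|\Pi_y|$ rather than $\max_x\sum_y B_{xy}|\Pi_y|$, and for $\eta\le W^2/L^2$ this is not dominated by the target factor; the paper closes this deficit explicitly, using $Fr+PD\ge 2p$ together with $\frac{W^2}{L^2}\max_x\sum_y \wt B_{xy}|\Pi_y|\le \max_x\sum_y B_{xy}|\Pi_y|$ and $\frac{L^4}{W^4}\frac{1}{N\eta}\le W^{-2\soe}$, rather than by a blanket ``absorb into the size bound''. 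Relatedly, your claim that each connected component is ``a union of whole digons, hence of even size'' holds only if free edges are counted in the connectivity, so it cannot be combined with a per-component count of index-carrying edges. These are precisely the two places where the paper's proof does real work, and as written your argument does not get through them.
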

	
	\begin{proof}[\bf Proof of Theorem \ref{thm:QUE}]
		Under the assumptions of Theorem \ref{thm:QUE}, we know that the local law \eqref{locallaw0} holds by Theorem \ref{thm: locallaw}. Moreover, in the proof of Theorem \ref{thm: locallaw}, we have constructed a sequence of $T$-expansions up to arbitrarily high order $n$ by Theorem \ref{completeTexp}. We can choose $n$ large enough depending on $\delta,\ \delta_0$ and $ c_0$ so that \eqref{Lcondition10} holds. Then, both Lemma \ref{def nonuni-T} and Lemma \ref{mGep} hold, so we can use Proposition \ref{trABAB} in the following proof. 
		
		Now, define $b_x := \frac{N}{|I_N|}\11_{x \in I_N}$ and $\Pi_x: = b_x - 1$. Then, $\tr{\Pi}= 0$ and 
		\begin{equation}\label{QUE-id}
			\begin{split}
				\left|\gE{u_\alpha,\Pi u_\alpha}\right|^2 &= \Big(\sum_x \Pi_x |u_\alpha(x)|^2 \Big)^2 =  \Big(\sum_x b_x |u_\alpha(x)|^2 -1\Big)^2  \\
				&= \bigg(\frac{1}{|I_N|}\sum_{x \in I_N} (N|u_\alpha(x)|^2 -1)\bigg)^2 \,.
			\end{split}
		\end{equation}
		Next, taking $z=E+\ii \eta$ with $\eta= \etas$ and applying Markov's inequality to \eqref{eq:trABAB}, we obtain that
		\begin{align*}
			\tr{\mathcal A \Pi \mathcal A \Pi} & \prec N  \bigg( \frac{|I_N|^{2/d}}{W^2} \frac{N}{|I_N|} + \frac{L^2}{W^2}  \bigg) \le \frac{2 N^2}{ W^{2}|I_N|^{1-2/d}}\,.\end{align*}
		Therefore, it follows from \eqref{eq:uab-tr-1} that 
		\begin{align}
			\label{eq:supque} \sup_{\alpha: |\lambda_\alpha - E| \leq \eta}|\gE{u_\alpha,\Pi u_\alpha}|^2  &\prec \eta^2 \tr{\mathcal A \Pi \mathcal A \Pi} \prec \frac{(N\eta)^2}{W^{2}|I_N|^{1-2/d}} = \frac{L^{10}W^{2\delta_0}}{W^{12}|I_N|^{1-2/d}}\,,\\
			\label{eq:avque}	\frac{1}{N\eta}\sum_{\alpha: |\lambda_\alpha - E| \leq \eta}|\gE{u_\alpha,\Pi u_\alpha}|^2 & \prec \frac{\eta}{N} \tr{\mathcal A \Pi \mathcal A \Pi} \prec \frac{N\eta}{W^{2}|I_N|^{1-2/d}} = \frac{L^{5}W^{\delta_0}}{W^7|I_N|^{1-2/d}}\,.
		\end{align}
		Combining \eqref{eq:supque} and \eqref{QUE-id}, we conclude \eqref{eq:que} since $\delta_0<1$. 
		From \eqref{eq:avque}, we obtain that
		$$\frac{1}{N}\sum_{\alpha: |\lambda_\alpha | \leq 2-\kappa}|\gE{u_\alpha,\Pi u_\alpha}|^2  \prec \frac{L^{5}W^{\delta_0}}{W^{7}|I_N|^{1-2/d}}\,,$$
		which, together with Markov's inequality, implies that
		\begin{equation}
			\frac{1}{N}\,\left|\left\{ \alpha: |\lambda_\alpha| \leq 2 - \kappa,  |\gE{u_\alpha,\Pi u_\alpha}|\geq \epsilon \right\}\right| \prec  \frac{\epsilon^{-2}L^{5}W^{\delta_0}}{W^{7 }|I_N|^{1-2/d}}\,.
		\end{equation}
		Combining this bound with \eqref{QUE-id}, with a union bound over $I_N \in \mathcal I$, we conclude \eqref{eq:weakque}.
	\end{proof}
	
	Finally, the proof of Proposition \ref{trABAB} follows from Lemma \ref{mGep}.
	\begin{proof}[\bf Proof of Proposition \ref{trABAB}]
		First, note that $|\tr{\mathcal A \Pi \mathcal A \Pi}|^{2p}$ is a sum of terms of the form $$\sum_{\mathcal G} \sum_{\mathbf x,\mathbf y}\mathcal G_{\mathbf x,\mathbf y} \prod_{i}\Pi_{x_i} \Pi_{y_i},$$ where $\mathcal G_{\mathbf x,\mathbf y}$  are graphs of the form
		\begin{equation*}
			\mathcal G_{\mathbf x,\mathbf y} = c(\{s_i\})\prod_{i = 1}^{2p} G^{(s_{2i-1})}_{x_i y_i} G^{(s_{2i})}_{y_i x_i}, 
		\end{equation*}
		with ${s_i} \in \{\pm\}$ and $c(\{s_i\})$ denoting a deterministic coefficient of order $\OO(1)$. By Lemma  \ref{mGep}, we have that
		\begin{equation*}
			\E[\mathcal G_{\mathbf x,\mathbf y}] =  \sum_{\mu} \mathcal G^{(\mu)}_{\mathbf x,\mathbf y} + \OO(W^{-D}),
		\end{equation*}
		where $\mathcal G^{(\mu)}_{\mathbf x,\mathbf y}$ are deterministic graphs as in \eqref{EGx}. This implies that
		\begin{equation}\label{GGmugamma}
			\E[|\tr{\mathcal A \Pi \mathcal A \Pi}|^{2p}] = \sum_{\mu} \sum_{\mathbf x,\mathbf y}\mathcal G^{(\mu)}_{\mathbf x,\mathbf y} \prod_{i}\Pi_{x_i} \Pi_{y_i} +\OO(W^{-D})\,.
		\end{equation}
		
		We observe that it suffices to consider $\mathcal G^{(\mu)}_{\mathbf x,\mathbf y}$ in which the $4p$ external atoms belong to at most $2p$ connected components when we do not include free edges into the edge set; otherwise the graph vanishes. To see this, suppose there are at least $2p+1$ such connected components. Then, there must be a connected component that contains only one external atom. Without loss of generality, suppose this atom is $x_1$. Since our graphs are translation invariant,
		we know that $\mathcal G^{(\mu)}_{\mathbf x,\mathbf y}$ does not depend on $x_1$. Hence, 
		$\sum_{x_1}\mathcal G^{(\mu)}_{\mathbf x,\mathbf y} \Pi_{x_1} = \mathcal G^{(\mu)}_{\mathbf x,\mathbf y}\sum_{x_1} \Pi_{x_1} = 0. $

		By \eqref{EGx2}, every $\mathcal G^{(\mu)}_{\mathbf x,\mathbf y} $ is bounded by a sum of $\OO(1)$ many simple auxiliary graphs $\mathcal G^{(\mu,\gamma)}_{\mathbf x,\mathbf y}$, with a coefficient 
		\be\label{cgamma_que} c_{\mu,\gamma} \le W^{2\soe(k_{\mu,\gamma}+t_{\mu,\gamma}-4p)}, \ee
		where $k_{\mu,\gamma}$ and $t_{\mu,\gamma}$ are respectively the number of edges and the number of connected components (without removing free edges) in $\mathcal G^{(\mu,\gamma)}_{\mathbf x,\mathbf y}$, and we have used property (c) of Lemma \ref{mGep}. 
		Now, consider a graph $\wt{\mathcal G}^{(\mu,\gamma)}_{\mathbf x,\mathbf y}$ obtained from the following procedure.
		\begin{itemize}
			\item Remove all silent/non-silent free edges from $\mathcal G^{(\mu,\gamma)}_{\mathbf x,\mathbf y}$.
			\item Remove a set of silent/non-silent pseudo-diffusive edges from the graph until each connected component becomes a rooted tree (the choice of the root is arbitrary). We always remove silent pseudo-diffusive edges first whenever possible.
		\end{itemize}
		Denote by $Fr_{\mu,\gamma}$ and $DF_{\mu,\gamma}$ the number of (silent/non-silent) free edges removed and the number of (silent/non-silent) pseudo-diffusive edges removed, respectively. It follows that $${\mathcal G}^{(\mu,\gamma)}_{\mathbf x,\mathbf y} \leq W^{-d \cdot DF_{\mu,\gamma}}\left( \frac{L^2}{W^2}\frac{1}{N\eta} \right)^{Fr_{\mu,\gamma}}\wt{\mathcal G}^{(\mu,\gamma)}_{\mathbf x,\mathbf y}.$$
		
		To bound $\sum_{\mathbf x,\mathbf y}|\wt{\mathcal G}^{(\mu,\gamma)}_{\mathbf x,\mathbf y}|\prod_{i}|\Pi_{x_i}| |\Pi_{y_i}|$, we sum over atoms from the leaves to the chosen root of each tree. If an atom connects to a pseudo-diffusive edge, then we get a factor $\max_x \sum_y B_{xy}|\Pi_y| $ from the summation over it. If an atom connects to a silent diffusive edge, then we get a factor {$\max_x \sum_y \wt B_{xy}|\Pi_y| $}. If an atom is a root, then we get a factor $\sum_y |\Pi_y|$. In sum, we obtain that
		\begin{align}
			\sum_{\mathbf x,\mathbf y} |{\mathcal G}^{(\mu,\gamma)}_{\mathbf x,\mathbf y} &|\prod_{i}|\Pi_{x_i}| |\Pi_{y_i}| \prec W^{-d \cdot DF_{\mu,\gamma}}\left( \frac{L^2}{W^2}\frac{1}{N\eta} \right)^{Fr_{\mu,\gamma}}\nonumber\\
			&\quad  \times  \bigg(\max_x \sum_y B_{xy}|\Pi_y| \bigg)^{PD_{\mu,\gamma}}\bigg(\max_x \sum_y \wt B_{xy}|\Pi_y| \bigg)^{SD_{\mu,\gamma}} \bigg(\sum_y |\Pi_y| \bigg)^{CC_{\mu,\gamma}}, \label{QUE-pf-bdd}
		\end{align}
		where the exponents $PD_{\mu,\gamma}$, $SD_{\mu,\gamma}$, and $CC_{\mu,\gamma}$ denote respectively the numbers of pseudo-diffusive edges, silent pseudo-diffusive edges, and connected components in $\wt{\mathcal G}^{(\mu,\gamma)}_{\mathbf x,\mathbf y}$.
		Since $SD_{\mu,\gamma} = 4p-PD_{\mu,\gamma} - CC_{\mu,\gamma}$, abbreviating $ A:= \max_x \sum_y \wt B_{xy}|\Pi_y|$, we get that the right-hand side of \eqref{QUE-pf-bdd} is 
		\begin{align}
			\, W^{-d \cdot DF_{\mu,\gamma}} \left( \frac{L^2}{W^2}\frac{1}{N\eta} \right)^{Fr_{\mu,\gamma} }  &\left(\frac{\max_x \sum_y B_{xy}|\Pi_y|}{A} \right)^{PD_{\mu,\gamma}} A^{4p}   \left(\frac{\sum_y |\Pi_y| }{A}\right)^{CC_{\mu,\gamma}} \nonumber\\
			\leq \, W^{-d (DF_{\mu,\gamma} + 2p - CC_{\mu,\gamma})} &\left( \frac{L^4}{W^4}\frac{1}{N\eta} \right)^{Fr_{\mu,\gamma}}  \left(\frac{\max_x \sum_y B_{xy}|\Pi_y|}{A} \right)^{2p}A^{4p} \left(\frac{\sum_y |\Pi_y| }{A}\right)^{2p} \nonumber\\
			\le & W^{-2\soe ( k_{\mu,\gamma}+t_{\mu,\gamma}-4p)}\bigg(\max_x \sum_y B_{xy}|\Pi_y| \bigg)^{2p} \bigg(\sum_y |\Pi_y| \bigg)^{2p} .\label{QUE-pf-bdd2}
		\end{align}
		Here, in the first step, we used $CC_{\mu,\gamma} \leq 2p$, $Fr_{\mu,\gamma} + PD_{\mu,\gamma} \geq 2p$ by Lemma  \ref{mGep} (b), and 
		$$ \frac{W^2}{L^2} \cdot A \leq  \max_x \sum_y B_{xy}|\Pi_y|, \quad A\le W^{-d}\sum_y |\Pi_y| ,$$
		and in the second step, we used the trivial bounds
		$$W^{-d} + \frac{L^4}{W^4}\frac{1}{N\eta}\le W^{-2\soe},\quad DF_{\mu,\gamma}+Fr_{\mu,\gamma} +2p - CC_{\mu,\gamma} \ge k_{\mu,\gamma}+t_{\mu,\gamma}-4p,$$
		due to $k_{\mu,\gamma}=DF_{\mu,\gamma}+Fr_{\mu,\gamma}+ PD_{\mu,\gamma} +SD_{\mu,\gamma}$ and $ t_{\mu,\gamma} \le CC_{\mu,\gamma} \le 2p$.  Plugging \eqref{QUE-pf-bdd2} into \eqref{QUE-pf-bdd} and using \eqref{cgamma_que}, we obtain that 
		\begin{align*}
			\sum_{\mathbf x,\mathbf y} |{\mathcal G}^{(\mu)}_{\mathbf x,\mathbf y}|\prod_{i}|\Pi_{x_i}| |\Pi_{y_i}| & \prec \sum_\gamma c_{\mu,\gamma} \sum_{\mathbf x,\mathbf y} |{\mathcal G}^{(\mu,\gamma)}_{\mathbf x,\mathbf y}|\prod_{i}|\Pi_{x_i}| |\Pi_{y_i}|  \\
			&\prec \bigg(\max_x \sum_y B_{xy}|\Pi_y| \bigg)^{2p} \bigg(\sum_y |\Pi_y| \bigg)^{2p} .
		\end{align*}
		This yields \eqref{eq:trABAB} together with \eqref{GGmugamma}.
	\end{proof}
	
	\subsection{Proof of the bulk universality}
	
	Let $\delta_0 \in (0,1 )$ be a sufficiently small constant and set \begin{equation}\label{eq:eta*etao}
  \eta_* := \etas,\quad \eta_\circ := W^{-\delta_0}L^{-d}. 
	\end{equation} 
 We define the matrix Ornstein–Uhlenbeck process $H_t$ as the solution to
	\begin{equation}
		\dif H_t = -\frac{1}{2} H_t \dif t+ \frac1{\sqrt{N}}\dif B_t, \quad \text{with} \quad H_0 = H,
	\end{equation}
	where $B_t$ is a Hermitian matrix-valued Brownian motion. 
	Define its Green's function $G_t(z) = (H_t - z)^{-1}$ and the averaged trace $m_t(z) :=N^{-1}\tr{G_t(z)}$.
	\begin{proposition}
		\label{hmbdelta}
		Under the setting of Theorem \ref{main thm-uni}, fix $n\in \N$ and $|E|\le  2-\kappa$. 
		Consider a sequence $z_i=E_i+\ii \eta_i$, $i=1,\cdots, n$, with $|E_i-E|\le \eta_*$ and $\eta_{\circ}\le \eta_i \le W^{\delta_0}L^{-d}$. If $W^{95+d} \geq L^{95+c}$ for a constant $c>0$, then 
		\begin{equation}
			\sup_{0 \leq t \leq \eta_*}\bigg| \E  \prod_{i=1}^n\im m_t(z_i)    -  \E  \prod_{i=1}^n\im m_{\eta_*}(z_i)  \bigg| \leq L^{-c/6+ (2n+5)\delta_0 } \,,
		\end{equation}
		for any constant $\delta_0 \in (0, c/(12n+30))$.
	\end{proposition}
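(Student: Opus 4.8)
The plan is to interpolate through the matrix Ornstein--Uhlenbeck flow, differentiate $\E\prod_i\im m_t(z_i)$ in time, and control the derivative by combining the complete expansion of Lemma~\ref{mGep} with the quantum unique ergodicity of Theorem~\ref{thm:QUE} (in the quantitative form of Proposition~\ref{trABAB}). Set $F_t:=\prod_{i=1}^n\im m_t(z_i)$. Since $H_0=H$ is Gaussian, $H_t$ remains Gaussian with covariance $(\Sigma_t)_{ab}=e^{-t}s_{ab}+(1-e^{-t})N^{-1}$, equivalently $\Sigma_t=S-(1-e^{-t})S^\circ$; thus for $t\le\eta_*$ the variance profile of $H_t$ is an $\OO(\eta_*)$ perturbation of $S$, it is still doubly stochastic and banded up to negligible corrections, and $\Sigma_t^\circ=e^{-t}S^\circ$ retains (at least) the spectral gap of $S^\circ$ at $1$, so the local law (Theorem~\ref{thm: locallaw}), the complete $T$-expansion (Lemma~\ref{def nonuni-T}), Lemma~\ref{mGep} and Proposition~\ref{trABAB} all apply to $H_t$ uniformly in $t\in[0,\eta_*]$ with the same threshold $\eta_*$. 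Applying Ito's formula to $F_t$ and Gaussian integration by parts to the drift $-\tfrac12H_t$, the Brownian and drift contributions combine into
\[
  \frac{\dif}{\dif t}\,\E F_t \;=\; -\frac{e^{-t}}{2}\sum_{a,b}s^\circ_{ab}\,\E\big[\partial_{h_{ba}}\partial_{h_{ab}}F_t\big],\qquad s^\circ_{ab}:=s_{ab}-N^{-1},
\]
in which the decisive point is that the mismatch $s^\circ$ between the band covariance and the mean-field covariance of the noise has vanishing row and column sums, $\sum_b s^\circ_{ab}=\sum_a s^\circ_{ab}=0$, by double stochasticity of $S$.

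Next, using $\partial_{h_{ab}}\tr{G(w)}=-(G(w)^2)_{ba}$ and $\partial_{h_{ba}}(G(w)^2)_{ba}=-G(w)_{bb}(G(w)^2)_{aa}-G(w)_{aa}(G(w)^2)_{bb}$, the Leibniz rule rewrites $\partial_{h_{ba}}\partial_{h_{ab}}F_t$ as a sum of $\OO(1)$ graphs with external atoms $a,b$ built from entries of $G_t(z_i)$ and $G_t(\bar z_i)$ (the $G^2$ factors being treated as $\partial_w G$, or resolved by a contour integral). To each resulting term $\sum_{a,b}s^\circ_{ab}\,\E[\mathcal G_{ab}\cdot(\text{undifferentiated }\im m_t\text{'s})]$ we apply Lemma~\ref{mGep}, expanding the expectation into deterministic graphs. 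A deterministic graph in which $a$ (or $b$) sits in a connected component of its own is, by translation invariance, independent of that index, so summing it against $s^\circ_{ab}$ produces exactly zero; hence only graphs in which both $a$ and $b$ are genuinely linked to the rest survive. For those, the summations $\sum_b s^\circ_{ab}$ and $\sum_a s^\circ_{ab}$ act on a connected cluster of $B$-edges and waved edges, and each traceless summation generates a QUE-type factor $\tr{\mathcal A(z)\Pi\mathcal A(z)\Pi}$ (equivalently, after spectral decomposition, $\sum_\alpha(\lambda_\alpha-z)^{-1}\langle u_\alpha,\Pi u_\alpha\rangle$) with the traceless diagonal matrix $\Pi=\mathrm{diag}(s^\circ_{a\cdot})$. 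This is controlled via Proposition~\ref{trABAB} using $\sum_y|s^\circ_{ay}|\asymp1$ and $\max_x\sum_y B_{xy}|s^\circ_{ay}|\lesssim W^{-d}$, yielding a gain of order $W^{-d}$ (plus further $W/L$ gains coming from the $\size$ estimate in Lemma~\ref{mGep}(c)); this is precisely the cancellation ``$\sum_a(G_t)_{aa}\,s^\circ_{ab}$ is small by QUE'' sketched in the introduction.

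Finally, the spectral parameters have $\eta_i\sim L^{-d}\ll\eta_*$, below the local-law scale, so for $t\in[0,\eta_*]$ we may only use the crude bounds following from the local law of $H_t$ at scale $\eta_*$ together with the monotonicity of $\eta\mapsto\eta\,\im G_{xx}(E+\ii\eta)$: one gets $\im G_{xx}(z_i)\prec\eta_*/\eta_i$, hence by the Ward identity $|(G^2(z_i))_{ab}|\le\eta_i^{-1}\sqrt{\im G_{aa}\,\im G_{bb}}\prec\eta_*/\eta_i^2$, and $\im m_t(z_j)\prec\eta_*/\eta_j$ for the undifferentiated factors. Each use of such a bound costs a bounded number of factors $\eta_*/\eta_i\le W^{-5+\delta_0}L^5$, and collecting them over the finitely many graphs produces a loss of at most $L^{95+\OO(n\delta_0)}$. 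Together with the prefactor $\tfrac{e^{-t}}2\sum_{a,b}|s^\circ_{ab}|\asymp N$, the $N^{-1}$'s generated by differentiating the $m_t$'s, and the gain $W^{-(d+95)}$ of the previous step, one obtains $\big|\tfrac{\dif}{\dif t}\E F_t\big|\prec W^{-(d+95)}L^{95+\OO(n\delta_0)}$; integrating over $t\in[0,\eta_*]$ (using $\eta_*\le1$) and invoking $W^{d+95}\ge L^{95+c}$ gives the stated bound $L^{-c/6+(2n+5)\delta_0}$ once $\delta_0$ is small enough (namely $\delta_0<c/(12n+30)$, which is exactly the balance between $c/6$ and $(2n+5)\delta_0$).

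The main obstacle is reconciling the last two steps: because the $z_i$ sit far below the scale $\eta_*$ at which we control the resolvent, the unavoidable crude bounds inject large powers of $L$, and the estimate closes only because the traceless structure of $s^\circ$ --- fed through Lemma~\ref{mGep} and the quantum unique ergodicity --- supplies enough powers of $W^{-1}$ (and of $W/L$) to beat them, which is why the hypothesis takes the shape $W^{d+95}\ge L^{95+c}$. Making this quantitative requires a careful graph-by-graph accounting of how many crude $\eta_*/\eta_i$ factors and how many traceless cancellations each term in the expansion of $\partial_{h_{ba}}\partial_{h_{ab}}F_t$ carries; this combinatorial bookkeeping, rather than any single new idea, is the technically demanding part. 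A minor preliminary point, recorded above, is that all the graphical machinery transfers verbatim from $H$ to $H_t$, which is immediate since $H_t$ is a Gaussian band matrix whose variance profile lies within $\OO(\eta_*)$ of $S$.
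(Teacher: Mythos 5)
Your opening step is the same as the paper's (It\^o's formula plus Gaussian integration by parts, with the gain coming from the tracelessness of $s^\circ$; cf.\ Lemma \ref{sdedt}), and you correctly identify QUE/Proposition \ref{trABAB} as the source of the crucial cancellation. But the central quantitative step has a genuine gap. You propose to apply the complete-expansion machinery (Lemma \ref{mGep}) to the graphs $\mathcal G_{ab}$ produced by $\partial_{h_{ba}}\partial_{h_{ab}}F_t$. Those graphs are built from resolvents $G_t(z_i)$ at spectral scales $\eta_i\le W^{\delta_0}L^{-d}$, which lie far \emph{below} $\eta_*$; Lemma \ref{mGep} (and the complete $T$-expansion, Lemma \ref{def nonuni-T}, and the local law it rests on) is only available for $\eta\in[\eta_*,1]$, so the expansion into deterministic graphs and the ensuing ``traceless summation kills isolated components'' argument cannot be invoked at the $z_i$. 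You acknowledge this later by switching to crude bounds $\im G_{xx}(z_i)\prec\eta_*/\eta_i$, but then the two steps are incompatible: one cannot first expand expectations at inadmissible $\eta$ and separately multiply by crude factors. The paper resolves exactly this tension differently: it never expands at the $z_i$, but instead spectrally decomposes $L_{1,t},L_{2,t}$ (Lemma \ref{L1L2-bd1}) into eigenvalue factors $|p_\alpha(z_i)|^2$ and eigenvector overlaps $M_\alpha,M_{\alpha,\beta}$, transfers the latter to the accessible scale $z_*=E+\ii\eta_*$ via Lemma \ref{uab-tr} (paying $\ell^4/\eta_*^2$), and only there applies Proposition \ref{trABAB} with $\Pi=S_a^\circ$, together with eigenvalue counting from the local law at $\eta_*$ (Lemma \ref{pmab}); the exponents $W^{-(d+65)/6}L^{65/6}$ etc.\ in Lemma \ref{Ltbd} come from splitting the spectrum into a window $|\lambda_\alpha-E|\le\ell$ and its complement and optimizing over $\ell$. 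Your accounting ``gain $W^{-(d+95)}$, loss $L^{95+\OO(n\delta_0)}$'' asserts this outcome without any such decomposition, and a naive combination of crude $\eta_*/\eta_\circ$ factors with a single $W^{-d}$ gain does not close (it forces $W\gtrsim L$), so the optimization is not optional bookkeeping but the heart of the proof.

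Two further points. First, your final integration treats the undifferentiated factors by $\im m_t(z_j)\prec\eta_*/\eta_j\sim(L/W)^5W^{2\delta_0}$, which would inject an extra $(L/W)^{5(n-1)}$-type loss that is not affordable under $W^{d+95}\ge L^{95+c}$; the paper instead keeps $\E\prod_{i\ne u}\im m_t(z_i)$ intact, derives the differential inequality \eqref{iter_est} in terms of the lower-order quantities $B_{n-1},B_{n-2}$, anchors them at $t=\eta_*$ using the local law of $H_{\eta_*}$ down to $W^{\delta_0}L^{-d}$ from \cite{LY17}, and iterates $n$ times --- this is where the $(2n+5)\delta_0$ in the exponent comes from, and it is missing from your argument. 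Second, the transfer of the graphical machinery from $H$ to $H_t$ is not quite ``immediate'': the issue is not the spectral gap of $e^{-t}S^\circ$ but the extra zero mode $t/N$ in $S^\pm_{(t)}$, which the paper handles by splitting $1/N\le B_{xy}\cdot W^2/L^2$ (a ghost edge) and checking that only the weaker bound \eqref{4th_property_weak} on self-energies survives; this is a minor but non-trivial verification.
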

	\begin{proof}[\bf Proof of Theorem \ref{main thm-uni}]
		With Theorem \ref{thm: locallaw} as the input, applying \cite[Theorem 2.2]{LSY2019} gives the universality of the correlation functions of $H_t$ at $t = \eta_*$. Then, Theorem \ref{main thm-uni} follows from a standard correlation function comparison theorem \cite[Theorem 15.3]{erdHos2017dynamical} and Proposition \ref{hmbdelta}.  as long as we take a sufficiently small $\delta_0$.
	\end{proof}
	
	To conclude Proposition \ref{hmbdelta}, we will bound products of $\im m_t(z_i)$ using the following lemma.
	\begin{lemma}
		\label{sdedt}
		For any fixed $n\in \N$, we have that 
		\begin{equation}
			\label{eq:multipoints}
			\begin{split}
				&\bigg| \partial_t \E   \prod_{i=1}^n\im m_t(z_i)   \bigg| \\
				&\leq C\E\bigg[\sum_{u} L_{1,t}(z_u) \prod_{i\not 
				= u}\im m_t(z_i) + \sum_{u\ne v} L_{2,t}(z_u,z_v)\prod_{i\not 
				= u,v}\im m_t(z_i)\bigg] \,,
			\end{split}
		\end{equation}
		for an absolute constant $C>0$, where we denote
		\begin{align*}
			L_{1,t}(z)&:=  \sum_{\mathbf G_1,\mathbf G_2 \in \{G_t,  G_t^*\}}\Big|\frac{1}{N}\sum_{a,b} (\mathbf G_1^2(z))_{aa} s^\circ_{ab}(\mathbf G_2(z))_{bb}\Big|\,,\\
			L_{2,t}(z_1,z_2)&: = \sum_{\mathbf G_1,\mathbf G_2 \in \{G_t, G_t^*\}}\Big|\frac{1}{N^2}\sum_{a,b} (\mathbf G_1^2(z_1))_{ab} s^\circ_{ab}(\mathbf G_2^2(z_2))_{ba} \Big|\,.
		\end{align*}
	\end{lemma}
	\begin{proof}
		Using It{\^o}'s formula and Gaussian integration by parts, we get that
		\begin{align*}
			\partial_{t}\E  \prod_{i=1}^n\im m_t(z_i) & = \frac{e^{-t}}{2}\sum_{a,b}\E\left[\partial_{{ab}}\partial_{ba} \left(\prod_{i=1}^n\im m_t(z_i)\right)\right] \left( \frac{1}{N} - s_{ab}\right)\\
			& = \frac{-e^{-t}}{2}\sum_{a,b}  s_{ab}^{\circ} \E\Bigg[ \sum_u (\partial_{{ab}}\partial_{ba} \im m_t(z_u) ) \prod_{i\not = u}\im m_t(z_i) \\
			& \qquad \qquad + \sum_{u\ne v}( \partial_{ba} \im m_t(z_u) )( \partial_{{ab}} \im m_t(z_v) ) \prod_{i\not 
				= u,v}\im m_t(z_i)\Bigg] ,
		\end{align*}
		where $\partial_{ab}$ denotes $\partial/\partial_{(H_t)_{ab}}$. 
  (Note since the entries of $H_t$ are Gaussian, their cumulants of order three and higher vanish.)
  One can check that the terms coming from $\partial_{{ab}}\partial_{ba} \im m_t(z_u)$ can be bounded by $	L_{1,t}$, while those coming from $( \partial_{{ba}} \im m_t(z_u) )( \partial_{ab} \im m_t(z_v) )$ can be bounded by $	L_{2,t}$. This concludes the proof.
	\end{proof}
	With Lemma \ref{sdedt}, it suffices to prove the following estimates on $L_{1,t}$ and $L_{2,t}$. 
	\begin{lemma}
		\label{Ltbd}
		Under the setting of  Proposition \ref{hmbdelta}, for $u,v\in \{1,\ldots,n\}$,
		we have that 
		\begin{equation}\label{eq:l12bd}
			\begin{split}
			& \sup_{0 \leq t \leq \eta_*} L_{1,t}(z_u)  \prec W^{-(d+65)/6+5\delta_0}L^{65/6}  N\,,\\
			& \sup_{0 \leq t \leq \eta_*}  L_{2,t} (z_u,z_v) \prec W^{-(d+60)/6+6\delta_0}L^{10} N\, .
			\end{split}
		\end{equation}
	\end{lemma}

	\begin{proof}[\bf Proof of Proposition \ref{hmbdelta}]
		Under the condition $W^{95+d} \geq L^{95+c}$, by Lemma \ref{Ltbd}, we have
		$$ L_{1,t}(z_u)  + L_{2,t}(z_u,z_v)  \prec   {L^{-c/6+7\delta_0}}/{\eta_*},$$
		uniformly in $0 \leq t \leq \eta_*$. Plugging it into Lemma \ref{sdedt}, we obtain that 
		\begin{align*}
			\bigg|\partial_t  \E   \prod_{i=1}^n\im m_t(z_i) \bigg|  & \prec  \eta_*^{-1}L^{-c/6+7\delta_0} \cdot \bigg(\max_u \E \prod_{i\not  = u}\im m_t(z_i) + \max_{u\ne v}\E \prod_{i\not 
				= u,v}\im m_t(z_i)   \bigg) .
		\end{align*} 
		Denoting $B_k(t):= \max_{S\subset \{1,\ldots,n\},|S|=k} \E \prod_{i\in S}\im m_t(z_i)$,	the above estimate implies that  
		\be\label{iter_est}
		\begin{split}
		&\sup_{0 \leq t\leq \eta_*}  \bigg| \E   \prod_{i=1}^n\im m_t(z_i) -\E   \prod_{i=1}^n\im m_{\eta_*}(z_i)\bigg|\\
		&\prec  L^{-c/6+7\delta_0} \sup_{0 \leq t\leq \eta_*}  \left[B_{n-1}(t)+B_{n-2}(t)\right].
		\end{split}
		\ee
		Recall that an local law for $m_{\eta_*}(z)$ has been established in \cite[Theorem 3.3]{LY17} down to the scale $\wt \eta:=W^{\delta_0}L^{-d}$, which gives that $\im m_{\eta_*}(E_i + \ii \wt \eta)=\OO(1)$ with high probability. Then, using the simple fact $\eta_i \im m(E_i +\ii \eta_i)\le  \wt \eta \im m(E_i +\ii \wt\eta)$, we obtain that $B_{k}(\eta_*)\prec W^{2k\delta_0}$ for all $1\le k \le n$. With this initial bound, iterating \eqref{iter_est} for $n$ many times concludes the proof. 
	\end{proof}

	The rest of this subsection is devoted to the proof of Lemma \ref{Ltbd}. It suffices to prove \eqref{eq:l12bd} for every fixed $0 \leq t \leq  \eta_*$. After that, we can take an $L^{-C}$-net of $t\in [0, \eta_*]$ for a large enough constant $C>0$, and then use a standard union bound and perturbation argument to conclude Lemma \ref{Ltbd}. We slightly abuse the notations and still denote the eigenvalues and eigenvectors of $H_t$ by $\{\lambda_\al\}$ and $\{u_\alpha\}$.
	
	\begin{lemma}\label{L1L2-bd1}
		$	L_{1,t}$ and $	L_{2,t}$ satisfy that
		\begin{align*}
			L_{1,t}(z) \leq 4 \mathfrak m_t \sum_{\alpha} \frac{|p_\alpha(z)|^2 }{N}M_{\alpha}\,,\quad L_{2,t}(z_1,z_2) \leq 4 \sum_{\alpha, \beta} \frac{|p_\alpha(z_1)|^2|p_\beta(z_2)|^2}{N^3} M_{\alpha,\beta},
		\end{align*}
		where $p_\al(z):=(\lambda_\al-z)^{-1}$, $\mathfrak m_t:=N^{-1}\sum_\beta |p_\beta|$, and 
		\begin{align*}
			&M_\alpha: = \sup_a \left| N\gE{u_\alpha, S^\circ_a u_\alpha}\right|\,,\quad M_{\alpha, \beta}: =  \sup_a \sup_{\wt u_i \in \{u_i,\overline u_i\}: i \in\{\alpha,\beta\}} \left|N\gE{\wt u_\alpha, S^\circ_a \wt u_\beta}\right|\,,
		\end{align*}
		with $S_a^\circ := \mathrm{diag}(s^\circ_{ab}: b\in \Z^d_L)$.
	\end{lemma}
	\begin{proof}
		By the eigendecomposition of $G_t$, we have that
		\begin{align*}
			\sum_{a,b}	(G_t^2(z_1))_{ab} s^\circ_{ab}(G_t^2(z_2))_{ba} = \sum_{\alpha,\beta} p^2_\alpha(z_1) p^2_\beta(z_2) \sum_a u_\alpha(a) \bar u_\beta(a) \sum_b \bar u_\alpha(b) u_\beta(b)  s_{ab}^\circ\,.
		\end{align*}
		Hence, we get
		\begin{align*}
			\Big|\sum_{a,b}(G_t^2(z_1))_{ab} s^\circ_{ab}(G_t^2(z_2))_{ba}\Big| \leq \sum_{\alpha,\beta} |p^2_\alpha(z_1) p^2_\beta(z_2)| \sum_a |u_\alpha(a)|  |\bar u_\beta(a)|  \Big|\sum_b \bar u_\alpha(b) u_\beta(b)  s_{ab}^\circ \Big|\\
			\leq \frac{1}{N}\sum_{\alpha,\beta} |p^2_\alpha(z_1) p^2_\beta(z_2)|  M_{\alpha, \beta}\sum_a |u_\alpha(a)|  |\bar u_\beta(a)|  \leq \frac{1}{N}\sum_{\alpha,\beta} |p^2_\alpha(z_1) p^2_\beta(z_2)| M_{\alpha, \beta}\,.
		\end{align*}
		The other cases are similar. 
	\end{proof}
	
	Next, we bound the quantities appearing in the previous lemma using the local law and QUE. We have only proved the local law and QUE at $t=0$, but they can be extended to any $0 \leq t \leq \eta_*$.
	\begin{lemma}\label{lem:samefort}
		For any $0 \leq t \leq \eta_*$, Theorem \ref{thm: locallaw}, Corollary \ref{thm:supu}, and  Proposition \ref{trABAB}  hold for $H_t$.
	\end{lemma}
	\begin{proof}
		We copy the proof for the $t=0$ case verbatim, where the only difference is that $H_t$ has a different variance matrix $S_{(t)}$ with entries $e^{-t}s_{xy} + (1- e^{-t})N^{-1}$. Correspondingly, the matrices $\Theta^{\circ}$ and $S^\pm$ are replaced by (recall \eqref{Theta-S-circ} and \eqref{Thetapm2})
		\begin{align*}
			\Theta^\circ_{(t)} := \frac{e^{-t}|m|^2S^\circ}{1 - e^{-t}|m|^2 S^\circ}, \quad S^+_{(t)} :=  \frac{ m^2S_{(t)}}{1 -  m^2 S_{(t)}} , \quad S^-_{(t)} =\overline S^+_{(t)}.
		\end{align*}
	It is not hard to see that that $\Theta^\circ_{(t)}$ has the same behavior as $\Theta^\circ$, while $S^\pm_{(t)}$ has an extra zero mode compared to \eqref{S+xy}:
	$$ \big|\big(S^\pm_{(t)}\big)_{xy}- S_{xy}\big| \lesssim  t W^{-d}\mathbf 1_{|x-y|\le W^{1+\tau}} + t/{N} +  t\langle x-y\rangle^{-D} .$$
	Hence, we only need to show that if we replace some $\pm$ waved edges in the graphs appearing in the proof of the $t=0$ case by $t/{N}$ factors, the resulting new graphs will not cause any trouble. In fact, this kind of replacement only affects the definition of molecules (i.e., two atoms in the same molecule may not satisfy $|x-y|\le W^{1+\tau}$ anymore), which in turn may affect the doubly connected property. However, notice that we can split a $t/{N}$ edge between $x$ and $y$ as $\frac{1}{N} \leq B_{xy}\cdot \frac{W^2}{L^2}$ with an additional small $t$ factor in the coefficient, where $B_{xy}$ is a pseudo-diffusive edge that can be used in the black net and $ {W^2}/{L^2}$ is a ghost edge that can be used in the blue net. Hence, all the new graphs are still doubly connected if we regard a $1/N$ edge as a ``long" edge between molecules, while the additional small $t$ factor in their coefficients makes them harmless in most places. The only place where these new graphs may have an effect is that the property \eqref{4th_property0} now becomes 
	\be\label{4th_property_weak}
	\left|  (\Selek_{2l})_{0x}(z) \right| \prec  W^{-(l-2)d } B_{0x}^2  + tW^{-(l-1)d } \frac{W^2}{L^2}B_{0x} \prec W^{-(l-1)d} \frac{W^2}{\langle x\rangle^{d+2}},
	\ee
	for $x\in \Z_L^d$. The property \eqref{4th_property0} is only relevant in establishing Lemma \ref{lem redundantagain}, whose proof only uses the last weaker bound in \eqref{4th_property_weak} (see the proof of \cite[Lemma 6.2]{BandI}). Once the bounds on labeled $\dashed$ edges are proved, the rest of the proof in the $t=0$ case can be copied verbatim to the $t>0$ case, and we will not repeat them here. 
\end{proof}
\begin{remark}
To help the reader understand the above proof, we now describe some examples to illustrate the main difference from the $t = 0$ case. Consider the following two graphs that will appear in $T$-expansions (they are picked from (4.3) and (4.10) of \cite{BandI}):
\begin{center}
\includegraphics[width=6cm]{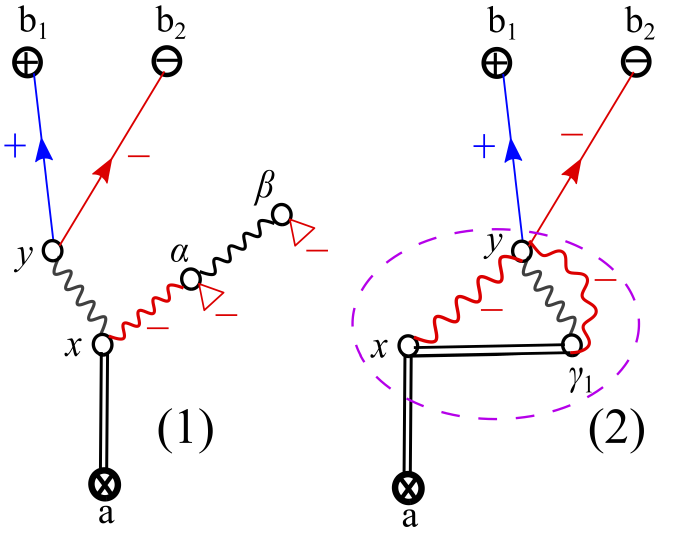} 
\end{center}
Here, graphs (1) and (2) respectively correspond to   
\begin{align*}
    \cal G^{(1)}_{\fa,\fb_1\fb_2}&=|m|^2 \sum_{x,y,\al,\beta} \Theta^\circ_{\fa x} s_{xy} S^-_{x\al} s_{\al\beta}( \overline G_{\al\al} - \overline m)( \overline G_{\beta\beta} - \overline m) G_{y \fb_1}\overline G_{y\fb_2},\\
    \cal G^{(2)}_{\fa,\fb_1\fb_2} &= |m|^2 \sum_{x,y,\gamma_1} \Theta^\circ_{\fa x} S^-_{xy} s_{y\gamma_1}S^-_{y\gamma_1} \Theta_{x\gamma_1}G_{y \fb_1}\overline G_{y\fb_2}.
\end{align*}
In particular, inside the purple circle of the graph (2) is a deterministic graph in $\Sele_6$: 
$$ \cal G_{xy} = |m|^2 S^-_{xy}\sum_{\gamma_1}s_{y\gamma_1}S^-_{y\gamma_1} \Theta_{x\gamma_1}.$$

When $t>0$, suppose we replace $S^-_{x\al}$ in $\cal G^{(1)}_{\fa,\fb_1\fb_2}$ by a $t/N$ factor and suppose we know $|G_{xy}-m|\prec B_{xy}^{1/2}+(N\eta)^{-1/2}$. Then, we can check that 
$$|\cal G^{(1)}_{\fa,\fb_1\fb_2}|\prec t W^{-\ord(\cal G^{(1)}_{\fa,\fb_1\fb_2})\cdot \soe},$$
which is negligible for all our proofs due to the $t$ factor. Adopting the viewpoint in the proof of \Cref{lem:samefort}, we can choose the two molecules as $\cal M_1=\{x,y\}$ and $\cal M_2=\{\al,\beta\}$. Then, the graph is still doubly connected if we bound a ${N}^{-1}$ factor by $B_{x\al}\cdot {W^2}/{L^2}$, where $B_{x\al}$ is a pseudo-diffusive edge and $ {W^2}/{L^2}$ is a ghost edge that can be used in the blue net. The same argument also works for the graph (2).

In $\cal G_{xy} $, if we replace $S^-_{xy}$ by a $t/N$ factor, then we have 
$$ \cal G_{xy} \prec \frac{t}{N} W^{-d}B_{xy} \le t\cdot W^{-2d} \frac{W^2}{\langle x-y\rangle^{d+2}},$$ 
which satisfies the bound in \eqref{4th_property_weak} with $2l=6$. Moreover, we have 
$$\sum_y | \cal G_{xy} | \prec t\frac{ W^{-d}}{N}\frac{L^2}{W^2} \le \eta_* \frac{W^{-d}}{N}\frac{L^2}{W^2},$$
which satisfies the sum zero property \eqref{3rd_property0}. 
\end{remark}

\begin{lemma}
	\label{pmab}
	Under the setting of Proposition \ref{hmbdelta}, for any small constant $\kappa>0$, the following estimates hold for $\ell\ge \eta_*$ and $z=E+\ii \eta$ with $|E|\le 2-\kappa$ and $\eta_\circ\le \eta\le \eta_*$ (recall \eqref{eq:eta*etao}):
	\begin{align}
		\label{eq:pmab-1}\sum_{\alpha: |\lambda_\alpha - E| \geq \ell} |p_{\alpha}(z)|^2 &\prec  \frac{N}{\ell}\frac{\eta_*}{\eta_\circ}\,,\\
		\label{eq:pmab-4}\mathfrak m_t(z) &\prec \frac{\eta_*}{\eta_\circ}\,,\\
		\label{eq:pmab-2}\sup_{ \alpha \in \llbracket \kappa N, (1-\kappa)N\rrbracket}M_\alpha  + \sup_{\alpha,\beta \in \llbracket \kappa N, (1-\kappa)N\rrbracket} M_{\alpha,\beta} &\prec N \eta_*\,,\\
		\label{eq:pmab-2.5}\sup_{\alpha}M_\alpha + \sup_{\alpha,\beta} M_{\alpha,\beta} &\prec N W^{-d}\,,\\
		\label{eq:pmab-3}\sum_{\alpha: |\lambda_\alpha - E| \leq \ell} |M_\alpha|^2  +\sum_{\alpha,\beta: |\lambda_\alpha - E| \leq \ell, |\lambda_\beta - E| \leq \ell} |M_{\alpha,\beta}|^2 &\prec \frac{N^2 \ell^4}{\eta_*^2} W^{-d}\,.
	\end{align}
\end{lemma}

\begin{proof}
	Let $z_*:=E+\ii \eta_*$. By Theorem \ref{thm: locallaw}, for any $|E|\le  2-\kappa$, we have 
	\begin{equation*}
		\begin{split}
			\#\{\alpha:|\lambda_\alpha - E| \leq \eta_\circ\} &\leq \sum_\alpha \frac{2\eta_\circ^2}{|\lambda_\alpha - (E + \ii\eta_\circ)|^2} = 2N\eta_\circ\im m_t({ z_\circ}) \\
			&\le 2N\eta_* \im m_t(z_*) \prec N\eta_* \,,
		\end{split}
	\end{equation*}
 where we denote $z_\circ:=E+\ii \eta_\circ$.
	Since $E$ and $\kappa$ are arbitrary, it gives that for any $k\in \N$ so that $k\eta_\circ \le (\log L)^{-1}$, 
	$$\#\{\alpha: |\lambda_\alpha-E| \in[k\eta_\circ,(k+1)\eta_\circ]\} \prec N\eta_*.$$
	With this estimate, we obtain  \eqref{eq:pmab-1} as follows:
	\begin{equation*}
		\begin{split}
			\sum_{\alpha: |\lambda_\alpha - E| \geq \ell} |p_{\alpha}(z)|^2  & \le \sum_{k\ge 0} \frac{4}{(\ell + k \eta_\circ)^2}\#\{\alpha: |\lambda_\alpha-E| \in[k\eta_\circ,(k+1)\eta_\circ]\}\\
			& \prec \sum_{k\in \N: k\eta_\circ \le (\log L)^{-1}} \frac{N\eta_*}{(\ell + k \eta_\circ)^2}  + N \lesssim \frac{N}{\ell} \frac{\eta_*}{\eta_\circ}\,.
		\end{split}
	\end{equation*}
	With a similar method, we get that
	\begin{equation*}
		\begin{split}
			\sum_\alpha |p_\alpha(z)| & = \sum_{\alpha: |\lambda_\alpha - E| \leq \eta_\circ} |p_\alpha(z)| + \sum_{\alpha: |\lambda_\alpha - E| > \eta_\circ} |p_\alpha(z)|  \prec  \frac{N \eta_*}{\eta_\circ} + \sum_{k\eta_\circ \le (\log L)^{-1}} \frac{N \eta_*}{k \eta_\circ}  \prec \frac{N \eta_*}{\eta_\circ},
		\end{split}
	\end{equation*}
	which yields \eqref{eq:pmab-4}. Next, \eqref{eq:pmab-2} follows from Corollary \ref{thm:supu}, and \eqref{eq:pmab-2.5} follows easily from the fact that $\|S^\circ_a\|\lesssim W^{-d}$. Finally, by Lemma \ref{uab-tr}, we have that
	\begin{equation*}
		\begin{split}
			&\sum_{\alpha: |\lambda_\alpha - E| \leq \ell} |M_\alpha|^2  +\sum_{\alpha,\beta: |\lambda_\alpha - E| \leq \ell, |\lambda_\beta - E| \leq \ell} |M_{\alpha,\beta}|^2 \\
			&\prec \sup_a \frac{N^2 \ell^4}{\eta_*^2} \left[\tr{\mathcal A(z_*) S_{a}^\circ  {\mathcal A(z_*)} S_{a}^\circ} + \tr{\mathcal A(z_*) S_{a}^\circ \overline {\mathcal A}(z_*) S_{a}^\circ}\right] \,.
		\end{split}
	\end{equation*}
	Applying Proposition \ref{trABAB} with $\Pi=S_{a}^\circ$, we get
	\begin{equation*}
		\tr{\mathcal A(z_*) S_{a}^\circ  {\mathcal A}(z_*) S_{a}^\circ} \prec W^{-d},\quad \tr{\mathcal A(z_*) S_{a}^\circ \overline{\mathcal A}(z_*) S_{a}^\circ} \prec W^{-d}\,.
	\end{equation*}
	The previous two equations together give \eqref{eq:pmab-3}.
\end{proof}
\begin{proof}[\bf Proof of Lemma \ref{Ltbd}]
	We first use Lemma \ref{L1L2-bd1} and then plug in the estimates in Lemma \ref{pmab} to get that 
	\begin{align*}
		L_{1,t}(z_u) &\leq \frac{4 \mathfrak m_t}{N} \bigg(\sum_{\alpha: |\lambda_\al|>2-\kappa/2}  |p_\alpha(z_u)|^2  M_{\alpha}+ \sum_{\alpha: |\lambda_\al-E|>\ell,|\lambda_\al|\le 2-\kappa/2}  |p_\alpha(z_u)|^2  M_{\alpha} \\
		&\qquad \quad +   \sum_{\alpha: |\lambda_\al-E|\le \ell} |p_\alpha(z_u)|^2  M_{\alpha}\bigg)\\
		& \prec \frac{\eta_*}{N\eta_\circ}\bigg[ N^2W^{-d} +\frac{N}{\ell }\frac{\eta_*}{\eta_\circ} \cdot N\eta_* +  \Big(\sum_{\alpha: |\lambda_\al-E|\le \ell} |p_\alpha(z_u)|^4\Big)^{1/2} \Big(\sum_{\alpha: |\lambda_\al-E|\le \ell} M_{\alpha}^2\Big)^{1/2}\bigg]\\
		& \prec \frac{\eta_*}{N\eta_\circ}\left( N^2W^{-d} + \frac{N}{\ell }\frac{\eta_*}{\eta_\circ} \cdot N\eta_* + \sqrt{\frac{N \im m_t(z_u)}{\eta_\circ^3}} \cdot \frac{N \ell^2}{\eta_*} W^{-d/2}\right)\\
		& \prec \frac{\eta_*}{N\eta_\circ}\left(N^2W^{-d} + \frac{N}{\ell}	\frac{\eta_*}{\eta_\circ} \cdot N\eta_* + \frac{\sqrt{N\eta_*}}{\eta_\circ^{2} } \frac{N\ell^2}{\eta_*} W^{-d/2}\right).
	\end{align*}
	Here, in the second step we used \eqref{eq:pmab-1}--\eqref{eq:pmab-2.5} and the Cauchy-Schwarz inequality, in the third step we used \eqref{eq:pmab-3} and 
	$$\sum_{\alpha: |\lambda_\al-E|\le \ell} |p_\alpha(z_u)|^4 \le \frac{1}{\eta_u^2}\sum_{\alpha} |p_\alpha(z_u)|^2=\frac{N\im m_t(z_u)}{\eta_u^3}\le \frac{N\im m_t(z_u)}{\eta_\circ^3},$$
	and in the fourth step we used $\eta_u \im m_t(z_u) \le \eta_* \im m_t(E_u+ \ii \eta_*)$ and the local law \eqref{locallaw}. Taking $\ell = W^{d/6} N^{1/6}\eta_\circ^{1/3}\eta_*^{5/6}$ gives the first bound in \eqref{eq:l12bd}. Similarly, we can bound $L_{2,t}$ as
	\begin{align*}
		&L_{2,t}(z_u,z_v) \\
		&\leq 4 \sum_{\alpha: |\lambda_\al-E_u|>\ell} \sum_\beta \frac{|p_\alpha(z_u)|^2|p_\beta(z_v)|^2}{N^3} M_{\alpha,\beta} + 4 \sum_{\alpha: |\lambda_\al-E|\le 2\ell} \sum_{\beta: |\lambda_\beta-E_v|> \ell} \frac{|p_\alpha(z_u)|^2|p_\beta(z_v)|^2}{N^3} M_{\alpha,\beta}\\
		&\quad + 4 \sum_{\alpha: |\lambda_\al-E|\le 2\ell} \sum_{\beta: |\lambda_\beta-E|\le 2\ell} \frac{|p_\alpha(z_u)|^2|p_\beta(z_v)|^2}{N^3} M_{\alpha,\beta}\\
		&\prec \frac{1}{N^3} \left(\frac{N^3}{W^{d}}+\frac{\sum_{a\in \{u,v\}} N \im m_t(z_a)}{\eta_\circ}\cdot \frac{N}{\ell } \frac{\eta_*}{\eta_\circ} N\eta_* +  \frac{N\sqrt{ \im m_t(z_u)\cdot \im m_t(z_v)}}{\eta_\circ^3}   \cdot \frac{N \ell^2}{ W^{d/2}\eta_*}\right)\\
		& \prec W^{-d}+\Big(\frac{\eta_*}{\eta_\circ}\Big)^3\frac{1}{\ell} +   \frac{\ell^2}{N\eta_\circ^4 W^{d/2}} = \frac{2\eta_*^{2}}{N^{1/3} W^{d/6}\eta_\circ^{10/3}}\,,
	\end{align*}
	where we have chosen $\ell=W^{d/6}N^{1/3}\eta_\circ^{1/3}\eta_* \ge \eta_* \ge \max_{i\in \{u,v\}} |E_i-E|$ and used $\{\alpha:|\lambda_\alpha - E_u| \le \ell\} \subset \{\alpha:|\lambda_\alpha - E| \le 2\ell\}$. This concludes the second bound in \eqref{eq:l12bd}.
\end{proof}

\section{Proof of Proposition \ref{cancellation property}}\label{sec_pf_sumzero}

The property \eqref{two_properties0} for $\cal E_n$ is a simple consequence of the fact that $S(x-y):=s_{xy}$ is symmetric and translation invariant (see Lemma A.1 of \cite{BandI}). The property \eqref{4th_property0} is a simple consequence of the following estimate on deterministic doubly connected graphs.

\begin{lemma}\label{dG-bd}
Suppose $d \geq 6$. Let $\mathcal G$ be a deterministic doubly connected normal graph without external atoms. Pick any two atoms of $\mathcal G$ and fix their values as $x,y \in Z^d_L$. The resulting graph $\mathcal G_{xy}$ satisfies that
\begin{equation}\label{det_double}
	\begin{split}
		(\mathcal G_{abs})_{xy} \prec &\, \Big( \frac{L^2}{W^2}\Big)^{k_\gh}\Big( \frac{1}{N\eta}\frac{L^2}{W^2}\Big)^{k_\fr}W^{-(n_{xy} - 4 - 2 k_{\fr})d/2}\\
		&\times \left(B_{xy}^2 + \mathbf 1_{k_\gh+k_\fr\ge 1} B_{xy} \frac{W^{2-d}}{L^2}\right),
	\end{split}
\end{equation}
where $n_{xy}: = \ord(\mathcal G_{xy})$, $k_\fr$ is the number of free edges and $k_\gh$ is the number of ghost edges. 
\end{lemma}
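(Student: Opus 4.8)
The plan is to induct on the number of internal atoms of $\mathcal G_{xy}$, removing them one at a time from the leaves of a spanning tree of the blue net while tracking how the scaling order, the number of ghost/free edges, and the exponent in $W$ change at each step. Since $\mathcal G$ is deterministic, every edge is a waved edge, a diffusive edge (possibly labelled), a ghost edge, or a free edge; by Lemma \ref{lem redundantagain} and \eqref{thetaxy} every diffusive or labelled diffusive edge $\Theta^{(\cdot)}_{uv}$ is bounded by $W^{-(k-2)d/2}B_{uv}$, every waved edge $S^\pm_{uv}$ by a pseudo-waved edge $W^{-d}\mathbf 1_{|u-v|\le W^{1+\tau}}$ (which is itself $\le W^{-2}B_{uv}$ up to constants), each ghost edge contributes $W^2/L^2$ together with its compensating $L^2/W^2$ coefficient, and each free edge contributes $(N\eta)^{-1}$ with coefficient $N\eta$. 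So $(\mathcal G_{abs})_{xy}$ is bounded, up to the prefactor $(L^2/W^2)^{k_\gh}(L^2/(W^2N\eta))^{k_\fr}$ hidden in coefficients, by a sum over doubly connected configurations of $B$-edges with an overall power of $W$ dictated by the scaling order. One then verifies the base case: a doubly connected deterministic graph on just the two marked atoms $x,y$ with no internal atoms must contain at least two $\dashed$/waved edges between $x$ and $y$ (one in each net), giving $B_{xy}^2$, while the exponent bookkeeping matches $n_{xy}=4$ exactly (a $\dashed$ edge has order $2$, so two of them give order $4$).

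For the inductive step I would pick a leaf atom $v$ of a blue spanning tree of the molecular graph $\mathcal G_{\mathcal M}$ (equivalently, work directly on atoms after Lemma \ref{GtoAG}-type reductions inside molecules, which only produce extra $W^{-\soe}$ factors that are absorbed). The key summation estimates are the elementary convolution bounds recorded just after the statement of Lemma \ref{mGep}, namely $\sum_w B_{xw}B_{wy}\lesssim \widetilde B_{xy}\le (L^2/W^2)\cdot W^{-2}B_{xy}$-type inequalities, together with $\sum_w B_{xw}(N\eta)^{-1}\lesssim (N\eta)^{-1}(L^2/W^2)$, and the degenerate case $\sum_w B_{xw}\lesssim L^2/W^2$ when $w$ has degree one. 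Summing over $v$ removes one internal atom and the two (or more) edges incident to it, and inserts a shorter $B$-edge (or, when $v$ sat on a ghost/free edge, produces the extra $L^2/W^2$ or $(N\eta)^{-1}L^2/W^2$ factors that the statement anticipates). The doubly connected property is preserved because a leaf of the blue tree, once removed, leaves the black net untouched and the blue net still spanning; this is exactly the mechanism used in \cite{BandI,BandII} for bounding doubly connected graphs, and I would cite those arguments rather than re-derive them. The arithmetic of the exponent $-(n_{xy}-4-2k_\fr)d/2$ is forced: each internal-atom summation lowers $\ord$ by $2$ (two fewer internal degrees of freedom are compensated by a net loss) while eating two order-$2$ edges and producing one, so the "excess order above $4$" is precisely what gets converted into powers of $W^{-d/2}$, with free edges exempted because they carry no index and hence contribute their order $2$ without a spatial decay factor to trade.

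The one subtle point, and the step I expect to be the main obstacle, is the dichotomy in the last factor: $B_{xy}^2$ versus $\mathbf 1_{k_\gh+k_\fr\ge 1}B_{xy}W^{2-d}/L^2$. When the graph has no ghost or free edges, both nets must connect $x$ and $y$ using only $\dashed$/waved edges, and after collapsing all internal atoms one is left with at least two parallel $B$-paths between $x$ and $y$, which collapse to $B_{xy}\cdot B_{xy}$ — here I must be careful that collapsing a $B$-\emph{path} (not a single edge) still yields an upper bound by $B_{xy}$, which follows from $\sum_w B_{xw}B_{wy}\lesssim (L^2/W^2)W^{-2}\langle x-y\rangle^{2-d}\lesssim (L^2/W^2)B_{xy}$ only after absorbing the stray $L^2/W^2$; this is fine because a multi-edge $B$-path necessarily has enough spare scaling order to pay for it, but the bookkeeping has to be done honestly. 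When a ghost or free edge is present it may be used to complete the blue net "for free" (it needs no spatial decay), so one of the two $B$-connections between $x$ and $y$ can be as weak as a single $B_{xy}$ while the other degenerates to the bare decay $W^{2-d}$ divided by $L^2$ coming from the ghost/free normalization — hence the asymmetric second term. Making this case analysis uniform over all doubly connected topologies, while simultaneously keeping the power of $W$ exactly at $-(n_{xy}-4-2k_\fr)d/2$ and no worse, is the delicate part; everything else is the routine leaf-stripping induction already developed in \cite{BandI}.
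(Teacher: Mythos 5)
Your plan is correct and amounts to the same argument the paper relies on: the paper's proof is a two-line citation of exactly this machinery (the case $k_{\gh}=k_{\fr}=0$ to Corollary 6.12 of \cite{BandI}, the ghost/free-edge case to Lemma 9.5 of \cite{BandII}), and your leaf-stripping induction over the blue spanning tree with the convolution bounds $\sum_w B_{xw}B_{wy}\lesssim \wt B_{xy}$ and $\sum_w B_{xw}(N\eta)^{-1}\lesssim (N\eta)^{-1}L^2/W^2$ is precisely the mechanism behind those cited results. No gap beyond the bookkeeping you already flag, which is handled in the cited lemmas.
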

\begin{proof}
The $k_\gh=k_\fr=0$ case of \eqref{det_double} was proved as Corollary 6.12 of \cite{BandI}, while the case with nonzero $k_\gh$ or $k_\fr$ follows from Lemma 9.5 of \cite{BandII}.
\end{proof}

Applying \eqref{det_double} with $k_\gh=k_\fr=0$ gives \eqref{4th_property0}, because $\cal E_n$ constructed in Proposition \ref{Teq} consists of doubly connected graphs without free or ghost edges. The estimate \eqref{det_double} with nonzero $k_\gh$ or $k_\fr$ will be used in the proof of Lemma \ref{def nonuni-T} in Appendix \ref{sec_pf_completeT}.

To conclude Proposition \ref{cancellation property}, we still need to prove the key ``sum zero property" \eqref{3rd_property0}.  
\begin{proposition}
\label{sum0:L}
Let $\Sele_n$ be the deterministic matrix constructed in Proposition \ref{Teq}. Fix any $d \geq 7$, 
\begin{equation}
	\Big|\sum_x (\mathcal E_n)_{0x}(m,S,S^\pm,\Theta^{[n-1]})\Big| \prec \left[\left(\eta+ \frac{W^{2d-6}}{L^{2d-6}}\right) W^{-d}  + \frac{1}{W^{2} L^{d-2}} \right] W^{-(n-4)d/2}\,,
\end{equation}
for $z= E+ \ii \eta$ with $|E|\le 2- \kappa$ and $\eta \in [\etas, 1]$, where we abbreviate $ \Theta^{[n-1]} := (\Theta^{(l)}_r)_{2 \leq r \leq l \leq n-1}$.

\end{proposition}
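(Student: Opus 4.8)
The plan is to establish the sum rule \eqref{3rd_property0} by a direct analysis of the deterministic graphs making up $\mathcal E_n$, exhibiting the cancellation forced by the conservation law $S\mathbf 1=\mathbf 1$ --- equivalently $\sum_y s^\circ_{xy}=0$, $\Theta^\circ\mathbf 1=0$, hence $\Theta^{(l)}_r\mathbf 1=0$ for all $2\le r\le l\le n-1$ by \eqref{chain S2k} --- together with the Ward identities \eqref{eq_Ward0}--\eqref{eq_Ward} that enter the construction of $\mathcal E_n$. Write $\sigma_n(z):=\sum_{x\in\Z_L^d}(\mathcal E_n)_{0x}(z)$; by Proposition~\ref{Teq}, $\sigma_n$ is an explicit finite sum of doubly connected deterministic graphs in $m,S,S^\pm,\Theta^{[n-1]}$ with two external atoms, namely those extracted as coefficients of $T$-variables when the scaling-order-$n$ graphs of $\mathcal A^{(>n-1)}$ are expanded one step further. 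Since by \eqref{4th_property0} and $\sum_x B_{0x}^2\lesssim W^{-d}$ a term-by-term bound only gives $W^{-(n-4)d/2}W^{-d}$, the content of the statement is that $\sigma_n$ is smaller by an extra factor $\max\!\big(\eta,(W/L)^{d-2}\big)$; at $\eta=0$ and on $\Z^d$ in place of the torus $\Z^d_L$ the cancellation would be exact.

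The underlying mechanism is the exact identity
\[
\E\Big[\sum_{\fb}T^{\circ}_{\fa,\fb\fb}(z)\Big]=\frac{|m|^2}{\eta}\sum_{\alpha}s^{\circ}_{\fa\alpha}\,\E\big[\im G_{\alpha\alpha}(z)\big]=0 ,
\]
which follows from \eqref{Tcirc}, the Ward identity \eqref{eq_Ward} and the $\fa$-independence of $\E[\im G_{00}]$ (translation invariance of the model). Tracing how $\mathcal E_n$ is born from the expansion of $\mathcal A^{(>n-1)}$, and using that $\sum_{\fb}\E[T_{w,\fb\fb}]=\tfrac{|m|^2}{\eta}\E[\im G_{00}]$ is exactly $w$-independent, one finds that the order-$n$ deterministic contribution of $\E[\sum_{\fb}\mathcal A^{(>n-1)}_{\fa,\fb\fb}]$ equals $\tfrac{|m|^2\im m}{\eta}\sigma_n$ up to explicitly identifiable lower-complexity terms (the recollision, free-edge and $Q$-graph by-products of the same expansion step, plus error graphs). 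Using $\E[\im G_{00}]=\im m+\oo(1)$ and $1-|m(z)|^2\asymp\eta$ (cf. \eqref{insert m^2}), it then suffices to bound $\E[\sum_{\fb}\mathcal A^{(>n-1)}_{\fa,\fb\fb}]$ and those by-products after $\sum_\fb$, and to check that the two sources of order-$n$ deterministic terms cancel up to a controlled residue. This cancellation is verified by the ``sum-zero-preserving'' bookkeeping of \cite{BandI,BandII}: one checks, step by step along the expansion producing $\mathcal E_n$, that each operation --- the splittings $1=P_x+Q_x$, the Gaussian integrations by parts, and the recognitions of $T$-, $\Theta^\circ$- and Ward-structures --- preserves the vanishing of the relevant external-atom sum up to the asserted error; graphs whose summed external atom carries a single $\dashed$ edge ($\Theta^\circ$ or $\Theta^{(l)}_r$) drop out identically since $\Theta^\circ\mathbf 1=\Theta^{(l)}_r\mathbf 1=0$, and the rest are paired up via $S\mathbf 1=\mathbf 1$ and $\widehat{S^\pm}(0)=m^2/(1-m^2)$ and its conjugate. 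The only genuinely new input compared with \cite{BandI} is that one must additionally carry the separated zero mode $\tfrac{|m|^2}{2\ii N\eta}(G_{\fb_2\fb_1}-\overline G_{\fb_1\fb_2})$ of Lemma~\ref{2ndExp} through these steps.

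The residue left by this bookkeeping produces exactly the three terms of \eqref{3rd_property0}: a bare-mass remainder proportional to $1-|m(z)|^2\asymp\eta$, from the $\eta$ in $\sum_\alpha|G_{x\alpha}|^2=\im G_{xx}/\eta$ combined with $\im G_{xx}\approx\im m$; a periodicity/zero-mode remainder of size $(W/L)^{2d-6}W^{-d}$ (times $W^{-(n-4)d/2}$), from the tail behavior at scale $L$ of the loop sums once $\Theta^\circ$, which differs from $\Theta$ only by the rank-one zero mode (cf. \eqref{Theta-Thetacirc}), replaces $\Theta$ on $\Z^d_L$; and a free-edge (zero-mode) remainder of size $\tfrac1N\tfrac{L^2}{W^2}=W^{-2}L^{2-d}$ (times $W^{-(n-4)d/2}$). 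Each residual graph is doubly connected and deterministic, so Lemma~\ref{dG-bd} (with the relevant number of ghost and free edges) and the diffusive bounds of Lemma~\ref{lem redundantagain} for $\Theta^{[n-1]}$ apply and deliver the claimed size. For the one step where a random graph must be estimated --- comparing $\mathcal E_n$ with the expectation it is extracted from --- one invokes the local law of Proposition~\ref{locallaw-fix}(i) on $\eta\in[W^2/L^2,1]$, available at this stage of the induction since the $(n-1)$-th order $T$-expansion has already been built and $\mathcal E_4,\dots,\mathcal E_{n-1}$ are already known to be self-energies.

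The main obstacle is the combinatorial verification in the second paragraph. Unlike the properties \eqref{two_properties0} and \eqref{4th_property0}, which follow from symmetry/translation invariance and the generic bound Lemma~\ref{dG-bd}, the sum rule is sensitive to the precise coefficients and the exact algebraic structure of every graph generated during the expansion; intermediate quantities cannot be estimated crudely. Showing that no expansion step leaks an un-cancelled order-$n$ constant-mode contribution larger than $\max(\eta,(W/L)^{d-2})\,W^{-d}\,W^{-(n-4)d/2}$ --- in effect reproducing and extending the sum-zero analysis of \cite{BandI} while carrying along the new zero-mode term --- is where essentially all the work lies.
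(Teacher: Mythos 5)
Your proposal does not follow the paper's route, and as a standalone plan it has a genuine gap. The paper proves Proposition \ref{sum0:L} by a \emph{comparison with the infinite space limit}: the exact sum-zero property $\sum_x(\mathcal E^\infty_n)_{0x}=\OO(W^{-D})$ is taken from Proposition \ref{sum0-inf} (imported from \cite{BandI}; for $d=7$ it uses the local law at the auxiliary scale $\eta\sim W^{2+\e}/L^2$ for an auxiliary $L$ in the window \eqref{cond_d=7_only}, supplied by Proposition \ref{locallaw-fix}(i)), and then two \emph{purely deterministic} comparison estimates transfer it to the actual $z$ and torus: Lemma \ref{LtoInf_x} (replacing $m(z),S,S^\pm(z),\Theta^{[n-1]}(z)$ by $m(E),S_\infty,S^\pm_\infty(E),\Theta^{[n-1]}_\infty(E)$, proved by induction on the self-energy order through Lemmas \ref{mSinf}, \ref{sum0:changeTheta}, \ref{theta-itself}, \ref{change_Theta} and the torus-versus-$\Z^d$ random-walk Green's function estimate of Lemma \ref{Green-gr} via Poisson summation) and Lemma \ref{outsideL} (difference between summing over $\Z^d$ and $\Z^d_L$). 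The three terms $\eta W^{-d}$, $(W/L)^{2d-6}W^{-d}$ and $W^{-2}L^{2-d}$ in the statement are exactly these comparison errors; they are not residues of a step-by-step cancellation bookkeeping, even though your intuition about their physical origin (finite $\eta$, zero mode, periodic tails) is correct.

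Your alternative--tracking the vanishing of the external-atom sum through the expansion that creates $\mathcal E_n$, using $\Theta^\circ\mathbf 1=0$, $S\mathbf 1=\mathbf 1$ and $\E\sum_{\fb}T^{\circ}_{\fa,\fb\fb}=0$--is essentially the indirect mechanism behind the infinite-space statement in \cite{BandI}, but running it directly at the actual $(L,\eta)$ cannot deliver the claimed bound. Writing $\sigma_n:=\sum_x(\mathcal E_n)_{0x}$, your identity amplifies $\sigma_n$ by a factor $\im m/\eta\sim\eta^{-1}$, so to conclude you would need to bound the $\fb$-summed random by-products ($\mathcal A^{(>n-1)}$, $\mathcal W^{(n)}$, the $Q$-graphs) to accuracy $\eta^{-1}\big[(\eta+(W/L)^{2d-6})W^{-d}+W^{-2}L^{2-d}\big]W^{-(n-4)d/2}$ uniformly down to $\eta=\eta_*$. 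At this stage of the induction the local law is available only for $\eta\ge W^2/L^2$ (Proposition \ref{locallaw-fix}(i); part (ii) requires the completed $T$-expansion, hence the very property being proved), so the scheme is circular for small $\eta$, and even at $\eta\sim W^2/L^2$ it would produce a bound at one scale and one auxiliary $L$ rather than the stated estimate uniform in $\eta\in[\eta_*,1]$ at the given $L$. The paper avoids this because its only probabilistic input is used once, in the infinite-space limit where $L$ and $\eta$ are free parameters that can be removed, while everything at the actual $(L,\eta)$ is deterministic. Finally, the central combinatorial claim of your second paragraph (each expansion operation preserves the sum-zero property up to the asserted residue, and the order-$n$ deterministic part of $\E\sum_\fb\mathcal A^{(>n-1)}_{\fa,\fb\fb}$ equals $\frac{|m|^2\im m}{\eta}\sigma_n$ up to controllable terms) is exactly where the work lies and is not carried out, so the proposal does not constitute a proof.
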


To this end, we will compare $\mathcal E_n$ with its infinite space limit defined below, which is known to satisfy the sum zero property (see Proposition \ref{sum0-inf}).

\begin{defn}[Infinite space limits]\label{def infspace}
We first define the infinite space limits of $m, S,S^\pm$, and $\Theta^\circ$ by keeping $W$ fixed and taking $L \to \infty$ and $\eta \to 0$:
\begin{equation}
	\begin{split}
		m(E)&: = \lim_{\eta \to 0_+}m(E + i\eta),\quad(S_\infty)_{\al\beta}:= \lim_{L\to \infty} f_{W,L}(\al-\beta)\,,\\
		S_\infty^{+}(E)&:= \frac{m^2(E)S_\infty}{1-m^2(E)S_\infty},\quad  S_\infty^{-}(E):=\overline S_\infty^{+}(E),\quad (\Theta_\infty)_{\al\beta}:=\sum_{k\geq 1}S_\infty^{k}\,.
	\end{split}
\end{equation}
Given a deterministic graph $\cal G\equiv \cal G\left(m(z),S, S^{\pm}(z), \Theta^{[n-1]}(z)\right)$ with $ z=E+\ii \eta$, we define 
\be\label{defVnl} 
\cal G^\infty \equiv  \cal G^\infty \left( m(E), S_\infty ,  S_\infty^{\pm}(E),  \Theta^{[n-1]}_\infty(E)\right)
\ee
in the following way.
\begin{enumerate}
	\item Replace the $s_{\al\beta}$ edges in $\cal G $ with $(S_{\infty})_{\al\beta}$.
	\item Replace the $\cal S^{\pm}_{\al\beta}(z)$ edges in $\cal G $ with $(S_\infty^{\pm})_{\al\beta}(E)$.
	\item Replace the $ \Theta^\circ_{\al\beta}$ edges in $\cal G $ with $(\Theta_\infty)_{\al\beta}$. Replace the labeled diffusive edges \smash{$( \Theta^{(l)}_r)_{\al\beta}$} in $\cal G $ with \smash{$( \Theta^{(l)}_{r,\infty})_{\al\beta}$} (obtained by replacing the $S$, $S^\pm$ and $ \Theta^\circ$ edges in \smash{$\Theta^{(l)}_{r}$} by their infinite space limits).
	
	\item For all $m(z)$ in the coefficient (that is, $m(z)$'s that do not appear in $ S^{\pm}(z)$ and $\Theta^\circ(z)$), we replace them with $m(E)$.
	\item Finally, we let all internal atoms take values over $\Z^d$ and denote the resulting graph by $\mathcal G^\infty$.
\end{enumerate}
Note that $\cal G^\infty$ (if exists) depends only on $E$, $W$ and $\psi$ in Assumption \ref{var profile}, but does not depend on $L$ and $\eta$.
\end{defn}

\begin{proposition}
For $d \geq 8$, we have 
\label{sum0-inf}
\begin{equation}
	\label{eq:sum0-inf}
	\sum_x (\mathcal E^{\infty}_n)_{0x}(m(E),S_\infty(E),S^\pm_\infty(E),\Theta^{[n-1])}_\infty) = \OO(W^{-D})\,,
\end{equation}
for any large constant $D>0$. In addition, \eqref{eq:sum0-inf} also holds for $d = 7$ assuming that for some $L$ satisfying
\be\label{cond_d=7_only}W^{(n-3)d/2+c_0} \leq L^2/W^2 \leq W^{(n-2)d/2-c_0},  \ee
we have the $(n-1)$-th order $T$-expansion and the local law \eqref{locallaw1}  holds when $z = E + \ii (W^{2+\e}/L^2)$ for a sufficiently small constant $\e>0$. 
\end{proposition}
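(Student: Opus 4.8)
The plan is to prove the sum-zero property for $\Sele_n^\infty$ by induction on the even integer $n$, the inductive hypothesis being that the lower-order self-energies $\Sele_m$, $4\le m\le n-2$, have already been constructed in Proposition \ref{Teq} and satisfy Definition \ref{collection elements}; in particular $\sum_x (\Sele_m^\infty)_{0x} = \OO(W^{-D})$, which is exactly what guarantees that the infinite-volume labelled diffusive edges $\Theta^{(l)}_{r,\infty}$ appearing inside $\Sele_n^\infty$ are well-defined and retain diffusive (infrared $\sim|p|^{-2}$) behavior rather than becoming summable. A preliminary point is that $\Sele_n^\infty$ of Definition \ref{def infspace} exists at all: each constituent graph is deterministic, doubly connected, free of ghost and free edges, and of scaling order $n\ge 4$, so by the $k_\gh=k_\fr=0$ case of Lemma \ref{dG-bd}, replacing $S,S^\pm,\Theta^\circ$ by their infinite-volume limits and letting the internal atoms run over $\Z^d$ yields an absolutely convergent sum when $d\ge 8$. (For $d=7$ the highest-order graphs are only marginally convergent, which is why that case needs the extra hypotheses; see below.)

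For the heart of the argument I would pass to the infinite-volume limit in the $n$-th order $\incomp$ of Definition \ref{def incompgenuni} with $\fb_1=\fb_2=\fb$. Taking $L\to\infty$ kills the $\Wn$ term and the explicit zero-mode term (both carry a $(N\eta)^{-1}$ factor), taking expectations kills the $Q$-graphs, and $\Err_{n,D}$ is negligible; solving as in the proof of Theorem \ref{completeTexp}, one is left with a deterministic identity expressing $\E\, T_{\fa,\fb\fb}^\infty$ as $m\,(\Theta^{(n)}_\infty)_{\fa\fb}\,\E\,\overline{G_{\fb\fb}^\infty}$ plus $\sum_x (\Theta^{(n)}_\infty)_{\fa x}$ convolved with the infinite-volume limits of $\E\,\PTn_{x,\fb\fb}$ and $\E\,\ATn_{x,\fb\fb}$, where $\Theta^{(n)}_\infty = (1-\Theta_\infty \wtSdelta^{(n)}_\infty)^{-1}\Theta_\infty$ and $\wtSdelta^{(n)}_\infty = \sum_{2l=4}^n \Sele_{2l}^\infty$. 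Now one exploits the Ward identity \eqref{eq_Ward}, valid at any $\eta>0$ and in any volume: $\sum_\fb T_{\fa,\fb\fb}^\infty = \frac{|m|^2}{\eta}\sum_\al s_{\fa\al}\,\im G_{\al\al}^\infty$, so the left side develops a $\frac1\eta |m(E)|^2\im m(E)$ singularity as $\eta\downarrow 0$; on the right, every term is a convolution against $\Theta^{(n)}_\infty$, whose Fourier transform at $p=0$ is finite when $\widehat{\wtSdelta^{(n)}_\infty}(0)\neq 0$ and infinite (diffusive) when $\widehat{\wtSdelta^{(n)}_\infty}(0)=0$. Tracking the $p\to0$ asymptotics of $\widehat{\Theta^{(n)}_\infty}(p)$ and of the Fourier transforms of the $\PTn,\ATn$ graphs, and using \eqref{insert m^2} together with $|m(E)|^2=1$ and $\widehat{S_\infty}(0)=1$ in the bulk, one shows that the only way the $\frac1\eta$-singularities can balance is that $\Theta^{(n)}_\infty$ stays diffusive, i.e. $\widehat{\wtSdelta^{(n)}_\infty}(0) = \sum_x (\wtSdelta^{(n)}_\infty)_{0x}=\OO(W^{-D})$; subtracting the inductive hypothesis for the lower orders gives $\sum_x(\Sele_n^\infty)_{0x}=\OO(W^{-D})$, which is the claim for $d\ge 8$. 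As in \cite{BandII}, an alternative is to establish the same cancellation by a purely graphical argument, reorganizing the self-energy graphs so that summing over $\fb$ produces an internal Ward contraction $\sum_y G_{\cdot y}\overline G_{\cdot y}$ that telescopes against the $\frac1\eta\im m$ factor; I would keep this route in reserve.

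For $d=7$ the infinite-volume sums above are only marginally convergent at high orders, so I would run the same consistency argument at finite volume with $L$ in the window \eqref{cond_d=7_only}. The upper bound $L^2/W^2\le W^{(n-2)d/2-c_0}$ ensures, via $\size(\cdot)$ and Lemma \ref{dG-bd}, that the finite-volume order-$n$ self-energy approximates its formal infinite-volume value with error $\ll W^{-D}$, while the lower bound $L^2/W^2\ge W^{(n-3)d/2+c_0}$ keeps the ghost and free-edge penalties under control; the stated hypotheses — that the $(n-1)$-th order $T$-expansion is available and that the local law \eqref{locallaw1} holds at $z=E+\ii(W^{2+\e}/L^2)$ — supply precisely the controlled $\Theta^{(n)}$ and $\im G_{\al\al}$ needed to carry out the $\fb$-summation and the Ward matching at this one small scale. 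Transferring the conclusion back to $\Sele_n^\infty$ then uses the finite-versus-infinite comparison (the same estimate that underlies Proposition \ref{sum0:L}) together with the fact that $\sum_x(\Sele_n^\infty)_{0x}$ does not depend on $L$.

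The step I expect to be the main obstacle is making the $\eta\downarrow0$ (and $L\to\infty$) limit of the $\incomp$ genuinely rigorous in the presence of the infrared singularity $\widehat{\Theta_\infty}(0)=+\infty$: one must control the $p\to0$ asymptotics of $\widehat{\Theta^{(n)}_\infty}(p)$ and of each self-energy graph's Fourier transform precisely enough to be certain that no hidden $\frac1\eta$ (equivalently $\frac1{|p|^2}$) contribution leaks out of the terms claimed to be subleading, and to justify exchanging the limits with the absolutely convergent graph sums. The marginal dimension $d=7$ is a secondary difficulty, since the window \eqref{cond_d=7_only} must be threaded so that the finite-to-infinite approximation and the local-law input are simultaneously valid.
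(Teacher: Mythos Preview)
The paper's own proof of this proposition is essentially a citation: for $d\ge 8$ it refers to Section 5.4 of \cite{BandI}, and for $d=7$ it simply asserts that the same proof carries over under the extra hypotheses. So there is no detailed argument in the present paper to compare against.

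Your sketch does capture the mechanism that underlies the cited argument in \cite{BandI}: the sum-zero property is extracted from a consistency relation for the infinite-volume $T$-equation, using the Ward identity $\sum_\fb T_{\fa\fb}=\frac{|m|^2}{\eta}\sum_\al s_{\fa\al}\im G_{\al\al}$ to force the $1/\eta$ (equivalently $1/|p|^2$) singularity to be carried by $\Theta_\infty^{(n)}$ rather than absorbed into a nonzero $\widehat{\Sigma^{(n)}_\infty}(0)$. Your identification of the $d=7$ issue (marginal convergence of the infinite-volume graph sums, resolved by running the argument at a carefully chosen finite $L$ and using the local law at $\eta\sim W^2/L^2$) is also on target and explains why the extra hypotheses in the statement are needed.

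One point to tighten: in your outline you pass to the $T$-\emph{expansion} (with $\Theta^{(n)}$) after taking expectations, but in fact the cleaner route, and the one actually used, is to stay with the $T$-\emph{equation} \eqref{mlevelT incomplete} (with $\Theta^\circ$ and the explicit $(\Theta^\circ\Sigma^{(n)})T$ term) and sum over $\fb$ directly; the self-energy then appears linearly and the cancellation is read off without having to analyze $(1-\Theta_\infty\Sigma^{(n)}_\infty)^{-1}$ near $p=0$. This avoids the circularity of needing $\widehat{\Theta^{(n)}_\infty}$ to be diffusive in order to conclude that $\widehat{\Sigma^{(n)}_\infty}(0)=0$. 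Apart from this reorganization, your plan is aligned with the argument the paper is citing.
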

\begin{proof}
For $d \geq 8$, \eqref{eq:sum0-inf} was proved in Section 5.4 of \cite{BandI}.
The same proof works for $d = 7$ provided with the assumptions in the statement. 
We now give more details. Under the given conditions, using the argument in the proof of \cite[Lemma 5.11]{BandI}, we can show that 
$\big| \sum_x (\mathcal E_n)_{0x}\big| \leq W^{-(n-2)d/2}\cdot W^{-c}$ 
for a small constant $c>0$. Together with Lemmas \ref{LtoInf_x} and \ref{outsideL} below (which bounds the difference between $\mathcal E_n$ and $\mathcal E_n^\infty$), it gives that  
$$\Big|\sum_x (\mathcal E_n^\infty)_{0x}\Big| \lesssim W^{-(n-2)d/2}\cdot W^{-c}.$$ 
Furthermore, as proved in \cite[Lemmas 5.10 and 8.4]{BandI}, we have 
\be\label{eq:sumscale}\sum_x (\mathcal E_{n}^\infty)_{0x} = W^{-(n-2)d/2} \mathfrak G_n + \OO(W^{-D})
\ee
for some constant $\mathfrak G_n$ independent of $W$. The above two equations imply that $\mathfrak G_n=\OO(W^{-c})$, showing that there must be $\mathfrak G_n=0$. Together with \eqref{eq:sumscale}, it concludes \eqref{eq:sum0-inf}.
\end{proof}
\begin{remark}
 In the proof of \eqref{eq:sum0-inf} for the $d =7$ case, the only missing piece in \cite{BandI} is the continuity estimate, \Cref{lem: ini bound}, for $d=7$. With \Cref{lem: ini bound} in this paper, we can prove the local law \eqref{locallaw1} under the condition \eqref{cond_d=7_only}. Then, the proof in Section 5.4 of \cite{BandI} applies to the $d=7$ case almost verbatim as discussed above.
\end{remark}



With Proposition \ref{sum0-inf}, Proposition \ref{sum0:L} is a simple consequence of the next two lemmas. 

\begin{lemma}
\label{LtoInf_x}
For $d \geq 6$, let $\mathcal G$ be a deterministic graph in $\cal E_n$. For any $x \in \Z_L^d$ and $z=E+\ii \eta$, we have
\begin{equation}\label{eq:G-Gind}
\begin{split}
	& \left|\mathcal G_{0x}(m(z),S,S^\pm(z),\Theta^{[n-1]}(z)) -  \mathcal G_{0x}(m(E),S_\infty,S^\pm_\infty(E),\Theta^{[n-1]}_\infty(E)) \right|  \\
	&\prec (\eta W^{-d}  + W^{-2} L^{2-d}) B_{0x}^2 W^{-\frac{n-6}{2}d}\,.
\end{split}
\end{equation}
\end{lemma}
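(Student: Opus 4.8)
The plan is to estimate the difference graph by graph, using the explicit forms of the infinite-space limits in Definition \ref{def infspace}. Write $\mathcal G_{0x}$ as a sum over internal atoms of a product of edge factors $s_{\al\beta}$, $S^\pm_{\al\beta}(z)$, $\Theta^\circ_{\al\beta}(z)$ (and labelled diffusive edges $(\Theta^{(l)}_r)_{\al\beta}(z)$), together with the coefficient which is a monomial in $m(z)$. Passing to the infinite-space limit replaces each of these factors by its $L\to\infty$, $\eta\to 0$ counterpart and enlarges the summation domain from $\Z_L^d$ to $\Z^d$. I would split the total difference into a telescoping sum: first change the edges/coefficient one at a time while keeping the summation on $\Z_L^d$, then pass from the $\Z_L^d$ sum to the $\Z^d$ sum. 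Each term in the telescope is a graph with exactly one ``defect'' factor, namely the difference of some edge (or of the coefficient monomial) between its finite and infinite versions, and all the other factors bounded as usual.

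The key inputs are the following elementary estimates, each of which is either standard or has the same proof as in \cite{BandI}:
\begin{itemize}
\item $|m(z) - m(E)| \lesssim \eta$ for $|E|\le 2-\kappa$ (from the $1/2$-H\"older continuity of $m$ in the bulk, or directly from $z = -m - m^{-1}$);
\item $|s_{\al\beta} - (S_\infty)_{\al\beta}| \prec W^{-D}$ and the analogous bound for $S^\pm$, since $f_{W,L}$ and its infinite-volume limit differ only by the Poisson-summation tails, cf. \eqref{subpoly}, \eqref{bandcw1}, \eqref{S+xy};
\item $|\Theta^\circ_{\al\beta}(z) - (\Theta_\infty)_{\al\beta}(E)|$ is controlled by $(\eta W^{-d} + W^{-2}L^{2-d})B_{\al\beta}$ on the diffusive range and by $\langle\al-\beta\rangle^{-D}$ outside it --- this is exactly the gain one gets by comparing the two random-walk Green's functions: the $\eta$ comes from the spectral parameter in $1/(1-|m|^2 S^\circ)$ versus $1/(1-S_\infty)$, and the $W^{-2}L^{2-d}$ from the removed zero mode / finite-volume discretization, using the spectral gap $1-\|S^\circ\|\gtrsim W^2/L^2$ as in the remark following \eqref{Japanesebracket};
\item for the labelled diffusive edges, the same bound with an extra $W^{-(r-2)d/2}$ factor, which follows from \eqref{BRB}, \eqref{chain S2k} and the already-established comparison for $\Theta^\circ$ and for the self-energies.
\end{itemize}
With these, each telescope term carries a defect factor of size at most $\eta W^{-d} + W^{-2}L^{2-d}$ (the coefficient-change and $\Theta$-change terms; the $s$/$S^\pm$-change terms are bounded by $W^{-D}$ and absorbed), while the remaining edges of the graph are summed by the same power-counting used for Lemma \ref{dG-bd} / Corollary 6.12 of \cite{BandI}. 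Since $\mathcal G$ is a deterministic doubly connected graph of scaling order $n$ without free or ghost edges, this power-counting produces the factor $B_{0x}^2 W^{-(n-6)d/2}$: one effectively removes one $\Theta^\circ$-edge of order $2$ when extracting the defect, leaving a doubly connected graph of order $n-2$ which is bounded by $B_{0x}^2 W^{-(n-2-4)d/2} = B_{0x}^2 W^{-(n-6)d/2}$ via \eqref{det_double} with $k_\gh = k_\fr = 0$.

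The main obstacle is the $\Z_L^d \to \Z^d$ replacement of the summation domain together with the simultaneous $\eta\to 0$ limit inside the diffusive edges: one must check that extending internal atoms to $\Z^d$ and dropping the periodicity does not spoil the doubly connected power-counting, and that the finite-$\eta$ propagator $\Theta^\circ(z)$ genuinely converges to $\Theta_\infty(E)$ at the claimed rate despite the zero mode having been removed in $\Theta^\circ$ but not in $\Theta_\infty$ (the latter is the honest Green's function $\sum_{k\ge 1}S_\infty^k$ on $\Z^d$, which has no zero mode since $S_\infty$ has no Perron eigenvalue $1$ as an operator on $\ell^2(\Z^d)$). The careful bookkeeping here --- matching the $W^{-2}L^{2-d}$ term against the finite-volume correction and confirming it is the dominant error --- is where essentially all the work lies; everything else is routine graph power-counting already carried out in \cite{BandI,BandII}. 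Once Lemma \ref{LtoInf_x} is in hand, Proposition \ref{sum0:L} follows by summing \eqref{eq:G-Gind} over $x\in\Z_L^d$ (gaining $\sum_x B_{0x}^2 \lesssim W^{-d}$ up to the $L^2/W^2$ factor from the diffusive range, which is exactly absorbed into $(\eta + (W/L)^{2d-6})W^{-d} + W^{-2}L^{2-d}$) and invoking Proposition \ref{sum0-inf} for the infinite-space piece.
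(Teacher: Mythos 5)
Your overall route---replacing $m$, $S$, $S^\pm$ and the (labelled) diffusive edges one at a time, then bounding the graph carrying a single ``defect'' factor by doubly connected power counting---is the same as the paper's (Lemmas \ref{mSinf}, \ref{sum0:changeTheta}, \ref{theta-itself}, \ref{change_Theta}). But there are two concrete problems. First, your claimed pointwise bound $|\Theta^\circ_{\al\beta}(z)-(\Theta_\infty)_{\al\beta}(E)|\prec(\eta W^{-d}+W^{-2}L^{2-d})B_{\al\beta}$ is false: the finite-volume/zero-mode correction is of size $W^{-2}L^{2-d}$ \emph{uniformly} in $\al-\beta$, with no $B_{\al\beta}$ decay (it contains the flat subtraction $\frac{1}{L^d}\sum_x\Theta(\Z_L^d,S,|m|)_{0x}$ and the image sums $\sum_{k\ne0}\Theta(\Z^d,S,|m|)_{0,\cdot+kL}$), and the $\eta$-part decays only like $\eta W^{-4}\langle\al-\beta\rangle^{4-d}$, as in Lemma \ref{theta-itself}. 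Because the correct bound has no spatial decay, one cannot keep the defect as a ``small $B$ edge''; one must bound the difference by its supremum, and then it is essential that the graph stays doubly connected after the diffusive edge is \emph{deleted}. This is exactly Lemma \ref{-edc}, which relies on the globally standard structure (no isolated subgraphs) of the graphs in $\mathcal E_n$---a point you assert but never justify, and which your (too strong) intermediate estimate would have let you bypass.

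Second, the comparison of a labelled edge $\Theta^{(l)}_r$ with $\Theta^{(l)}_{r,\infty}$ requires \eqref{eq:G-Gind} for all self-energies $\mathcal E_k$ with $k\le l$, since these edges are built from lower-order self-energies via \eqref{chain S2k}; your phrase ``already-established comparison for the self-energies'' hides a circularity unless the proof is explicitly organized as an induction on the order, which is how the paper proceeds (base case $\mathcal E_2=0$, then Lemmas \ref{mSinf} and \ref{sum0:changeTheta} advance the induction). Finally, note that both graphs in \eqref{eq:G-Gind} are summed over $\Z_L^d$: the lemma only replaces edges and the coefficient, not the summation domain, so the $\Z_L^d\to\Z^d$ replacement on which you spend your ``main obstacle'' paragraph is not part of this statement at all; it is the separate Lemma \ref{outsideL}, proved by a tail estimate of the type $\sum_{|y|\ge L/\log L}B_{0y}^3$ using the redundancy of a diffusive edge attached to $0$, with no $\eta$-limit involved.
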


\begin{lemma}
\label{outsideL}
For $d \geq 6$, let $\mathcal G$ be an arbitrary deterministic graph in $\cal E_n$. We have
\begin{equation*}
\begin{split}
	&\Big|\sum_{x \in \Z^d} \mathcal G^\infty_{0x}(m(E),S_\infty,S^\pm_\infty(E),\Theta_\infty^{[n-1]}(E))- \sum_{x \in \Z^d_L} \mathcal G_{0x}(m(E),S_\infty,S^\pm_\infty(E),\Theta_\infty^{[n-1]}(E))\Big| \\
	& \prec  (W/L)^{2d-6} W^{-\frac{n - 2}{2}d}\,.
\end{split}
\end{equation*}
\end{lemma}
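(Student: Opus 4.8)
\textbf{Proof proposal for Lemma \ref{outsideL}.}

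The plan is to exploit the fact that the summand $\mathcal G^\infty_{0x}$ decays rapidly in $\|x\|$, so that the difference between the sum over $\Z^d$ and the sum over $\Z^d_L$ is controlled by the tail of $\mathcal G^\infty_{0x}$ for $\|x\|$ comparable to $L$, together with the error incurred by passing from the periodic lattice $\Z^d_L$ to $\Z^d$ (i.e.\ unfolding the torus). First I would fix a deterministic doubly connected graph $\mathcal G$ in $\mathcal E_n$ and recall from Definition \ref{collection elements}(i) that $\mathcal G$ has scaling order $n$, contains no free or ghost edges, and is built only from waved edges, dotted edges, diffusive edges, and labelled diffusive edges $\Theta^{(l)}_r$ with $2\le r\le l\le n-1$. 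By Lemma \ref{dG-bd} applied with $k_\gh=k_\fr=0$ (or its infinite-space analogue, which holds by the same argument since all the building blocks $S_\infty$, $S_\infty^\pm$, $\Theta_\infty$, $\Theta^{(l)}_{r,\infty}$ satisfy the same pointwise decay bounds as their finite-$L$ counterparts by Lemma \ref{lem redundantagain} and Lemma \ref{lem deter}), the graph $\mathcal G^\infty_{0x}$ obeys
\[
|\mathcal G^\infty_{0x}|\prec W^{-(n-4)d/2}\,(B_{0x}^\infty)^2,\qquad (B^\infty_{0x})^2 = W^{-4}\langle x\rangle^{-2(d-2)},
\]
valid for $x\in\Z^d$, where $\langle x\rangle = \|x\|+W$; in fact the waved and dotted edges give even faster decay, but the diffusive-squared bound is all that is needed. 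Note this bound requires only $d\ge 6$ so that $2(d-2)>d$ and the sum $\sum_{x\in\Z^d}(B^\infty_{0x})^2$ converges; this is why the hypothesis $d\ge 6$ appears.

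Next I would split the claimed difference into two pieces. For the first, write $\sum_{x\in\Z^d}\mathcal G^\infty_{0x} - \sum_{x\in\Z^d\cap\,\mathrm{fundamental\ domain}}\mathcal G^\infty_{0x} = \sum_{\|x\|_\infty\ge L/2}\mathcal G^\infty_{0x}$, and estimate this tail by
\[
\Big|\sum_{\|x\|_\infty\ge L/2}\mathcal G^\infty_{0x}\Big|\prec W^{-(n-4)d/2}W^{-4}\sum_{\|x\|\ge L/2}\|x\|^{-2(d-2)}\lesssim W^{-(n-4)d/2}W^{-4}L^{-2(d-2)+d} = W^{-(n-4)d/2}W^{-4}L^{-d+4}.
\]
Comparing with the target bound $(W/L)^{2d-6}W^{-(n-2)d/2}$: the target equals $W^{2d-6-(n-2)d/2}L^{-2d+6} = W^{-(n-4)d/2 - 6}L^{-2d+6}$, so since $L\gg W$ and $d\ge 6$, one checks $W^{-4}L^{-d+4}\le W^{-6}L^{-2d+6}$ is equivalent to $W^2\le L^{d-2}$, which holds comfortably; actually the tail is even smaller than required, giving room to absorb the logarithmic factors from the dyadic decomposition into the $\prec$ notation. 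For the second piece I would account for the difference between $\mathcal G_{0x}$ (periodized building blocks on $\Z^d_L$) and $\mathcal G^\infty_{0x}$ restricted to the fundamental domain: each edge factor on $\Z^d_L$ differs from its infinite-space value by the image sums under $L\Z^d$, i.e.\ $(\Theta_\infty)_{[u]_L} $ vs.\ $\sum_{k\in\Z^d}(\Theta_\infty)_{u+Lk}$, and by the rapid decay of $\Theta_\infty$, $S_\infty$, $S_\infty^\pm$ each such periodization error is $\OO(W^{-D})$ or at worst of size $\lesssim (W/L)^{2d-4}$ times the leading term (the worst case being the slowest-decaying diffusive edge, whose image sum contributes $\sum_{k\ne 0}\langle Lk\rangle^{2-d}\cdot W^{-2}\lesssim W^{-2}L^{2-d}$ relative to the leading $W^{-2}\langle u\rangle^{2-d}$). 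Summing this relative error over the $\OO(1)$ many edges of $\mathcal G$ and over $x$ in the fundamental domain, and using $\sum_x (B^\infty_{0x})^2\lesssim W^{-4}\cdot W^{-(d-4)}=W^{-d}$ (for $d\ge 6$; the sum is dominated by $\|x\|\sim W$), I would obtain a contribution bounded by $W^{-(n-4)d/2}\cdot W^{-d}\cdot L^{2-d}W^{d-2}\cdot(\text{something})$ which is again dominated by $(W/L)^{2d-6}W^{-(n-2)d/2}$ after the elementary exponent bookkeeping.

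The main obstacle I anticipate is not any single estimate but the careful bookkeeping of exponents across the several error sources — the torus-unfolding tail, the image-sum (periodization) errors on each type of edge, and the logarithmic losses from dyadic decomposition — to verify that each is indeed dominated by the stated bound $(W/L)^{2d-6}W^{-(n-2)d/2}$, and to confirm that $d\ge 6$ (rather than $d\ge 7$ or $d\ge 8$) genuinely suffices for all of them (the borderline being the convergence of $\sum\langle x\rangle^{-2(d-2)}$ and $\sum\langle x\rangle^{-(d-4)}$, both of which need $d>4$, so $d\ge 6$ is safe). A secondary point requiring care is that one must not simply cite Lemma \ref{dG-bd} verbatim for the infinite-space graph, but rather observe that its proof (Corollary 6.12 of \cite{BandI}) goes through unchanged because the infinite-space building blocks satisfy the same decay estimates uniformly in $E$; I would state this as a one-line remark rather than reproving it. Everything else — the structure of doubly connected graphs, the pointwise bounds on $\Theta^\circ$, $\Theta^{(l)}_r$, $S^\pm$ — is already available from Sections \ref{sec_T_exp} and the cited lemmas.
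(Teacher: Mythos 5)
Your main quantitative step does not close. For the tail of the $x$-sum you rely only on the a priori bound $|\mathcal G^\infty_{0x}|\prec W^{-(n-4)d/2}B_{0x}^2$, and summing $B_{0x}^2\sim W^{-4}\langle x\rangle^{-2(d-2)}$ over $\|x\|\gtrsim L$ gives $W^{-(n-4)d/2-4}L^{4-d}$. The target is $(W/L)^{2d-6}W^{-(n-2)d/2}$, whose $W$-exponent is $3d-6-nd/2$, not the $2d-6-nd/2$ you wrote (you dropped a factor $W^{d}$ when rewriting it), and even with your rewriting the comparison $W^{-4}L^{4-d}\le W^{-6}L^{6-2d}$ is equivalent to $W^2\le L^{2-d}$, i.e.\ false — you reversed the inequality. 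Doing the bookkeeping correctly, your tail bound exceeds the claimed bound by the factor $(L/W)^{d-2}$, which cannot be absorbed by $\prec$. This is not a repairable constant: the paper's exponent matches the target with equality, and it is obtained only by exploiting the \emph{globally standard} structure of graphs in $\mathcal E_n$: by Lemma \ref{-edc} the diffusive edge $e=(0,y)$ adjacent to the far atom can be removed while keeping the graph doubly connected, and Lemma \ref{dG-bd} applied to $\widetilde{\mathcal G}^\infty_{0y}$ (the graph with $x$ summed out and $e$ deleted) gives $W^{-(n-6)d/2}B_{0y}^2$; multiplying back the deleted edge yields decay $B_{0y}^3\sim W^{-6}\langle y\rangle^{-3(d-2)}$ in the far region, and $\sum_{|y|\ge L/\log L}B_{0y}^3\prec W^{-6}L^{6-2d}$ is exactly what produces $(W/L)^{2d-6}W^{-(n-2)d/2}$. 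With only $B^2$ decay, as in your argument, the statement is out of reach.

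Your second piece also misreads the statement: in this lemma both sums are evaluated at the \emph{same} infinite-space inputs $m(E),S_\infty,S^\pm_\infty(E),\Theta^{[n-1]}_\infty(E)$, so there is no periodization or image-sum error in the edge factors here; that comparison (torus versus $\Z^d$ propagators, and $\eta>0$ versus $\eta\to 0$) is the content of Lemma \ref{LtoInf_x}, via Lemmas \ref{theta-itself} and \ref{Green-gr}. The genuine second issue, which your proposal leaves untreated, is that \emph{all internal atoms} range over $\Z^d$ on one side and over $\Z^d_L$ on the other, not just the external atom $x$. The paper handles this by propagating the same far-atom argument along the connectivity of the graph: restrict each neighbor of $0$ to $|y|\le L/\log L$, then its neighbors to $2L/\log L$, and so on, each restriction costing at most $\OO_\prec((W/L)^{2d-6}W^{-(n-2)d/2})$ by the removed-diffusive-edge mechanism above (waved edges contribute only $\OO(W^{-D})$). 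Since there are $\OO(1)$ atoms, the claimed bound follows. Without these two ingredients — the $B^3$ decay from Lemmas \ref{-edc} and \ref{dG-bd}, and the internal-atom range comparison — your argument does not prove the lemma.
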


\begin{proof}[\bf Proof of Proposition \ref{sum0:L}]
Note that in the setting of Proposition \ref{cancellation property}, we have constructed an $(n-1)$-th order $T$-expansions, and part (i) of Proposition \ref{locallaw-fix} then shows that the local law \eqref{locallaw1}  holds for $z = E + \ii (W^{2+\e}/L^2)$ under the condition \eqref{cond_d=7_only}. Hence, \eqref{eq:sum0-inf} holds for all $d\ge 7$ by Proposition \ref{sum0-inf}. In addition, from Lemma \ref{LtoInf_x}, we get that 
\begin{equation*}
\begin{split}
	&\Big|\sum_{x\in \Z_L^d}\mathcal G_{0x}(m,S,S^\pm,\Theta^{[n-1]}) - \sum_{x\in \Z_L^d} \mathcal G_{0x}(m(E),S_\infty,S^\pm_\infty(E),\Theta^{[n-1]}_\infty(E)) \Big| \\
	&  \prec (\eta W^{-d}  + W^{-2} L^{2-d})  W^{-(n-4)d/2}.
\end{split}
\end{equation*}
Together with Lemma \ref{outsideL}, it implies that 
\begin{equation*}
\begin{split}
	&\Big|\sum_{x\in \Z_L^d} (\mathcal E_n)_{0x}(m,S,S^\pm,\Theta^{[n-1]})-\sum_{x\in \Z^d} (\mathcal E^{\infty}_n)_{0x}(m(E),S_\infty,S^\pm_\infty(E),\Theta^{[n-1])}_\infty(E))\Big|\\
	&\prec \left[\left(\eta+ (W/L)^{2d-6}\right) W^{-d}  + W^{-2} L^{2-d} \right] W^{-(n-4)d/2}.
\end{split}
\end{equation*}
Combining it with \eqref{eq:sum0-inf}, we conclude the proof. 
\end{proof}

The rest of this section is devoted to the proofs of Lemmas \ref{LtoInf_x} and \ref{outsideL}.

\subsection{Proof of Lemma \ref{outsideL}}


Note that the graphs $\cal G^\infty$ and $\cal G$ are different in the sense that the atoms in $\cal G^\infty$ take values over $\Z^d$ while the atoms in $\cal G$ take values over $\Z_L^d$.

Suppose $0$ connects to an atom $y$ through a diffusive edge $e=(0,y)$ in $\cal G$. Then, removing this edge still gives a doubly connected graph due to the last statement in \Cref{Teq}.
Now, let $\cal G^{\infty}_{0x,y}$ be the graph obtained by fixing $y$ in $\cal G^{\infty}_{0x}$ as an external atom. Note that
{$	\sum_x \mathcal  G^{\infty}_{0x,y} = \wt{\mathcal G}^{\infty}_{0y}$}, 
where \smash{$\wt{\mathcal G}^{\infty}_{0y}$} is a graph obtained by changing $x$ in $\mathcal  G^{\infty}_{0x,y}$ to an internal atom. Then, applying Lemma \ref{dG-bd} to the doubly connected graph with $e$ removed from \smash{$\wt{\mathcal G}^{\infty}_{0y}$} gives that 
\begin{equation*}
\sum_{|y| \geq L/\log L}\left|\mathcal G^{\infty}_{0y}\right| \prec W^{-(n - 6)d/2} \sum_{|y| \geq L/\log L}B_{0y}^3 \prec (W/L)^{2d-6} W^{-(n - 2)d/2}\,.
\end{equation*}
This means that if we do not sum over $|y| \geq L/\log L$ in $\sum_x \mathcal  G^{\infty}_{0x}$, it gives an error of order 
\be\label{err-det}
\OO_{\prec}\left((W/L)^{2d-6} W^{-(n - 2)d/2}\right).
\ee
If $0$ connects to an atom $y$ through a waved edge, then by \eqref{app compact f} and \eqref{S+xy}, we get that summing over $|y| \geq L/\log L$ gives a negligible error $\OO(W^{-D})$.

The above argument can be extended to any pair of neighbors. For example, suppose $y$ connects to an atom $w$. Then, for $|y| \leq L/\log L$, if we do not sum over $|w| \geq 2L/\log L$ in $\sum_x \mathcal  G^{\infty}_{0x}$, it gives an error of order \eqref{err-det}. Continuing this argument, since the graph $\cal G$ is connected and there are $\OO(1)$ many atoms, we obtain the desired result.

\subsection{Proof of Lemma \ref{LtoInf_x}}

Note that when $\eta$ is of order 1, Lemma \ref{LtoInf_x} follows directly from Lemma \ref{dG-bd}. Hence, without loss of generality, we assume $\eta\ll 1$ in the following proof. 

\begin{lemma}
\label{mSinf}
For $x\in \Z_L^d$ and any large constant $D>0$, we have that
\begin{align*} &\Big| \mathcal G_{0x}(m(E),S_\infty ,S^\pm_\infty(E),\Theta_\infty^{[n-1]}(E))  - \mathcal G_{0x}(m(z),S,S^\pm(z),\Theta_\infty^{[n-1]}(E)) \Big| \\
& \prec \eta B_{0x}^2W^{-(n - 4)d/2} + L^{-D}\,.
\end{align*}
\end{lemma}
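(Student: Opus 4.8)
The goal of Lemma~\ref{mSinf} is to replace, inside a deterministic graph $\mathcal G$ in $\mathcal E_n$, the finite-volume and finite-$\eta$ objects $m(z),S,S^\pm(z)$ by their infinite-space limits $m(E),S_\infty,S_\infty^\pm(E)$, while keeping the labelled diffusive edges $\Theta^{[n-1]}_\infty(E)$ untouched, at the cost of an error $\eta B_{0x}^2 W^{-(n-4)d/2}+L^{-D}$. The plan is to perform this replacement one edge (or coefficient factor) at a time and estimate the resulting difference graph using the doubly connected structure and Lemma~\ref{dG-bd}.

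First I would record the elementary one-parameter estimates that feed the telescoping: (a) $|m(z)-m(E)|\lesssim \eta$ uniformly for $|E|\le 2-\kappa$, which follows from the $1/2$-Hölder continuity (indeed Lipschitz continuity in the bulk) of $m$ up to the real axis; (b) $|s_{xy}-(S_\infty)_{xy}|\prec W^{-D}$ for all $x,y$, since $f_{W,L}$ and its infinite-volume limit differ only through the Poisson summation tails (cf.~\eqref{subpoly}, \eqref{bandcw1}), so this replacement costs only a negligible error; (c) $|S^\pm_{xy}(z)-(S^\pm_\infty)_{xy}(E)|\prec \eta\,\big(W^{-d}\mathbf 1_{|x-y|\le W^{1+\tau}}+\langle x-y\rangle^{-D}\big)$, which follows by writing $S^+ = m^2 S/(1-m^2S)$, using that $1-m^2(E)S_\infty$ is invertible with norm bound uniform in the bulk (because $\|S_\infty\|\le 1$ and $|m(E)|<1$ strictly away from $E=\pm2$), and controlling the difference via the resolvent identity together with (a) and (b); the spatial decay is inherited from the standard decay estimate \eqref{S+xy} applied to both terms. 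The point of each of these is that the replacement produces a factor of size $O(\eta)$ (or $O(W^{-D})$) times an object with the same spatial decay profile as the original edge.

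Next I would set up the telescoping sum. Enumerate the edges and coefficient factors of $\mathcal G_{0x}$ that still involve $z$ (namely the $s_{\al\beta}$ edges, the $S^\pm(z)$ edges, and the stray $m(z)$'s in the coefficient — the $\Theta^{[n-1]}$ edges are by hypothesis already in their infinite-space form in the target expression), and replace them one at a time, so that the left-hand side of Lemma~\ref{mSinf} is bounded by a sum of $O(1)$ difference graphs. In each difference graph, one edge $\mathcal A_{\al\beta}$ has been replaced by $\mathcal A_{\al\beta}-\mathcal A^\infty_{\al\beta}$, which by (a)--(c) is bounded by $\eta$ times a nonnegative ``pseudo-edge'' having the same decay as a waved edge, an $S^\pm$ edge, or (for the coefficient case) a bare constant; all other edges are bounded by their absolute values as usual. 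Crucially, replacing a single edge by a same-decay pseudo-edge does not destroy the doubly connected property: a waved/$S^\pm$ edge is not used in either the black net or the blue net (only diffusive, blue solid and free edges are), so removing it and reinstating it as a pseudo-waved edge leaves both spanning trees intact; replacing an $m(z)$ in the coefficient affects nothing structural. Hence each difference graph is a deterministic doubly connected normal graph with two fixed atoms $0$ and $x$, no free or ghost edges, with an extra multiplicative $O(\eta)$ (or $O(W^{-D})$), and with scaling order equal to $\ord(\mathcal G)=n$. Applying Lemma~\ref{dG-bd} with $k_\gh=k_\fr=0$ and $n_{0x}=n$ gives $(\mathcal G_{\rm abs})_{0x}\prec W^{-(n-4)d/2}B_{0x}^2$, so each difference graph is $\prec \eta\,W^{-(n-4)d/2}B_{0x}^2$, and the $W^{-D}$-type replacements contribute the harmless $L^{-D}$ term. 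Summing the $O(1)$ contributions yields exactly the claimed bound.

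The main obstacle is item (c): one must verify that the difference $S^\pm(z)-S^\pm_\infty(E)$ genuinely carries an $O(\eta)$ prefactor \emph{and} retains the short-range decay $W^{-d}\mathbf 1_{|x-y|\le W^{1+\tau}}+\langle x-y\rangle^{-D}$, rather than merely $O(1)$ with decay. This requires care because $S^\pm$ involves the resolvent $(1-m^2S)^{-1}$, whose norm is only barely bounded; the argument needs the spectral gap of $S$ at $1$ and the strict bound $|m(E)|<1$ in the bulk (uniform for $|E|\le 2-\kappa$), together with a Combes--Thomas / Fourier-multiplier type estimate to propagate decay through the resolvent difference, exactly as in the proof of \eqref{S+xy} (cf.\ the reference to equation (4.21) of \cite{PartII}). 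Once (c) is in hand, the telescoping and the single application of Lemma~\ref{dG-bd} per difference graph are routine, and the lemma follows.
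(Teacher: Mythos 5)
Your proposal is essentially the paper's own argument made explicit: the paper's proof is a one-line telescoping, quoting $|m(z)-m(E)|=\OO(\eta)$, the decay bounds \eqref{app compact f} and \eqref{S+xy}, and — for your estimate (c) — the ready-made bound $|(S^+_\infty)_{0x}(E)-S^+_{0x}(z)|\prec \eta W^{-d}\mathbf 1_{|x|\le W^{1+\tau}}+L^{-D}$ cited from equation (7.8) of \cite{BandI}; the replacement-one-edge-at-a-time step and the application of Lemma \ref{dG-bd} (with $k_\gh=k_\fr=0$, $n_{0x}=n$) to each difference graph are exactly what you spell out, and your observation that waved/$S^\pm$ edges play no role in the black or blue nets, so the doubly connected property and scaling order are preserved, is the right justification.

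One inaccuracy in your sketch of (c): you justify the invertibility of $1-m^2(E)S_\infty$ by ``$|m(E)|<1$ strictly away from $E=\pm 2$'', but this is false — from \eqref{msc} one has $|m(E)|=1$ for every $E$ in the bulk, and only $|m(z)|<1$ for $\eta>0$. The correct input is that the point $1$ stays at distance of order one from the spectrum of $m^2(E)S_\infty$: for $|E|\le 2-\kappa$ with $E$ bounded away from the exceptional value where $m^2(E)$ approaches a real point of the spectrum, this follows from $\im m^2(E)\ne 0$, and near $E=0$ (where $m^2=-1$) from the structure of the spectrum of $S$ near $-1$ guaranteed by Assumption \ref{var profile}. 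This is exactly the mechanism behind \eqref{S+xy} and behind the cited estimate from \cite{BandI}, to which you defer anyway, so the slip is fixable and does not affect the structure of your argument; but as written, the resolvent-difference step cannot be run off a bound $\|m^2(E)S_\infty\|<1$.
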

\begin{proof}
It is an immediate consequence of \eqref{app compact f}, \eqref{S+xy}, the equation (7.8) of \cite{BandI} which shows that 
$$ |(S^+_\infty)_{0x}(E)-  S^{+}_{0x}(z)| \prec \eta W^{-d}\mathbf 1_{|x|\le W^{1+\tau}} + L^{-D}, $$
and the simple estimate $|m(z) - m(E)| = \OO(\eta)$.
\end{proof}

\begin{lemma}
Under the setting of Proposition \ref{sum0:L}, let $\mathcal G$ be an arbitrary deterministic graph in $\cal E_{l+1}$ for a fixed $2\le l \le n-1$. If \eqref{eq:G-Gind} holds for every graph in $\cal E_{k}$ with $k \leq l$, then for $x\in \Z_L^d$,
\label{sum0:changeTheta}
\begin{align*} & |  \mathcal G_{0x}(m(z),S,S^\pm(z),\Theta_\infty^{[l]}(E)) -  \mathcal G_{0x}(m(z),S,S^\pm(z),\Theta^{[l]}(z))| \\
&\prec (\eta  W^{-d} + W^{-2} L^{2-d})B_{0x}^2W^{-((l+1)-6)d/2}\,.
\end{align*}
\end{lemma}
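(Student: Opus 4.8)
\textbf{Proof plan for Lemma \ref{sum0:changeTheta}.}

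The plan is to exploit the fact that the only difference between $\mathcal G_{0x}(m(z),S,S^\pm(z),\Theta_\infty^{[l]}(E))$ and $\mathcal G_{0x}(m(z),S,S^\pm(z),\Theta^{[l]}(z))$ lies in the labelled diffusive edges $(\Theta^{(r')}_{r})_{\al\beta}$ with $2\le r\le r'\le l$: in one graph they are evaluated at $(m(E),S_\infty,S^\pm_\infty(E))$ and in the other at $(m(z),S,S^\pm(z))$. Recall from \eqref{chain S2k} that $\Theta^{(r')}_r$ is itself a finite sum of chains $\Theta^{\circ}\Sele_{k_1}\Theta^{\circ}\cdots \Theta^{\circ}\Sele_{k_j}\Theta^{\circ}$, so the difference of the two versions of a single labelled diffusive edge telescopes into a sum of terms, each of which replaces exactly one factor (either a $\Theta^{\circ}$ propagator or one constituent $\Sele_{k_i}$ of lower order $k_i\le l$) by the difference of its two evaluations, with all remaining factors frozen (some at the $z$-evaluation, some at the $E$-evaluation). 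For the $\Sele_{k_i}$ factors with $k_i\le l$, the hypothesis of the lemma — that \eqref{eq:G-Gind} holds for every graph in $\cal E_k$ with $k\le l$ — provides precisely the needed bound $\prec (\eta W^{-d}+W^{-2}L^{2-d})B_{0\cdot}^2 W^{-(k_i-6)d/2}$ on $(\Sele_{k_i})_{0x}^{(z)}-(\Sele_{k_i})_{0x}^{(E)}$. For the $\Theta^{\circ}$ propagator, one needs the analogous difference bound $|\Theta^{\circ}_{0x}(z) - (\Theta_\infty)_{0x}(E)|$; this follows from the standard estimate (used already in the proof of Lemma \ref{mSinf} and appearing in \cite{BandI}) that the difference of the kernels at scale $\eta$ versus $\eta=0$ and $\Z_L^d$ versus $\Z^d$ is $\prec (\eta W^{-d}+W^{-2}L^{2-d})B_{0x}$ — i.e. a gain of one extra $B$-factor decay relative to $\Theta^{\circ}$ itself, together with the small prefactor $\eta + W^2/L^2$.

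Having reduced to a single replaced factor inside an otherwise fixed graph, I would then invoke the deterministic graph bound, Lemma \ref{dG-bd}, to control the resulting graph. The key point is that the replaced factor carries an extra $B$-decay and the prefactor $(\eta W^{-d}+W^{-2}L^{2-d})$ relative to the original diffusive/self-energy edge, while all the remaining structure still forms a doubly connected graph (removing or replacing one diffusive edge preserves double-connectedness by Lemma \ref{-edc}, and replacing a $\Sele_{k_i}$-block by another graph of the same scaling order does not change the molecular structure). Thus the graph value picks up exactly the factors $(\eta W^{-d}+W^{-2}L^{2-d})$, an extra $B_{0x}$ (upgrading $B_{0x}$ to $B_{0x}^2$ after the internal summations, exactly as in \eqref{det_double}), and the $W$-power bookkeeping: since $\mathcal G\in\cal E_{l+1}$ has scaling order $l+1$, the frozen part together with the replaced factor of order $k_i$ gives total order $l+1$, and tracking the $W^{-\soe}$-factors through Lemma \ref{dG-bd} yields the claimed $W^{-((l+1)-6)d/2}$. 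One sums over the $\OO(1)$ many chains in each $\Theta^{(r')}_r$, the $\OO(1)$ many labelled diffusive edges in $\mathcal G$, and the $\OO(1)$ many telescoping terms, absorbing all combinatorial factors into the $\prec$.

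The main obstacle, and the step requiring the most care, is the $W$-power accounting in the telescoping: when I replace a constituent $\Sele_{k_i}$-block of order $k_i$ inside a labelled diffusive edge of order $r$, I must check that the difference graph's scaling order and ghost/free-edge count feed into Lemma \ref{dG-bd} so as to produce $W^{-((l+1)-6)d/2}$ and not something weaker, and in particular that the extra $B$-decay from the replaced factor really does upgrade the final bound from $B_{0x}$ to $B_{0x}^2$ after performing the internal-vertex summations (this is where one must verify that the remaining graph, minus the replaced edge, still routes a doubly connected structure between $0$ and $x$ so that Lemma \ref{dG-bd} applies with $n_{0x}$ large enough). A secondary subtlety is making sure the inductive hypothesis is applied only to $\Sele_k$ with $k\le l$ — which is exactly the content of $\Theta^{[l]}$ involving self-energies of order at most $l$ — so that no circularity arises; this is guaranteed because $\mathcal G\in\cal E_{l+1}$ only uses $\Theta^{(r')}_r$ with $r'\le l$, hence only $\Sele_k$ with $k\le l$, consistent with the recursive structure noted after Definition \ref{collection elements}.
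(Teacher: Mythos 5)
Your overall skeleton (telescope over the labelled diffusive edges, use \eqref{eq:G-Gind} for the $\Sele_{k_i}$-blocks, remove the replaced edge and control the rest via Lemmas \ref{-edc} and \ref{dG-bd}) is the same as the paper's, but your key quantitative input is false, and your decay bookkeeping relies on it. You claim $|\Theta^{\circ}_{0x}(z)-(\Theta_\infty)_{0x}(E)|\prec(\eta W^{-d}+W^{-2}L^{2-d})B_{0x}$, i.e.\ a full extra $B$-factor of decay. The correct estimate (the $l'=2$ case of Lemma \ref{theta-itself}, proved via the finite-volume comparison with zero-mode subtraction in Lemma \ref{Green-gr}) is $W^{-2}L^{2-d}+\eta W^{-4}\gE{x}^{4-d}$: the finite-$L$/zero-mode contribution is a flat term with \emph{no} spatial decay, and the $\eta$-part decays only like $\gE{x}^{4-d}$ (a $\wt B$-type edge), not like $B_{0x}$. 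Already on the diagonal the true difference is of order $\eta W^{-d}+W^{-2}L^{2-d}$, which exceeds your claimed $(\eta W^{-d}+W^{-2}L^{2-d})B_{00}$ by a factor $W^{d}$. Moreover this comparison is not the "standard estimate" of Lemma \ref{mSinf} (which only treats $m$, $S$, $S^{\pm}$) nor a bare citation to \cite{BandI}: the finite-$L$ part with $S^{\circ}=P^{\perp}SP^{\perp}$ is exactly what Lemma \ref{Green-gr} is for, and the extension to labelled edges $\Theta^{(k)}_{k'}$ is the telescoping statement Lemma \ref{theta-itself}, which must be proved, not assumed.

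Because of this, your mechanism for producing the final $B_{0x}^2$ ("the replaced factor carries an extra $B$-decay ... upgrading $B_{0x}$ to $B_{0x}^2$") does not work: the replaced edge contributes no usable decay in the worst term. The repair is exactly the paper's route (Lemma \ref{change_Theta}): bound the edge difference only by its supremum, $\sup_{a,b}|(\Theta^{(k)}_{k'}-\Theta^{(k)}_{k',\infty})_{ab}|\prec W^{-(k'-2)d/2}(\eta W^{-d}+W^{-2}L^{2-d})$, and obtain \emph{all} of the spatial decay from the remaining graph: since removing a diffusive edge from a globally standard deterministic graph keeps it doubly connected (Lemma \ref{-edc}), Lemma \ref{dG-bd} applied to the remaining graph of scaling order $l+1-k'$ yields $B_{0x}^2W^{-(l+1-k'-4)d/2}$, and the product of the two $W$-powers gives precisely $W^{-((l+1)-6)d/2}$. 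With your edge-difference bound corrected and the $B_{0x}^2$ rerouted through the doubly connected remainder, your plan becomes the paper's proof; as written, the central estimate is wrong and the claimed bound does not follow.
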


\begin{proof}[\bf Proof of Lemma \ref{LtoInf_x}]
First, we trivially have $\cal E_2=0$. Now, suppose we have proved that Lemma \ref{LtoInf_x} holds for all graphs in the $\selfs$ of order $\leq l$. Then, Lemmas \ref{mSinf} and \ref{sum0:changeTheta} imply Lemma \ref{LtoInf_x} for the graphs in $\cal E_{l+1}$. By induction in $l$, we conclude the proof. 
\end{proof}

A key ingredient in proving Lemma \ref{sum0:changeTheta} is a comparison between (labeled) diffusive edges in $\Theta^{[n]}$ and those in $\Theta^{[n]}_\infty$.
\begin{lemma}
If \eqref{eq:G-Gind} holds for every graph in $\cal E_{k}$ with $k \leq l$, then for any $l'\le l$,
\label{theta-itself}
\begin{equation}
\label{eq:theta-itself}
\Big| (\Theta^{(l)}_{l'} )_{xy} -  (\Theta^{(l)}_{l',\infty} )_{xy}\Big| \prec W^{-(l'-2)d/2}\left( W^{-2}L^{2-d}+ \frac{\eta }{W^{4}\gE{x-y}^{d-4}}\right).
\end{equation}
\end{lemma}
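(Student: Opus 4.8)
\textbf{Proof proposal for Lemma \ref{theta-itself}.}

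The plan is to run an induction on $l'$ that unfolds the defining series \eqref{chain S2k} for $\Theta^{(l)}_{l'}$ and compares it term by term with the corresponding series for $\Theta^{(l)}_{l',\infty}$. The base case $l'=2$ is $\Theta^{(l)}_{2}=\Theta^{\circ}$ versus $\Theta^{(l)}_{2,\infty}=\Theta_\infty$, which is exactly the content of a folklore comparison between $\Theta^{\circ}(z)$ and its infinite-space limit; concretely, writing $\Theta^{\circ}=|m|^2S^{\circ}/(1-|m|^2S^{\circ})$ and $\Theta_\infty=\sum_{k\ge1}S_\infty^{k}$, the difference splits into (a) the effect of replacing $S^{\circ}$ by $S$ (a zero-mode correction of size $\OO(W^{-2}L^{2-d})$, controlled via the spectral gap $1-\|S^{\circ}\|\gtrsim W^2/L^2$ as in the remark below \eqref{Japanesebracket}), (b) the effect of $\eta>0$ in $m(z)$ versus $m(E)$, which produces the $\eta\,W^{-4}\gE{x-y}^{4-d}$ term by the same resolvent-expansion bookkeeping used for \eqref{thetaxy}, and (c) the finite-versus-infinite lattice truncation, again of order $W^{-2}L^{2-d}$ after summing the diffusive tails (cf. the proof of Lemma \ref{outsideL}). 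For the inductive step, I would expand $\Theta^{(l)}_{l'}$ as in \eqref{chain S2k} into a sum of chains $\Theta^{\circ}\Sele_{k_1}\Theta^{\circ}\cdots\Sele_{k_j}\Theta^{\circ}$ with $\sum k_i-2(j-1)=l'$, and subtract the corresponding infinite-space chains. Each chain-difference telescopes over its factors: a typical term has the form $(\text{prefix})\cdot[(\text{one factor}) - (\text{its infinite-space limit})]\cdot(\text{suffix})$, where the changed factor is either a $\Theta^{\circ}$ block (handled by the base case), a lower-order $\Theta^{(l)}_{r,\infty}$ block with $r<l'$ (handled by the inductive hypothesis applied at level $r$), or a self-energy $\Sele_{k_i}$ with $k_i\le l'-2\cdot(\text{remaining blocks})\le l$ (handled by the hypothesis \eqref{eq:G-Gind} that Lemma \ref{LtoInf_x} already holds for all self-energies of order $\le l$).

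The estimation of each telescoped term is then purely a matter of convolving diffusive kernels. I would bound the unchanged prefix/suffix blocks using $|\Theta^{(n)}_{r}|_{xy}\prec W^{-(r-2)d/2}B_{xy}$ from Lemma \ref{lem redundantagain} and $|\Sele_{2l}|_{xy}\prec W^{-(l-2)d}B_{xy}^2$ from \eqref{4th_property0}, and bound the changed block by the right-hand side of \eqref{eq:G-Gind} (for a self-energy) or of \eqref{eq:theta-itself} at a lower level (for a $\Theta$-block), with an extra $\eta\,W^{-\#}\gE{\cdot}^{4-d}$ or $W^{-2}L^{2-d}$ factor. The two elementary convolution inequalities
\[
\sum_{w}B_{xw}B_{wy}\lesssim \wt B_{xy}=W^{-4}\gE{x-y}^{4-d},
\qquad
\sum_{w}B_{xw}\frac{W^{2-d}}{L^2}\lesssim \frac{L^{2}}{W^{2}}\cdot\frac{W^{2-d}}{L^{2}}\cdot B_{xy}\cdot(\text{power of }W),
\]
together with the bookkeeping of the scaling-order exponents $\sum k_i-2(j-1)=l'$, let one check that the powers of $W^{-d/2}$ collect into exactly $W^{-(l'-2)d/2}$, and that the residual small factor is always $\big(W^{-2}L^{2-d}+\eta\,W^{-4}\gE{x-y}^{4-d}\big)$ as claimed (the $\eta$-term must be tracked carefully since it appears with kernel $\wt B$, not $B$, but $\wt B_{xy}\le W^{-4}\gE{x-y}^{4-d}$ on the nose and the remaining $B$-convolutions only improve the spatial decay). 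This is where $d\ge 6$ is used: all the convolutions $\sum_w B_{xw}B_{wy}$, $\sum_w B_{xw}\wt B_{wy}$ are summable exactly when $d-2>2$ and $d-4\ge 0$ in the relevant ranges.

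The main obstacle I anticipate is purely combinatorial rather than analytic: ensuring that the induction is well-founded, i.e., that every block appearing in a chain for $\Theta^{(l)}_{l'}$ — whether a self-energy $\Sele_{k_i}$ or a nested labeled diffusive edge — genuinely has order strictly smaller than $l'$ (or equal to $l'$ only in the trivial single-$\Theta^{\circ}$ case), so that the hypotheses \eqref{eq:G-Gind} and the lower-level instance of \eqref{eq:theta-itself} are legitimately available. This is guaranteed by the constraint $\sum_{i=1}^{j}k_i-2(j-1)=l'$ with each $k_i\ge4$, which forces $j\le l'/2$ and $\max_i k_i\le l'-2(j-1)\le l'-2<l'$ whenever $j\ge2$; the only $j=1$ term is $\Theta^{\circ}\Sele_{l'}\Theta^{\circ}$, whose middle factor has order exactly $l'$ but is a self-energy, so \eqref{eq:G-Gind} at level $l'$ (which is part of the standing assumption "\eqref{eq:G-Gind} holds for every graph in $\cal E_k$ with $k\le l$", and $l'\le l$) applies. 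Once this bookkeeping is set up correctly, the rest is a routine but lengthy diffusive-convolution estimate, which I would present compactly by citing the analogous computation in \cite{BandI} rather than reproducing it in full.
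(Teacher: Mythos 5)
Your overall architecture — prove the case $l'=2$ first, then write $\Theta^{(l)}_{l'}$ via \eqref{chain S2k} as a sum of chains $\Theta^{\circ}\Sele_{k_1}\Theta^{\circ}\cdots\Sele_{k_t}\Theta^{\circ}$ and telescope, replacing one factor at a time by its infinite-space limit and using \eqref{eq:G-Gind} for the self-energy factors — is exactly the paper's route, and your combinatorial bookkeeping for the chains (each $k_i\le l'\le l$, so the hypothesis on $\cal E_{k_i}$ applies) is fine.

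The genuine gap is in your base case, specifically in items (a) and (c). You bound the zero-mode correction (the difference between $\Theta$ and $\Theta^{\circ}$) and the finite-versus-infinite-lattice truncation \emph{separately}, each by $\OO(W^{-2}L^{2-d})$. That fails in the regime that matters: the lemma is needed down to $\eta\approx W^{-5+\delta_0}L^{5-d}$, and by \eqref{Theta-Thetacirc} the zero mode equals $\frac{\im m}{N\eta}$, which at such $\eta$ is of order $W^{5}L^{-5}$, larger than $W^{-2}L^{2-d}$ by a factor $\gtrsim W^{7}$ when $d\ge 7$. The spectral gap $1-\|S^{\circ}\|\gtrsim W^2/L^2$ does not help here — it controls $\Theta^{\circ}$ itself, not the size of the removed zero mode. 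Likewise, for $\eta\ll W^2/L^2$ the decay length of $\Theta(\Z^d,S,|m|)$ exceeds $L$, so the periodic-image sum $\sum_{k\ne 0}\Theta(\Z^d,S,|m|)_{0,x+kL}$ (your item (c)) is itself of order $(N\eta)^{-1}$, not $W^{-2}L^{2-d}$; the citation of Lemma \ref{outsideL} is inapposite since there the kernels are $\eta$-independent. The missing idea is that these two large quantities must be played against each other: using $\frac{\im m}{N\eta}=\frac{1}{L^d}\sum_{x}\Theta(\Z_L^d,S,|m|)_{0x}$ and the Poisson-type identity \eqref{eq:poisson}, the paper groups the zero mode with $\Theta(\Z_L^d,S,|m|)_{00}-\Theta(\Z^d,S,|m|)_{00}=\sum_{k\ne0}\Theta(\Z^d,S,|m|)_{0,kL}$ and controls the \emph{difference} through the second-difference (translation-regularity) estimate of Lemma \ref{Green-gr}, which is where the genuine $W^{-2}L^{2-d}$ smallness comes from. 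Your items (b) (the $|m|$-versus-$1$ killing rate, giving $\eta W^{-4}\langle x-y\rangle^{4-d}$) and the $S$-versus-$S_\infty$ replacement are fine, and the inductive step is fine, but without the zero-mode/image-sum cancellation the claimed bound \eqref{eq:theta-itself} is not reachable for $\eta\in[W^{-5+\delta_0}L^{5-d},\,W^2/L^2)$.
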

With this lemma, Lemma \ref{sum0:changeTheta} is a direct consequence of the following result, which controls the error in replacing every (labeled) diffusive edge with its infinite space limit.
\begin{lemma}
\label{change_Theta}
Under the setting of Lemma \ref{sum0:changeTheta}, let $e = (a,b)$ be an arbitrary diffusive $\Theta^{(k)}_{k'}$ edge in $\mathcal G$ with $k' \leq k \leq l$. Let $\mathcal G^e$ be a graph obtained by replacing the edge $e$ in $\mathcal G$ by $(\Theta^{(k)}_{k'} - \Theta^{(k)}_{k',\infty})_{ab}$. Then, we have that
\begin{equation}
\mathcal (\mathcal G^e_{abs})_{0x} \prec (\eta  W^{-d} + W^{-2}L^{2-d})B_{0x}^2W^{-((l+1)-6)d/2}\,.
\end{equation}
\end{lemma}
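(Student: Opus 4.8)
The plan is to invoke Lemma~\ref{theta-itself} to replace the edge $e$ by an explicit small scalar-or-weight quantity, split the resulting bound into two deterministic graphs according to the two terms of that estimate, and bound each one with Lemma~\ref{dG-bd} after a connectivity-preserving surgery. Since $e$ is a $\Theta^{(k)}_{k'}$ edge with $k'\le k\le l$ and the inductive hypothesis \eqref{eq:G-Gind} is in force for all $\cal E_r$ with $r\le l$, Lemma~\ref{theta-itself} gives
\[
\big|(\Theta^{(k)}_{k'})_{ab}-(\Theta^{(k)}_{k',\infty})_{ab}\big|\prec W^{-(k'-2)d/2}\Big(W^{-2}L^{2-d}+\eta\,\wt B_{ab}\Big),\qquad \wt B_{ab}:=\frac{1}{W^{4}\gE{a-b}^{d-4}}.
\]
Hence $(\mathcal G^e_{abs})_{0x}$ is dominated by $\mathcal G^{(a)}+\mathcal G^{(b)}$, where $\mathcal G^{(a)}$ is obtained from $\mathcal G_{abs}$ by deleting $e$ and multiplying by the scalar $W^{-(k'-2)d/2}W^{-2}L^{2-d}$, while $\mathcal G^{(b)}$ is obtained by replacing $e$ with the weight $\eta\,W^{-(k'-2)d/2}\wt B_{ab}$. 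Recall that every graph in $\cal E_{l+1}$ is doubly connected, has scaling order $l+1$, and contains no free or ghost edges.

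For $\mathcal G^{(a)}$: the key point is that, by the globally standard property of $\cal E_{l+1}$-graphs (Definitions~\ref{defn gs} and~\ref{def genuni2}) together with Lemma~\ref{-edc}, deleting the single diffusive edge $e$ leaves a doubly connected graph $\mathcal G\setminus e$, which still has no free or ghost edges and has scaling order $(l+1)-k'$. Applying Lemma~\ref{dG-bd} with $k_\gh=k_\fr=0$ gives $(\mathcal G\setminus e)_{abs,0x}\prec W^{-((l+1)-k'-4)d/2}B_{0x}^2$. Combining with the scalar prefactor and the identity $(k'-2)+((l+1)-k'-4)=(l+1)-6$ yields $\mathcal G^{(a)}\prec W^{-2}L^{2-d}\,W^{-((l+1)-6)d/2}B_{0x}^2$, which is the first term of the claimed bound.

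For $\mathcal G^{(b)}$: the crude estimate $\wt B_{ab}\le (L/W)^2B_{ab}$ loses exactly a factor $(L/W)^2$, so instead I would un-sum the silent weight via $\wt B_{ab}\lesssim\sum_{w}B_{aw}B_{wb}$ (the companion of $\sum_w B_{aw}B_{wb}\lesssim\wt B_{ab}$; both hold for $d\ge5$ by a standard convolution bound), thereby replacing $e=(a,b)$ by a two-edge path $a\!-\!w\!-\!b$ through a fresh internal atom $w$. Assigning both new diffusive edges to whichever of $\cal B_{black}$, $\cal B_{blue}$ contained $e$ keeps the graph doubly connected with disjoint nets; the resulting graph $\mathcal G''$ has no free or ghost edges and scaling order $(l+1)-k'+2$, so Lemma~\ref{dG-bd} gives $(\mathcal G'')_{abs,0x}\prec W^{-((l+1)-k'-2)d/2}B_{0x}^2$. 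Since $(k'-2)+((l+1)-k'-2)=(l+1)-4=2+((l+1)-6)$, this produces $\mathcal G^{(b)}\prec \eta\,W^{-d}\,W^{-((l+1)-6)d/2}B_{0x}^2$, the second term. Adding the two bounds completes the proof.

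I expect the main obstacle to lie in the double-connectivity bookkeeping rather than in the power counting: one genuinely needs the globally standard structure (through Lemma~\ref{-edc}) to know that $\mathcal G\setminus e$ remains doubly connected, and in the $\mathcal G^{(b)}$ step one must verify that the two replacement edges can be routed through a single net so that $\cal B_{black}\cap\cal B_{blue}=\emptyset$ is preserved. Everything else reduces to the elementary identities among scaling orders displayed above.
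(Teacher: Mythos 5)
Your proposal follows essentially the same route as the paper: Lemma \ref{theta-itself} for the edge replacement, Lemma \ref{-edc} (via the globally standard property) to see that $\cal G\setminus e$ remains doubly connected, and Lemma \ref{dG-bd} for the power counting; your exponent bookkeeping, $(k'-2)+((l+1)-k'-4)=(l+1)-6$, is correct and reproduces the first term exactly as in the paper. The only place you diverge is the $\eta$-term, and there the paper is simpler than you anticipate: since $\gE{a-b}\ge W$, one has the pointwise bound $\eta\,\wt B_{ab}\le \eta W^{-d}$, so this contribution can be pulled out as a constant prefactor just like $W^{-2}L^{2-d}$ and Lemma \ref{dG-bd} applied to $\cal G\setminus e$ (scaling order $(l+1)-k'$) already gives $\eta W^{-d}\,W^{-((l+1)-6)d/2}B_{0x}^2$. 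No edge between $a$ and $b$ needs to be retained at all, so the un-summing surgery $\wt B_{ab}\lesssim\sum_w B_{aw}B_{wb}$ is unnecessary; your premise that the ``crude'' estimate loses a factor $(L/W)^2$ compares $\wt B_{ab}$ with $B_{ab}$, whereas the even cruder bound by $W^{-d}$ is all the argument requires.

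If you do keep the surgery, two corrections are needed. First, your net assignment is wrong as stated: putting \emph{both} new edges into the collection that contained $e$ leaves the new molecule $\{w\}$ with no edge in the other collection, so that collection cannot contain a spanning tree through $w$, and disjointness of $\cal B_{black}$ and $\cal B_{blue}$ forbids reusing the edges. The correct assignment puts one new edge in each net, which works precisely because $\cal G\setminus e$ is already doubly connected by Lemma \ref{-edc} (note also that $e$, being redundant, need not belong to either net in the first place). With that fix your power counting for $\cal G''$ is right. Second, a minor point worth stating explicitly: Lemma \ref{dG-bd} is formulated for graphs built from the paper's edge types, so inserting pseudo-diffusive $B$-edges is a slight abuse; it is harmless because the proof of Lemma \ref{dG-bd} only uses the bound $|\Theta^{\circ}_{xy}|\prec B_{xy}$ on each diffusive edge, but this should be said.
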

\begin{proof}
Let $\wh {\mathcal G}$ be the subgraph of $\mathcal G$ with $e$ removed and $\wh {\mathcal G}(a,b)$ be the graph obtained by fixing $a,b$ in $\wh {\mathcal G}$ as external atoms. 
By the last statement in \Cref{Teq}, $\wh {\mathcal G} $ is still doubly connected. Therefore,
\begin{align*}
&(\mathcal G^e_{abs})_{0x} \le \sum_{a,b} \left|\left(\Theta^{(k)}_{k'} - \Theta^{(k)}_{k',\infty}\right)_{ab}\right|(\wh {\mathcal G}_{abs}(a,b))_{0x } \\
&\prec (W^{-2}L^{2-d } + \eta W^{-d}) W^{-(k'-2)d/2}  (\wh {\mathcal G}_{abs})_{0x } \prec  (W^{-2}L^{2-d } + \eta W^{-d}) B_{0x}^2W^{-(l-5)d/2}\, ,
\end{align*}
where we used Lemma \ref{theta-itself} in the second step and Lemma \ref{dG-bd} in the third step.
\end{proof}

It remains to prove Lemma \ref{theta-itself}. Let $\Theta(\Z_L^d, S, |m|)$ be the Green's function for the random walk on $\Z^d_L$ that has transition matrix $S$ and is killed at each step with probability $1 - |m|$, i.e., 
$$ \Theta(\Z_L^d, S, |m|)_{xy}:= \sum_{k=1}^\infty |m|^{2k}(S^k)_{xy} = \left(\frac{|m|^2 S}{1-|m|^2 S}\right)_{xy},\quad x,y \in \Z_L^d.$$
Similarly, we can define $\Theta(\Z_L, S, |m|)$.
Then, we will prove Lemma \ref{theta-itself} using the following bound.

\begin{lemma}
\label{Green-gr}
For $x,y \in \Z_L^d$ and any  constants $\tau, D>0$, we have that
\begin{align*}
&\Big| \Big( \Theta(\Z_L^d, S, |m|)_{xy} - \Theta(\Z^d, S, |m|)_{xy} \Big) - \Big( \Theta(\Z_L^d, S, |m|)_{00} - \Theta(\Z^d, S, |m|)_{00} \Big) \Big| \\
&\prec \frac{\gE{x-y}^2}{W^2 L^{d} }  \mathbf 1_{\eta \le \frac{W^{2 + \tau}}{L^2}} + L^{-D}\,.
\end{align*}
\end{lemma}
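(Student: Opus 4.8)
\textbf{Proof proposal for Lemma \ref{Green-gr}.}

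The plan is to express both $\Theta(\Z_L^d, S, |m|)$ and $\Theta(\Z^d, S, |m|)$ in Fourier space and compare them mode by mode. Writing $|m|^2 = 1 - c\eta + \OO(\eta^2)$ for a constant $c = c(E) > 0$ (which follows from taking the imaginary part of $z = -m - m^{-1}$, cf.\ \eqref{insert m^2}), we have
\[
\Theta(\Z^d, S, |m|)_{xy} = \frac{1}{(2\pi)^d}\int_{[-\pi,\pi]^d} \frac{|m|^2 \widehat S(p)}{1 - |m|^2 \widehat S(p)} e^{\ii p\cdot(x-y)}\dd p,
\]
where $\widehat S(p) = \sum_{a} S(a) e^{\ii p\cdot a}$ is smooth, satisfies $\widehat S(0) = 1$, and by Assumption \ref{var profile}(ii) obeys $\widehat S(p) \le \max\{1 - cW^2|p|^2, 1-c\}$ on the torus, so that $1 - |m|^2\widehat S(p) \gtrsim \eta + W^2|p|^2$. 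The finite-volume counterpart $\Theta(\Z_L^d, S, |m|)_{xy}$ is the same expression but with the integral replaced by a Riemann sum over the dual lattice $p \in \frac{2\pi}{L}\Z_L^d$ (times $L^{-d}$). First I would subtract the $00$ terms: the quantity to estimate becomes
\[
\Big(\frac{1}{L^d}\sum_{p \in \frac{2\pi}{L}\Z_L^d} - \frac{1}{(2\pi)^d}\int\Big)\; \frac{|m|^2\widehat S(p)}{1-|m|^2\widehat S(p)}\big(e^{\ii p\cdot(x-y)} - 1\big)\dd p,
\]
and the insertion of the factor $(e^{\ii p\cdot(x-y)} - 1)$, which is $\OO(\min\{1, |p|\,\|x-y\|_L\})$, is exactly what kills the would-be divergence at $p = 0$ and produces the $\gE{x-y}^2$ gain.

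The main estimate is then a quantitative comparison of a Riemann sum with its integral for the function $F(p) := \frac{|m|^2\widehat S(p)}{1-|m|^2\widehat S(p)}(e^{\ii p\cdot(x-y)}-1)$. The standard tool is the Poisson summation formula / Euler--Maclaurin: the difference between $\frac{1}{L^d}\sum_{p\in\frac{2\pi}{L}\Z_L^d}F(p)$ and $\frac{1}{(2\pi)^d}\int F$ equals $\sum_{k\in\Z^d\setminus\{0\}}\widehat F(Lk)$ (suitably normalized), i.e.\ it is controlled by the decay of the Fourier transform of $F$ at frequencies of order $L$. Since $F$ is smooth away from $p=0$ with derivatives blowing up like powers of $(\eta + W^2|p|^2)^{-1}$ near the origin and like $\|x-y\|_L$ from the oscillatory factor, one splits the $p$-integral/sum into the region $|p| \lesssim W^{-1}$ (where the propagator is large but $(e^{\ii p\cdot(x-y)}-1)$ is small, of size $\OO(|p|\|x-y\|_L)$, and the total mass is controlled) and $|p| \gtrsim W^{-1}$ (where $F$ and all its derivatives are bounded by powers of $W$, giving rapid decay of the Riemann-sum error in $L$). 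Carefully bookkeeping the two regions against each other, using $1 - |m|^2\widehat S(p) \gtrsim \eta + W^2|p|^2$ and integrating $\int_{|p|\le W^{-1}} \frac{|p|\,\|x-y\|_L}{\eta + W^2|p|^2}\dd p$, yields the bound $\dfrac{\gE{x-y}^2}{W^2 L^d}$ on the indicated range $\eta \le W^{2+\tau}/L^2$ (below this threshold the zero mode $\frac{\im m}{N\eta}$ is still the dominant correction and the smooth-integral approximation is valid), with a negligible $\OO(L^{-D})$ coming from the rapid decay of $\psi$ and the Poisson summation tail.

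\textbf{Deduction of Lemma \ref{theta-itself} from Lemma \ref{Green-gr}.} Granting the comparison lemma, for the bare diffusive edge $\Theta^\circ = P^\perp\Theta P^\perp$ one writes $\Theta^\circ_{xy} = \Theta(\cdot)_{xy} - \frac{\im m}{N\eta}$ (cf.\ \eqref{Theta-Thetacirc}), so the $L$-versus-$\Z^d$ difference of $\Theta^\circ_{xy}$ is exactly the left side of Lemma \ref{Green-gr} up to the harmless zero-mode bookkeeping, giving the $k'=2$ case of \eqref{eq:theta-itself}. For the labelled edges $\Theta^{(l)}_{l'}$ with $l' \ge 4$, one uses the chain representation \eqref{chain S2k}: $\Theta^{(l)}_{l'}$ is a finite sum of products $\Theta^\circ \Sele_{k_1}\Theta^\circ \cdots \Sele_{k_j}\Theta^\circ$. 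Replacing one factor at a time by its infinite-space limit, the difference telescopes; each replacement of a $\Theta^\circ$ costs the bound just proved, while each replacement of an $\Sele_k$ costs the assumed inductive hypothesis \eqref{eq:G-Gind} for graphs in $\cal E_k$ with $k \le l$ (together with Lemma \ref{dG-bd} to absorb the remaining factors and the convolution estimates $\sum_w B_{xw}B_{wy}\lesssim \wt B_{xy}$). Tracking the scaling orders through the chain gives the prefactor $W^{-(l'-2)d/2}$ and the two error terms $W^{-2}L^{2-d}$ and $\eta\,W^{-4}\gE{x-y}^{-(d-4)}$. The hard part will be the Riemann-sum-versus-integral estimate in the crossover region $|p|\sim W^{-1}$ and making the $\eta$-dependence sharp there; everything after that is bookkeeping with the graphical bounds already established.
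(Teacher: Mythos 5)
Your Fourier-space plan is essentially the dual of what the paper does: the paper uses the real-space Poisson identity \eqref{eq:poisson} to write the torus Green's function as a sum over images, so that the quantity in Lemma \ref{Green-gr} becomes $\sum_{k\ne 0}$ of the \emph{symmetrized second difference} $\Theta(\Z^d,S,|m|)_{0,x+kL}+\Theta(\Z^d,S,|m|)_{0,-x+kL}-2\Theta(\Z^d,S,|m|)_{0,kL}$, which is then bounded by a second-difference transition-probability estimate quoted from \cite{BandI}, namely $\prec \big(W^{-2}(|k|L)^{-d}\gE{x}^2+L^{-D}\big)\11_{|k|L\le \eta^{-1/2}W^{1+\tau}}+(|k|L)^{-D}$, and summed over $k$. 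Applying Poisson summation to your Riemann-sum-minus-integral discrepancy produces exactly this image sum, so the route is the same identity in disguise; the genuine gap is in where you locate the $\gE{x-y}^2$ gain. The factor $e^{\ii p\cdot(x-y)}-1=\OO(\min\{1,|p|\,\|x-y\|_L\})$ yields only \emph{one} power of $\|x-y\|_L$, and your model integral $\int_{|p|\le W^{-1}}|p|\,\|x-y\|_L\,(\eta+W^2|p|^2)^{-1}\dd p$ carries only one power as well; a first-difference bound of this type gives at best something of order $\gE{x-y}\,W^{-2}L^{1-d}$ (up to a logarithm), which exceeds the claimed $\gE{x-y}^2W^{-2}L^{-d}$ by a factor $L/\gE{x-y}$. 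The second power comes only from a parity cancellation: the linear term $\ii\, p\cdot(x-y)$ is odd in $p$ while the symbol is even, so it contributes nothing to either the dual-lattice sum or the integral, and the effective factor is $\cos(p\cdot(x-y))-1=\OO(|p|^2\|x-y\|_L^2)$ --- the Fourier dual of pairing $k$ with $-k$ into the symmetrized second difference above. Your sketch never invokes this cancellation, so as written it cannot reach the stated bound.

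Moreover, the step you defer (``the hard part \ldots the crossover region'') is precisely the content of the lemma. The discrepancy is not controlled by the size of the integral but by the decay of the image terms, which is only polynomial, of order $W^{-2}\gE{x}^2(|k|L)^{-d}$, on the range $|k|L\lesssim \eta^{-1/2}W^{1+\tau}$, and rapid beyond; this is exactly how the indicator $\mathbf 1_{\eta\le W^{2+\tau}/L^2}$ arises (once $\eta\gg W^{2+\tau}/L^2$ the $\Z^d$ propagator has already decayed on scale $\eta^{-1/2}W\ll L$, so every image term is $\OO((|k|L)^{-D})$), not because ``the zero mode is still the dominant correction'': the zero mode cancels identically in the expression you wrote, since the integrand vanishes at $p=0$. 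Likewise, the asserted ``rapid decay of the Riemann-sum error in $L$'' for $|p|\gtrsim W^{-1}$ only produces factors $(W/L)^j$, which is not rapid decay in $L$ in general (though it suffices when $W\le L^{1-c}$). The paper closes the quantitative step by citing the second-difference heat-kernel-type estimate from \cite{BandI}; in your framework you would need to prove the corresponding sharp bound near $p=0$ (second-order Taylor of the symbol with the stated $\eta$-dependence), and this is exactly what is missing from the proposal. Your appended deduction of Lemma \ref{theta-itself} is broadly consistent with the paper, but it is not the statement under review.
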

\begin{proof}
Noticing that
\begin{equation}
\label{eq:poisson}
\Theta(\Z_L^d, S, |m|)_{x,y} = \sum_{k \in \Z^d}\Theta(\Z^d, S, |m|)_{x,y+kL}\,,
\end{equation}
we have
\begin{equation}
\label{eq:knot0}
\Theta(\Z_L^d, S, |m|)_{xy} - \Theta(\Z^d, S, |m|)_{xy}= \sum_{k \in \Z^d \setminus\{0\}}\Theta(\Z^d, S, |m|)_{x,y+kL}\,.
\end{equation}
Hence, it suffices to prove that for any $x \in \Z^d_L$,
\begin{equation*}
\begin{split}
	&\sum_{ k \in \Z^d \setminus \{0\}} \Big| \Theta(\Z^d, S, |m|)_{0,x + kL} + \Theta(\Z^d, S, |m|)_{0,-x+kL}-2\Theta(\Z^d, S, |m|)_{0,kL}\Big|  \\
	&\prec  \frac{\gE{x}^2}{W^2L^{d}} \mathbf 1_{\eta \le \frac{W^{2 + \tau}}{L^2}} + L^{-D}.
\end{split}
\end{equation*}
This is a direct consequence of the transition probability estimate for the random walk on $\Z^d$ with transition matrix $S$. For example, it is proved on \cite[page 76]{BandI} that 
\begin{equation*}
\begin{split}
	&	\Big| \Theta(\Z^d, S, |m|)_{0,x + kL} + \Theta(\Z^d, S, |m|)_{0,-x+kL}-2\Theta(\Z^d, S, |m|)_{0,kL}\Big|\\
	& \prec \Big( W^{-2}(kL)^{-d}\gE{x}^2  + L^{-D} \Big) \11_{|kL| \leq \eta^{-1/2} W^{1 + \tau}} + (|k|L)^{-D}\,.
\end{split}
\end{equation*}
Summing this over $k \in \Z^d \setminus \{0\}$ yields the desired result since $\tau$ is arbitrarily small.
\end{proof}

\begin{proof}[\bf Proof of Lemma \ref{theta-itself}]
We first prove \eqref{eq:theta-itself} for $\Theta^{(l)}_2 \equiv  \Theta^\circ$. By definition, we have
\begin{align*}
&\,|\Theta^{\circ}_{xy} - (\Theta^{\circ}_{\infty})_{xy}|  \\
\leq &\, \Big| \Big( \Theta(\Z_L^d, S, |m|)_{xy} - \Theta(\Z^d, S, |m|)_{xy} \Big) - \Big( \Theta(\Z_L^d, S, |m|)_{00} - \Theta(\Z^d, S, |m|)_{00} \Big) \Big| \\
+&\, \Big| 	\Theta(\Z_L^d, S, |m|)_{00} - \Theta(\Z^d, S, |m|)_{00}  - \frac{1}{L^d}\sum_{x\in \Z_L^d}\Theta(\Z_L^d, S, |m|)_{0x} \Big|\\
+&\, |\Theta(\Z^d, S, |m|)_{xy} - \Theta(\Z^d, S_\infty, |m|)_{xy}| + |\Theta(\Z^d, S_\infty, |m|)_{xy} - \Theta(\Z^d, S_\infty, 1)_{xy}| \,.
\end{align*}
We bound the first term using Lemma \ref{Green-gr}. The second term is bounded by
\begin{align*}
&\frac{1}{L^d}\sum_{x \in \Z_L^d}\Big|	\sum_{k \not = 0}\Theta(\Z^d, S, |m|)_{0,kL}  - \sum_{k \not = 0}\Theta(\Z^d, S, |m|)_{0,x + kL} \Big| \\
&\quad  + \frac{1}{L^d}\sum_{x \in \Z_L^d}\Theta(\Z^d, S, |m|)_{0x } \prec W^{-2}L^{2-d},
\end{align*}
where we used the facts that the summands in the first term are given by
$$  \Big| ( \Theta(\Z_L^d, S, |m|)_{0x} - \Theta(\Z^d, S, |m|)_{0x} ) -  ( \Theta(\Z_L^d, S, |m|)_{00} - \Theta(\Z^d, S, |m|)_{00} ) \Big| \prec W^{-2}L^{2-d} $$
due to \eqref{eq:knot0} and Lemma \ref{Green-gr}, while the summands in the second term are bounded by $\Theta(\Z^d, S, |m|)_{0x }\prec W^{-2}\langle x\rangle^{2-d}$, a standard result for the random walk in $\Z^d$. 
Finally, we have
\begin{align*}
|\Theta(\Z^d, S, |m|)_{xy} - \Theta(\Z^d, S_\infty, |m|)_{xy}| &\leq L^{-D}\,,\\
|\Theta(\Z^d, S_\infty, |m|)_{xy} - \Theta(\Z^d, S_\infty, 1)_{xy}| &\prec \frac{\eta }{W^{4}\gE{x-y}^{d-4}} +\gE{x-y}^{-D}\,,
\end{align*}
where the first bound is due to \eqref{app compact f} and the second bound was proved as equation (7.9) in \cite{BandI}. Combining the above estimates yields \eqref{eq:theta-itself} for $l'=2$. 

For the general case, we note that by \eqref{chain S2k}, $\Theta^{(l)}_{l'}$ is a sum of graphs of the form $\Theta^{\circ}\Sele_{k_1}\Theta^{\circ}  \Sele_{k_2}\Theta^{\circ} \cdots \Theta^{\circ}  \Sele_{k_t}\Theta^{\circ}$ 	with $k_i \leq l' \leq l$. Hence, using the bound \eqref{eq:theta-itself} for $\Theta^\circ - \Theta^\circ_\infty$ and the fact that \eqref{eq:G-Gind} holds for the graphs in $\cal E_{k_i}$, $k_i\le l$, we can replace $\Theta^\circ$ and $\Sele_{k_i}$ by $\Theta^\circ_\infty$ and $\Sele_{k_i}^{\infty}$ one by one with good control of the resulting errors. The relevant proof is similar to that below equation (7.22) of \cite{BandI}, so we omit the details.
\end{proof}

\section{Proof of Lemma \ref{mGep} and Lemma \ref{gvalue_continuity}}\label{sec_pf_complete}

We define the \emph{generalized doubly connected property} of graphs with external molecules, which generalizes the doubly connected property in Definition \ref{def 2net}. 

\begin{defn}[Generalized doubly connected property]\label{def 2netex}
A graph $\cal G$ with external molecules is said to be generalized doubly connected if its molecular graph satisfies the following property. There exists a collection, say $\cal B_{black}$, of $\dashed$ (or pseudo-$\dashed$) edges, and another collection, say $\cal B_{blue}$, of blue solid, $\dashed$ (or pseudo-$\dashed$), free or ghost edges such that: (a)  $\cal B_{black}\cap \cal B_{blue}=\emptyset$, (b) every internal molecule connects to external molecules through two disjoint paths: a path of edges in $\cal B_{black}$ and a path of edges in $\cal B_{blue}$. Simply speaking, a graph is generalized doubly connected if merging all its external molecules into a single internal molecule gives a doubly connected graph in the sense of Definition \ref{def 2net}. 
\end{defn}

We will show that expanding a graph without internal molecules in a proper way always generates generalized doubly connected graphs. Moreover, in these graphs, at most half of the external molecules are used in the generalized doubly connected property in the sense of property (b) of the following lemma. 

\begin{lemma}\label{mG-fe}
Under the setting of Lemma \ref{mGep}, the deterministic graphs $\mathcal G_{\mathbf x}^{(\mu)}$ in \eqref{EGx} satisfy the following properties: 
\begin{itemize}
\item[(a)] $\mathcal G_{\mathbf x}^{(\mu)}$ are generalized doubly connected normal graphs without silent edges. 

\item[(b)] 
Consider all external molecules that are neighbors of internal molecules on the molecular graph, say $\cal M_1,\ldots, \cal M_n$. We can find at least $r \ge \lceil n/2\rceil$ of them, denoted by $\{\cal M_{k_i}\}_{i=1}^r$, that are simultaneously connected with redundant $\dashed$ or free edges in the following sense: (i) every $\cal M_{k_i}$, $1\le i \le r$, connects to an internal molecule through a $\dashed$ or free edge $e_{k_i}$, and (ii) after removing all these $r$ edges $e_{k_i}$, $1\le i \le r$, the resulting graph is still generalized doubly connected. (We will call these molecules $\cal M_{k_i}$ as special external molecules and these redundant $\dashed$ or free edges as special redundant edges.) 

\item[(c)] If $x_i$ and $x_j$ are connected in $\mathcal G_{\mathbf x}$, then they are also connected in $\mathcal G_{\mathbf x}^{(\mu)}$ through non-ghost edges.   

\item[(d)] Every $\mathcal G_{\mathbf x}^{(\mu)}$ satisfies $\size(\mathcal G_{\mathbf x}^{(\mu)})\le W^{-2\soe (p-t)}$ under the definition \eqref{eq:def-size}, where $t$  denotes the number of connected components in $\mathcal G_{\mathbf x}^{(\mu)}$.


\end{itemize}
\end{lemma}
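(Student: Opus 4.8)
\textbf{Proof proposal for Lemma \ref{mG-fe}.}

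The plan is to establish the four properties by tracking them as invariants through the expansion procedure that proves Lemma \ref{mGep}. The starting graph $\mathcal G_{\mathbf x}$ has only external atoms and no internal molecules; after merging all external molecules into one molecule it is trivially doubly connected, so it satisfies (a) vacuously (there are no internal molecules yet), and (c)--(d) hold by hypothesis on the input. The expansion proceeds by repeatedly applying the $\nonuni$ of Lemma \ref{def nonuni-T} (and the complete expansion of $T$-variables and $Q$-graphs built on top of it) to the $G$-edges and $T$-subgraphs of the current graph, each such step replacing a random subgraph by a sum of deterministic graphs containing new internal atoms/molecules, labelled $\dashed$ edges, free edges, and ghost edges. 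The key is that each elementary expansion step preserves the generalized doubly connected property — this is exactly the content of how $\nonuni$ graphs are assembled: the new $\wt\Theta$ edge going into the expanded vertex can be routed into the black net, and the $\mathcal D^{(\cdot)}$ factors are themselves doubly connected, while free/ghost edges are assigned to the blue net as in Definition \ref{def 2netex}. Since the starting graph collapses to a doubly connected graph, after every finite sequence of such steps the resulting $\mathcal G_{\mathbf x}^{(\mu)}$ remains generalized doubly connected, giving (a). For (c), note that expansions never disconnect $G$-edges: each $G_{uv}$ that is expanded is replaced by a sum of graphs in which $u$ and $v$ remain connected through non-ghost edges (the leading $m\wt\Theta_{uv}\overline G$ term and all $\mathcal D$-terms keep $u,v$ connected by $\dashed$/solid edges), and the external weights $G_{\fb\fb}$ and $\overline G_{\fb\fb}$ produced do not affect connectivity of distinct external atoms; hence connectivity of $x_i,x_j$ is inherited.

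For property (b) — the statement that at least half of the external molecules adjacent to internal molecules carry \emph{redundant} $\dashed$ or free edges — I would argue as follows. Every time the expansion creates an internal molecule, it does so by "growing" a $\wt\Theta$ (or $\Theta^{(n)}$, or free) edge out of an already-present vertex; the vertex that this edge grows from is, at the moment of creation, an external atom of the subgraph being expanded. Thus each newly created internal molecule is born attached by a $\dashed$ or free edge $e$ to some (then-)external molecule, and this edge $e$ is redundant in the sense that the rest of the doubly connected structure of the $\nonuni$ graph provides the black and blue spanning connections independently of $e$ (this is precisely property (2) and the "redundant free edge" clause of Definition \ref{defn genuni}, and the analogous statement for $\mathcal D$-graphs in Lemma \ref{def nonuni-T}(2)). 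One then shows these redundant edges can be chosen to attach to \emph{distinct} external molecules at the end: since the initial graph has $p$ external atoms all taking different values (hence in distinct molecules before expansion), and each internal molecule is created via an edge from an external vertex, a counting/pigeonhole bookkeeping — keeping one redundant edge per "branch" emanating from each external molecule into the internal part — yields at least $\lceil n/2\rceil$ simultaneously removable edges attached to distinct special external molecules. Removing all of them at once preserves generalized double connectivity because each was individually redundant with respect to a common black/blue net that does not use any of them (the "simultaneously redundant" structure is maintained because the black and blue nets, chosen once at the top level, route through internal molecules without ever using these boundary edges $e_{k_i}$).

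Property (d) follows from a bookkeeping of scaling orders and ghost/free counts: each elementary expansion step increases $\ord(\mathcal G)$ and contributes at most $W^{-c_0}$-size factors via the $\mathcal D$-graphs (of size $\le W^{-c_0}$ by Lemma \ref{def nonuni-T}(2)), and the relation $\size(\mathcal G_{\mathbf x}^{(\mu)})\le W^{-2\soe(p-t)}$ comes from the doubly connected property together with the estimate \eqref{det_double} of Lemma \ref{dG-bd} applied after contracting connected components — the same power-counting used to derive property (c) of Lemma \ref{mGep}, just stated at the atomic-graph level before the auxiliary-graph reduction. The main obstacle I anticipate is property (b), specifically proving that the $\lceil n/2\rceil$ redundant edges can be chosen \emph{simultaneously} and attached to \emph{distinct} external molecules: one must carefully argue that the black/blue nets realizing generalized double connectivity can be picked to avoid all boundary edges growing into the interior at once, and track through the recursive expansion (including the expansions of $Q$-graphs and of $\Theta^{(n)}$ edges, which themselves contain nested structure) that no later expansion step "uses up" a previously-reserved redundant edge. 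This is essentially a structural invariant that must be formulated precisely at each level of the recursion in Lemma \ref{def nonuni-T} and its companions, and then verified to be stable under all the graph operations in Section \ref{sec_operations}; the verification is combinatorial but routine once the invariant is correctly stated, and I would carry it out by induction on the number of expansion steps, with the base case being the unexpanded graph $\mathcal G_{\mathbf x}$ (where $n=0$ and the claim is vacuous).
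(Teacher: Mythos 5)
Your overall architecture --- induct on the expansion steps built on the complete $T$-expansion of Lemma \ref{def nonuni-T}, and carry (a)--(d) as invariants --- is the same as the paper's (which formalizes it via the generalized SPD/globally standard properties of Definition \ref{GSS_external} and Strategy \ref{strat_global_weak2}), and your treatment of (a), (d), and most of (c) is essentially what the paper does. The gap is in (b), exactly where you flag it, and it is not a routine verification. Your key claim --- that every newly created internal molecule is ``born attached by a $\dashed$ or free edge $e$ to some external molecule, and this edge $e$ is redundant'' --- is false in general: the diffusive edge produced when a blue solid edge incident to an external molecule is expanded may be \emph{pivotal}, i.e.\ needed in the black or blue net (this is precisely why the lemma only asserts $r\ge \lceil n/2\rceil$ rather than $r=n$), and internal molecules are also created by expanding edges deep inside the internal part, after which local expansions, $Q$-expansion pulls of blue or red solid edges to internal molecules, and mergers can change which external molecules neighbor the internal part and whether their edges are redundant. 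A ``one redundant edge per branch'' pigeonhole neither yields the factor $1/2$ nor survives these operations, because the reserved edge is exactly the one consumed when it is the first blue solid edge in the pre-deterministic order, or when the maximal isolated subgraph it borders is collapsed.

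What the paper actually does is strengthen (b) to an invariant (b$'$) that is stable under the expansions: it is stated for the molecular graph with the maximal weakly isolated subgraph replaced by a single diffusive or free edge, and it is quantified over \emph{all} assignments of the remaining blue solid edges to diffusive or free type; external neighbors of internal molecules are classified into ``special redundant'' and ``special pivotal,'' and the count $r\ge\lceil n/2\rceil$ is obtained by checking, through an extensive case analysis (expansions of blue solid edges between external molecules including the three $Q$-expansion pull patterns, local expansions with and without isolated subgraphs, and global expansions of internal or boundary blue edges, each with several net reassignments exploiting a black--blue symmetry), that every operation creates at least as many new special redundant external molecules as new special pivotal ones, with property (c) verified simultaneously in each case. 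So your proposal identifies the right difficulty but does not supply the invariant or the accounting that resolves it; as written, the induction for (b) cannot be closed.
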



The proof of Lemma \ref{mG-fe} will be given in Section \ref{sec_pf_mG}. To help the reader understand the statement of Lemma \ref{mG-fe}, we provide the following example, which describes a typical class of deterministic graphs that will naturally appear in the expansion of $\cal G_{\bx}$. The reader can also use this example to assist with the understanding of the proofs of Lemmas \ref{mGep} and \ref{gvalue_continuity} below.
\begin{center}
\includegraphics[width=7cm]{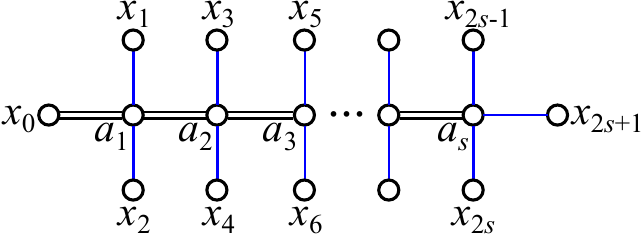}
\end{center}
This figure shows a molecular graph, where vertices $x_i$ denote external molecules and vertices $a_i$ denote internal molecules. The black double-line edges represent diffusive edges in the black net, and the blue solid lines represent diffusive or free edges in the blue net. Notice that among the two blue edges connected with an internal molecule $a_i$, $1\le i \le s-1$, one of them is redundant for the generalized doubly connected property, and $a_s$ is connected with two redundant blue edges. Hence, we can choose $x_1, x_3 ,\ldots, x_{2s+1} $ as special external molecules among the $2s+2$ external molecules in the above graph.

\subsection{Proof of Lemma \ref{mGep}}

In this subsection, we complete the proof of Lemma \ref{mGep} using Lemma \ref{mG-fe}. It remains to prove \eqref{EGx2} and the properties (a)--(c). Given a graph \smash{$\mathcal G_{\mathbf x}^{(\mu)}$} satisfying Lemma \ref{mG-fe}, similarly to Lemma \ref{GtoAG}, we first remove all local structures inside its molecules and bound it by an auxiliary graph consisting of external atoms $\bx_1,\cdots, \bx_p$, internal atoms that are representatives of internal molecules, and edges between them. More precisely, we reduce \smash{$\mathcal G_{\mathbf x}^{(\mu)}$} to an auxiliary graph together with some $W$ factors as follows. First, we pick a representative atom for each internal and external molecule (note that two external atoms can be in the same molecule if they are connected through a path of waved edges). Second, in light of \eqref{app compact f}, \eqref{thetaxy}, \eqref{S+xy} and \eqref{BRB}, we bound every waved edge inside a molecule by a pseudo-waved edge (up to a negligible error $W^{-D}$) connected with the representative atom of the molecule and bound every $\dashed$ edge by a pseudo-diffusive edge (with some $W$ factors if the $\dashed$ edge is labeled and of scaling order $>2$). Third, we bound every pseudo-$\dashed$ edge between two molecules with representatives, say $x$ and $y$, by $W^{(d-2)\tau}B_{xy}$. Finally, we bound the subgraph inside a molecule as follows: we keep one pseudo-waved edge between two neighboring external atoms; we bound every other pseudo-waved or pseudo-$\dashed$ edge inside the molecule by $W^{-d}$; we bound every free edge inside the molecule by $(N\eta)^{-1}$; we bound every summation over a non-representative internal atom by $W^{d}$. 


Denote the auxiliary graph thus obtained by $\wt{\cal G}_{\bx}$. It also satisfies the properties (a)--(c) of Lemma \ref{mG-fe} and that 
\be\label{eq_bdd_aux0} |\cal G_{\bx}^{(\mu)} | \le W^{C\tau} c(W,L) \cdot \wt{\cal G}_{\bx} + W^{-D},\ee
where $C>0$ is a constant that does not depend on $\tau$ and $c(W,L)>0$ is a $(W,L)$-dependent coefficient so that 
\be\label{eq_bdd_aux}
c(W,L) \size \left(\wt{\cal G}_{\bx}\right) \le \size\left(\cal G_{\bx}^{(\mu)}\right).
\ee
We still need to bound the summations over the internal atoms in $\wt{\cal G}_{\bx}$. With the generalized doubly connected property, we can choose a collection of blue trees such that every internal atom connects to an external atom through a unique path on a blue tree. These blue trees are disjoint, and each of them contains an external atom as the root. Then, we will sum over all internal atoms from the leaves of the blue trees to the roots. In bounding every summation, we will use the following two estimates with $k\ge 1$ and $r\ge 0$: 
\be\label{keyobs4.3}
\begin{split}
& \sum_{x } \prod_{j=1}^k B_{ x y_j}\cdot \prod_{s=1}^r \wt B_{ x z_s} \cdot B_{x a} \\ 
&\prec \sum_{l=1}^k \prod_{j: j\ne l}B_{y_j y_{l}} \cdot \Big( \wt B_{y_l a} \prod_{s=1}^r \wt B_{z_s a}+ \wt B_{y_l a} \prod_{s=1}^r \wt B_{z_s y_l} + \sum_{t=1}^r  \wt B_{y_l z_t}  \wt B_{z_t a} \prod_{s:s\ne t} \wt B_{z_s z_t} \Big) ,
\end{split}
\ee
\be\label{keyobs4.3_add}
\sum_{x} \prod_{j=1}^k B_{ x y_j}\cdot \prod_{s=1}^r \wt B_{ x z_s} \cdot \frac{W^2}{L^2}\prec \sum_{l=1}^k \prod_{j: j\ne l}B_{y_j y_{l}} \cdot \Big(  \prod_{s=1}^r \wt B_{z_s y_l} + \sum_{t=1}^r  \wt B_{y_l z_t}  \prod_{s:s\ne t} \wt B_{z_s z_t} \Big) .
\ee
The left-hand side of \eqref{keyobs4.3} is a star graph consisting of $k$ pseudo-diffusive edges (where at least one of them is in the black tree), a pseudo-diffusive edge in the blue tree, and $r$ silent pseudo-diffusive edges connected with $x_i$, while every graph on the right-hand side is a connected graph consisting of $(k-1)$ pseudo-diffusive edges and $(r+1)$ silent pseudo-diffusive edges. The estimate \eqref{keyobs4.3_add} can be applied to the case where the blue edge is a ghost or free edge. Of course, there may be some free edges connected with $x$, in which case we can still use \eqref{keyobs4.3} and \eqref{keyobs4.3_add} by multiplying some $(N\eta)^{-1}$ factors with them. 

Using \eqref{keyobs4.3} or \eqref{keyobs4.3_add}, we can bound a summation over an internal atom by a sum of new graphs. Some edges in the new graphs may be self-loops on atoms, which are picked out as $W$-dependent factors. Moreover, when we lose a ghost edge in the summation, we pick out the corresponding $L^2/W^2$ factor in its coefficient. It is easy to see that every new graph still satisfies the properties (a) and (c) of Lemma \ref{mG-fe}, and its size multiplied with the $W$-dependent factor associated with it is at most the size of \smash{$\wt{\cal G}_{\bx}$}. We still need to show that every special redundant edge in property (b) will be ``preserved" in a certain sense during summations over internal atoms and will finally contribute to a non-silent edge.

Suppose we sum over an internal atom $x$ as follows:  
\begin{center}
\includegraphics[width=3.2cm]{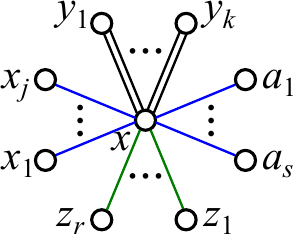}
\end{center}
Here, $(a_1,x),\ldots, (a_s,x)$ indicate pseudo-$\dashed$ or free edges, with one of them being the leaf of a blue tree, $(y_1,x),\ldots, (y_k,x)$ are pseudo-$\dashed$ edges in the black net, green edges $(z_1,x),\ldots, (z_r,x)$ indicate silent pseudo-$\dashed$ or silent free edges, and edges $(x_1,x),\cdots, (x_j,a)$ indicate special redundant edges connected with external atoms. When $(x_i,x)$ is a redundant free edge, we can treat it as a free edge between $x_i$ and another external atom. 
Hence, without loss of generality, we assume that $(x_1,x),\cdots, (x_j,a)$ are all pseudo-diffusive edges. Then, we apply \eqref{keyobs4.3} or \eqref{keyobs4.3_add} to bound the summation over $x$. In every resulting graph on the right-hand side, one of the following cases happens for a special redundant edge $(x_i,x)$.
\begin{itemize}
\item If $(x_i,x)$ becomes an edge connecting $x_i$ to another internal atom, it is still special redundant.

\item If $(x_i,x)$ becomes an edge connecting $x_i$ to another external atom, it gives a non-silent pseudo-$\dashed$ edge between external atoms and associated with $x_i$. 

\item Suppose $(x_i,x)$ is lost after the summation and there is no black pseudo-diffusive edge between $x_i$ and $x$ before the summation. Then, we apply \eqref{keyobs4.3} or \eqref{keyobs4.3_add} with the edge in the blue tree playing the role of $B_{xa}$ in \eqref{keyobs4.3} or the factor $W^2/L^2$ in \eqref{keyobs4.3_add}. In the new graph where $(x_i,x)$ is lost, $y_1, \cdots, y_k$ are still connected to each other through pseudo-diffusive edges and they are connected to $x_i$ through a black pseudo-diffusive edge, which is redundant in the black net.

\item Suppose $(x_i,x)$ is lost after the summation and there is at least one black pseudo-diffusive edge between $x_i$ and $x$ before the summation. Then, we apply \eqref{keyobs4.3} with an $(x_i,x)$ edge playing the role of $B_{xa}$ in \eqref{keyobs4.3}. If the edge, say $e$, in the blue tree before the summation is a pseudo-diffusive or free edge, then in the new graph where $(x_i,x)$ is lost, the edge $e$ becomes a redundant non-silent blue edge connected with $x_i$. If the edge $e$ is a ghost edge, then in the new graph where $(x_i,x)$ is lost, we gain an extra $W^2/L^2$ factor in the coefficient. In the final simple auxiliary graphs, this $W^2/L^2$ factor will turn a silent edge connected with $x_i$ into a non-silent edge. 
\end{itemize}

To sum up, after each summation over an internal atom, every special redundant edge connected with an external atom $x_i$ either becomes a non-silent edge between $x_i$ and another external atom or becomes another special redundant edge connected with $x_i$. Hence, after summing over all internal atoms, the special redundant edges all lead to non-silent edges between external atoms. Furthermore, we observe the simple fact that the edges between external atoms are not affected throughout the summations, so every non-isolated external atom that is not a neighbor of internal atoms in $\wt{\cal G}_{\bx}$ is connected with a non-silent edge. The above arguments show that after summing over all internal atoms, we can bound $\wt{\cal G}_{\bx}$ by a sum of simple auxiliary graphs satisfying the properties (a) and (b) in Lemma \ref{mGep} and of size $\le \size(\wt{\cal G}_{\bx})$. Combining these facts with \eqref{eq_bdd_aux0} and \eqref{eq_bdd_aux} yields \eqref{EGx2} and the property (c), since the constant $\tau$ can be arbitrarily small. This concludes the proof. 

\subsection{Proof of Lemma \ref{gvalue_continuity}}

In this subsection, we complete the proof of Lemma \ref{gvalue_continuity} with Lemma \ref{mGep}. We first decompose the sum over $\bx\in \cal I^p$ into different cases according to whether every pair of atoms $x_i$ and $x_j$ take the same value or not. We fix one case and identify atoms that take the same value. This gives a connected graph with $1\le q\le p$ external atoms. Without loss of generality, we denote these external atoms by $\wt\bx= \{x_i: 1\le i \le q\}$ and the graph by $\wt {\cal G}_{\wt\bx}$. Applying Lemma \ref{mGep}, we obtain that \be\label{EGx2.5}
\E \wt {\cal G}_{\wt\bx} \prec  \sum_{\gamma}c_\gamma(W,L) {\cal G}_{\wt\bx}^{(\gamma)}+ \OO(W^{-D})\, ,
\ee
where every $\mathcal G_{\wt{\mathbf x}}^{(\gamma)}$ is a connected simple auxiliary graph satisfying the properties (b) and (c) in Lemma \ref{mGep} with $t=1$. By the property (c), we have that
\be\label{cgamma}
c_\gamma(W,L) \le  W^{2\soe (k_\gamma-p+1)},\quad  k_\gamma:=\#\{\text{edges in } {\cal G}_{\wt\bx}^{(\gamma)}\}.
\ee
It remains to bound 
\be\label{eq_reduce_to_aux}
\frac{1}{|\cal I|^q}\sum_{x_i \in \cal I, i \in [q]} {\cal G}_{\wt\bx}^{(\gamma)},\quad [q]:=\{1,\cdots, q\}. 
\ee
By property (b), there are $n\ge \lceil q/2\rceil$ special external atoms, each of which is associated with a unique \emph{special non-silent edge}. Since every atom is connected with at least one silent/non-silent pseudo-diffusive or free edge, its average over $\cal I$ provides at least a factor
\be\label{bad factor}
\frac{1}{W^4 K^{d-4}}+ \frac{1}{N\eta}\frac{L^2}{W^2}.
\ee
In addition, we will show that each average over a special external atom connected with a special non-silent edge contributes a better factor
\be\label{good factor}
\frac{1}{W^2 K^{d-2}}+ \frac{1}{N\eta} + \frac{1}{K^{d/2}} \sqrt{\frac{1}{N\eta}\frac{L^2}{W^2}}.
\ee

In bounding \eqref{eq_reduce_to_aux}, we first estimate averages over the special external atoms. We use the following two estimates to bound the average over such an external atom $x$ connected with a special diffusive edge: for $k\ge 1$,  
\be\label{keyobs4}
\begin{split}
& \frac{1}{|\cal I|}\sum_{x\in \cal I} B_{xy_1}^2 \prod_{j=2}^k B_{ x y_j}\cdot \prod_{s=1}^r \wt B_{ x z_s} \prec  \frac{W^{-d}}{ K^d} \sum_{l=1}^k  \sum_{t=1}^r \prod_{j\in[k]: j\ne l}B_{y_j y_{l}} \cdot \wt B_{ y_1 z_t} \cdot   \prod_{s:s\ne t} \wt B_{z_s z_t}  ,
\end{split}
\ee
\be\label{keyobs4_add}
\frac{1}{|\cal I|}\sum_{x\in \cal I} \prod_{j=1}^k B_{ x y_j}\cdot \prod_{s=1}^r \wt B_{ x z_s}  \prec \frac{1}{W^2 K^{d-2}} \sum_{l=1}^k  \sum_{t=1}^r \prod_{j\in [k]: j\ne l}B_{y_j y_{l}} \cdot \wt B_{y_l z_t} \cdot   \prod_{s:s\ne t} \wt B_{z_s z_t} .
\ee
Of course, if there are free or silent free edges connected with $x$, we can still use \eqref{keyobs4} and \eqref{keyobs4_add} by multiplying them with some $(N\eta)^{-1}$ and $ {L^2}/{W^2}$ factors. 
We apply the above two estimates in two different cases. In the first case, suppose the special diffusive edge of $x$ is paired with the special diffusive edge of $y_1$. Then, we use \eqref{keyobs4} to bound the average over $x$ by a factor $W^{-d}K^{-d}$ times a sum of new graphs, each of which is still connected and has \emph{two fewer} special external atoms. If the first case does not happen, then we use \eqref{keyobs4_add} to bound the average over $x$ by a factor $W^{-2}K^{-(d-2)}$ times a sum of new graphs, each of which is still connected and has \emph{one fewer} special external atom.   

Second, we sum over special external atoms connected with non-silent free edges. Given such an external atom $x$, we use the trivial identity $|\cal I|^{-1}\sum_{x\in \cal I}1=1$ if $x$ is only connected with silent/non-silent free edges, the estimate \eqref{keyobs4_add} if $x$ is connected with at least one pseudo-diffusive edge, or the following estimate if $x$ is connected with silent pseudo-diffusive edges and silent/non-silent free edges:
\be\label{keyobs5_add}
\frac{1}{|\cal I|}\sum_{x\in \cal I} \prod_{s=1}^r \wt B_{ x z_s}  \prec \frac{1}{W^4 K^{d-4}}   \sum_{t=1}^r \prod_{s:s\ne t} \wt B_{z_s z_t} .
\ee
In this way, we can bound the average over $x$ by a factor $(N\eta)^{-1}$ times a sum of new graphs, each of which is still connected and has one fewer special external atom. Next, summing over a non-special external atom using $|\cal I|^{-1}\sum_{x\in \cal I}1=1$ or \eqref{keyobs5_add} yields a factor \eqref{bad factor} times a sum of new connected graphs. Finally, the average over the last atom is equal to $1$.  

In sum, we can bound \eqref{eq_reduce_to_aux} as
\begin{align}
 \frac{1}{|\cal I|^q}\sum_{x_i \in \cal I, i \in [q]} {\cal G}_{\wt\bx}^{(\gamma)} \label{eq_reduce_to_aux2}
&\prec W^{- 2\soe(k_{\gamma}-q+1)}\left( \frac{1}{K^d}\right)^{n_1} \left( \frac{1}{W^2K^{d-2}} + \frac{1}{N\eta}\right)^{n-2n_1}  \\
&\quad \times \left( \frac{1}{W^4K^{d-4}} + \frac{1}{N\eta}\frac{L^2}{W^2}\right)^{q-1-(n-n_1)} \nonumber\\
&\prec W^{-2\soe(k_{\gamma}-q+1)} \left( \frac{1}{W^2K^{d-2}} + \frac{1}{N\eta}+ \frac{1}{K^{d/2}} \sqrt{\frac{1}{N\eta}\frac{L^2}{W^2}}\right)^{n } \nonumber\\
&\quad \times \left( \frac{1}{W^4K^{d-4}} + \frac{1}{N\eta}\frac{L^2}{W^2}\right)^{q-1-n},\nonumber
\end{align}
where $q/2 \le n \le q-1$ is the number of special external atoms and $0\le n_1\le n/2$ is the number of times that \eqref{keyobs4} has been applied. Here, the factor $W^{-2\soe(k_{\gamma}-q+1)}$ comes from $k_\gamma - q +1$ silent/non-silent pseudo-diffusive and free edges that become self-loops during the summations (for example, the $W^{-d}$ factor in \eqref{keyobs4} comes from a pseudo-diffusive edge that becomes a self-loop in the summation), and we have applied trivial bounds $W^{-d}\le W^{-2\soe}$ and $\frac{1}{N\eta}\frac{L^2}{W^2}\le W^{-2\soe}$ to them.  Now, combining \eqref{eq_reduce_to_aux2}, \eqref{cgamma} and \eqref{EGx2.5}, we get that 
\begin{align*}
&\quad \frac{1}{|\cal I|^p}\sum_{x_i \in \cal I, i \in [p]}\mathcal G_{\mathbf x}(z)\\
&\prec \sum_{q \in [p]} \sum_{ q/2 \le n\le q-1} \frac{1}{|\cal I|^{p-q}} \left( \frac{1}{W^2K^{d-2}} + \frac{1}{N\eta}+ \frac{1}{K^{d/2}} \sqrt{\frac{1}{N\eta}\frac{L^2}{W^2}}\right)^{n }  \\ 
&\quad \times \left( \frac{1}{W^4K^{d-4}} + \frac{1}{N\eta}\frac{L^2}{W^2}\right)^{q-1-n}  \\
&\prec \left[\left( \frac{1}{W^2K^{d-2}} + \frac{1}{N\eta}+ \frac{1}{K^{d/2}} \sqrt{\frac{1}{N\eta}\frac{L^2}{W^2}}\right) \left( \frac{1}{W^4K^{d-4}} + \frac{1}{N\eta}\frac{L^2}{W^2}\right)\right]^{\frac{p-1}{2}}.
\end{align*}	
This concludes the proof of Lemma \ref{gvalue_continuity}.

\begin{appendix}

\section{Construction of the $T$-equation} \label{sec notation}

In this section, we present the proof of Proposition \ref{Teq}. Since it is very similar to that for Theorem 3.7 in \cite{BandII}, we only describe an outline of the proof and point out the main difference from the argument in \cite{BandII}.  

\subsection{More graphical properties}

The graphs in the $T$-expansion and $\incomp$ satisfy some stronger structural properties than the doubly connected property, which we define now one by one. All these properties have been defined in \cite{BandII}, but some minor modifications are needed to incorporate a new type of edges, i.e., free edges, in our graphs.

\begin{defn}[Isolated subgraphs]\label{defn iso}
Let $\cal G$ be a doubly connected graph and $\cal G_{\cal M}$ be its molecular graph with all red solid edges removed. A subset of internal molecules in $\cal G$, say $\Pol$, is called \emph{isolated} if and only if $\Pol$ is connected to its complement $\Pol^c$ exactly by two edges in $\cal G_{\cal M}$---a $\dashed$ edge in the black net and a blue solid, free or $\dashed$ edge in the blue net. An isolated subgraph of $\cal G$ is a subgraph induced on an isolated subset of molecules.
 \end{defn}
 
An isolated subgraph of $\cal G$ is said to be \emph{proper} if it is induced on a proper subset of internal molecules of $\cal G$. An isolated subgraph is said to be \emph{minimal} if it has no proper isolated subgraph. As a convention, if a graph $\cal G$ does not contain any proper isolated subgraph, then the minimal isolated subgraph (MIS) refers to the subgraph induced on all internal molecules. On the other hand, given a doubly connected graph $\cal G$, an isolated subgraph is said to be \emph{maximal} if it is not a proper isolated subgraph of $\cal G$.

\begin{defn}[Pre-deterministic property]\label{def PDG}
A doubly connected graph $\mathcal G$ is said to be pre-deterministic if there exists an order of all internal blue solid edges say $b_1 \preceq b_2 \preceq ... \preceq b_k$, such that 
\begin{itemize}
\item[(i)] $b_1$ is redundant (recall \Cref{def-redundant});
\item[(ii)] for $1 \leq i \leq {k-1}$, if we replace each of $b_1,...,b_i$ by a $\dashed$ or free edge, then $b_{i+1}$ becomes redundant.
\end{itemize}
\end{defn}

\begin{defn}[Sequentially pre-deterministic property]\label{def seqPDG}
A doubly connected graph $\mathcal G$ 
is said to be sequentially pre-deterministic (SPD) if it satisfies the following properties.
\begin{itemize}

\item[(i)] All isolated subgraphs of $\mathcal G$ that have non-deterministic closure (recall Definition \ref{def_sub}) forms a sequence $(\Iso_{j})_{j=0}^k$ such that
\begin{equation}
\Iso_0 \supset \Iso_{1} \supset \cdots \supset \Iso_{k},
\end{equation}
where $\Iso_0$ is the maximal proper isolated subgraph and $\Iso_k$ is the MIS.

\item[(ii)] 
The MIS $\Iso_k$ is pre-deterministic. Let $\mathcal G_{\mathcal M}$ be the molecular graph without red solid edges.  For any $0 \leq j \leq k-1$, if we replace $\Iso_{j+1}$ and its two external edges in $\cal G_{\mathcal M}$ by a single $\dashed$ or free edge, then $\Iso_j$ becomes pre-deterministic. 
\end{itemize}

\end{defn}

By definition, a subgraph has a non-deterministic closure if it contains $G$ edges and weights inside it, or if it is connected with at least one external $G$ edge. Moreover, by the definition of isolated subgraphs, the two external edges in property (ii) are exactly the black and blue external edges in the black and blue nets.

\begin{defn}[Globally standard graphs]\label{defn gs}
A doubly connected graph $\cal G$ is said to be \emph{globally standard} if it is SPD and its proper isolated subgraphs are all weakly isolated. Here, an isolated subgraph is said to be {weakly isolated} if it has at least two external red solid edges; otherwise, it is said to be strongly isolated. 
\end{defn}

We refer the reader to Sections 5 and 6 of \cite{BandII} for the motivations and intuitions behind these definitions and some detailed explanations about their meanings. We are now ready to complete the definitions of the $T$-expansion and $\incomp$.

\begin{defn} [$T$-expansion and $\incomp$: additional properties]\label{def genuni2}
The graphs in Definitions \ref{defn genuni} and \ref{def incompgenuni} satisfy the following properties:
\begin{enumerate}
		\item Graphs of scaling order $k$ in $\PTn_{x,\fb_1 \fb_2}$, $\Wn_{x,\fb_1\fb_2}$ and $\QTn_{x,\fb_1\fb_2}$ do not depend on $n$.
		
		\item $\PTn_{x,\fb_1\fb_2}$ is a sum of globally standard recollision graphs without free edges. 
		
		\item  $\ATn_{x,\fb_1\fb_2}$ is a sum of SPD graphs without free edges. 
		
		\item  $\Wn_{x,\fb_1\fb_2}$ is a sum of SPD graphs, each of which has exactly one redundant free edge in its MIS. 
		
		\item  $\QTn_{x,\fb_1\fb_2} $ is a sum of SPD $Q$-graphs without free edges. Moreover, the atom in the $Q$-label of a $Q$-graph belongs to the MIS, i.e., all solid edges and weights have the same $Q$-label $Q_x$ for an atom $x$ inside the MIS.

		\item Every $\Sele_l$ is a sum of globally standard deterministic graphs of scaling order $l$ and without free edges. Moreover, the labelled diffusive edges in $\Sele_l$ are all $\Theta^{(i)}_k$ edges (recall \eqref{chain S2k}) with $2\le k\le i\le l-1$.
	
	\end{enumerate}
\end{defn}

The following lemma shows that every $\dashed$ edge in $\Sele_l$ is redundant as stated in \Cref{Teq}.
\begin{lemma}\label{-edc}
	Let $\mathcal G$ be a globally standard deterministic graph without ghost or free edges. Then, any diffusive edge in $\mathcal G$, say $e = (a,b)$, is redundant in $\cal G$.
\end{lemma}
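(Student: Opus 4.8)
\textbf{Proof proposal for Lemma~\ref{-edc}.}
The plan is to argue by contradiction: suppose some diffusive edge $e=(a,b)$ in the globally standard deterministic graph $\cal G$ is \emph{pivotal} rather than redundant, and derive a violation of the globally standard structure. Since $\cal G$ has no ghost or free edges and is SPD, the first step is to unwind what pivotality of $e$ means on the molecular graph $\cal G_{\cal M}$ (with red solid edges removed, as in Definition~\ref{def seqPDG}(ii)). By the doubly connected property, $\cal G_{\cal M}$ carries a black net $\cal B_{black}$ of $\dashed$ edges and a blue net $\cal B_{blue}$ of blue solid and $\dashed$ edges, each containing a spanning tree. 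If $e$ were pivotal, then $e$ must be a bridge (cut edge) simultaneously needed for connectivity in \emph{both} the black-net sense and the blue-net sense — more precisely, one cannot delete $e$ and still exhibit two edge-disjoint spanning structures. I would first reduce to the case where $e$ lies in every choice of $\cal B_{black}$ (the black case) or every choice of $\cal B_{blue}$ (the blue case), since a diffusive edge can serve in either net.

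The heart of the argument is then to locate an isolated subgraph forced by this bridge. If $e=(a,b)$ is a bridge in the black net, removing it disconnects the molecular graph into two pieces $\Pol$ and $\Pol^c$ (one containing $a$, the other $b$). Because $\cal G$ is doubly connected, there must still be a blue-net path between $\Pol$ and $\Pol^c$, and since $e$ is pivotal this blue path together with $e$ are the \emph{only} two edges of $\cal G_{\cal M}$ crossing the cut — exactly Definition~\ref{defn iso} of an isolated subgraph. So $\Pol$ (say the side not containing the external atoms, or one can always choose such a side since $\cal G$ is deterministic with $\OO(1)$ molecules) induces a \emph{proper} isolated subgraph $\Iso_\Pol$. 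Now invoke the globally standard hypothesis: every proper isolated subgraph of $\cal G$ is weakly isolated, i.e.\ has at least two external red solid edges. But $\cal G$ is \emph{deterministic}, hence contains no $G$ edges at all and in particular no red solid edges. This is the contradiction. Hence no diffusive edge of $\cal G$ can be pivotal, i.e.\ every diffusive edge is redundant.

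A subtlety I would be careful about: the notion of "redundant" in Definition~\ref{def-redundant} is about the doubly connected property surviving the removal of a \emph{single} edge, while the analysis above shows that if removal of $e$ breaks double connectivity, then $e$ is one of the two bridging edges of a genuine isolated subgraph; I need to confirm that this forced isolated subgraph is \emph{proper} (it cannot be all internal molecules, since $e$ connects $a$ and $b$ which lie on opposite sides of the cut, so both sides are nonempty, and at least one side avoids the set of external molecules — here I use that a deterministic $\self$ graph $\Sele_l$ has its external atoms in a single molecule by the translation-invariance/structure in Definition~\ref{def genuni2}(6), or more robustly, that an isolated subgraph on \emph{all} internal molecules is by convention the MIS and the argument can be rerun on the complementary side). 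The main obstacle is thus purely bookkeeping: making the bridge-to-isolated-subgraph correspondence airtight in the presence of the blue-net/black-net asymmetry (a diffusive edge is eligible for both nets) and handling the boundary case where one side of the cut is the MIS. Once that is pinned down, the "no red edges in a deterministic graph" observation closes the proof immediately.
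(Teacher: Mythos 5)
Your overall route coincides with the paper's: its entire proof consists of the remark that the case where $e$ lies inside a molecule is trivial (such an edge does not appear in the molecular graph at all), and that for $e$ between different molecules the conclusion follows from the fact that $\cal G$ has no isolated subgraphs --- which holds for exactly the reason you give, namely that by Definition \ref{defn gs} a proper isolated subgraph of a globally standard graph must be weakly isolated and hence carry at least two external red solid edges, while a deterministic graph contains no red solid edges. So the structural input you identify is precisely the one the paper uses; note, however, that you never treat the within-molecule case, which is the paper's first (immediate) observation.

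The place where your write-up goes beyond the paper --- converting pivotality of $e$ into the existence of an isolated subgraph --- is also where it does not hold as written, and this is the substantive combinatorial content rather than ``bookkeeping''. If $e$ lies on every black spanning tree, removing it splits that tree but need not disconnect the molecular graph; more importantly, pivotality of $e$ does not imply that the cut $(\Pol,\Pol^c)$ you produce is crossed by exactly two edges. Losing the doubly connected property after deleting a single edge is a condition on all partitions of the set of molecules (in the spirit of the Nash--Williams/Tutte tree-packing criterion), not on a single two-edge cut, so ``since $e$ is pivotal this blue path together with $e$ are the only two edges crossing the cut'' is a non sequitur. In fact, for a general doubly connected molecular graph the implication ``no two-edge cut $\Rightarrow$ every diffusive edge redundant'' can fail: four molecules joined pairwise by six diffusive edges form a doubly connected graph with no isolated subset in the sense of Definition \ref{defn iso}, yet removing any edge leaves too few edges for two edge-disjoint spanning trees. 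Closing this step therefore requires either exploiting additional structure of the deterministic graphs actually produced in $\cal E_n$ or a more careful partition/tight-cut argument; the paper leaves this implication implicit, but your proposed justification of it, as stated, would not survive scrutiny, so you should either supply that argument or state the reduction at the same level of brevity as the paper rather than via the incorrect bridge-to-two-edge-cut deduction.
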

\begin{proof}
The statement is trivial if $e$ is inside a molecule. If $e$ is between different molecules, then the conclusion follows from the fact that $\cal G$ has no isolated subgraphs. 
\end{proof}

\subsection{Local expansions}\label{sec_operations}

In this subsection, we define several basic graph operations that will be used in the construction of the $\incomp$.
We first define two operations related to dotted edges. 
\begin{defn}[Dotted edge operations]\label{dot_operation}
(i) For any pair of atoms $\al$ and $\beta$ in a graph $\cal G$, if there is at least one $G$ edge but no $\times$-dotted edge between them, then we write $1=\mathbf 1_{\al=\beta} + \mathbf 1_{\al \ne \beta} ;$ if there is a $\times$-dotted edge $ \mathbf 1_{\al\ne \beta}$ but no $G$ edge between them, then we write $\mathbf 1_{\al\ne \beta} =1 - \mathbf 1_{\al = \beta}.$  Expanding the product of all these sums, we can expand $\cal G$ as
\be\label{odot}
\cal G := \sum {\Dot} \cdot \cal G,
\ee
where each ${\Dot}$ is a product of dotted and $\times$-dotted edges together with a $\pm$ sign. In ${\Dot} \cdot \cal G$, every $\times$-dotted edge is associated with at least one off-diagonal $G$ edge, and all diagonal $G$ edges become weights on atoms.

 \vspace{5pt}
 
 \noindent (ii) We merge internal atoms that are connected by a path of dotted edges (but we sometimes do not merge an external atom with an internal atom due to their different roles in graphs).
\end{defn}
The only reason for introducing these two almost trivial operations is to write a graph into a sum of 
normal graphs (recall Definition \ref{defnlvl0}), whose scaling orders are well-defined. 

Using \eqref{GmH} and Gaussian integration by parts, we obtain the following three types of graph expansions. 

\begin{lemma}[Weight expansions, Lemma 3.5 of \cite{BandI}] \label{ssl} 
Suppose $f$ is a differentiable function of $G$. Then,
\begin{align} 
  (G_{xx}  -  m) f(G)&=  m \sum_{  \al} s_{x\al}  (G_{xx}-m) (G_{\al\al}-m)f (G) \nonumber\\
  & +m \sum_{  \al,\beta}S^{+}_{x\al} s_{\al \beta}  (G_{\al\al}-m) (G_{\beta\beta}-m)f (G) \nonumber\\
 &-  m  \sum_{ \al} s_{x \al} G_{\al x}\partial_{ h_{ \al x}} f (G) -  m \sum_{ \al,\beta} S^{+}_{x\al}s_{\al \beta} G_{\beta \al}\partial_{ h_{ \beta\al}} f(G) + \cal Q_w \, ,\label{Owx}\end{align}
 where 
 $\cal Q_w$ is a sum of $Q$-graphs,
 \begin{align} 
  \cal Q_w &:=  Q_x \left[(G_{xx}  -  m) f(G)\right] + \sum_{\al} Q_\al \left[  S^{+}_{x\al}(G_{\al\al}  -  m) f(G)\right] \nonumber\\
 & - m  Q_x\Big[ \sum_{\al} s_{x\al}  (G_{\al\al}-m)G_{xx} f(G) \Big]  - m \sum_\al Q_\al\Big[ \sum_{ \beta}S_{x\al}^{+}s_{\al\beta}(G_{\beta\beta}-m)G_{\al\al} f(G) \Big] \nonumber\\
  &+  m  Q_x  \Big[  \sum_\al s_{x \al}  G_{\al x}\partial_{ h_{\al x}} f(G)\Big]+ m \sum_\al Q_\al \Big[  \sum_{ \beta} S^{+}_{x\al}s_{\al \beta} G_{\beta \al}\partial_{ h_{\beta \al}} f(G)\Big]. \nonumber
\end{align} 
\end{lemma}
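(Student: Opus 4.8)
The plan is to derive \eqref{Owx} by combining the resolvent identity \eqref{GmH} with a single Gaussian integration by parts, and then iterating the resulting identity and resumming a geometric series in $m^2 S$. Since the lemma is quoted from \cite[Lemma 3.5]{BandI}, one may also simply cite that reference; we outline the argument for completeness.

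\emph{Step 1 (one-step identity).} Evaluating \eqref{GmH} at the diagonal entry $(x,x)$ gives $G_{xx}-m=-m\sum_{\al}h_{x\al}G_{\al x}-m^2G_{xx}$. Multiplying by $f(G)$, applying $P_x$, and peeling off $Q_x\big[(G_{xx}-m)f(G)\big]=(G_{xx}-m)f(G)-P_x\big[(G_{xx}-m)f(G)\big]$, we apply to $P_x[h_{x\al}G_{\al x}f(G)]$ the complex Gaussian integration by parts formula $P_x[h_{x\al}F]=s_{x\al}P_x[\partial_{h_{\al x}}F]$ together with $\partial_{h_{\al x}}G_{\al x}=-G_{\al\al}G_{xx}$ and the Leibniz rule on the factor $f(G)$. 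Writing $G_{\al\al}=m+(G_{\al\al}-m)$ in the term $s_{x\al}G_{\al\al}G_{xx}f(G)$ and using $\sum_{\al}s_{x\al}=1$, the contribution $m^2P_x[G_{xx}f(G)]$ thus produced cancels the $-m^2P_x[G_{xx}f(G)]$ coming from the resolvent identity. Restoring $P_x=1-Q_x$ and once more writing $G_{xx}=m+(G_{xx}-m)$, we obtain
\begin{align*}
(G_{xx}-m)f(G)={}& m\sum_{\al}s_{x\al}(G_{xx}-m)(G_{\al\al}-m)f(G)+m^2\sum_{\al}s_{x\al}(G_{\al\al}-m)f(G)\\
&-m\sum_{\al}s_{x\al}G_{\al x}\partial_{h_{\al x}}f(G)+\cal Q_w^{(1)},
\end{align*}
where $\cal Q_w^{(1)}:=Q_x[(G_{xx}-m)f(G)]-mQ_x\big[\sum_{\al}s_{x\al}(G_{\al\al}-m)G_{xx}f(G)\big]+mQ_x\big[\sum_{\al}s_{x\al}G_{\al x}\partial_{h_{\al x}}f(G)\big]$ is precisely the sum of the first, third, and fifth terms in the definition of $\cal Q_w$.

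\emph{Step 2 (iteration and resummation).} The only term above not yet in the form of \eqref{Owx} is the residual $m^2\sum_{\al}s_{x\al}(G_{\al\al}-m)f(G)$, which has the same structure as the left-hand side with $x$ replaced by $\al$. Re-expanding $(G_{\al\al}-m)f(G)$ by Step 1 (keeping $f$ fixed), then re-expanding the new residual, and so on, after $k$ steps the residual has the shape $m^{2k}\sum_{\al}(S^{k})_{x\al}(G_{\al\al}-m)f(G)$, which tends to $0$ as $k\to\infty$ since $\|S\|\le1$ and $|m(z)|<1$ for $\eta>0$ (by \eqref{insert m^2}), so that $\|m^2S\|<1$. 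Collecting the three types of generated terms over all iterations and using $S^{+}=\sum_{j\ge1}m^{2j}S^{j}$: the ``weight-pair'' terms resum to $m\sum_{\al,\beta}S^{+}_{x\al}s_{\al\beta}(G_{\al\al}-m)(G_{\beta\beta}-m)f(G)$; the derivative terms resum to $-m\sum_{\al,\beta}S^{+}_{x\al}s_{\al\beta}G_{\beta\al}\partial_{h_{\beta\al}}f(G)$; and the $Q$-graphs resum, using that $S^{+}$ is deterministic and may be pulled through $Q_\al$, to exactly the second, fourth, and sixth terms of $\cal Q_w$. Together with $\cal Q_w^{(1)}$ this reconstructs all of $\cal Q_w$ and yields \eqref{Owx}.

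There is no essential obstacle in this argument; it is a mechanical resolvent/integration-by-parts expansion. The only point requiring a word of care is the legitimacy of the geometric resummation, which rests on $\|m^2S\|<1$ for $\eta>0$; everything else is bookkeeping to check that the six $Q$-terms and the two deterministic $S^{+}$-kernels emerge with the stated combinatorial form.
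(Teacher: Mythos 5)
Your derivation is correct, and since the paper itself only quotes this lemma from \cite{BandI}, your self-contained argument follows essentially the same standard route as the original proof: the resolvent identity \eqref{GmH} at the diagonal entry, complex Gaussian integration by parts under $P_x$, cancellation of the $m^2 P_x[G_{xx}f(G)]$ terms via $\sum_\al s_{x\al}=1$, and then generation of the $S^+$ kernels; the only cosmetic difference is that you obtain $S^{+}=\sum_{j\ge 1}m^{2j}S^{j}$ by iterating the one-step identity and resumming a geometric series (justified by $\|m^2S\|\le |m|^2<1$ from \eqref{insert m^2}), whereas one can equivalently solve the resulting linear relation by inverting $1-m^2S$ in one stroke. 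All six $Q$-terms and the two deterministic kernels come out with the stated form, so there is no gap.
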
 

\begin{lemma}[Edge expansions, Lemma 3.10 of \cite{BandI}] \label{Oe14}
Suppose $f$ is a differentiable function of $G$. Consider a graph
\be\label{multi setting}
\cal G := \prod_{i=1}^{k_1}G_{x y_i}  \cdot  \prod_{i=1}^{k_2}\overline G_{x y'_i} \cdot \prod_{i=1}^{k_3} G_{ w_i x} \cdot \prod_{i=1}^{k_4}\overline G_{ w'_i x} \cdot f(G),
\ee
where the atoms $y_i,$ $y'_i$, $w_i$ and $w'_i$ are all not equal to $x$. If $k_1\ge 1$, then we have that
\begin{align} 
  \cal G  : = \sum_{i=1}^{k_2} &|m|^2  \left( \sum_\al s_{x\al }G_{\al y_1} \overline G_{\al y'_i}\right)\frac{\cal G}{G_{x y_1} \overline G_{xy_i'}}+ \sum_{i=1}^{k_3} m^2 \left(\sum_\al s_{x\al }G_{\al y_1} G_{w_i \al} \right)\frac{\cal G}{G_{xy_1}G_{w_i x}}    \nonumber \\
& + m   \sum_\al s_{x\al }\left(G_{\al \al}-m\right) \cal G   + \sum_{i=1}^{k_2}m  (\overline G_{xx} -\overline m) \left( \sum_\al s_{x\al }G_{\al y_1} \overline G_{\al y'_i}\right)\frac{\cal G}{G_{x y_1} \overline G_{xy_i'}}   \nonumber \\
&  + \sum_{i=1}^{k_3} m  (G_{xx}-m) \left(\sum_\al s_{x\al }G_{\al y_1} G_{w_i \al} \right)\frac{\cal G}{G_{xy_1}G_{w_i x}}   \nonumber\\
 &+(k_1-1) m  \sum_\al s_{x\al } G_{x \al} G_{\al y_1}\frac{ \cal G}{G_{xy_1}}   + k_4 m   \sum_\al s_{x\al }\overline G_{\al x} G_{\al y_1}  \frac{\cal G}{G_{xy_1}} \nonumber\\
 &- m    \sum_\al s_{x\al } \frac{\cal G}{G_{x y_1}f(G)}G_{\al y_1}\partial_{ h_{\al x}}f (G)  +\cal Q_{e} \, ,\label{Oe1x}
\end{align}
where $\cal Q_{e} $ is a sum of $Q$-graphs, 
\begin{align*}
  \cal Q_{e} &:= Q_x \left( \cal G\right) -  \sum_{i=1}^{k_2}m Q_x \left[ \overline G_{xx}  \left( \sum_\al s_{x\al }G_{\al y_1} \overline G_{\al y'_i}\right)\frac{\cal G}{G_{x y_1} \overline G_{xy_i'}} \right] \\
& - \sum_{i=1}^{k_3} m Q_x \left[ G_{xx} \left(\sum_\al s_{x\al }G_{\al y_1} G_{w_i \al} \right)\frac{\cal G}{G_{xy_1}G_{w_i x}}\right] -  m Q_x \left[  \sum_\al s_{x\al }\left(G_{\al \al}-m\right) \cal G\right]\\
& -(k_1-1) m Q_x \left[  \sum_\al s_{x\al }G_{x \al} G_{\al y_1}  \frac{ \cal G}{G_{xy_1}}\right]  - k_4 m Q_x \left[  \sum_\al s_{x\al }\overline G_{\al x} G_{\al y_1}  \frac{\cal G}{G_{xy_1}}\right] \\
& +m Q_x \left[  \sum_\al s_{x\al } \frac{\cal G}{G_{x y_1}f(G)}G_{\al y_1}\partial_{h_{\al x}}f(G)  \right].
 \end{align*} 
 \end{lemma}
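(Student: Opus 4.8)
The plan is to run a single Gaussian integration-by-parts step on the distinguished factor $G_{xy_1}$, in direct analogy with the weight expansion of Lemma~\ref{ssl}. Write $\mathcal G = G_{xy_1}\cdot \mathcal G'$, where $\mathcal G' := \mathcal G/G_{xy_1}$ collects all remaining $G$-factors in \eqref{multi setting} together with $f(G)$, and split $\mathcal G = P_x\mathcal G + Q_x\mathcal G$. The term $Q_x\mathcal G$ is exactly the leading $Q$-graph $Q_x(\mathcal G)$ in $\mathcal Q_e$, so the task reduces to expanding $P_x\mathcal G$.

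Since $y_1\neq x$, the resolvent identity \eqref{GmH} gives $G_{xy_1} = -m\sum_\al H_{x\al}G_{\al y_1} - m^2 G_{xy_1}$, hence $\mathcal G = -m(HG)_{xy_1}\mathcal G' - m^2\mathcal G$; I deliberately keep the $-m^2\mathcal G$ term on the right rather than dividing through by $1+m^2$, since it will be cancelled below. Taking $P_x$ and using that the $x$-th row of $H$ is Gaussian and independent of $H^{(x)}$, so $P_x(H_{x\al}F) = s_{x\al}P_x(\partial_{h_{\al x}}F)$, I distribute $\partial_{h_{\al x}}$ over $G_{\al y_1}\mathcal G'$ by the Leibniz rule. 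With $\partial_{h_{\al x}}G_{ab} = -G_{a\al}G_{xb}$ and $\partial_{h_{\al x}}\overline G_{ab} = \overline{\partial_{h_{x\al}}G_{ab}} = -\overline G_{ax}\overline G_{\al b}$, each edge/weight type in \eqref{multi setting} produces exactly one family of terms in \eqref{Oe1x}: hitting $G_{\al y_1}$ gives $m\sum_\al s_{x\al}G_{\al\al}\mathcal G$; hitting a $G_{xy_i}$ with $i\ge 2$ gives the $(k_1-1)$-term; hitting an $\overline G_{xy_i'}$ gives the two $k_2$-terms; hitting a $G_{w_ix}$ gives the two $k_3$-terms; hitting an $\overline G_{w_i'x}$ gives the $k_4$-term; and hitting $f(G)$ gives the last explicit term.

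The remaining work is bookkeeping with two recurring moves. First, whenever a derivative produces a diagonal factor $G_{xx}$, $\overline G_{xx}$ or $G_{\al\al}$, I split it as $(G_{xx}-m)+m$ (and similarly): the light-weight piece furnishes the higher-order companion term (with coefficient $m$), while the bare $m$ pairs with $m\cdot s_{x\al}$ to reproduce the $|m|^2$ or $m^2$ coefficients of the leading $k_2$- and $k_3$-terms. In particular, the bare-$m$ piece of $-m\sum_\al s_{x\al}(-G_{\al\al}G_{xy_1})\mathcal G'$ equals $m^2\sum_\al s_{x\al}\mathcal G = m^2 P_x\mathcal G$ (using $\sum_\al s_{x\al}=1$, which also holds after applying $P_x$), cancelling exactly the spurious $-m^2 P_x\mathcal G$. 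Second, each time a partial expectation is stripped via $P_x(s_{x\al}\Gamma) = s_{x\al}\Gamma - Q_x(s_{x\al}\Gamma)$, the accumulated $-Q_x$ remainders, together with $Q_x(\mathcal G)$, assemble into $\mathcal Q_e$. This is the same computation already carried out for Lemmas 3.5 and 3.10 of \cite{BandI}; none of it is genuinely hard, and the only point requiring care is getting the signs of the conjugated derivatives $\partial_{h_{\al x}}\overline G$ right and correctly matching which diagonal splittings feed the cancelling $\pm m^2$ contributions versus the light-weight higher-order terms.
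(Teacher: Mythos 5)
Your derivation is correct and is exactly the computation behind the cited Lemma 3.10 of \cite{BandI} (the present paper does not reprove it): split off $Q_x\cal G$, apply \eqref{GmH} to the distinguished factor $G_{xy_1}$, integrate by parts in the Gaussian entries inside $P_x$, use the Leibniz rule with $\partial_{h_{\al x}}G_{ab}=-G_{a\al}G_{xb}$ and $\partial_{h_{\al x}}\overline G_{ab}=-\overline G_{ax}\overline G_{\al b}$, split the diagonal factors as $m+(G_{xx}-m)$ so that the bare-$m$ contribution from $G_{\al\al}$ cancels the $-m^2 P_x\cal G$ term via $\sum_\al s_{x\al}=1$, and collect the $P_x=1-Q_x$ remainders into $\cal Q_e$. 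All term families and signs you list match \eqref{Oe1x} and the stated $\cal Q_e$, so the proposal is essentially the same argument as the source's proof.
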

 
\begin{lemma} [$GG$ expansion, Lemma 3.14 of \cite{BandI}]\label{T eq0}
Consider a graph $\cal G= G_{xy}   G_{y' x }  f (G)$ where $f$ is a differentiable function of $G$ and $y,y'\ne x$. We have that 
\begin{align}
 \cal G = m &S^{+}_{xy} G_{y' y} f(G)  + m \sum_\al  s_{x\al} (G_{\al \al}-m) \cal G + m \sum_{\al,\beta}  S^{+}_{x\al}  s_{\al\beta} (G_{\beta \beta}-m) G_{\al y}   G_{y'\al} f(G) \nonumber \\
    &  + m(G_{xx }-m)   \sum_\al s_{x\al}G_{\al y}   G_{y'\al} f(G) + m \sum_{\al,\beta}  S^+_{x\al} s_{\al\beta}  (G_{\al\al }-m) G_{\beta y}   G_{y'\beta} f(G)  \nonumber\\
    & - m \sum_{ \al}  s_{x\al}G_{\al y} G_{y' x} \partial_{ h_{\al x}}f(G) - m \sum_{\al,\beta} S^{+}_{x\al} s_{\al\beta}G_{\beta y} G_{y' \al} \partial_{ h_{\beta \al}}f(G) + \cal Q_{GG} ,\label{Oe2x}
    \end{align}
where $\cal Q_{GG} $ is a sum of $Q$-graphs,
    \begin{align}
  \cal Q_{GG}&:=  Q_x \left(\cal G\right)+  \sum_\al Q_\al\Big[ S^+_{x\al}  G_{\al y}   G_{y'\al} f(G) \Big]  - m Q_y\Big[S^{+}_{xy}  G_{y' y} f(G)\Big] - m Q_x\Big[\sum_\al  s_{x\al} (G_{\al\al}-m) \cal G\Big]  \nonumber\\
   &- m \sum_\al Q_\al\Big[\sum_{ \beta}  S^{+}_{x\al}  s_{\al\beta} (G_{\beta \beta}-m)G_{\al y}   G_{y'\al} f(G)\Big] - mQ_x\Big[ G_{xx }  \sum_\al s_{x\al} G_{\al y}   G_{y'\al} f(G)\Big] \nonumber\\
&- m\sum_\al Q_\al\Big[ \sum_{ \beta}  S^{+}_{x\al}  s_{\al\beta} G_{\al\al }  G_{\beta y}   G_{y'\beta} f(G)\Big]  + m Q_x\Big[\sum_{ \al}  s_{x\al} G_{\al y} G_{y' x} \partial_{ h_{ \al x}}f(G)\Big] \nonumber\\
& + m\sum_\al Q_\al\Big[ \sum_{\beta} S^{+}_{x\al} s_{\al\beta}G_{\beta y} G_{y' \al} \partial_{h_{\beta \al}}f (G)\Big] .\nonumber 
 \end{align}
\end{lemma}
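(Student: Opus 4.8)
The plan is to obtain \eqref{Oe2x} by expanding the $G$-edge $G_{xy}$ attached to the vertex $x$ via the resolvent identity \eqref{GmH}, applying the complex Gaussian integration by parts formula, and then carrying out a geometric resummation; this is the same mechanism used in the proofs of Lemmas~\ref{2ndExp}, \ref{ssl} and \ref{Oe14}. In fact \eqref{Oe2x} is essentially the $(k_1,k_2,k_3,k_4)=(1,0,1,0)$ specialization of the edge expansion \eqref{Oe1x} (with $y_1=y$, $w_1=y'$, so that the quotient $\cal G/(G_{xy_1}G_{w_1x})$ equals $f(G)$), followed by one extra resummation of the leading term produced there. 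A clean route is therefore to quote Lemma~\ref{Oe14}, which already records the correct $P_x/Q_x$ bookkeeping and yields
\[
\cal G = m^2\sum_\al s_{x\al}G_{\al y}G_{y'\al}f(G) + m\sum_\al s_{x\al}(G_{\al\al}-m)\cal G + m(G_{xx}-m)\sum_\al s_{x\al}G_{\al y}G_{y'\al}f(G) - m\sum_\al s_{x\al}G_{\al y}G_{y'x}\partial_{h_{\al x}}f(G) + \cal Q_e ,
\]
and then to resum the first term on the right. Equivalently one can repeat the self-contained computation: substitute $G_{xy}=-m\sum_\al h_{x\al}G_{\al y}-m^2G_{xy}$ (valid since $x\ne y$), split $h_{x\al}(\cdots)=P_x[h_{x\al}(\cdots)]+Q_x[h_{x\al}(\cdots)]$, apply $P_x[h_{x\al}F]=s_{x\al}P_x[\partial_{h_{\al x}}F]$, use $\partial_{h_{\al x}}G_{ab}=-G_{a\al}G_{xb}$ together with $\sum_\al s_{x\al}=1$ and the self-consistent equation $(z+m)m=-1$ to cancel the stray $-m^2\cal G$, and split $G_{xx}=m+(G_{xx}-m)$; this reproduces the display above with $\cal Q_e$ the corresponding sum of $Q$-graphs.

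It remains to resum $m^2\sum_\al s_{x\al}G_{\al y}G_{y'\al}f(G)$. The key point is that $\phi(u):=G_{uy}G_{y'u}f(G)$ reduces to $\cal G$ at $u=x$ but is now summed over all $u\in\Z_L^d$. Expanding the edge $G_{uy}$ in $\phi(u)$ by \eqref{GmH} — and noting that the term $m\,\delta_{uy}$ in that identity is no longer trivial, since $u$ ranges over all sites — the same integration-by-parts computation gives, uniformly in $u$,
\[
\phi(u) = m\,\delta_{uy}G_{y'y}f(G) + m^2\sum_\beta s_{u\beta}\phi(\beta) + m\sum_\beta s_{u\beta}(G_{\beta\beta}-m)\phi(u) + m(G_{uu}-m)\sum_\beta s_{u\beta}G_{\beta y}G_{y'\beta}f(G) - m\sum_\beta s_{u\beta}G_{\beta y}G_{y'u}\partial_{h_{\beta u}}f(G) + Q_u[\phi(u)] .
\]
Iterating this identity in the $m^2S$-convolution produces a factor $(m^2S)^k$ at the $k$-th stage; since $|m|<1$ for $\eta>0$ and $\|S\|\le1$, the remainder $m^{2k}(S^k\phi)$ tends to zero, and summing $\sum_{k\ge1}(m^2S)^k = m^2S/(1-m^2S)=S^+$ (recall \eqref{Thetapm2}) gives
\[
m^2\sum_\al s_{x\al}\phi(\al) = m\,S^+_{xy}G_{y'y}f(G) + \sum_\al S^+_{x\al}\Big[\,m\sum_\beta s_{\al\beta}(G_{\beta\beta}-m)\phi(\al) + m(G_{\al\al}-m)\sum_\beta s_{\al\beta}G_{\beta y}G_{y'\beta}f(G) - m\sum_\beta s_{\al\beta}G_{\beta y}G_{y'\al}\partial_{h_{\beta\al}}f(G)\Big] + \sum_\al S^+_{x\al}Q_\al[\phi(\al)] .
\]
Substituting this back gives exactly the right side of \eqref{Oe2x}: the light-weight, $(G_{xx}-m)$ and $\partial f$ terms at $x$ are the second, fourth and sixth terms, their $S^+$-convolved counterparts are the third, fifth and seventh terms, $m S^+_{xy}G_{y'y}f(G)$ is the first term, and $\cal Q_{GG}$ is assembled from $\cal Q_e$ (equivalently $Q_x[\cal G]$ together with the auxiliary $Q_x,Q_y$ pieces produced along the way) and the $S^+$-weighted $Q_\al$ pieces from the iteration.

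The main technical obstacle is the resummation bookkeeping: verifying that the iteration converges to $S^+$ and that the $\delta_{uy}$ contribution carries precisely the coefficient needed to produce $m S^+_{xy}G_{y'y}f(G)$, together with the careful tracking of the $P/Q$ splittings (a single $Q_x$ at the first stage and a single $Q_\al$ at each later stage) so that every fluctuation piece lands in $\cal Q_{GG}$ with the correct label. The diagonal contributions coming from $h_{xx}$ are of lower order and are absorbed in the usual way. Since this is precisely the computation behind Lemma~3.14 of \cite{BandI}, and runs parallel to Lemmas~\ref{2ndExp}, \ref{ssl} and \ref{Oe14}, the remaining details are routine.
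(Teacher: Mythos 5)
Your route---specializing the edge expansion \eqref{Oe1x} at $x$ (with $k_1=k_3=1$, $k_2=k_4=0$) and then resumming the leading term $m^2\sum_\al s_{x\al}G_{\al y}G_{y'\al}f(G)$ by expanding $\phi(u)=G_{uy}G_{y'u}f(G)$ at a general site, so that the $\delta_{uy}$ contribution survives and the $m^2S$-chain inverts into $S^+=m^2S(1-m^2S)^{-1}$ (legitimate since $\|m^2S\|\le|m|^2<1$ for $\eta>0$)---is exactly the mechanism behind Lemma 3.14 of \cite{BandI}, which this paper cites rather than reproves, so the proposal is correct and takes essentially the same approach. One small caution: your displayed iteration identity records only $Q_u[\phi(u)]$, whereas the exact identity (obtained by rewriting each $P_u[X]$ as $X-Q_u[X]$) carries further $Q_u$-pieces which, after weighting by $S^+_{x\al}$, become precisely the $Q_y$-term and the remaining $Q_\al$-terms of $\cal Q_{GG}$; moreover, under the complex Gaussian assumption used here the identity is exact, so there are no diagonal $h_{xx}$ contributions to ``absorb''.
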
 

Using the above three lemmas, we can define the graph operations that represent weight, edge, and $GG$ expansions. We refer the reader to Section 3 of \cite{BandI} for their precise definitions. All these operations are called \emph{local expansions on the atom $x$}, in the sense that they do not create new molecules (since all new atoms created in these expansions connect to $x$ through paths of waved edges) in contrast to the global expansion that will be defined in Section \ref{sec_goperations} below. 

The goal of local expansions is to expand a graph into a sum of \emph{locally standard graphs}. To explain this concept, we first define \emph{standard neutral atoms}.

\begin{defn}[{Standard neutral atoms}]\label{def SNA}
An atom in a normal graph is said to be \emph{standard neutral} if it satisfies the following two properties:
\begin{itemize}
    \item it has a neutral charge, where the charge of an atom is defined by counting its incoming and outgoing plus $G$ (i.e., blue solid) edges and minus $G$ (i.e., red solid) edges: 
$$\#\{\text{incoming $+$ and outgoing $-$ $G$ edges}\}- \#\{\text{outgoing $+$ and incoming $-$ $G$ edges}\} ;$$

\item it is only connected with three edges besides the $\times$-dotted edges: a $G$ edge, a $\overline G$ edge, and a neutral waved edge (i.e., an $S$ edge). 
\end{itemize}
 \end{defn}
 
By definition, the edges connected with a standard neutral atom, say $\al$, take the form 
\begin{equation}\label{12in4T}
t_{x,y_1 y_2}=\sum_\al s_{x\al}G_{\al y_1}\overline G_{\al y_2}\mathbf 1_{\al\ne y_1}\mathbf 1_{\al\ne y_2}\ \ \text{or}\ \ t_{y_1y_2, x}=\sum_\al G_{ y_1\al }\overline G_{y_2 \al}s_{\al x}\mathbf 1_{\al\ne y_1}\mathbf 1_{\al\ne y_2} ,
\end{equation}
which are essentially the $T$-variables (except for the two $\times$-dotted edges and the missing coefficient $|m|^2$). We define \emph{locally standard graphs} as graphs that only contain standard neutral atoms or atoms that are not connected with any $G$ edge.

 \begin{defn} [Locally standard graphs] \label{deflvl1}
A graph is  \emph{locally standard}  if 
\begin{itemize}
\item[(i)] it is a normal graph without $P/Q$ labels; 

\item[(ii)] it has no weights or light weights;

\item[(iii)] any internal atom is either standard neutral or connected with no solid edge. 
\end{itemize}
 \end{defn}

As discussed in Section 3.4 of \cite{BandI}, applying local expansions repeatedly, we can expand any normal graph into a linear combination of locally standard, recollision, higher order, or $Q$ graphs.

\begin{lemma} \label{lvl1 lemma}
Let $\mathcal G$ be a normal graph. Then, for any fixed $n\in \N$, we can expand $\cal G$ into a sum of $\OO(1)$ many graphs:
\begin{align}\label{expand lvl1}
\mathcal G =\mathcal G_{local} + \ATn  + \QGn ,
\end{align}
where $\mathcal G_{local} $ is a sum of locally standard graphs,  
$\ATn$ is a sum of graphs of scaling order $> n$, and $\QGn$ is a sum of $Q$-graphs. Some of the graphs on the right-hand side may be recollision graphs, i.e., there is at least one dotted edge between a pair of internal and external atoms (recall Definition \ref{Def_recoll}). 
In addition, every molecular graph on the right-hand side can be obtained by a composition of the operations (L1)--(L3) in Definition \ref{Goper} below acting on the molecular graph of $\mathcal G$. As a consequence, if $\cal G$ is doubly connected, then all graphs on the right-hand side of \eqref{expand lvl1} are also doubly connected.
\end{lemma}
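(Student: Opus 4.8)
The plan is to prove Lemma~\ref{lvl1 lemma} by an iterative local-expansion algorithm, essentially the one of \cite[Section 3.4]{BandI}; the only new feature here is the possible presence of free edges, which are index-free and inert under all of the local expansions in Lemmas~\ref{ssl}, \ref{Oe14} and \ref{T eq0}, so they change nothing. First I would set up the algorithm. Call an internal atom \emph{bad} if it carries a weight or light weight, or if it is connected with at least one solid $G$ edge but fails to be standard neutral in the sense of Definition~\ref{def SNA}. If $\cal G$ has no bad atom, then (after merging dotted components via Definition~\ref{dot_operation}(ii) and recording that it has no $P/Q$ labels) it is already locally standard and we are done. Otherwise pick a bad atom $x$ and apply the appropriate local expansion on $x$: the weight expansion of Lemma~\ref{ssl} if $x$ carries a (light) weight; the edge expansion of Lemma~\ref{Oe14} if $x$ has excess solid edges of one orientation; and the $GG$ expansion of Lemma~\ref{T eq0} to convert a pair $G_{x\cdot}G_{\cdot x}$ into the $S^+$-form. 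After each expansion we invoke Definition~\ref{dot_operation} to rewrite the output as a sum of normal graphs, so that scaling orders are well defined.

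Each step thus replaces $\cal G$ by a finite sum of graphs, and inspection of the right-hand sides of Lemmas~\ref{ssl}, \ref{Oe14}, \ref{T eq0} shows that every resulting graph is either (a) a $Q$-graph, which we collect into $\QGn$; (b) a graph whose scaling order has strictly increased, which we move into $\ATn$ as soon as the order exceeds $n$; or (c) a graph of the \emph{same} scaling order in which the total ``badness'' has strictly decreased, where badness is the number of weights plus the sum over bad atoms of their solid degree — because a local expansion either removes a weight, or lowers the solid degree of $x$, or splits $x$ into a standard-neutral atom plus a strictly smaller remainder. Some output graphs acquire a new dotted edge joining an internal atom to an external one (from the split $1=\mathbf 1_{\al=\beta}+\mathbf 1_{\al\ne\beta}$ in Definition~\ref{dot_operation}), i.e.\ become recollision graphs; these are permitted on the right-hand side of \eqref{expand lvl1} and need no further processing.

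Next I would argue termination by running the algorithm with the lexicographic monovariant $(\min(\ord,n+1),\ \text{badness})$. Along any branch the first coordinate is non-decreasing, and either it eventually exceeds $n$ (and the branch is truncated into $\ATn$), or it stabilizes, after which each further local expansion strictly decreases the integer-valued second coordinate; hence every branch halts after finitely many steps at a locally standard graph, a $Q$-graph, or a higher-order graph. This yields \eqref{expand lvl1}.

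Finally I would track the molecular structure. By construction every new atom produced by a local expansion on $x$ is tied to $x$ through a path of waved edges, hence lies in the molecule of $x$, so no new molecule is ever created; the induced effect on the molecular graph is therefore one of the three molecular operations (L1)--(L3) of Definition~\ref{Goper}. Since those operations preserve the existence of the disjoint black and blue spanning nets (the content of the corresponding statement in \cite{BandI}), double connectedness of $\cal G$ is inherited by every graph in \eqref{expand lvl1}, with free edges — being index-free — parked harmlessly inside the blue net. The main obstacle is exactly this last bookkeeping together with the termination monovariant: one must check case by case that the finitely many graph shapes appearing on the right-hand sides of Lemmas~\ref{ssl}, \ref{Oe14} and \ref{T eq0} only ever decrease the badness at fixed scaling order and only ever realize the operations (L1)--(L3). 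All of this is carried out in \cite[Section 3.4]{BandI}, so it suffices to observe that adjoining free edges affects none of those verifications.
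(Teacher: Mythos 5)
Your proposal is correct and follows essentially the same route as the paper: the paper simply observes that this is Lemma 3.22 of \cite{BandI} generalized to arbitrary normal graphs and applies that proof verbatim, which is precisely the iterative local-expansion algorithm (weight/edge/$GG$ expansions plus dotted-edge operations, termination by an order/badness monovariant, and molecular bookkeeping via (L1)--(L3)) that you reconstruct. Your added remark that free edges are index-free and hence inert under all local expansions is exactly the observation needed to transfer the \cite{BandI} argument to the present setting.
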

\begin{proof}
This lemma is a generalization of Lemma 3.22 in \cite{BandI}, which only considered graphs with three external atoms $\fa,\fb_1,\fb_2$ (i.e., graphs coming from expansions of $T_{\fa,\fb_1\fb_2}$), while the graph $\cal G$ in our lemma is more general. But the proof for \cite[Lemma 3.22]{BandI} can be applied to our case almost verbatim, so we omit the details. 
\end{proof}

\begin{defn}[Local molecular operations]\label{Goper}
We define the following operations on molecular graphs related to local expansions on $x$. Let $\cal M_x$ be the molecule containing $x$.
\begin{itemize}
\item[(L1)] Merge $\cal M_x$ with another molecule.
\item[(L2)] Add two new solid edges of the same color between $\cal M_x$ and another molecule.
\item[(L3)] For a pair of molecules, say $\mathcal M_1$ and $\mathcal M_2$, remove a solid edge between $\mathcal M_1$ and $\mathcal M_2$, and then add two solid edges of the same color: one between $\mathcal M_1$ and $\mathcal M_x$ and the other between $\mathcal M_2$ and $\mathcal M_x$. For simplicity of presentation, we will call this operation ``$\mathcal M_x$ pulls a solid edge between $\cal M_1$ and $\cal M_2$".
\end{itemize}
\end{defn}

\subsection{Global expansions}\label{sec_goperations}

In this subsection, we define global expansions, which may create new molecules in contrast to local expansions. Suppose we have the $(n-1)$-th order $T$-expansion by induction. 

Given a locally standard graph, say $\cal G$, a global expansion involves replacing the $T$-variable containing a standard neutral atom by the $(n-1)$-th order $T$-expansion. More precisely, picking a standard neutral atom $\al$ in a locally standard graph, so that the edges connected to it take one of the forms in \eqref{12in4T}. Then, we apply the $(n-1)$-th order $T$-expansion in \eqref{mlevelTgdef} to these variables in the following way: 
\begin{align}\label{eq_tT}
	  t_{x,y_1 y_2} & = \frac { T_{x,y_1y_2} }{|m|^{2}}  - \sum_\al s_{x\al}G_{\al y_1}\overline G_{\al y_2}\left(\mathbf 1_{\al\ne y_1}\mathbf 1_{\al = y_2}+\mathbf 1_{\al=y_1}\mathbf 1_{\al \ne y_2} +\mathbf 1_{\al = y_1}\mathbf 1_{\al = y_2}\right) .
\end{align}
The last term on the right-hand side gives one (if $y_1=y_2$) or two (if $y_1\ne y_2$) recollision graphs, so we combine it with $ |m|^{-2}\sum_x \Theta^{(n-1)}_{x\al}\PT^{(n-1)}_{\al,y_1 y_2} $ and denote the resulting expression by $\mathfrak R^{(n-1)}_{x,y_1 y_2}$. This gives that  
\be\label{replaceT}
 \begin{split}
	t_{x,y_1 y_2} &=   \overline m^{-1}  \Theta^{(n-1)}_{xy_1}\overline G_{y_1y_2} + \frac{G_{y_2 y_1} - \overline G_{y_1 y_2}}{2\ii N\eta} +  \mathfrak R^{(n-1)}_{x,y_1 y_2}  \\
	& +|m|^{-2} \sum_\al \Theta^{(n-1)}_{x\al}\left[ \AT^{(>n-1)}_{\al,y_1y_2}+ \cal W^{(n-1)}_{\al,y_1y_2}  + \QT^{(n-1)}_{\al,y_1y_2} +  (\Err_{n-1,D})_{\al,y_1y_2} \right].
\end{split}
\ee
The expansion of $t_{y_1y_2, x}$ can be obtained by exchanging the order of matrix indices in the above equation. 


In a global expansion, if we replace $t_{x,y_1y_2}$ with a graph on the right-hand side of \eqref{replaceT} that is not in \smash{$ |m|^{-2}\sum_x \Theta^{(n-1)}_{x\al}\QT^{(n-1)}_{\al,y_1 y_2} $}, we then need to perform dotted edge operations in Definition \ref{dot_operation} to write it into a sum of normal graphs. For each resulting graph, we either stop expanding it or continue performing local and global expansions. On the other hand, if we replace $t_{x,y_1y_2}$ with a graph in {$ |m|^{-2}\sum_x \Theta^{(n-1)}_{x\al}\QT^{(n-1)}_{\al,y_1 y_2} $}, we will get a graph of the form  
\be\label{QG}\cal G=\sum_y \Gamma Q_y (\cal G_1) ,\ee
where both $\Gamma$ and $\cal G_1$ are graphs without $P/Q$ labels. 
Then, we need to perform the so-called \emph{$Q$-expansion} to expand \eqref{QG} into a sum of $Q$-graphs and graphs without $P/Q$ labels. The $Q$-expansion is defined in Section 4.4 of \cite{BandII}. Instead of stating the full definition, we only describe its key feature here.

Recall that $H^{(y)}$ is the $(N-1)\times(N-1)$ minor of $H$ obtained by removing the $y$-th row and column. We define the resolvent minor $G^{(y)}(z):=(H^{(y)}-z)^{-1}$. Using the Schur complement formula, we obtain that
\be\label{resolvent_GGG}
G_{x_1x_2}=G_{x_1x_2}^{(y)}+\frac{G_{x_1 y}G_{yx_2}}{G_{yy}},\quad x_1,x_2\in \Z_L^d.
\ee
Applying this identity to expand the resolvent entries in $\Gamma$ one by one, we get that
\be\label{decompose_gamma} \Gamma=\Gamma^{(y)}+\sum_{\omega} \Gamma_\omega,\ee 
where $\Gamma^{(y)}$ is a graph whose weights and solid edges are all $G^{(y)}$ entries, so that it is independent of the $y$-th row and column of $H$, and 
every $\Gamma_\omega$ has a strictly higher scaling order than $\Gamma$, at least two new solid edges connected with $y$, and a factor of the form $(G_{yy})^{-k}(\overline G_{yy})^{-l}$ for some $k,l\in \N$. 
Using \eqref{decompose_gamma}, we can expand \eqref{QG} as
\be\label{QG2}
\cal G=\sum_{\omega}\sum_y \Gamma_\omega Q_y(\cal G_1) + \sum_y Q_y\left( \Gamma \cal G_1\right)- \sum_{\omega}\sum_y Q_y\left(\Gamma_\omega \cal G_1\right),
\ee
where the second and third terms are sums of $Q$-graphs, 
and the graphs in the first term have the following key features: (i) the scaling order of $\sum_y \Gamma_\omega Q_y(\cal G_1)$ is strictly higher than $\ord(\cal G)$; (ii) at least one weight or solid edge in $\Gamma$ is pulled to atom $y$ (i.e., replaced by two solid edges connected with $y$) in $\Gamma_\omega$. 

Next, we apply \eqref{resolvent_GGG} in the reverse way to remove all $G^{(y)}$ entries and apply Taylor expansion to $(G_{yy})^{-1}=[m+(G_{yy}-m)]^{-1}$ to remove all $(G_{yy})^{-1}$ and $(\overline G_{yy})^{-1}$ entries. In this way, we can write the right-hand side of \eqref{QG2} into a sum of graphs containing only regular $G$ edges and $G$ weights. Finally, we still need to remove the $Q_y$ label in $\sum_y \Gamma_\omega Q_y(\cal G_1)$, which can be achieved by applying the local expansions and \eqref{resolvent_GGG} repeatedly. Since the full definition of the $Q$-expansion is tedious, we will not repeat it here and refer the reader to Section 4.4 of \cite{BandII} for more details. We only record the following key lemma regarding the $Q$-expansion. 


\begin{lemma}[Lemma 4.15 of \cite{BandII}]\label{Q_lemma}
Let $\cal G_0 := \sum_x \Gamma_0 Q_x(\wt{\Gamma}_0)$, where $\cal G_0$, $\Gamma_0$ and $\wt\Gamma_0$ are all normal graphs without $P/Q$ labels and $x$ is an internal atom. Then, for any large constant $D>0$, $\cal G_0$ can be expanded into a sum of $O(1)$ many graphs:
\be\label{G0Q} \cal G_0 = \sum_\omega \cal G_\omega + \sum_\zeta Q_x(\wt{\cal G}_\zeta)  + \cal G_{err},\ee
where $\cal G_{\omega}$ and $\wt{\cal G}_\zeta$ are normal graphs without $P/Q$ labels, and $\cal G_{err}$ is a sum of normal graphs of scaling order $>D$. Moreover, the following properties hold.
\begin{itemize}
	\item[(i)] Every graph on the right-hand side of \eqref{G0Q} has scaling order $\ge \ord(\cal G_0)$.
	
	\item[(ii)] If there is a new atom in a graph on the right-hand side of \eqref{G0Q}, then it is connected to $x$ through a path of waved edges (i.e., the expansion \eqref{G0Q} is a local expansion on $x$). 
	
	\item[(iii)] Every molecular graph on the right-hand side can be obtained by a composition of the operations (L1)--(L3) acting on the molecular graph of $\mathcal G_0$. 
	As a consequence, if $\cal G_0$ is doubly connected, then all graphs on the right-hand side of \eqref{G0Q} are also doubly connected.
	
	\item[(iv)] If $\Gamma_0$ does not contain any weight or solid edge attached to $x$, then the scaling order of  $\cal G_\omega$ is  strictly higher than $\ord(\cal G_0)$ for every $\omega$.
 Furthermore,  $\cal G_\omega$ contains at least one atom that belongs to the original graph $\Gamma_0$ and is connected to $x$ through a solid, waved, or dotted edge (before it is merged with $x$ by a dotted edge operation).

\end{itemize}
\end{lemma}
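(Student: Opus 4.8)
The plan is to prove this exactly as the \emph{$Q$-expansion} defined in Section 4.4 of \cite{BandII}, reducing everything to three elementary moves: (a) the Schur complement identity \eqref{resolvent_GGG}, used both to strip the dependence of a resolvent entry on the $x$-th row/column and, in reverse, to reassemble $G^{(x)}$-entries into ordinary $G$-entries; (b) the Taylor expansion $(G_{xx})^{-1}=\sum_{j=0}^{J}(-m)^{-j-1}(G_{xx}-m)^{j}+(-m)^{-J-1}(G_{xx}-m)^{J+1}(G_{xx})^{-1}$ (and likewise for $\overline G_{xx}$) around $m$; and (c) the local expansions of Lemma \ref{lvl1 lemma} (i.e.\ the weight/edge/$GG$ expansions of Lemmas \ref{ssl}, \ref{Oe14}, \ref{T eq0}) together with the dotted edge operations of Definition \ref{dot_operation}, which rewrite any expression as a sum of normal graphs. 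The whole construction is an induction on scaling order with threshold $D$: at each step the pieces that still carry a $Q_x$ label and are not yet of the allowed form $Q_x(\wt{\cal G}_\zeta)$ will strictly gain scaling order, so once that order exceeds $D$ they are put into $\cal G_{err}$. Since $\Gamma_0$ and $\wt\Gamma_0$ have $\OO(1)$ atoms and edges, each move spawns $\OO(1)$ children, and the recursion depth is at most $D=\OO(1)$, the total number of graphs produced is $\OO(1)$.

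First I would isolate the dependence of $\Gamma_0$ on the $x$-th row and column. Applying \eqref{resolvent_GGG} with $y=x$ to every $G$- and $\overline G$-entry of $\Gamma_0$ that is not already incident to $x$ (both solid edges and weights), one at a time, gives $\Gamma_0=\Gamma_0^{(x)}+\sum_\omega\Gamma_{0,\omega}$, where $\Gamma_0^{(x)}$ is built solely from $G^{(x)}$-entries — hence, in the case that $\Gamma_0$ has no solid edge or weight at $x$, is a function of $H^{(x)}$ alone — while each $\Gamma_{0,\omega}$ has scaling order strictly larger than $\ord(\Gamma_0)$, carries two new solid edges at $x$, and a factor $(G_{xx})^{-k_\omega}(\overline G_{xx})^{-l_\omega}$; moreover in that case each $\Gamma_{0,\omega}$ keeps an atom of $\Gamma_0$ joined to $x$ by a solid edge (the atom from which the entry was ``pulled''). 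Under the case (iv) hypothesis, using that $\Gamma_0^{(x)}$ is $H^{(x)}$-measurable one has $\Gamma_0^{(x)}Q_x(\wt\Gamma_0)=Q_x(\Gamma_0^{(x)}\wt\Gamma_0)$, so that
\[
\cal G_0=\sum_xQ_x\big(\Gamma_0^{(x)}\wt\Gamma_0\big)+\sum_\omega\sum_x\Gamma_{0,\omega}\,Q_x(\wt\Gamma_0).
\]
For the first sum I would reinstate ordinary $G$-entries via \eqref{resolvent_GGG}, Taylor-expand every $(G_{xx})^{\pm1}$ that appears around $m$ (the remainder has scaling order $>D$ and goes into $\cal G_{err}$) and apply the dotted edge operations; this yields the $\sum_\zeta Q_x(\wt{\cal G}_\zeta)$ part of \eqref{G0Q}. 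For the second sum, for each $\omega$ I would Taylor-expand the $(G_{xx})^{-k_\omega}(\overline G_{xx})^{-l_\omega}$ factor (remainder into $\cal G_{err}$), apply the dotted edge operations, and observe that each resulting graph is again of the form $\sum_x\Gamma'\,Q_x(\wt\Gamma_0)$ with $\ord(\Gamma')>\ord(\Gamma_0)$, i.e.\ an instance of the lemma with strictly larger scaling order; invoking the lemma recursively on these and feeding their outputs back produces the remaining $\cal G_\omega$'s, of scaling order $\ge\ord(\cal G_0)$ and strictly larger in case (iv). When $\Gamma_0$ \emph{does} have a solid edge or weight at $x$ the same scheme applies after one first uses the local expansions of Lemma \ref{lvl1 lemma} on $x$ to standardize the structure at $x$; this is the only point where a $\cal G_\omega$ of scaling order exactly $\ord(\cal G_0)$ can appear.

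Finally I would track the graphical properties along the construction. Property (ii) is immediate: the only new atoms ever created are summation indices attached to $x$ through waved edges (in \eqref{resolvent_GGG} and in the local expansions), so every new atom reaches $x$ along a path of waved edges. Properties (i) and (iv) are the scaling-order bookkeeping described above. Property (iii), and with it the preservation of double connectivity, is proved by checking move by move that each of (a), (b), (c) acts on the molecular graph by a composition of the local molecular operations (L1)--(L3) of Definition \ref{Goper} — merging $\cal M_x$ with another molecule, adjoining a parallel pair of solid edges at $\cal M_x$, or $\cal M_x$ ``pulling'' a solid edge — exactly as recorded in Lemma \ref{lvl1 lemma}; since (L1)--(L3) preserve double connectivity, so does their composition. \textbf{The main obstacle} is precisely this last, combinatorial part of the bookkeeping: one must verify that the $Q_x$-elimination inside the recursion — in particular the re-standardization of the atom $x$ via local expansions and the handling of the factors $(G_{xx})^{\pm1}$ — never produces a graph outside the span of (L1)--(L3), never lowers the scaling order, and keeps the output within the three declared classes ($\cal G_\omega$, $Q_x(\wt{\cal G}_\zeta)$, $\cal G_{err}$). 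This requires the full expansion machinery of \cite{BandII} rather than a short self-contained argument, and is where I would expect to spend most of the effort.
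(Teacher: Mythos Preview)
Your proposal is correct and follows essentially the same approach as the paper's outline (which in turn cites \cite{BandII}, Lemma 4.15, for the full proof): Schur-complement decomposition of $\Gamma_0$, pulling the $H^{(x)}$-measurable piece through $Q_x$, reverse-Schur plus Taylor expansion to restore ordinary $G$-entries, and recursion on the higher-order remainder, with (L1)--(L3) bookkeeping for the molecular graph. The only cosmetic difference is that the paper's \eqref{QG2} rewrites $Q_x(\Gamma_0^{(x)}\wt\Gamma_0)$ as $Q_x(\Gamma_0\wt\Gamma_0)-\sum_\omega Q_x(\Gamma_{0,\omega}\wt\Gamma_0)$ to avoid ever writing $G^{(x)}$-entries explicitly, whereas you keep $\Gamma_0^{(x)}$ and revert afterwards; the two are algebraically identical.
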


To summarize, global expansions correspond to the following operations. 

\begin{defn}[Global operations]\label{Goper2}
Given $t_{x,y_1 y_2}$, we define the following operations. 
\begin{itemize}
\item[(G1)] Replace $t_{x,y_1 y_2}$ by a $\dashed$ edge between $x$ and $y_1$ and a red solid edge between $y_1$ and $y_2$,  and then apply dotted edge operations if necessary. 
\item[(G2)] Replace $t_{x,y_1 y_2}$ by a free edge between $x$ and $y_1$ and a solid edge between $y_1$ and $y_2$,  and then apply dotted edge operations if necessary.
	\item [(G3)]Replace $t_{x,y_1y_2}$ by a graph in $ \mathfrak R^{(n-1)}_{x,y_1 y_2} $, and then apply dotted edge operations if necessary. 
	\item [(G4)] Replace $t_{x,y_1y_2}$ by a graph in $ \sum_\al \Theta^{(n-1)}_{x\al} \AT^{(>n-1)}_{\al,y_1y_2} $, and then apply dotted edge operations if necessary.
	\item [(G5)] Replace $t_{x,y_1y_2}$ by a graph in $ \sum_\al \Theta^{(n-1)}_{x\al} \cal W^{(n-1)}_{\al,y_1y_2} $, and then apply dotted edge operations if necessary.
	\item [(G6)] Replace $t_{x,y_1y_2}$ by a graph in $ \sum_\al \Theta^{(n-1)}_{x\al} \QT^{(n-1)}_{\al,y_1y_2} $, and then apply the $Q$-expansion if necessary. 
	\item [(G7)] Replace $t_{x,y_1y_2}$ by a graph in $ \sum_\al \Theta^{(n-1)}_{x\al} (\Err_{n-1,D})_{\al,y_1y_2}$, and then apply dotted edge operations if necessary.
\end{itemize}
\end{defn}

Using the doubly connected property of the graphs in the $(n-1)$-th order $T$-expansion, it is not hard to see that if we apply the operations (G1)--(G7) to a redundant blue solid edge (or more precisely, a $t$-variable containing this edge), the resulting graphs are still doubly connected. On the other hand, expanding a pivotal blue solid edge may break the doubly connected property. Hence, we need to make sure that whenever performing a global expansion, there is at least one redundant blue solid edge so that the expansion process can be continued without breaking the doubly connected property. This is guaranteed by the SPD property in Definition \ref{def seqPDG}. We now show that this property is preserved not only in local expansions but also in global expansions as long as we expand \emph{the first blue solid edge in a pre-deterministic order of the MIS}.  

\begin{lemma}\label{lem_SPD_preserve}
Let $\cal G$ be an SPD graph. 
\begin{enumerate}
    \item[(i)] Applying operations (L1)--(L3) to the molecular graph of $\cal G$ still gives a SPD molecular graph.
    
    \item[(ii)] Applying operations (G1)--(G6) to a $t_{x,y_1y_2}$ variable containing the first blue solid edge in a pre-deterministic order of the non-deterministic MIS still gives a sum of SPD graphs.
\end{enumerate}

\end{lemma}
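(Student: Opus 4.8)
The plan is to check, one operation at a time, that each ingredient of the sequentially pre-deterministic (SPD) property in Definition~\ref{def seqPDG} survives: the nested chain $\Iso_0\supset\cdots\supset\Iso_k$ of isolated subgraphs with non-deterministic closure, the existence of a pre-deterministic order $b_1\preceq\cdots\preceq b_k$ of the blue solid edges of the MIS $\Iso_k$, and the pre-determinism of each $\Iso_j$ after contracting $\Iso_{j+1}$ together with its two external edges. Since all of these notions are defined purely on the molecular graph with red solid edges deleted (Definitions~\ref{def_poly}, \ref{def moleg}, \ref{defn iso}, \ref{def PDG}), it suffices to track the molecular graph. Part~(i) concerns only molecular operations and is exactly the corresponding statement of \cite{BandII}; the proof there carries over, the only bookkeeping addition being that free edges are treated uniformly with $\dashed$ edges (they belong to the blue net and count like $\dashed$ edges when checking isolation). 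Part~(ii) will be reduced, via the $(n-1)$-th order $T$-expansion (Definition~\ref{def genuni2}) and the $Q$-expansion lemma (Lemma~\ref{Q_lemma}), to a finite sequence of molecular operations of type (L1)--(L3) together with a single replacement of the redundant blue edge $b_1$ by a $\dashed$ or free edge.

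For part~(i): operation (L1) only coarsens the molecular graph, so an isolated subset of the new graph lifts to an isolated subset of the old one (or to the whole internal vertex set), and the chain $(\Iso_j)$ and each pre-deterministic order descend verbatim. For (L2) and (L3) the two new solid edges have the same color and are both incident to $\cal M_x$; if they are red they are invisible to both nets and to the isolation count, and if they are blue then at least one of the two is redundant (they form a parallel pair through $\cal M_x$), so it — and, if applicable, both — may be appended to the end of any pre-deterministic order, after which the order of the original blue edges is unchanged because deleting a redundant edge does not alter which edges are pivotal. In all three cases the non-deterministic isolated subgraphs of the new graph lie among those of the old one (after the contraction induced by (L1)), so the nesting is preserved.

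For part~(ii): by hypothesis we expand a $t$-variable $t_{x,y_1y_2}$ whose unique blue solid edge $b_1$ is first in a pre-deterministic order of the non-deterministic MIS, so $b_1$ is redundant in $\cal G$. Operations (G1)--(G2) replace $t_{x,y_1y_2}$ by a $\dashed$ (resp.\ free) edge from $x$ to $y_1$ and a red solid edge from $y_1$ to $y_2$; removing the redundant $b_1$ and putting a $\dashed$/free edge in its place keeps $\cal G$ doubly connected, leaves $\Iso_0\supset\cdots\supset\Iso_{k-1}$ untouched, and turns $\Iso_k$ into a (possibly smaller) isolated subgraph whose pre-determinism is witnessed by $b_2\preceq\cdots\preceq b_k$. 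Operations (G3)--(G6) replace $t_{x,y_1y_2}$ by an entire subgraph drawn from $\mathfrak R^{(n-1)}_{x,y_1y_2}$, $\sum_\al\Theta^{(n-1)}_{x\al}\AT^{(>n-1)}_{\al,y_1y_2}$, $\sum_\al\Theta^{(n-1)}_{x\al}\cal W^{(n-1)}_{\al,y_1y_2}$, or $\sum_\al\Theta^{(n-1)}_{x\al}\QT^{(n-1)}_{\al,y_1y_2}$; by the induction hypothesis each such piece is doubly connected and SPD by Definition~\ref{def genuni2}, and it is attached to the rest of $\cal G$ along the $\dashed$-type highway $\Theta^{(n-1)}_{x\al}$ in its blue net together with a $\dashed$ edge in its black net, exactly where the redundant $b_1$ used to sit. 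One then builds the new chain of non-deterministic isolated subgraphs by inserting those of the spliced-in piece (which all lie inside the position of the old MIS) below the old chain, the new MIS being the innermost one, and takes the new pre-deterministic order to be $b_2\preceq\cdots\preceq b_k$ followed by a pre-deterministic order of the new piece's blue edges. For (G6) one additionally applies the $Q$-expansion (Lemma~\ref{Q_lemma}), whose properties (iii)--(iv) ensure that every resulting molecular graph comes from that of the spliced graph by operations (L1)--(L3), whence the conclusion follows from part~(i).

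The main obstacle will be the splicing step in (G3)--(G6): one has to verify that the concatenation of the two pre-deterministic orders is again pre-deterministic, i.e.\ that at every stage of the subsequent expansion a redundant blue edge is available. This uses that the spliced piece's blue net contains a $\dashed$/free edge that may be contracted first (the analogue of ``$b_1$ is redundant'' for the inner piece), and that contracting the whole inner piece down to a single $\dashed$ edge returns one precisely to the situation after (G1)/(G2). A secondary point requiring care is that the inserted pieces may carry external red solid edges to $y_1$ and $y_2$, which can change weak versus strong isolation of the surrounding subgraphs; since Lemma~\ref{lem_SPD_preserve} only asserts the SPD property (not global standardness), this is irrelevant here, but it must be monitored when one later upgrades to globally standard graphs. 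Apart from these points I expect the verification to be a finite and essentially routine case analysis paralleling \cite{BandII}, with the uniform treatment of free edges in the blue net being the only genuinely new element.
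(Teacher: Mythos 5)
Your overall route is the same as the paper's: part (i) is reduced to Lemma 5.10 of \cite{BandII} plus a treatment of free edges, and part (ii) is handled by observing that (G1)/(G2) are immediate from Definition \ref{def seqPDG}, that (G3)--(G6) splice in graphs which are SPD by Definition \ref{def genuni2}, and that the ensuing dotted-edge and $Q$-expansions reduce to (L1)--(L3) and part (i) via Lemma \ref{Q_lemma}. The genuine gap is exactly at the one point that is new relative to \cite{BandII}: the free edges in part (i). The strengthened Definitions \ref{def PDG} and \ref{def seqPDG} require the redundancy checks to hold when each previously processed blue solid edge (or contracted isolated subgraph) is replaced by a diffusive \emph{or free} edge, and for these checks a free edge is strictly weaker than a diffusive edge, since by Definition \ref{def 2net} it can only serve in the blue net, never in the black net. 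Hence declaring that free edges are ``treated uniformly with diffusive edges'' does not reduce the new assignments to the old ones; the \cite{BandII} argument covers only the all-diffusive assignment. The paper closes this with an observation you never use: a free edge created at some step of the check is itself redundant (because the blue solid edge or closure it replaced was redundant at that stage), and since free edges carry no indices they can be moved anywhere; relocating the free edge parallel to any remaining blue solid edge makes that edge redundant, so once a single free edge appears the rest of the SPD verification is trivial. Without this movability argument (or an inspection of the \cite{BandII} proof showing that replaced edges are never needed in the black net), your part (i) is incomplete, and since your treatment of (G3)--(G6) leans on part (i) to absorb the dotted-edge and $Q$-expansions, the gap propagates to part (ii).

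A secondary point: in the splicing step for (G3)--(G6) you propose the flat order ``$b_2\preceq\cdots\preceq b_k$ followed by a pre-deterministic order of the new piece's blue edges,'' which runs against the hierarchy of Definition \ref{def seqPDG}. The spliced-in graph sits at the position of $b_1$; its isolated subgraphs become the \emph{inner} part of the chain, and its blue edges must be processed (or the whole piece contracted to a single diffusive or free edge) \emph{before} the original SPD property of $\cal G$ certifies that $b_2$ is redundant. Your chain-insertion sentence has the right picture, but the stated concatenation would require $b_2$ to be redundant while the new piece is still fully present, which is not what the hypothesis provides; you also flag this splicing verification as an open obstacle rather than resolving it, whereas the paper disposes of it by invoking the SPD property of the $T$-expansion graphs directly and then applying part (i).
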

\begin{proof}
The first statement was proved in Lemma 5.10 of \cite{BandII}, but under a weaker definition of the SPD property: the property (ii) in Definition \ref{def PDG} was ``if we replace each of $b_1,...,b_i$ by a $\dashed$ edge, then $b_{i+1}$ becomes redundant" and the property (ii) in Definition \ref{def seqPDG} was ``if we replace $\Iso_{j+1}$ and its two external edges in $\cal G_{\mathcal M}$ by a single $\dashed$ edge, then $\Iso_j$ becomes pre-deterministic". (This is because the concept of free edges was not introduced in \cite{BandII}.) In checking the SPD property, we change the blue solid edges or closures of isolated subgraphs into $\dashed$ or free edges one by one and check whether the next blue solid edge is redundant. Based on Lemma 5.10 of \cite{BandII}, we only need to consider cases where some blue solid edge or the closure of some isolated subgraph is replaced by a free edge at a certain step. Notice that the free edge obtained at this step is redundant in the graph, say $\cal G'$, and hence we can move the free edge to other places so that every remaining blue solid edge becomes redundant. For example, for a blue solid edge $b$ between atoms $x$ and $y$, we can treat the free edge as a blue free edge between $x$ and $y$ so that the edge $b$ becomes redundant. Then, the SPD property of $\cal G'$ is trivial to check.

The second statement for operations (G1) and (G2) follows from the definition of the SPD property of $\cal G$, because they replace the first blue solid edge of the MIS with a $\dashed$ edge and a free edge, respectively. For the operations (G3)--(G6), if we replaced $t_{x,y_1y_2}$ by a graph in 
$$\mathfrak R^{(n-1)}_{x,y_1 y_2} + \sum_\al \Theta^{(n-1)}_{x\al}\left[ \AT^{(>n-1)}_{\al,y_1y_2}+ \cal W^{(n-1)}_{\al,y_1y_2}  + \QT^{(n-1)}_{\al,y_1y_2}  \right],$$
the resulting graph, say $\cal G_{new}$, is SPD due to the SPD property of the graphs in the $T$-expansion (recall Definition \ref{def genuni2}) and the fact that $t_{x,y_1y_2}$ contains the first blue solid edge of the MIS. Then, applying dotted edge operations or the $Q$-expansion to $\cal G_{new}$ gives a sum of SPD graphs by the first statement. 
\end{proof}


With Lemma \ref{lem_SPD_preserve}, we can easily show that the globally standard property (recall Definition \ref{defn gs}) is also preserved in local and global expansions. 

\begin{lemma}\label{lem globalgood}
Let $\cal G$ be a globally standard graph without $P/Q$ labels and let $\Iso_k$ be its MIS with non-deterministic closure. For local expansions, we have that:
\begin{enumerate}
\item[(i)] Applying operations (L1)--(L3) to the molecular graph of $\cal G$ still gives a globally standard molecular graph. Furthermore, if we apply the local expansions in Lemmas \ref{ssl}--\ref{T eq0} on an atom in $\Iso_k$, then in every new $Q$-graph, the atom in the $Q$-label also belongs to the MIS with non-deterministic closure. 
\end{enumerate}
Let $t_{x,y_1y_2}$ be a $t$-variable that contains the first blue solid edge in a pre-deterministic order of $\Iso_k$. Then, we have that: 
\begin{itemize}
\item[(ii)] If we apply the operation (G1), then every new graph has no $P/Q$ label or free edge, is globally standard, and 
has one fewer blue solid edge.

\item[(iii)] If we apply the operation (G2), then every new graph has no $P/Q$ label, is globally standard, and has exactly one redundant free edge in its MIS.


\item[(iv)] If we apply the operation (G3), then every new graph has no $P/Q$ label or free edge, is globally standard, and has a scaling order $\ge \ord(\cal G) +1 $.


\item[(v)] If we apply the operation (G4), then every new graph has no $P/Q$ label or free edge, is SPD, and has a scaling order $\ge \ord(\cal G)+n-2$.


\item[(vi)] If we apply the operation (G5), then every new graph has no $P/Q$ label, is SPD, and has exactly one redundant free edge in its MIS.

\item[(vii)] If we apply the operation (G6), then we get a sum of $\OO(1)$ many new graphs: 
\be\label{mlevelTgdef3_Q}
\sum_\omega \cal G_\omega  + \cal Q   + \cal G_{err} ,
\ee
where every $\cal G_\omega$ has no $P/Q$ label or free edge, is globally standard and has a scaling order $\ge \ord(\cal G) +1 $; $\cal Q$ is a sum of $Q$-graphs, each of which has no free edge, is SPD,  and has a MIS containing the atom in the $Q$-label; $\cal G_{err}$ is a sum of doubly connected graphs of scaling orders $> D$.

\item[(viii)] If we apply the operation (G7), then every new graph is doubly connected and has a scaling order $\ge \ord(\cal G)+D-1$. 

\end{itemize}
\end{lemma}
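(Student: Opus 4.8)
The plan is to establish the eight statements by combining Lemma \ref{lem_SPD_preserve}, which already supplies the SPD part of each claim, with a case-by-case bookkeeping of the remaining structural features: absence of $P/Q$ labels and free edges, the weakly-isolated condition of Definition \ref{defn gs}, the number of blue solid and free edges, and the scaling order. All of the latter ride on the explicit description of the $(n-1)$-th order $T$-expansion graphs in Definition \ref{def genuni2}. The one recurring computational input is that the $t$-variable $t_{x,y_1y_2}$ has scaling order $2$ (two off-diagonal $G$ edges, one waved edge, one internal atom summed over), so replacing it inside $\mathcal G$ by an expression of scaling order $k$ raises $\ord(\mathcal G)$ by exactly $k-2$; fed with the built-in scaling-order lower bounds of $\mathcal R^{(n-1)}$ ($\ge 3$), of $\sum_\alpha\Theta^{(n-1)}_{x\alpha}\mathcal A^{(>n-1)}_{\alpha,y_1y_2}$ ($\ge n$ after accounting for the internal atom $\alpha$), and of $\Err_{n-1,D}$ ($>D$), this yields the scaling-order parts of items (iv), (v), and (viii).

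First I would treat item (i). SPD preservation under (L1)--(L3) is exactly Lemma \ref{lem_SPD_preserve}(i). Since these operations are \emph{local}---every new atom is joined to $x$ by a path of waved edges and every new solid edge is attached to $\mathcal M_x$---no new proper isolated subgraph arises with a pair of external edges different from those of some isolated subgraph already present, so the weakly-isolated property of proper isolated subgraphs is inherited and the graph remains globally standard. For the ``furthermore'' clause, inspection of the weight, edge and $GG$ expansions (Lemmas \ref{ssl}, \ref{Oe14}, \ref{T eq0}) shows that every new $Q$-label is $Q_x$ or $Q_\alpha$ with $\alpha$ joined to $x$ by a waved edge, hence $\alpha$ lies in $\mathcal M_x\subset\Iso_k$; and since only local structure is appended to $\Iso_k$, its closure stays non-deterministic.

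Next I would dispose of items (ii)--(vi). Operation (G1) deletes the first blue solid edge of $\Iso_k$ (redundant by the pre-deterministic order) and inserts a $\dashed$ edge and a red solid edge: by Lemma \ref{lem_SPD_preserve}(ii) the result is SPD, it inherits the weakly-isolated property, no $P/Q$ label or free edge appears, and it has one fewer blue solid edge. Operation (G2) is identical except that a free edge takes the place of the deleted blue solid edge, and this free edge lies in the MIS and is redundant there---precisely the configuration recorded in Definition \ref{def genuni2}(4). Operations (G3)--(G5) substitute a graph drawn from $\mathfrak R^{(n-1)}_{x,y_1y_2}$, from $\sum_\alpha\Theta^{(n-1)}_{x\alpha}\mathcal A^{(>n-1)}_{\alpha,y_1y_2}$, or from $\sum_\alpha\Theta^{(n-1)}_{x\alpha}\mathcal W^{(n-1)}_{\alpha,y_1y_2}$; by Definition \ref{def genuni2}(2)--(4) each of these is globally standard, respectively SPD, with the stated free-edge content, and gluing on the $\Theta^{(n-1)}_{x\alpha}$ edge and the residual part of $\mathcal G$ does not disturb this because the glued-in subgraph is attached only along the two edges that used to constitute the $t$-variable; for (G3) one also recalls that the two extra terms produced by \eqref{eq_tT} create a dotted edge from $\alpha$ to $y_1$ or $y_2$, so they too are globally standard recollision graphs. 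Finally the dotted-edge operations of Definition \ref{dot_operation} preserve all these properties by the double-connectedness clause of Lemma \ref{lvl1 lemma}.

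The last and hardest item is (vii); (viii) is immediate from the scaling-order count plus the double connectedness of dotted-edge operations. For (G6) the substitution produces a graph of the form $\sum_w\Gamma_0\,Q_w(\widetilde\Gamma_0)$ with $w$ the $Q$-label atom inside the MIS of $\mathcal Q^{(n-1)}$, to which I would apply the $Q$-expansion Lemma \ref{Q_lemma}, obtaining the decomposition \eqref{mlevelTgdef3_Q}. The delicate point---and what I expect to be the main obstacle---is verifying the hypothesis of Lemma \ref{Q_lemma}(iv), namely that $\Gamma_0$ carries no weight or solid edge incident to $w$, which is exactly what ensures that each $\mathcal G_\omega$ has \emph{strictly} larger scaling order than $\mathcal G$; this is where the requirement of Definition \ref{def genuni2}(5) that the $Q$-label atom of a graph in $\mathcal Q^{(n-1)}$ lie inside its MIS is used, since it forces all incidences at $w$ outside $Q_w(\cdot)$ to be through waved, diffusive or dotted edges. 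Granting this, Lemma \ref{Q_lemma}(i)--(iii) delivers the scaling-order bound, the double connectedness, and---after applying Lemma \ref{lem_SPD_preserve}(i) to the local expansions performed inside the $Q$-expansion---the fact that the surviving $Q$-graphs are SPD with $Q$-label atom still in the MIS, while $\mathcal G_{err}$ consists of doubly connected graphs of scaling order $>D$. The remaining routine checks---that the $\mathcal G_\omega$ are in fact globally standard and free of $P/Q$ labels and free edges, not merely SPD---follow the template of the proof of Theorem 3.7 in \cite{BandII}; the only novelty is the presence of free edges, which one absorbs by observing that a redundant free edge can always be relocated so that the blue-net spanning tree avoids it, thereby reducing every remaining verification to the free-edge-free case already treated in \cite{BandII}.
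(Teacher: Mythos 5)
Your proposal is correct and takes essentially the same route as the paper: the paper's proof just combines Lemma \ref{lem_SPD_preserve} with Lemmas 6.4 and 6.5 of \cite{BandII} and observes that (iii) and (vi) follow as (ii) and (v) do, which is exactly what you reconstruct---SPD via Lemma \ref{lem_SPD_preserve}, scaling-order bookkeeping from the $t$-variable having order $2$ together with Definitions \ref{defn genuni} and \ref{def genuni2}, Lemma \ref{Q_lemma} (with its hypothesis (iv) verified through the $Q$-label atom lying in the MIS) for operation (G6), and relocation of redundant free edges to reduce the new cases to the free-edge-free arguments of \cite{BandII}. No changes needed.
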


\begin{proof}
Based on Lemma \ref{lem_SPD_preserve}, the statement (i) was proved in \cite[Lemma 6.4]{BandII}, and the statements (ii), (iv), (v), (vii), and (viii) were proved in \cite[Lemma 6.5]{BandII}. The statement (iii) can be proved in the same way as statement (ii), while the statement (vi) can be proved in the same way as statement (v). We omit the details.
\end{proof}

\subsection{Proof of Proposition \ref{Teq}}

Based on Lemma \ref{lem globalgood}, we can define the global expansion strategy for the proof of Proposition \ref{Teq}. Starting with the second order $T$-expansion \eqref{eq:2nd}, we continue to expand the term \smash{$\sum_x \Theta^{\circ}_{\fa x} \mathcal A^{(>2)}_{x,\fb_1\fb_2}$} with local and global expansions. We will stop expanding a graph if it is a normal graph and satisfies at least one of the following properties: 
\begin{itemize}
\item[(S1)] it is a $\fb_1$/$\fb_2$-recollision graph; 

\item[(S2)] its scaling order is at least $n+1$;

\item[(S3)] it contains a redundant free edge;

\item[(S4)] it is a $Q$-graph; 

\item[(S5)] it is \emph{non-expandable}, that is, its MIS with non-deterministic closure is locally standard and has no redundant blue solid edge. 
\end{itemize}
Note that if a graph $\cal G$ satisfies the property that
\be\label{det_eq}
\text{its subgraph induced on all internal atoms is deterministic,}
\ee
then $\cal G$ is non-expandable. On the other hand, in a non-expandable graph that does not satisfy \eqref{det_eq}, we cannot expand a plus $G$ edge in the MIS without breaking the doubly connected property. 


\begin{strategy}[Global expansion strategy]\label{strat_global}
Fix any $n\in \N$ and large constant $D>n$. Given the above stopping rules (S1)--(S5), we expand $T_{\fa,\fb_1\fb_2}$ according to the following strategy.
 \vspace{5pt}

\noindent{\bf Step 0}: We start with the second order $T$-expansion \eqref{eq:2nd}, where we only need to expand $\sum_x \Theta^{\circ}_{\fa x} \mathcal A^{(>2)}_{x,\fb_1\fb_2}$ since all other terms already satisfy the stopping rules. We apply local expansions to it and obtain a linear combination of new graphs, each of which either satisfies the stopping rules or is locally standard. At this step, there is only one internal molecule in every graph. Hence, the graphs are trivially globally standard.  

\vspace{5pt}
\noindent{\bf Step 1}: Given a globally standard input graph, we perform local expansions on atoms in the MIS with non-deterministic closure. We send the resulting graphs that already satisfy the stopping rules (S1)--(S5) to the outputs. Every remaining graph is globally standard by Lemma \ref{lem globalgood}, and its MIS is locally standard (i.e. the MIS contains no weight and every atom in it is either standard neutral or connected with no solid edge).

\vspace{5pt}
\noindent{\bf Step 2}: Given a globally standard input graph $\cal G$ with a locally standard MIS, say $\Iso_k$, we find a $t_{x,y_1y_2}$ or $t_{y_1y_2,x}$ variable that contains the first blue solid edge in a pre-deterministic order of $\Iso_k$. If we cannot find such a $t$-variable, then we stop expanding $\cal G$.

\vspace{5pt}
\noindent{\bf Step 3}: We apply the global expansions (G1)--(G7) to the $t_{x,y_1y_2}$ or $t_{y_1y_2,x}$ variable chosen in Step 2. We send the resulting graphs that already satisfy the stopping rules (S1)--(S5) to the outputs. The remaining graphs are all globally standard by Lemma \ref{lem globalgood}, and we sent them back to Step 1. 
\end{strategy}

Applying the above expansion strategy, we can expand $T_{\fa,\fb_1\fb_2}$ into 
$$m  \Theta^{\circ}_{\fa \fb_1}\overline G_{\fb_1\fb_2} +  \frac{|m|^2}{2\ii N\eta} (G_{\fb_2 \fb_1} - \overline G_{\fb_1 \fb_2}) $$ plus a sum of $\OO(1)$ many graphs satisfying the stopping rules (S1)--(S5). The graphs satisfying stopping rules (S1)--(S4) can be included into 
\be\label{RAWQ}
\sum_x \Theta^{\circ}_{\fa x}\left[\PTn_{x,\fb_1 \fb_2} + {\cal A}^{(>n)}_{x,\fb_1\fb_2}  + \Wn_{x,\fb_1\fb_2}  + \QTn_{x,\fb_1\fb_2}  +  (\Err_{n,D})_{x,\fb_1\fb_2}\right].
\ee
Now, suppose a graph, say $\cal G_{\fa,\fb_1\fb_2}$, is an output graph of Strategy \ref{strat_global} and does not satisfy (S1)--(S4). 
Then, $\cal G_{\fa,\fb_1\fb_2}$ either is non-expandable or does not contain a $t$-variable required by Step 2 of Strategy \ref{strat_global}. In either case, $\cal G_{\fa,\fb_1\fb_2}$ contains a locally standard MIS  $\Iso_k$, which, by the pre-deterministic property of $\Iso_k$, does not contain any internal blue solid edge. Suppose $\Iso_k$ is a proper isolated subgraph. Due to the weakly isolated property, $\Iso_k$ has at least two external red solid edges, at most one external blue solid edge, and no internal blue solid edge. Hence, $\Iso_k$ cannot be locally standard, which gives a contradiction. This shows that $\Iso_k$ is indeed the subgraph of $\cal G_{\fa,\fb_1\fb_2}$ induced on all internal atoms. Then, $\Iso_k$ is locally standard and does not contain any internal blue solid edges, so it must satisfy \eqref{det_eq}. Furthermore, by the locally standard property, $\Iso_k$ contains a standard neutral atom connected with $\fb_1$ and $\fb_2$. Thus, the output graphs of Strategy \ref{strat_global} that do not satisfy (S1)--(S4) can be written as
$$|m|^2\sum_x (\Theta^{\circ} \wt \Sigma^{(n)})_{\fa x} t_{x,\fb_1\fb_2}=  \sum_x (\Theta^{\circ} \wt \Sigma^{(n)})_{\fa x} T_{x,\fb_1\fb_2} + \sum_x (\Theta^{\circ} \wt \Sigma^{(n)})_{\fa x} \left(|m|^2 t_{x,\fb_1\fb_2} - T_{x,\fb_1\fb_2}\right),$$
where $\wt \Sigma^{(n)}$ is a sum of deterministic graphs and the second term can be included into $\sum_x \Theta^{\circ}_{\fa x} \PTn_{x,\fb_1 \fb_2}$ (recall \eqref{eq_tT}). Finally, we expand all $\Theta^{(n-1)}$ edges in \smash{$ \wt \Sigma^{(n)}$} using \eqref{chain S2k}:
$$\Theta^{(n-1)}  = \sum_{k=2}^{n-1} \Theta^{(n-1)}_k  + \Delta ^{(n-1)},$$
where $\Delta_{xy}^{(n-1)}$ is regarded as a labelled diffusive edge of scaling order $>n-1$ between $x$ and $y$. Then, we collect all graphs of scaling order $\le n$ into $ \Sigma^{(n)}$ and all remaining graphs into $\sum_x \Theta^{\circ}_{\fa x} \ATn_{x,\fb_1 \fb_2}$. 

To conclude Proposition \ref{Teq}, we still need to show that: (i) every $\dashed$ edge in $\cal E_{n}$ is redundant; (ii) the property 1 in Definition \ref{def genuni2} holds and that the sum of graphs of scaling order $\le n-1$ in $\Sigma^{(n)}$ is equal to $\Sigma^{(n-1)}$. 
Here, conclusion (i) follows from \Cref{-edc}; conclusion (ii) follows from the argument in Section 6.4 of \cite{BandII}, and we omit the details.

\section{Construction of the complete $T$-expansion}\label{sec_pf_completeT}

The $Q$-graphs in \eqref{mlevelTgdef weak} satisfy some additional properties given in the following lemma.

\begin{lemma}[$\Nonuni$: additional properties]\label{def nonuni-T extra}
Under the assumptions of Lemma \ref{def nonuni-T}, for any large constant $D>0$, $T_{\fa,\fb_1 \fb_2}$ can be expanded as \eqref{mlevelTgdef weak}, where every graph $\mathcal Q^{(\omega)}_{x,\fb_1\fb_2}$ satisfies the following properties.

\begin{enumerate}

\item It is an SPD graph (with ghost edges included in the blue net). 

\item The atom in the $Q$-label of $\mathcal Q^{(\omega)}_{x,\fb_1\fb_2} $ belongs to the MIS with non-deterministic closure. 

\item The size of $\mathcal Q^{(\omega)}_{x,\fb_1\fb_2}$ satisfies that $\size ( \mathcal Q^{(\omega)}_{x,\fb_1\fb_2} ) \le W^{-2\soe}$.

\item There is an edge, blue solid or $\dashed$ or dotted, connected to $\fb_1$; there is an edge, red solid or $\dashed$ or dotted, connected to $\fb_2$. 
\end{enumerate} 

 \end{lemma}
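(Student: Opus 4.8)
The plan is to obtain Lemma \ref{def nonuni-T extra} as a byproduct of the construction of the $\nonuni$ in Lemma \ref{def nonuni-T}, by carrying the four asserted properties as invariants along that construction. Recall that the $\nonuni$ \eqref{mlevelTgdef weak} is produced from the $n$-th order $T$-expansion \eqref{mlevelTgdef} by continuing to expand the random graphs in $\PTn_{x,\fb_1\fb_2}$, $\ATn_{x,\fb_1\fb_2}$ and $\Wn_{x,\fb_1\fb_2}$ with a variant of Strategy \ref{strat_global}: one keeps performing local and global expansions (operations (L1)--(L3) and (G1)--(G7)), with the additional stopping rule that a graph whose subgraph induced on its internal atoms is deterministic is sent to the deterministic output (the remaining external weights on $\fb_1,\fb_2$ being absorbed into the monomials $f_\mu,\wt f_\nu,g_\gamma$), and with ghost edges inserted into the blue net to record the $L^2/W^2$ gained whenever the sum-zero property \eqref{3rd_property0} of a self-energy is invoked. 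The $Q$-graphs $\mathcal Q^{(\omega)}_{x,\fb_1\fb_2}$ appearing in \eqref{mlevelTgdef weak} are exactly the $Q$-outputs of this process, either inherited directly from $\QTn_{x,\fb_1\fb_2}$ or freshly created by $Q$-expansions (Lemma \ref{Q_lemma}). Hence it suffices to check that properties (1)--(4) hold for the inherited $Q$-graphs and are preserved under each expansion step.

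For the $Q$-graphs coming from $\QTn_{x,\fb_1\fb_2}$, properties (1), (2) and (4) are exactly item (5) of Definition \ref{def genuni2} and item (1) of Definition \ref{defn genuni}, while property (3) is trivial since such graphs carry no ghost edge and have scaling order $\ge 2$, so $\size(\cdot) = W^{-\ord\cdot\soe}\le W^{-2\soe}$. For the inductive step I would argue as follows. The SPD property (with ghost edges included in the blue net) is preserved by local expansions via Lemma \ref{lem_SPD_preserve}(i) together with Lemma \ref{lem globalgood}(i), and by global expansions acting on the first blue solid edge of the nondeterministic MIS via Lemma \ref{lem_SPD_preserve}(ii) and Lemma \ref{lem globalgood}(ii)--(viii); when a ghost edge is added it becomes a redundant blue-net edge, so the proof of Lemma \ref{lem_SPD_preserve} (moving a redundant blue/free/ghost edge so that each remaining blue solid edge turns redundant) applies verbatim. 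This gives (1). For (2), new $Q$-graphs arise only from operation (G6) and from $Q$-expansions, whose outputs always have the $Q$-label atom inside the MIS with nondeterministic closure by Lemma \ref{lem globalgood}(vii) and Lemma \ref{Q_lemma}(ii)--(iv), and this membership is not disturbed by subsequent local expansions on atoms of that MIS, again by Lemma \ref{lem globalgood}(i). Property (4) is immediate: $T_{\fa,\fb_1\fb_2}$ starts with a blue $G$ edge at $\fb_1$ and a red $G$ edge at $\fb_2$, and no expansion step ever removes the last edge incident to $\fb_1$ or $\fb_2$; such an edge is only ever turned into a $\dashed$ edge (when a diffusive edge is pulled out, or when a $t$-variable containing it is replaced via \eqref{replaceT}) or a dotted edge (when $\fb_1$ or $\fb_2$ is identified with a summation index), which are precisely the edge types allowed in the statement.

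The delicate point, and the one I expect to be the main obstacle, is the size bound (3). The argument rests on two facts: every local and global expansion is scaling-order nondecreasing (Lemma \ref{lvl1 lemma}, Lemma \ref{lem globalgood}), and a ghost edge is introduced only together with a sufficiently large jump in scaling order — concretely, a ghost edge appears only when the sum-zero bound \eqref{3rd_property0} for some $\Sele_{2l}$ with $2l\ge 4$, or the higher-order remainder $\Delta^{(n-1)}$ of a renormalized $\dashed$ edge, is used, and in each such case the scaling order gained is at least $n-1$ (this mirrors the bookkeeping behind the estimates \eqref{estimates_ptree}, where $\ATn$-type and error graphs carry the $L^2/W^2$ factor together with scaling order $>n$). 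Granting this, a $Q$-graph with $k_\gh$ ghost edges has $\ord\ge 2+(n-1)k_\gh$, so by the condition \eqref{Lcondition10},
\begin{equation*}
\size(\mathcal Q^{(\omega)}_{x,\fb_1\fb_2}) = \Big(\frac{L^2}{W^2}\Big)^{k_\gh}W^{-\ord\cdot\soe}\le W^{((n-1)\soe-c_0)k_\gh}\,W^{-(2+(n-1)k_\gh)\soe} = W^{-2\soe-c_0 k_\gh}\le W^{-2\soe}.
\end{equation*}
Making the second fact precise — enumerating every place in the complete $T$-expansion where a ghost edge can be created and verifying the associated scaling-order increase in each case — is where the real work lies; it parallels the size estimates in Section 9 of \cite{BandII} and the proof of Lemma \ref{dG-bd}, which I would adapt to the present setup with the zero mode $\frac{\im m}{N\eta}$ split off from every $T$-variable.
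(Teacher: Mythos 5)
Your overall scheme---obtaining the lemma by carrying (1)--(4) as invariants of the expansion that produces \eqref{mlevelTgdef weak}, using Lemma \ref{lem_SPD_preserve}, Lemma \ref{lem globalgood} and Lemma \ref{Q_lemma} for the SPD property and the location of the $Q$-label, and the trivial preservation of the edges attached to $\fb_1,\fb_2$---is the same route the paper takes (it runs Strategy \ref{strat_global_weak} with the stopping rules (T1)--(T3) and defers the detailed verification to the proof of Lemma 9.7 in \cite{BandII}), and your treatment of properties (1), (2) and (4) is in line with that.

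The genuine gap is in property (3), and it lies precisely where you place "the real work". You assert that a ghost edge is created only when the sum-zero bound \eqref{3rd_property0} for some $\Sele_{2l}$, or the remainder $\Delta^{(n-1)}$ of a renormalized diffusive edge, "is used", each time with a scaling-order gain of at least $n-1$. That is not the mechanism in this construction: the sum-zero property is an \emph{estimate} and is never invoked in the purely algebraic derivation of \eqref{mlevelTgdef weak}. Ghost edges are inserted in Case 3 of Strategy \ref{strat_global_weak}, i.e.\ exactly when the MIS with non-deterministic closure is deterministic, strongly isolated and locally standard and its external blue solid edge is pivotal; one then writes $1=\frac{L^2}{W^2}\cdot\frac{W^2}{L^2}$ to make that edge redundant (and when a redundant free edge is available it is moved there instead, so no ghost edge is created at all). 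Consequently, the compensation of the $L^2/W^2$ coefficient per ghost edge must come from the structural fact that strongly isolated subgraphs cannot be generated by the globally standard graphs in $\PTn$ and $\QTn$ (Definition \ref{def genuni2}); they enter only through the order-$(>n)$ graphs of $\ATn$, while $\Wn$ supplies the redundant free edge used in the alternative branch of Case 3. Verifying this bookkeeping---that every ghost edge is accompanied by an order increment of roughly $n-1$, so that \eqref{Lcondition10} gives $\size(\mathcal Q^{(\omega)}_{x,\fb_1\fb_2})\le W^{-2\soe}$---is exactly the content of the argument the paper imports from \cite{BandII}; your sketch neither identifies this trigger (the strongly isolated deterministic MIS) nor proves the order jump, and it attributes the $L^2/W^2$ factors to events (sum-zero cancellations) that play no role in the expansion itself. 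The displayed inequality $\ord\ge 2+(n-1)k_\gh \Rightarrow \size\le W^{-2\soe}$ is the right target, but its hypothesis is left unjustified and is traced to the wrong source.
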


 \begin{proof}[\bf Proof of Lemmas \ref{def nonuni-T} and \ref{def nonuni-T extra}]
 Starting with the $n$-th order $T$-expansion \eqref{mlevelTgdef}, to conclude Lemma \ref{def nonuni-T}, we need to further expand the graphs in 
 \be\label{remain_graphs}
 \sum_x \Theta^{(n)}_{\fa x} \left(\PTn_{x,\fb_1 \fb_2} +  \ATn_{x,\fb_1\fb_2}  + \Wn_{x,\fb_1\fb_2} \right).
 \ee
 To describe the expansion strategy, we first define the stopping rules. Given the large constant $D$ in Lemma \ref{def nonuni-T}, we stop expanding a graph $\cal G$ if it is normal and satisfies at least one of the following properties:
\begin{itemize}
\item[(T1)] the subgraph of $\cal G$ induced on internal atoms is deterministic and locally standard (i.e., if there is an internal atom connected with external edges, it must be standard neutral);

\item[(T2)] $\size(\cal G)\le W^{-D}$;

\item[(T3)] $\cal G$ is a $Q$-graph. 
\end{itemize}

The following lemma shows that a graph from the expansions can be bounded by its size. It will also be used in the proof of Lemma \ref{mG-fe} in Section \ref{sec_pf_mG}.
 \begin{lemma}\label{bdd_gene_double}
 Under the setting of Lemma \ref{def nonuni-T}, let $\cal G$ be a generalized doubly connected graph (recall Definition \ref{def 2netex}). Then, we have
 $$\cal G\prec \size(\cal G).$$
\end{lemma}
\begin{proof}
With Lemma \ref{GtoAG} and Lemma \ref{w_s}, its proof is exactly the same as that for Lemma 9.11 of \cite{BandII}. So we omit the details. 
\end{proof}

The doubly connected property of the graphs from our expansions trivially implies that they are generalized doubly connected. Hence, those graphs satisfying the stopping role (T2) will be errors in $\Err_{\fa,\fb_1 \fb_2}$. 

Similar to Strategy \ref{strat_global}, the core of the expansion strategy for the proof of Lemma \ref{def nonuni-T} is still to expand the plus $G$ edges according to a pre-deterministic order in the MIS with non-deterministic closure. However, at a certain step, we have to expand a pivotal blue solid edge, say $b$, connected with an isolated subgraph. To deal with this issue, we add a ghost edge between the ending atoms of the pivotal edge $b$ and multiply the coefficient by $L^2/W^2$. Then, the blue solid edge $b$ becomes redundant, so we can expand it as in Step 3 of Strategy \ref{strat_global}. However, we need to make sure that every graph from the expansion does not have a diverging size.

\begin{strategy}
\label{strat_global_weak}
Given a large constant $D>0$ and an SPD graph $\cal G$ without $P/Q$ labels, we perform one step of expansion as follows. Let $\Iso_{k}$ be the MIS with non-deterministic closure. 
 
\vspace{5pt}

\noindent{\bf Case 1}: Suppose $\Iso_{k}$ is not locally standard. We then perform local expansions on atoms in $\Iso_{k}$ and send the resulting graphs that already satisfy the stopping rules (T1)--(T3) to the outputs. Every remaining graph has a locally standard MIS with non-deterministic closure and satisfies the SPD property by Lemma \ref{lem_SPD_preserve}.

\vspace{5pt}
\noindent{\bf Case 2}: Suppose $\Iso_{k}$ is locally standard. We find a $t_{x,y_1y_2}$ or $t_{y_1y_2,x}$ variable that contains the first blue solid edge in a pre-deterministic order of $\Iso_{k}$, and then apply the global expansions (G1)--(G7) in Definition \ref{Goper2} to it (where we need to apply \eqref{replaceT} with $n-1$ replaced by $n$). We send the resulting graphs that already satisfy the stopping rules (T1)--(T3) to the outputs, while the remaining graphs are all SPD by Lemma \ref{lem_SPD_preserve}.

\vspace{5pt}
\noindent{\bf Case 3}: Suppose $\Iso_{k}$ is deterministic, strongly isolated in $\Iso_{k-1}$, and locally standard. In other words, $\Iso_{k}$ contains a standard neutral atom, say $\al$, connected with an external blue solid edge and an external red solid edge. Suppose the edge $G_{\al y_1}$ or $G_{y_1\al}$ in a $t_{x,y_1y_2}$ or $t_{y_1y_2,x}$ variable is the pivotal external blue solid edge of $\Iso_{k}$. If there is a redundant free edge, then we move it to $(\al,y_1)$; otherwise, we add a ghost edge between $(\al,y_1)$ and multiply the graph by $L^2/W^2$. In the resulting graph, the edge $G_{\al y_1}$ or $G_{y_1\al}$ becomes redundant if we include the free edge or the added ghost edge into the blue net. Then, we apply the global expansions (G1)--(G7) in Definition \ref{Goper2} by using \eqref{replaceT} with $n-1$ replaced by $n$. We send the resulting graphs that already satisfy the stopping rules (T1)--(T3) to the outputs, while the remaining graphs are all SPD by Lemma \ref{lem_SPD_preserve}.
\end{strategy}

Applying the Strategy \ref{strat_global_weak} repeatedly to expand the graphs in \eqref{remain_graphs}, we get $\OO(1)$ many output graphs that satisfy the stopping rules (T1)--(T3). Combining them together, we get an expansion of $T^{\circ}_{\fa,\fb_1\fb_2}$ as 
	\be\label{mlevelTgdef weak2} 
	\begin{split}
		T^{\circ}_{\fa,\fb_1 \fb_2}& =m  \Theta^{\circ}_{\fa \fb_1}\overline G_{\fb_1\fb_2}+ \sum_x \Theta^{\circ}_{\fa x}\Err'_{x,\fb_1 \fb_2} +\sum_\mu \sum_{x,y} \Theta^{\circ}_{\fa x} \wt{\cal D}^{(\mu)}_{x y} T_{y, \fb_1 \fb_2}  \\
		& +  \sum_{\mu} \sum_x  \Theta^{\circ}_{\fa x}\mathcal D^{(\mu)}_{x \fb_1} \overline G_{\fb_1\fb_2} f_\mu (G_{\fb_1\fb_1})+   \sum_\nu \sum_{x}  \Theta^{\circ}_{\fa x}\mathcal D^{(\nu)}_{x \fb_2} G_{\fb_2\fb_1} \wt f_\nu(G_{\fb_2\fb_2}) \\
		&+\sum_{\gamma} \sum_x \Theta^{\circ}_{\fa x}\cal D^{(\gamma)}_{x, \fb_1 \fb_2}g_\mu(G_{\fb_1\fb_1},G_{\fb_2\fb_2},\overline G_{\fb_1\fb_2},  G_{\fb_2\fb_1})  + \sum_{\omega}\sum_x \Theta^{\circ}_{\fa x} \cal Q^{(\omega)}_{x,\fb_1\fb_2}  ,  
	\end{split}
	\ee
	where the graphs satisfying (T2) are included into $\sum_x \Theta^{\circ}_{\fa x}\Err'_{x,\fb_1 \fb_2}$, the graphs satisfying (T3) are included into \smash{$\sum_{\omega}\sum_x \Theta^{\circ}_{\fa x} \cal Q^{(\omega)}_{x,\fb_1\fb_2}$}, and the graphs satisfying (T1) give the third to sixth terms on the right-hand side (where we have included the term $m   (\Theta^{(n)}-\Theta^{\circ})_{\fa \fb_1}\overline G_{\fb_1\fb_2}$ in \eqref{mlevelTgdef} into the fourth term on the right-hand side). Following the proof of \cite[Lemma 9.7]{BandII}, we can show that the graphs \smash{$\mathcal D^{(\mu)}_{x \fb_1}$, $\mathcal D^{(\nu)}_{x \fb_2}$, $\mathcal D^{(\gamma)}_{x , \fb_1 \fb_2}$  and $\mathcal Q^{(\omega)}_{x,\fb_1\fb_2} $} satisfy the properties in Lemmas \ref{def nonuni-T} and \ref{def nonuni-T extra}, and that \smash{$\wt{\cal D}^{(\mu)}_{x y}$} are doubly connected graphs satisfying 
	\be\label{size_wtD}
	\size(\wt{\cal D}^{(\mu)}_{x y})\le \frac{W^2}{L^2} W^{-2\soe-c_0},
	\ee
under \eqref{Lcondition10}.	In fact, the doubly connected property of the graphs in \eqref{mlevelTgdef weak2} follows from the fact that the SPD property is maintained in Strategy \ref{strat_global_weak}. The main technical part is to ensure that the sizes of these graphs will not diverge due to the $L^2/W^2$ factors introduced alongside the ghost edges. Since the relevant argument is very similar to that in the proof of \cite[Lemma 9.7]{BandII},  we omit the details.
	
Now, solving the equation \eqref{mlevelTgdef weak2} as in \eqref{eq:solve-Teq-0}, we can get an expansion of $T^\circ_{\fa,\fb_1\fb_2}$, which, together with \eqref{eq:TTC}, gives the expansion \eqref{mlevelTgdef weak} with 
$$ \wt\Theta:=\Big(1- \sum_\mu \Theta^\circ \wt{\cal D}^{(\mu)}\Big)^{-1}\Theta^\circ.$$
Using Lemma \ref{dG-bd} and recalling the definition \eqref{eq:def-size}, we get that 
\begin{align*} 
\sum_{\al} \Theta^{\circ}_{x\al} \wt{\cal D}^{(\mu)}_{\al y}\prec \sum_\al B_{x\al} \left(B_{\al y}^2 +  B_{\al y} \frac{W^{2-d}}{L^2}\right) W^{4\soe}\size\left(\wt{\cal D}^{(\mu)}_{\al y}\right) \le \frac{W^{-c_0}}{ \langle x- y\rangle^{d}},
\end{align*} 
where we used \eqref{size_wtD} and $W^2/L^2 \cdot B_{xy}\le \langle x- y\rangle^d$ in the second step. Using this estimate and the Taylor expansion of $\wt\Theta$, we can get that $|\wt \Theta_{xy}| \prec B_{xy}$. This concludes the proof. 
\end{proof}

\section{Proof of Lemma \ref{mG-fe}}\label{sec_pf_mG}

Corresponding to Definition \ref{def 2netex}, we also define the \emph{generalized SPD and globally standard properties} for graphs with external molecules (recall Definitions \ref{def seqPDG} and \ref{defn gs}).

\begin{defn}\label{GSS_external}
\noindent {(i)} A graph $\cal G$ is said to satisfy the \emph{generalized SPD property} with external molecules if merging all external molecules of $\cal G$ into one single internal molecule yields an SPD graph. 

\vspace{5pt}
\noindent {(ii)} A graph $\cal G$ is said to be \emph{generalized globally standard} with external molecules if it is generalized SPD in the above sense and every proper isolated subgraph with non-deterministic closure is weakly isolated.
\end{defn}

We will expand $\cal G_{\bx}$ by applying Strategy \ref{strat_global} with the following modifications: 
\begin{itemize}
\item we will use the generalized globally standard property in Definition \ref{GSS_external};
\item for a global expansion, we will use the $\nonuni$ \eqref{mlevelTgdef weak} instead of the $n$-th order $T$-expansion; 
\item we will stop expanding a graph if it is deterministic, its size is less than $W^{-D}$, or it is a $Q$-graph.
\end{itemize} 
More precisely, we expand $\cal G_{\bx}$ according to the following strategy. 

\begin{strategy}
\label{strat_global_weak2}
Given a large constant $D>0$ and a generalized globally standard graph without $P/Q$ labels, we perform one step of expansion as follows. 

\vspace{5pt}
\noindent{\bf Case 1:} Suppose we have a graph where all solid edges are between external molecules. Corresponding to Step 0 of Strategy \ref{strat_global}, we perform local expansions to get a sum of locally standard graphs 
plus some graphs that already satisfy the stopping rules. Then, we apply \eqref{mlevelTgdef weak} to 
an arbitrary $t$-variable and get a sum of generalized globally standard graphs plus some graphs that already satisfy the stopping rules. 

\vspace{5pt}
\noindent{\bf Case 2:} As in Step 1 of Strategy \ref{strat_global}, we perform local expansions on atoms in the MIS with non-deterministic closure and get a sum of generalized globally standard graphs with locally standard MIS plus some graphs that already satisfy the stopping rules. 

\vspace{5pt}
\noindent{\bf Case 3:} Given a generalized globally standard input graph with a locally standard MIS, we find a $t$-variable that contains the first blue solid edge in a pre-deterministic order of the MIS as in Step 2 of Strategy \ref{strat_global}, and then apply \eqref{mlevelTgdef weak} to it as in Step 3 of Strategy \ref{strat_global} to get a sum of generalized globally standard graphs plus some graphs that already satisfy the stopping rules. 
\end{strategy}

In the proof of Lemma 9.14 in \cite{BandII}, it has been shown that applying the Strategy \ref{strat_global_weak2} repeatedly to expand $\cal G_{\bx}$ will finally give a sum of graphs satisfying the stopping rules, where the deterministic graphs are denoted as \smash{$\cal G_{\bx}^{(\mu)}$}, the $Q$-graphs have zero expectation, and the remaining graphs have small enough sizes and give an error $\OO(W^{-D})$ by Lemma \ref{bdd_gene_double}. Similar to the proof of \cite[Lemma 9.14]{BandII}, we can show that the property (a) of Lemma \ref{mG-fe} follows from the generalized globally standard property. The property (d) can be shown easily by keeping track of the sizes of the graphs from expansions using property (2) of Lemma \ref{def nonuni-T} and property (3) of Lemma \ref{def nonuni-T extra}. The main goal of the rest of the proof is to show that properties (b) and (c) of Lemma \ref{mG-fe} hold. For this purpose, we define the following property, which coincides with property (b) for deterministic graphs. We will show that this property holds throughout the expansion process.

\begin{defn}
(b') After replacing the maximal (weakly) isolated subgraph by a $\dashed$ or free edge in the molecular graph without red solid edges, we can find at least $r \ge \lceil n/2\rceil$ external molecules that are simultaneously connected with special redundant ($\dashed$ or free) edges in the following sense: {after replacing every blue solid edge in the graph with a $\dashed$ or free edge}, the resulting graph satisfies property (b) of Lemma \ref{mG-fe}.
\end{defn}

In the above definition, by ``replacing every blue solid edge in the graph with a $\dashed$ or free edge", we mean all possible assignments of the types ($\dashed$ or free) for the blue solid edges instead of only one particular assignment.

Given a generalized doubly connected graph, say $\cal G$, satisfying properties (b') and (c), we will call the $r$ molecules in property (b') as ``\emph{special redundant molecules}", the corresponding $r$ redundant edges as ``\emph{special redundant edges}", 
other external molecules that are neighbors of internal molecules as ``\emph{special pivotal molecules}", and other edges between external and internal molecules as ``\emph{special pivotal edges}". 
We need to show that all new graphs from an expansion of $\cal G$ also satisfy properties (b') and (c). In the following proof, whenever we say the ``original graph", we actually mean the original graph together with a certain assignment of the types ($\dashed$ or free) for all blue solid edges. It will be clear from the context which assignment we are referring to. In the proof, we will use the following trivial but very convenient fact. 
\begin{claim}\label{trivial_graph1}
Given a graph $\cal G$, let $\cal S$ be a subset of molecules such that the subgraph induced on $\cal S$ is doubly connected. Then, $\cal G$ is doubly connected if and only if the following quotient graph $\cal G/\cal S$ is doubly connected: $\cal G/\cal S$ is obtained by treating the subset of molecules $\cal S$ as one single vertex, and the edges connected with $\cal S$ in $\cal G$ are now connected with this vertex $\cal S$ in $\cal G/\cal S$. 
\end{claim}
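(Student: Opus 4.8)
The plan is to reduce the claim to the elementary graph-theoretic fact that contraction of a connected subgraph both preserves and reflects the existence of spanning trees (and, more generally, of edge-disjoint pairs of spanning subgraphs). Recall that, by Definition \ref{def 2net}, a molecular graph is doubly connected exactly when there are two edge-disjoint collections $\cal B_{black}$ (of $\dashed$ edges) and $\cal B_{blue}$ (of blue solid, $\dashed$ and free edges) each of which contains a spanning tree. So I would work entirely at the level of molecular graphs and track how such a pair of nets behaves under the contraction $\cal G \mapsto \cal G/\cal S$: edges of $\cal G$ with both endpoints in $\cal S$ become self-loops in $\cal G/\cal S$ and are discarded; edges with exactly one endpoint in $\cal S$ are identified with edges of $\cal G/\cal S$ of the same type (the black/blue classification is unaffected); edges disjoint from $\cal S$ are unchanged. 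For graphs carrying external molecules I would invoke the convention in Definition \ref{def 2net}, deleting external molecules first, and I may assume (as holds in all our applications) that they lie outside $\cal S$.

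For the forward implication, assume $\cal G$ is doubly connected with nets $\cal B_{black},\cal B_{blue}$ containing spanning trees $T_{black},T_{blue}$. I would use that contracting a connected spanning subgraph along a subset of its vertices again yields a connected spanning subgraph; hence the images of $T_{black},T_{blue}$ in $\cal G/\cal S$, with self-loops removed, are connected and spanning, so they contain spanning trees of $\cal G/\cal S$ made only of edges of $\cal B_{black}$, resp.\ $\cal B_{blue}$, not lying inside $\cal S$. Restriction preserves disjointness and edge types, so $\cal G/\cal S$ is doubly connected. (This direction does not use any property of $\cal S$.)

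For the reverse implication, assume $\cal G/\cal S$ is doubly connected with nets $\cal B'_{black},\cal B'_{blue}$, and use the hypothesis that the subgraph of $\cal G$ induced on $\cal S$ is doubly connected, say with nets $\cal B''_{black},\cal B''_{blue}$. The edges of $\cal B'_{black},\cal B'_{blue}$ pull back to edges of $\cal G$ with at most one endpoint in $\cal S$, whereas $\cal B''_{black},\cal B''_{blue}$ consist of edges inside $\cal S$, so these two families are disjoint as edge sets of $\cal G$. I would then set $\cal B_{black}:=\cal B'_{black}\cup\cal B''_{black}$ and $\cal B_{blue}:=\cal B'_{blue}\cup\cal B''_{blue}$; disjointness $\cal B_{black}\cap\cal B_{blue}=\emptyset$ follows from $\cal B'_{black}\cap\cal B'_{blue}=\emptyset$, $\cal B''_{black}\cap\cal B''_{blue}=\emptyset$, and the "inside $\cal S$ versus not inside $\cal S$" dichotomy. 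Finally I would check that $\cal B_{black}$ contains a spanning tree of $\cal G$: take a spanning tree of $\cal G/\cal S$ inside $\cal B'_{black}$ together with a spanning tree of the induced subgraph on $\cal S$ inside $\cal B''_{black}$; any two molecules of $\cal G$ are then joined by concatenating a path of the quotient tree (which may enter and exit $\cal S$ at different molecules) with paths inside the $\cal S$-tree that patch these entry and exit molecules together, so the union is connected and spanning. The same argument works for $\cal B_{blue}$, giving double connectivity of $\cal G$.

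The only points that require genuine care — and the closest thing to an obstacle in this otherwise routine statement — are (i) verifying that the black/blue and $\dashed$/solid/free edge classifications are respected under the identification of edges with one endpoint in $\cal S$ with the corresponding edges of $\cal G/\cal S$, so that the assembled nets really consist of the edge types allowed by Definition \ref{def 2net}; and (ii) the patching argument in the reverse direction, i.e.\ that the union of a spanning tree of $\cal G/\cal S$ with a spanning tree of the subgraph induced on $\cal S$ is a connected spanning subgraph of $\cal G$. Both are straightforward once the contraction correspondence is set up precisely, which is why the claim is labelled as "trivial but very convenient".
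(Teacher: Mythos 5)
Your proposal is correct: the forward direction by projecting the two nets under contraction, and the reverse direction by pulling back the quotient nets and taking their union with the nets of the subgraph induced on $\cal S$ (patching the quotient spanning tree with the $\cal S$-spanning tree, and using the ``inside $\cal S$ versus not inside $\cal S$'' dichotomy for disjointness), is exactly the routine verification intended. The paper offers no written proof, labelling the claim as trivial, so your argument simply supplies the standard details and there is no divergence in approach to report.
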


We divide the proof into three cases corresponding to the three cases in Strategy \ref{strat_global_weak2}, which are respectively given in Sections \ref{sec_case1}--\ref{sec_case3}. These proofs show that the properties (b') and (c) are preserved throughout the expansions, and hence complete the proof of Lemma \ref{mG-fe}.

\subsection{Global expansions of blue solid edges between external molecules}\label{sec_case1} 

It is trivial to see that the local expansions in Case 1 of Strategy \ref{strat_global_weak2} do not break properties (b') and (c) of the input graph $\cal G$ since no internal molecule is generated in this process. 
We only need to consider the global expansion of a blue solid edge between external molecules. First, replacing the relevant $t$-variable with $\Err_{\fa,\fb_1 \fb_2}$ in \eqref{mlevelTgdef weak} gives an error term $\OO(W^{-D})$. Second, by using Claim \ref{trivial_graph1}, it is easy to check that replacing the relevant $t$-variable with other non-$Q$ graphs in \eqref{mlevelTgdef weak} does not break properties (b') and (c). 
Hence, we only need to consider the case where the blue solid edge is replaced by a $Q$-graph in  \eqref{mlevelTgdef weak}. Suppose we have expanded a blue solid edge between molecules $x_i$ and $x_j$. After $Q$-expansions, we obtain the following three possible cases:
\begin{center}
    \includegraphics[width=10cm]{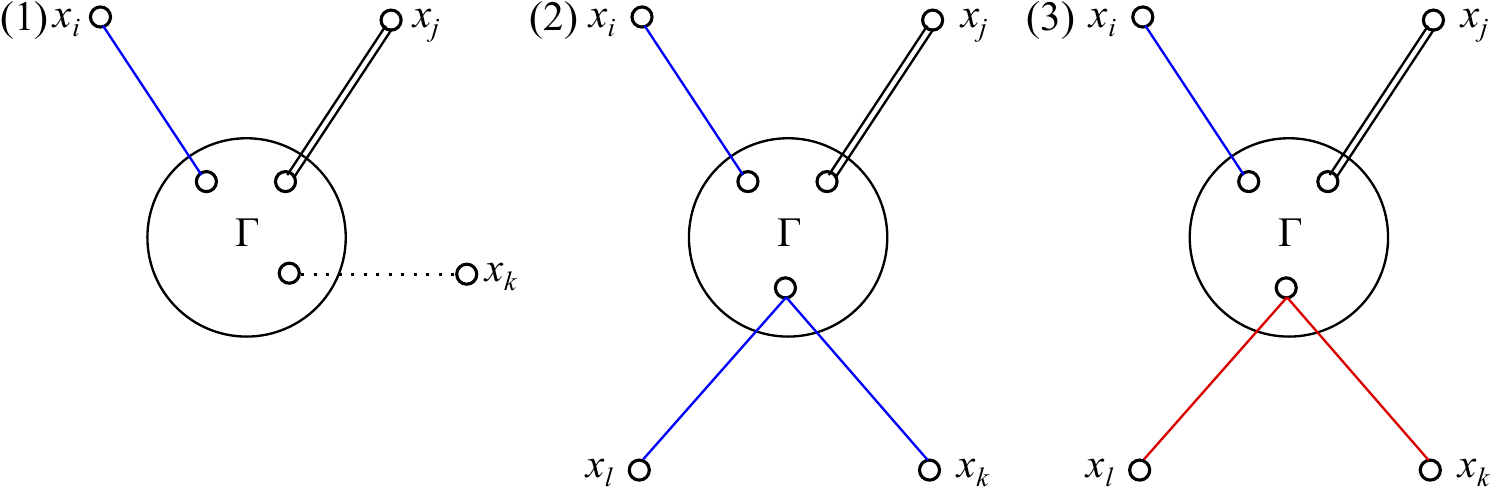}
\end{center}
All these graphs are molecular graphs, $\Gamma$ indicates the subgraph component induced on newly generated internal molecules, and we have only drawn the relevant molecules and edges without showing all other details. In graph (1), a molecule in $\Gamma$ is merged with an external molecule $x_k$ due to the dotted edge between them; in graph (2), a blue solid edge between external molecules $x_k$ and $x_l$ is pulled to a molecule in $\Gamma$; in graph (3), a red solid edge between external molecules $x_k$ and $x_l$ is pulled to a molecule in $\Gamma$ and the cases (1) and (2) do not happen. Note that in case (3), $\Gamma$ is a weakly isolated subgraph. After replacing its closure with a $\dashed$ or free edge, the resulting graph trivially satisfies properties (b') and (c). It remains to show that the graphs in cases (1) and (2) satisfy properties (b') and (c). 

In case (1), the internal molecules in $\Gamma$ are connected with $x_k$ through paths of blue edges and paths of black edges, so both $x_i$ and $x_j$ are special redundant in the new graph. Hence, if either $x_i$ or $x_j$ is \emph{not} special redundant in the original graph, then we have at least one more special redundant molecule and at most one more special pivotal molecule $x_k$ in the new graph. On the other hand, suppose $x_i$ or $x_j$ are both special redundant molecules in the original graph. Then, they are also special redundant in the new graph, and the internal molecules in $\Gamma$ are connected with $x_j$ through paths of black edges and connected with $x_i$ through paths of blue edges. Hence, $x_k$ is a special redundant molecule in the new graph. Together with the simple fact that $x_i$ still connects to $x_j$, it shows that the graph in case (1) satisfies properties (b') and (c). 

In case (2), by Claim \ref{trivial_graph1}, it is equivalent to consider the following graph (2.1) with $\Gamma$ reduced to a vertex $a$:
\begin{center}
    \includegraphics[width=9cm]{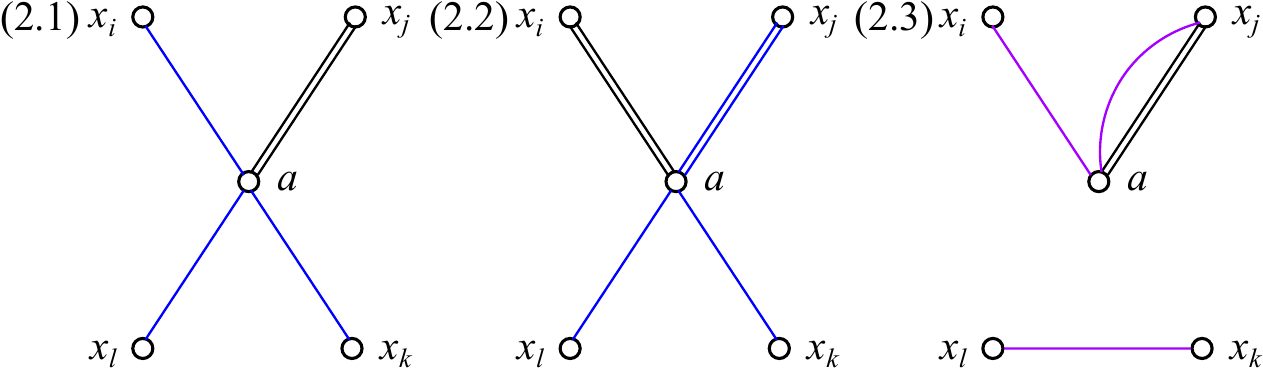}
\end{center}
Note that two of $x_i$, $x_k$ and $x_l$ can be chosen as special redundant. Hence, if at least two of them are \emph{not} special redundant in the original graph, then the new graph will have two more special redundant molecules and at most two more special pivotal molecules. If only one of them is \emph{not} special redundant in the original graph, the resulting graph will have one more special redundant molecule and at most one more special pivotal molecule $x_j$. If all of them are special redundant in the original graph, then we need to consider two different cases depending on how we assign the types for the blue solid edges. First, suppose we replace one external blue solid edge, say $(x_i,a)$, with a $\dashed$ edge as in graph (2.2). Then, putting $(x_i,a)$ into the black net and $(x_j,a)$ into the blue net gives a graph where $x_j$ becomes special redundant. Second, suppose we replace all external blue solid edges with free edges, then we rearrange them as in graph (2.3), where the molecule $x_j$ becomes special redundant. Moreover, notice that in graphs (2.2) and (2.3), $x_i$ still connects to $x_j$ and $x_k$ still connects to $x_l$. Hence, the new graph in case (2) satisfies properties (b') and (c).  


\subsection{Local expansions} \label{sec_case2}
In this subsection, we consider the local expansions in Case 2 of Strategy \ref{strat_global_weak2}. We divide the following proof into three different cases.

\vspace{5pt}

\noindent{\bf Case I}: Suppose that there are no isolated subgraphs in $\cal G$. It is easy to check that special redundant molecules after a local expansion are still special redundant. We only need to consider molecules that are not connected with internal molecules directly in the original graph, but later become neighbors of some internal molecules after local expansions. This may happen if: (i) an external molecule is merged with an internal molecule due to newly added waved or dotted edges; (ii) a blue solid edge between external molecules is pulled to an internal molecule. In both cases, the property (c) for new graphs is simple to check, and we only need to examine the property (b').

In case (i), suppose an internal molecule $a$ is merged with an external molecule $x_k$. We claim that this molecule is connected with at least one redundant blue edge and one redundant black edge. Take the black net as an example. In the original graph, we pick a disjoint union of black spanning trees that connect all internal molecules to special pivotal external molecules. Suppose $a$ connects to a special pivotal molecule $x_i$ on a black tree with $x_i$ being its root. Then, after merging $a$ and $x_k$, the black edge between $a$ (i.e., $x_k$) and its parent on the tree is special redundant, because all children of $a$ now connect to $x_k$ and all ancestors of $a$ still connect to $x_i$ in the new graph. 
In case (ii), suppose a blue solid edge between external molecules is pulled to an internal molecule $a$. Then, both the edges $(x_i,a)$ and $(x_i,a)$ are special redundant, and hence $x_i$ and $x_j$ are special redundant molecules.

\vspace{5pt}

\noindent{\bf Case II}: In this case, suppose we are performing local expansions on atoms in a maximal weakly isolated subgraph, say $\Gamma$, between internal molecules. Then, we show that the operations (L1)--(L3) in Definition \ref{Goper} do not break the properties (b') and (c). If these operations do not involve molecules outside $\Gamma$, then there is nothing to prove. Also, notice that operation (L2) is a special case of (L3) by taking $\cal M_1=\cal M_2$. Hence, we only need to show that after the following five kinds of operations, the new graphs still satisfy properties (b') and (c): 
\begin{center}
	\includegraphics[width=11cm]{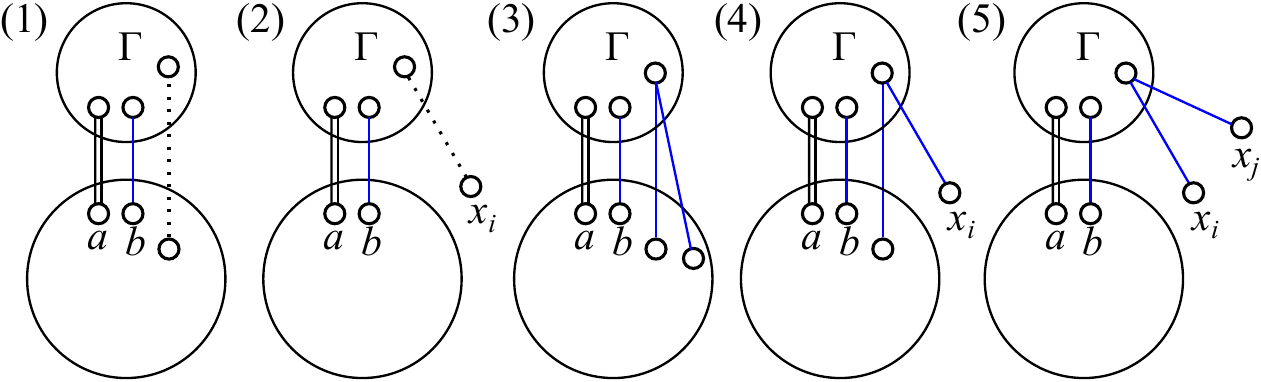}
\end{center}
All these graphs are molecular graphs without red solid edges, $\Gamma$ indicates the maximal isolated subgraph in the original graph, and the lower circle indicates the subgraph induced on molecules that are not in $\Gamma$. Again, we have only drawn the relevant molecules and edges without showing all other details. In graphs (1) and (2), a molecule in $\Gamma$ is merged with an internal molecule and an external molecule, respectively, where we have used a dotted edge to indicate this operation. In graph (3), a blue solid edge between internal molecules is pulled to a molecule in $\Gamma$. In graph (4), a blue solid edge between an internal molecule and an external molecule $x_i$ is pulled to a molecule in $\Gamma$. In graph (5), a blue solid edge between external molecules $x_i$ and $x_j$ is pulled to a molecule in $\Gamma$. For simplicity of presentation, in the following proof, whenever we refer to the ``original graph", we actually mean the original graph with the closure of $\Gamma$ replaced by a $\dashed$ or free edge $(a,b)$. 


Using Claim \ref{trivial_graph1}, it is trivial to check that the graph in case (1) satisfies properties (b') and (c). In case (2), it is equivalent to consider the graph with $\Gamma$ reduced to a vertex:
\begin{center}
    \includegraphics[width=3.5cm]{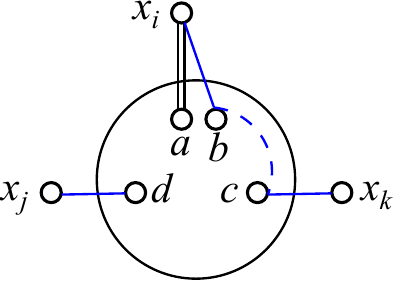}
\end{center}
Here, $x_i$, $x_j$, and $x_k$ are external molecules and we assume that $(x_j,d)$ is a special redundant edge and $(x_k,c)$ is a special pivotal edge in the original graph. 
We claim that the special redundant edges such as $(x_j,d)$ in the original graph are still redundant in the new graph, and one of the edges $(a,x_i)$ and $(b,x_i)$ is special redundant. First, if $(b,x_i)$ is replaced by a free edge, then it can play the role of a free edge $(a,b)$ in the original graph, so that $(x_j,d)$ and $(x_i,a)$ are special redundant in the new graph. 
Second, suppose $(b,x_i)$ is replaced by a $\dashed$ edge. Assume that $(a,b)$ is used as a blue $\dashed$ edge in the original graph for the property (b') to hold.  
Then, there exists a blue path between $a$ (or $b$) and a special pivotal edge (this is because by the generalized SPD property of the original graph, removing the closure of $\Gamma$ still gives a doubly connected graph). 
WLOG, suppose this path is between $b$ and $c$, denoted by the blue dashed edge in the above graph. Then, the edge $(x_i, b)$ is special redundant in the new graph, because the molecules $a$ and $b$ are connected through the edge $(a,x_i)$ from $a$ to (the equivalence class of) external molecules, the edge $(x_k, c)$, and the blue dashed path, where the edge $(b,x_i)$ is not used. The case where $(a,b)$ is used as a black $\dashed$ edge in the original graph can be proved in the same way by using a black-blue symmetry (i.e., switching the colors of the $\dashed$ edges in the above argument leads to a proof). Together with the fact that $a$ and $b$ are still connected in the new graph, the above arguments show that the properties (b') and (c) hold for case (2).

In case (3), reducing $\Gamma$ into a single vertex $y$ gives the following graph:
\begin{center}
    \includegraphics[width=2.3cm]{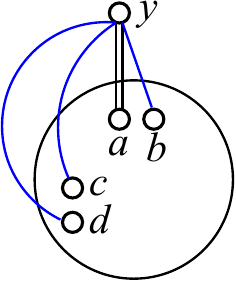}
\end{center}
If the edge $(b,y)$ is replaced by a free edge, then it can play the role of a free edge $(a,b)$ in the original graph. If both $(c,y)$ and $(d,y)$ are replaced by free edges, then they can play the role of free edges $(a,b)$ and $(c,d)$ in the original graph. If $(b,y)$ is replaced by a $\dashed$ edge while $(c,y)$ and $(d,y)$ are replaced by a free edge and a $\dashed$ edge, respectively, then the free edge can play the role of a free edge $(c,d)$ and the two $\dashed$ edges $(a,y)$ and $(b,y)$ replace the role of a $\dashed$ edge $(a,b)$ in the original graph. With all these assignments, we can easily check that the special redundant molecules in the original graph are still special redundant in the new graph in all the above cases. It remains to consider the hardest case where the edges $(b,y)$, $(c,y)$ and $(d,y)$ are all replaced by diffusive edges. 

First, we assume that in the original graph, both $(a,b)$ and $(c,d)$ are used as blue $\dashed$ edges in order for the property (b') to hold. Without loss of generality, we assume that the molecule $b$ connects to a special pivotal molecule, say $x_j$.
\begin{center}
    \includegraphics[width=7cm]{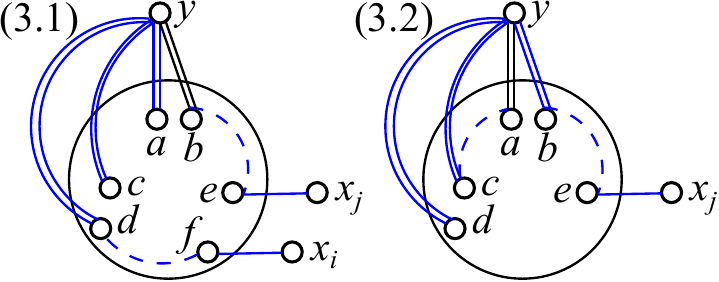}
\end{center}
We first consider a case such that in the original graph, $c$ or $d$ connects to a special pivotal molecule, say $x_i$, through a blue path $(d,f)$ which does not pass the edge $(a,b)$; see graph (3.1) above. Then, if we put $(b,y)$ into the black net and $(a,y)$, $(c,y)$ and $(d,y)$ into the blue net, $a$ still connects to $b$ through a blue path consisting of edges $(a,y)$, $(d,y)$, $(x_i,f)$ and $(x_j,e)$, the blue path from $d$ to $f$, and the blue path from $e$ to $b$. Thus, the graph (3.1) also satisfies property (b') as the original graph. On the other hand, suppose in the original graph, every blue path from $c$ or $d$ to a special pivotal molecule, say $x_j$, has to pass the edge $(a,b)$; see graph (3.2) above. Then, if we put $(a,y)$ into the black net and $(b,y)$, $(c,y)$ and $(d,y)$ into the blue net, $a$ still connects to $b$ through a blue path consisting of edges $(b,y)$, $(c,y)$ and the blue path from $a$ to $c$. Thus, the graph (3.2) also satisfies property (b') as the original graph. 
Second, suppose that in the original graph, $(a,b)$ and $(c,d)$ are used as blue $\dashed$ edge and black $\dashed$ edge, respectively, in order for the property (b') to hold. Then, putting $(a,y)$ and $(b,y)$ into the blue net and $(c,y)$ and $(d,y)$ into the black net, the resulting graph still satisfies property (b'). Finally, the case where the original graph uses $(a,b)$ as a black $\dashed$ edge can be handled in the same way by using a black-blue symmetry. Together with the simple fact that $a$, $b$ are connected and $c$, $d$ are connected in the new graphs, the above arguments show that the properties (b') and (c) hold for case (3).

In case (4), without loss of generality, we assume that the pulled edge, say $(x_i,c)$, is used as a \emph{special redundant edge} or a \emph{special pivotal blue edge} in the original graph. The case where $(x_i,c)$ is used as a \emph{special pivotal black edge} in the original graph can be handled in the same way by using a black-blue symmetry. Reducing $\Gamma$ into a single molecule $y$, we get the following graph (4.1): 
\begin{center}
    \includegraphics[width=14cm]{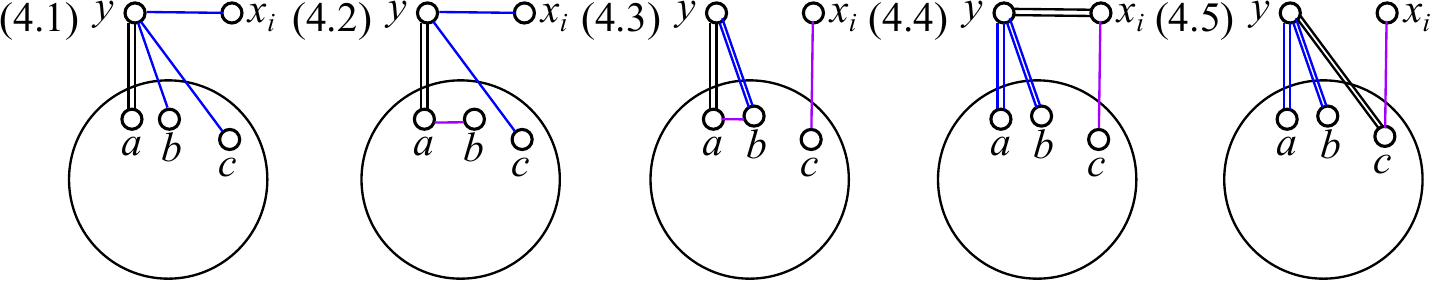}
\end{center}
We need to show that the special redundant molecules in the original graph are still special redundant in new graphs. First, if $(b,y)$ is replaced by a free edge, then it can play the role of a free edge $(a,b)$  in the new graph (4.2), and we see that the original special redundant molecules (which may include $x_i$) are still special redundant. Second, if $(c,y)$ and $(x_i,y)$ are replaced by free edges and $(b,y)$ is replaced by a $\dashed$ edge, then one free edge plays the role of a free edge $(a,b)$ and the other one plays the role of a free edge $(x_i,c)$ in the new graph (4.3), and we see that the original special redundant molecules are still special redundant. Third, if $(c,y)$ is replaced by a free edge and $(b,y)$, $(x_i,y)$ are replaced by $\dashed$ edges, then the free edge can play the role of a free edge $(x_i,c)$. If $(a,b)$ is used as a blue $\dashed$ edge in the original graph, then we put $(a,y)$ and $(b,y)$ into the blue net and $(x_i,y)$ into the black net as in graph (4.4); otherwise, we put $(a,y)$ and $(b,y)$ into the black net and $(x_i,y)$ into the blue net. In this way, the original special redundant molecules are still special redundant in the new graph. Fourth, if $(x_i,y)$ is replaced by a free edge and $(b,y)$, $(c,y)$ are replaced by $\dashed$ edges, then the free edge can play the role of a free edge $(x_i,c)$. If $(a,b)$ is used as a blue $\dashed$ edge in the original graph, we put $(a,y)$, $(b,y)$ into the blue net and $(c,y)$ into the black net as in graph (4.5); otherwise, we put $(a,y)$, $(b,y)$ into the black net and $(c,y)$ into the blue net. In this way, the original special redundant molecules are still special redundant in the new graph. Fifth, suppose $(x_i,y)$, $(b,y)$ and $(c,y)$ are all replaced by $\dashed$ edges, and $(a,b)$ is used as a black $\dashed$ edge in the original graph for the property (b') to hold. Then, we put $(a,y)$, $(b,y)$ into the black net and $(c,y)$, $(x_i,y)$ into the blue net as in graph (4.6). We see that the original special redundant molecules are still special redundant in the new graph, because $(a,y)$ and $(b,y)$ play the same role as $(a,b)$ in the original graph, while $(x_i,y)$ and $(c,y)$ play the same role as $(x_i,c)$ in the original graph.
\begin{center}
    \includegraphics[width=6cm]{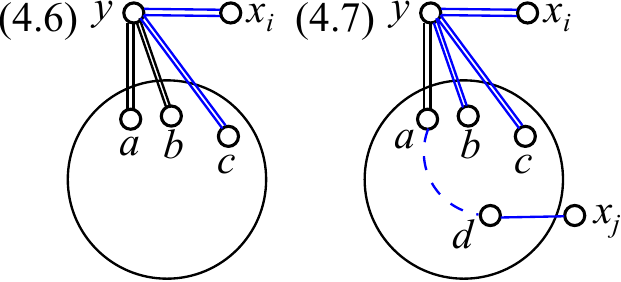}
\end{center}
Sixth, suppose $(x_i,y)$, $(b,y)$ and $(c,y)$ are all replaced by $\dashed$ edges, $(a,b)$ is used as a blue $\dashed$ edge, and $(x_i,c)$ is used as a special redundant edge in the original graph for the property (b') to hold. Then, we put $(a,y)$, $(b,y)$ into the blue net and $(c,y)$ into the black net, and we see that the original special redundant molecules are still special redundant in the new graph.
Seventh, suppose $(x_i,y)$, $(b,y)$ and $(c,y)$ are all replaced by $\dashed$ edges, $(a,b)$ is used as a blue $\dashed$ edge, and $(x_i,c)$ is used as a special pivotal blue $\dashed$ edge in the original graph for the property (b') to hold. Without loss of generality, suppose $a$ connects to a special pivotal edge $(x_j,d)$ through a blue path as in graph (4.7). (The molecule $d$ can be $c$, in which case the following proof still works.) If we put $(a,y)$ into the black net and $(b,y)$, $(c,y)$ and $(x_i,y)$ into the blue net, then  $a$ still connects to $b$ through a blue path consisting of edges $(b,y)$, $(x_i,y)$ and $(x_j,d)$, and the blue path from $a$ to $d$. Moreover, $(x_i,y)$ and $(c,y)$ can play the same role as $(x_i,c)$ in the original graph. Hence, the original special redundant molecules are still special redundant in the new graph (4.7). 

Combining all the above arguments with the simple fact that $a$, $b$ are connected and $x_i$, $c$ are connected in the new graphs, we obtain that the properties (b') and (c) hold for case (4).

Finally, in case (5), we consider the following graph (5.1) with $\Gamma$ reduced into a single molecule $y$. 
\begin{center}
    \includegraphics[width=10cm]{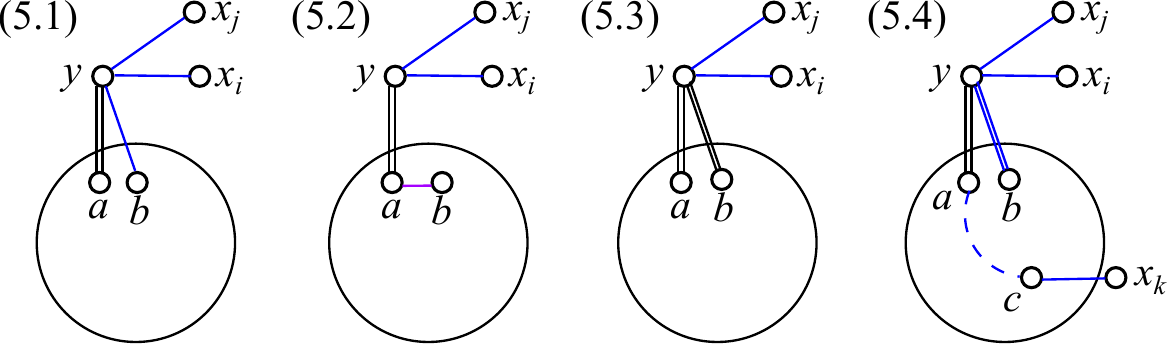}
\end{center}
First, if $(b,y)$ is replaced by a free edge, then it can play the same role as the free edge $(a,b)$ in the original graph. In the new graph (5.2), either $x_i$ or $x_j$ can be chosen as special redundant, and hence the property (b') holds. Second, suppose $(b,y)$ is replaced by a $\dashed$ edge, and $(a,b)$ is used as a black $\dashed$ edge in the original graph for the property (b') to hold. Then, putting $(a,y)$, $(b,y)$ into the black net gives a new graph (5.3) in which either $x_i$ or $x_j$ is special redundant, and hence the property (b') holds. Third, suppose $(b,y)$ is replaced by a $\dashed$ edge, and $(a,b)$ is used as a blue $\dashed$ edge in the original graph for the property (b') to hold. Without loss of generality, suppose $a$ connects to a special pivotal edge $(x_k,c)$ through a blue path as in graph (5.4). Then, putting $(a,y)$ into the black net and $(b,y)$ into the blue net gives a graph in which either $x_i$ or $x_j$ is special redundant, because $a$ still connects to $b$ through a blue path consisting of $(b,y)$, $(x_k,c)$, $(x_i,y)$ or $(x_j,y)$, and the blue path from $a$ to $c$. Hence, the graph (5.4) satisfies the property (b').
Together with the simple fact that $a$, $b$ are connected and $x_i$, $x_j$ are connected in the new graph, the above arguments show that the properties (b') and (c) hold for case (5).

\vspace{5pt}

\noindent{\bf Case III}: In this case, suppose we are performing local expansions on atoms in a maximal weakly isolated subgraph, say $\Gamma$, between a pair of internal and external molecules. Similar to Case II, we need to show that the operations (L1)--(L3) involving molecules outside $\Gamma$ do not break the properties (b') and (c). More precisely, we will show that after the following five kinds of operations, the new graphs still satisfy properties (b') and (c):
\begin{center}
    \includegraphics[width=11.5cm]{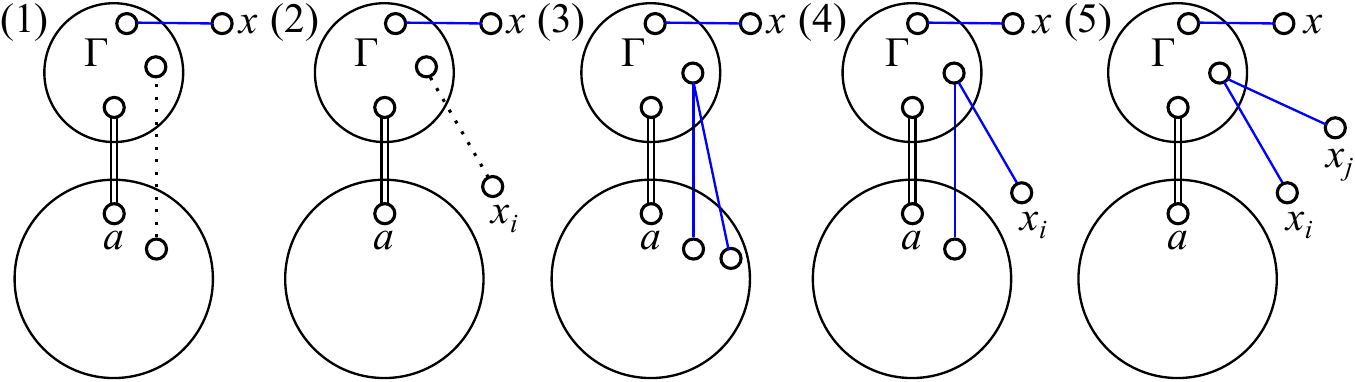}
\end{center}
In these graphs, $\Gamma$ indicates the maximal isolated subgraph, the lower circle indicates the subgraph induced on molecules that are not in $\Gamma$, and we have only drawn the relevant molecules and edges without showing all other details. In graphs (1) and (2), a molecule in $\Gamma$ is merged with an internal molecule and an external molecule, respectively. In graph (3), a blue solid edge between internal molecules is pulled to a molecule in $\Gamma$. In graph (4), a blue solid edge between an internal molecule and the external molecule $x_i$ is pulled to a molecule in $\Gamma$. In graph (5), a blue solid edge between the external molecules $x_i$ and $x_j$ is pulled to a molecule in $\Gamma$. Again, for simplicity of presentation, the ``original graph" in the following proof refers to the original graph where the closure of $\Gamma$ is replaced by a $\dashed$ or free edge $(x,a)$.

The cases (1) and (2) are almost trivial by using Claim \ref{trivial_graph1}, and the case (3) can be handled in a similar way as the above Case II-(4). If the edge $(x,a)$ is special redundant in the original graph, then treating the external molecule $x$ as an internal molecule, the current Case III becomes a special case of Case II above. Furthermore, if the blue solid edge between $x$ and $\Gamma$ is replaced by a free edge, then it can be used as a free edge $(x,a)$ in the original graph and the proof will be simple, so we omit the details. It remains to study cases (4) and (5) with $(x,a)$ being special pivotal and the blue solid edge between $x$ and $\Gamma$ replaced by a $\dashed$ edge.

In case (4), reducing $\Gamma$ into a single molecule $y$ gives the following graph (4.1), where a blue solid edge $(x_i,b)$ is pulled to $y$:
\begin{center}
    \includegraphics[width=10cm]{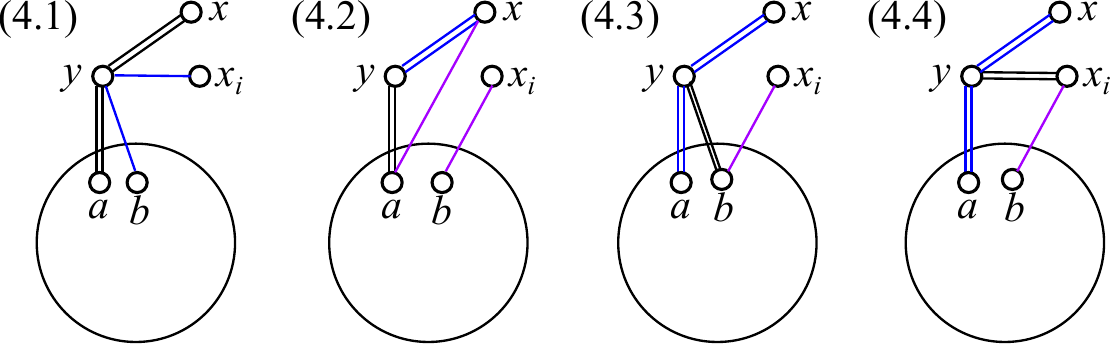}
\end{center}
First, if $(b,y)$ and $(x_i,y)$ are replaced by free edges, then they can play the same role as the free edges $(x,a)$ and $(x_i,b)$ in the original graph. Then, putting $(x,y)$ and $(a,y)$ into the blue and black nets, respectively, gives a new graph (4.2) that satisfies the property (b').   
Second, if $(b,y)$ is replaced by a diffusive edge and $(x_i,y)$ is replaced by a free edge, then the free edge plays the same role as the free edge $(x_i,b)$ in the original graph. Moreover, if $(x,a)$ is used as a blue diffusive edge in the original graph for the property (b') to hold, then we put $(x,y)$ and $(a,y)$ into the blue net and $(b,y)$ into the black net as in graph (4.3); otherwise, we put $(x,y)$ and $(a,y)$ into the black net and $(b,y)$ into the blue net. In this way, $(x,y)$ and $(a,y)$ together will replace the role of $(x,a)$ in the original graph, and hence the new graph will satisfy the property (b'). 
Third, if $(b,y)$ is replaced by a free edge and $(x_i,y)$ is replaced by a diffusive edge, then we can deal with it in a similar way as the second case; see graph (4.4).

Now, suppose $(b,y)$ and $(x_i,y)$ are both replaced by diffusive edges. Without loss of generality, we assume that $(x,a)$ is used as a blue diffusive edge in the original graph, while the case with $(x,a)$ used as a black $\dashed$ edge in the original graph can be handled in the same way by using a black-blue symmetry. On the one hand, if $(x_i,b)$ is used as a black diffusive edge in the original graph for the property (b') to hold, then we put $(x,y)$, $(a,y)$ into the blue net and $(x_i,y)$, $(b,y)$ into the black net. The new graph (4.5) will satisfy the property (b'). 
\begin{center}
    \includegraphics[width=5.5cm]{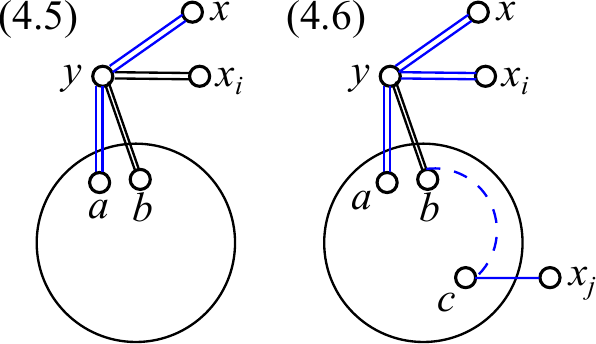}
\end{center}
On the other hand, suppose $(x_i,b)$ is used as a blue diffusive edge in the original graph for the property (b') to hold. Without loss of generality, assume that $b$ is connected with a blue special pivotal edge $(x_j,c)$ in the original graph, which is represented by the blue path in graph (4.6) (this pivotal edge can be $(x,a)$ in the original graph, in which case the following proof still works). Then, putting $(a,y)$, $(x,y)$ and $(x_i,y)$ into the blue net and $(b,y)$ into the black net as in graph (4.6), the resulting graph will satisfy the property (b'). This is because, in the new graph, $b$ still connects to $x_i$ (which belongs to the same equivalence class of external molecules as $x_j$) through a blue path consisting of the blue path from $b$ to $c$ and the edge $(x_j,c)$.

Combining the above arguments with the simple fact that $x$, $a$ are connected and $x_i$, $b$ are connected in the new graphs, we see that the properties (b') and (c) hold for case (4).

Finally, consider the case (5) with $(x,a)$ being special pivotal in the original graph and the blue solid edge between $x$ and $\Gamma$ replaced by a $\dashed$ edge. Reducing $\Gamma$ into a single molecule $y$ gives the following graph (5.1), where a blue solid edge $(x_i,x_j)$ is pulled to $y$:
\begin{center}
   \includegraphics[width=11cm]{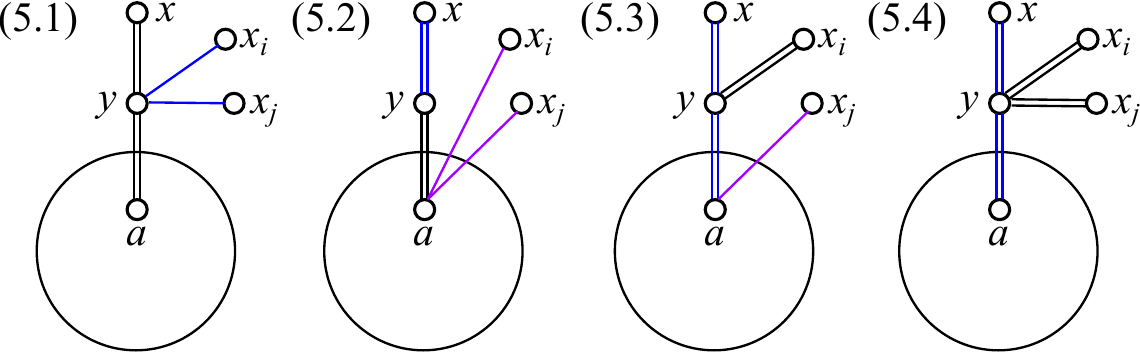}
\end{center}
If $(x,a)$ is used as a black diffusive edge in the original graph for the property (b') to hold, then it is obvious that either $x_i$ or $x_j$ can be chosen as special redundant. We now consider the harder case where $(x,a)$ is used as a blue diffusive edge in the original graph for the property (b') to hold.
First, if $(x_i,y)$ and $(x_j,y)$ are both replaced by free edges, then we treat them as free edges $(x_i,a)$ and $(x_j,a)$ as in graph (5.2). Note that one of them can play the same role as the free edge $(x,a)$ in the original graph, while the other one is redundant if we put $(x,y)$ into the blue net and $(y,a)$ into the black net. This shows that at least one of $x_i$ and $x_j$ is free redundant, and hence the new graph (5.2) satisfies the property (b'). 
Second, suppose one of $(x_i,y)$ and $(x_j,y)$ is replaced by a free edge, and the other one is replaced by a diffusive edge. Without loss of generality, let the free edge be $(x_j,y)$. If we treat it as a free edge $(x_i,a)$ and put $(x,y)$ and $(a,y)$ into the blue net and $(x_i,y)$ into the black net as in graph (5.3), then the edge $(x_j,a)$ is special redundant. On the other hand, if we put $(x,y)$ into the black net and $(y,a)$ into the blue net, then $(x_j,a)$ can replace the role of $(x,a)$ in the original graph and the edge $(x_i,y)$ becomes special redundant. In either case, at least one of $x_i$ and $x_j$ is special redundant, and hence the new graph (5.3) satisfies the property (b').
Finally, suppose $(x_i,y)$ and $(x_j,y)$ are both replaced by diffusive edges. Then, putting $(x,y)$, $(a,y)$ into the blue net and $(x_i,y)$, $(x_j,y)$ into the black net as in graph (5.4), one of $x_i$ and $x_j$ becomes special redundant, and hence the property (b') still holds. Together with the simple fact that $x$, $a$ are connected and $x_i$, $x_j$ are connected in the new graphs, we see that the properties (b') and (c) hold for case (5).  

\subsection{Global expansions of blue solid edges} \label{sec_case3}

In Case 3 of Strategy \ref{strat_global_weak2}, the blue solid edge we are expanding is either between internal molecules or between a pair of external and internal molecules. First, replacing the relevant $t$-variable with $\Err_{\fa,\fb_1 \fb_2}$ in \eqref{mlevelTgdef weak} gives an error term $\OO(W^{-D})$. Second, using Claim \ref{trivial_graph1}, we see that replacing the relevant $t$-variable with other non-$Q$ graphs in \eqref{mlevelTgdef weak} does not break properties (b') and (c). Finally, suppose we replace the $t$-variable by a $Q$-graph in \eqref{mlevelTgdef weak}. Then, it is easy to see that the resulting graph before applying $Q$-expansions satisfies properties (b') and (c) by replacing the closure of the maximal isolated subgraph with a $\dashed$ or free edge. Since $Q$-expansions are local expansions, all new graphs after applying them satisfy properties (b') and (c) by the arguments in Section \ref{sec_case2}.

\end{appendix}
\begin{acks}[Acknowledgments]
 We would like to thank Paul Bourgade for the helpful discussions. We are very grateful to an anonymous referee for the helpful comments, which have resulted in a significant improvement of the paper. Fan Yang would like to thank the support of the Beijing Institute of Mathematical Sciences and Applications. 
\end{acks}

\begin{funding}
The work of Horng-Tzer Yau is partially supported by the NSF grants DMS-1855509 and DMS-2153335 and a Simons Investigator award. The work of Jun Yin is partially supported by the NSF grant DMS-1802861.
\end{funding}


\end{document}